\documentclass[a4paper,leqno]{amsart}

\usepackage{amsmath,amsfonts,amsthm,amssymb,amscd}

\usepackage[
    a4paper,
    left=3cm,
    right=3cm,
    top=3cm,
    bottom=3cm,
    twoside=false
]{geometry}

\usepackage[T1]{fontenc}
\usepackage[english]{babel}
\usepackage{graphicx,xcolor}
\usepackage{cite}
\usepackage[official]{eurosym}

\usepackage{hyperref}
\hypersetup{
    colorlinks=true,
    filecolor=black
}

\usepackage{hyperref}

\newtheorem{theorem}{Theorem}[section]

\newtheorem{lemma}[theorem]{Lemma}
\newtheorem{proposition}[theorem]{Proposition}

\theoremstyle{definition}

\newtheorem{remark}[theorem]{Remark}

\numberwithin{equation}{section}

\newcommand\ud{\, \mathrm{d}}

\newcommand{\RomanUpper}[1]{\expandafter\uppercase\expandafter{\romannumeral #1}}

\renewcommand{\le}{\leqslant}
\renewcommand{\ge}{\geqslant}

\usepackage{mathtools}

\renewcommand{\Re}{\mathop{\rm Re}\nolimits}
\renewcommand{\Im}{\mathop{\rm Im}\nolimits}

\newcommand{\R}{{\mathbb R}}

\theoremstyle{plain}

\def\s{\sigma}

\numberwithin{equation}{section}

\def\det{\operatorname{det}}

\def\s{\sigma}
\def\l{\lambda}

\begin{document}

   \parindent=8pt

    \title[Blowup for the energy supercritical NLS]
    {Slow blowup solutions for the focusing energy supercritical nonlinear Schr\"odinger equation}%\lz{suggested title: Slow type II blow-up solutions for the focusing energy-supercritical nonlinear Schr\"odinger equation}

    \author[X. ~Li]{Xuanying Li}
    \address{The Institute of Geometry and Physics, University of Science and Technology of China, Hefei 230026, Anhui, China}
    \email{lxy0418@ustc.edu.cn}

 \begin{abstract}
We consider the focusing energy supercritical nonlinear Schr\"odinger equation
\begin{equation*}
i\partial_t\psi+\Delta\psi+|\psi|^{2p}\psi=0
\end{equation*}
in dimensions $d\geq11$, with $p\in\mathbb N^\ast$, under the Joseph--Lundgren condition $2p+1>p_{JL}(d)$. For every $\nu>\nu_0(d,p)$,
there exists a radial finite-time type II blow-up solution concentrating the ground state $Q$ in the form
\begin{equation*}
\psi(t,x)=\lambda^{\frac{1}{p}}(t)Q(\lambda(t)x)+\zeta(t,x),\quad  \lambda(t)=t^{-\frac12-\nu}.
\end{equation*}
All homogeneous Sobolev norms $\|\psi(t)\|_{\dot H^s}$, $1\leq s<s_c$, remain bounded.
The radiation converges to a limiting profile
\begin{equation*}
    \zeta^*\in \dot H^s(\mathbb R^d),  \qquad s_c<s<s_c+2\nu|\lambda_+|,
\end{equation*}
where $\lambda_+$ is determined by the leading correction in the asymptotic expansion of $Q$ at infinity. Our proof combines matched asymptotic expansions across three spatial scales with modified energy estimates. The key ingredient is the systematic treatment of the singular nonlinear background and coupled oscillations generated by the slow algebraic tail of $Q$.
\end{abstract}

  \maketitle
\noindent {\sl Keywords:}  Energy supercritical, Schr\"{o}dinger equation, Blowup.

\section{Introduction}

\subsection{Setting of the problem}

We consider the focusing nonlinear Schr\"{o}dinger equation
\begin{equation}\label{NLS}
\left\{
\begin{aligned}
&i\partial_t\psi=-\Delta\psi-|\psi|^{2p}\psi,\qquad (t,x)\in I\times\mathbb R^d,\\
&\psi|_{t=0}=\psi_0,
\end{aligned}
\right.
\end{equation}
where $I\subset\mathbb R$ is an interval. For sufficiently regular
solutions, the
energy and mass are conserved:
\begin{equation*}%\label{def energy mass}
\begin{gathered}
E(\psi(t))=\frac12\int_{\mathbb R^d}|\nabla\psi(t,x)|^2\,\mathrm dx-\frac{1}{2(p+1)}\int_{\mathbb R^d}|\psi(t,x)|^{2p+2}\,\mathrm dx=E(\psi_0),\\[0.5em]
M(\psi(t))=\int_{\mathbb R^d}|\psi(t,x)|^2\,\mathrm dx=M(\psi_0).
\end{gathered}
\end{equation*}

Equation \eqref{NLS} is invariant under the scaling
\begin{equation*}
\psi(t,x)\longmapsto\psi_\lambda(t,x):=\lambda^{\frac1p}\psi(\lambda^2t,\lambda x),\qquad \lambda>0,
\end{equation*}
and the corresponding scaling critical Sobolev exponent is
\begin{equation*}
s_c=\frac {d}{2}-\frac{1}{p}.
\end{equation*}
Throughout this paper, we work in the energy supercritical regime
\begin{equation*}
s_c>1,\qquad\text{equivalently}\qquad p>\frac{2}{d-2}, \qquad d\ge3.
\end{equation*}

\medskip

The blowup solution is constructed around the positive radial stationary profile $Q$, normalized by
\begin{equation*}%\label{ground state equation introduction}
\Delta Q+Q^{2p+1}=0, \qquad Q(0)=1, \qquad Q'(0)=0.
\end{equation*}
The asymptotic behavior of $Q$ at infinity is explicitly given in terms of the Joseph--Lundgren exponent
\begin{equation*}%\label{def p JL}
p_{JL}(d)=
\begin{cases}
+\infty, & 3\le d\le10,\\[0.4em]\displaystyle1+\frac{4}{d-4-2\sqrt{d-1}},
& d\ge11.
\end{cases}
\end{equation*}
This exponent was introduced by Joseph and Lundgren in their study of positive radial solutions to Lane--Emden-type equations \cite{JosephLundgren}.

\medskip

We shall assume throughout that $p\in\mathbb N^\ast$ and
\begin{equation}\label{JL condition}
2p+1>p_{JL}(d).
\end{equation}
Linearizing the stationary equation around $Q$ and seeking perturbations with asymptotic behavior $r^{-\frac1p+\lambda_{+}}$
as $r\to\infty$ yields the characteristic equation
\begin{equation*}
\lambda^2+\left(d-2-\frac{2}{p}\right)\lambda+2\left(d-2-\frac1p\right)=0.
\end{equation*}
Its discriminant is
\begin{equation}\label{def Discr}
\operatorname{Discr}:=(d-2)^2-4(2p+1)c_\infty^{2p}.
\end{equation}
Condition \eqref{JL condition} is equivalent to
\begin{equation*}
\operatorname{Discr}>0,
\end{equation*}
and hence the two characteristic exponents are real. We write them as
\begin{equation}\label{def lambda pm}
\lambda_\pm=\frac1p-\frac{d-2}{2}\pm\frac{\sqrt{\operatorname{Discr}}}{2},
\end{equation}
so that
\begin{equation*}
\lambda_-<\lambda_+<0.
\end{equation*}
 The far field asymptotics of $Q$ are generated by the characteristic exponents $\lambda_+$ and $\lambda_-$, see \cite{MerleRaphaelRodnianski15}. In particular, one has
\begin{equation}\label{expansion Q}
Q(r)=r^{-\frac1p}\Big(c_\infty+\sum\limits_{m+n\geq 1}c_{m,n}r^{\lambda_+ m+\lambda_{-}n}\Big),\qquad r\to\infty.
\end{equation}
The higher order asymptotic expansion of $Q$ will be developed below and will play an essential role in the matching construction.

\medskip
\subsection{Previous results}

A central mechanism for singularity formation in focusing nonlinear
Schr\"odinger equations is the concentration of a rescaled ground state.
Schematically, a solution with a single concentrating core has the form
\begin{equation*}
u(t,x)= e^{i\gamma(t)}\lambda(t)^{\frac{1}{p}} Q(\lambda(t)x)+\text{radiation},
\end{equation*}
where $Q$ denotes the ground state of the equation under consideration
and $\lambda(t)\to+\infty$ is the inverse concentration scale.
Identifying the concentrating profile is only part of the problem:
one must also determine the possible concentration rates and understand
how the concentrating core interacts with the radiation.

For the mass critical nonlinear Schr\"{o}dinger equation, ground state
concentration gives rise to distinct blowup regimes.
The pseudoconformal transformation applied to the ground state
standing wave yields an explicit minimal mass blowup solution with
concentration rate
\begin{equation*}
\lambda(t)\approx (T-t)^{-1}.
\end{equation*}
In dimensions one and two, Bourgain and Wang constructed solutions with the same pseudoconformal rate and a prescribed smooth regular component satisfying suitable vanishing conditions at the blowup point \cite{BourgainWang97}. Another regime is characterized by the log-log law
\begin{equation*}
\lambda(t)\approx\left(\frac{\log|\log(T-t)|}{T-t}\right)^{\frac12}.
\end{equation*}
Blowup solutions with this rate were constructed by Perelman in one dimension for a non-generic class of initial data \cite{Perelman01}. Merle and Rapha\"{e}l subsequently established log-log blowup for
open sets of $H^1$ initial data and developed a detailed analysis of the near ground state dynamics \cite{MerleRaphael05a,MerleRaphael06}. The stability of the log-log rate was proved by Rapha\"{e}l \cite{Raphael05}. Merle and Rapha\"{e}l also proved that, after subtracting the concentrating ground state, the radiation converges strongly in $L^2$ to a limiting profile \cite{MerleRaphael05b}. 

In the mass supercritical, energy subcritical range $0<s_c<1$, self-similar collapse provides another mechanism for finite time singularity formation, with inverse concentration scale
\begin{equation*}
\lambda(t)\approx (T-t)^{-\frac{1}{2}}.
\end{equation*}
The formal and numerical background is discussed in \cite{SulemSulem99}. For $1\leq d\leq5$ and slightly mass-supercritical nonlinearities, Merle, Rapha\"{e}l and Szeftel proved the existence and stability in $H^1$ of finite time blowup dynamics with the self-similar rate \cite{MerleRaphaelSzeftel10}. Their argument perturbs the analysis of the mass critical log-log regime and controls the concentration rate without identifying an exact limiting self-similar profile. Bahri, Martel and Rapha\"{e}l subsequently constructed radial finite energy self-similar profiles in every dimension $d\geq1$ for $0<s_c\ll1$, using matched asymptotic expansions to obtain a branch bifurcating from the ground state \cite{BahriMartelRaphael21}. Li \cite{Li23} established finite codimensional asymptotic stability of self-similar blow-up profiles, allowing nonradial perturbations, through Strichartz estimates obtained by adapting Beceanu's operator-valued Wiener theorem \cite{Beceanu11}. His subsequent spectral analyses yield asymptotic stability modulo symmetries for slightly mass-supercritical profiles in dimensions $1\leq d\leq10$, using the corresponding spectral coercivity input \cite{Li25a,Li25b}. Beyond the perturbative regime $0<s_c\ll1$, Donninger and Sch\"{o}rkhuber \cite{DonningerSchorkhuber26} and Dahne and Figueras \cite{DahneFigueras24} obtained computer-assisted constructions of finite energy radial self-similar blow-up solutions for the three-dimensional cubic NLS. Donninger and Lichtnecker subsequently extended the cubic construction to an open interval of nonlinear powers containing the cubic exponent  \cite{DonningerLichtnecker25}.

For the three dimensional energy critical nonlinear Schr\"{o}dinger equation, Ortoleva and Perelman constructed solutions concentrating
the ground state at infinite time with power-law concentration rates \cite{OrtolevaPerelman14}. More recently, Schmid constructed
finite time type II blowup solutions with a continuously
parameterized family of rates
\begin{equation*}
\lambda(t)=(T-t)^{-\frac12-\nu},
\qquad \nu>1,
\end{equation*}
together with convergence of the radiation in $\dot H^1\cap\dot H^2$ \cite{Schmid26}. Very recently, Kim \cite{Kim26} constructed smooth radial finite time type II blowup solutions concentrating a single ground state with quantized concentration rates
\begin{equation*}
\lambda(t)= (T-t)^{-2L}, \qquad L\in\mathbb N^\ast.
\end{equation*}

In the energy supercritical regime, Merle, Rapha\"{e}l and Rodnianski constructed radial finite time type II blowup solutions in dimensions
$d\geq11$ for a class of polynomial nonlinearities above the Joseph--Lundgren threshold \cite{MerleRaphaelRodnianski15}.
Their construction combines a finite-dimensional modulation system with energy estimates and produces smooth blowup solutions with an
integer indexed sequence of power law concentration rates. The algebraic tail of the ground state plays an essential role in their
analysis, and all Sobolev norms below the scaling regularity remain bounded up to the blowup time. Thus, the type II regimes constructed
in \cite{MerleRaphaelRodnianski15} exhibit a quantization of the admissible concentration rates.

A different line of work shows that the concentration rate can also be prescribed continuously. Krieger, Schlag and Tataru developed such a prescribed rate construction for $1$-corotational wave maps from $\mathbb R^{2+1}$ to $\mathbb S^2$ \cite{KriegerSchlagTataru08} and for the three dimensional energy critical focusing nonlinear wave equation \cite{KriegerSchlagTataru09}. In both settings, finite time blowup
occurs through concentration of a rescaled harmonic map or stationary
ground state, with
\begin{equation*}
\lambda(t)=(T-t)^{-1-\nu}.
\end{equation*}
Their original constructions covered the range $\nu>\frac{1}{2}$. The full polynomial range $\nu>0$ was subsequently obtained by Gao and
Krieger for critical wave maps \cite{GaoKrieger15} and by Krieger and Schlag for the energy critical wave equation \cite{KriegerSchlag14}.
The same general prescribed rate philosophy was later implemented by Perelman for the $1$-equivariant Schr\"{o}dinger map equation from
$\mathbb R^2$ to $\mathbb S^2$, where she constructed finite time blowup solutions with
\begin{equation*}
\lambda(t)=(T-t)^{-\frac12-\nu}, \qquad \nu>1,
\end{equation*}
\cite{Perelman14}.
In these constructions, one starts from a prescribed concentration law, constructs sufficiently accurate corrections to the rescaled
stationary profile, and then controls the remaining error perturbatively. In particular, the regularity of the resulting solution and of its limiting radiation is closely related to the prescribed blowup exponent.

A complementary mechanism is to prescribe the asymptotic radiation rather than the concentration rate itself. For the energy critical one-equivariant wave map equation, Jendrej, Lawrie and Rodriguez constructed bubbling solutions with prescribed radiation, for which the blow-up rate is determined by the interaction between the radiation and the concentrating harmonic map
\cite{JendrejLawrieRodriguez22}. More recently, Kim, Kwon and Oh adapted this mechanism to the radial self-dual Chern--Simons--Schr\"odinger equation and constructed finite energy blowup solutions with a continuum of rates generated by prescribed asymptotic profiles \cite{KimSoonsikOh26}.

A natural question is: \emph{for the focusing energy-supercritical nonlinear Schr\"{o}dinger equation, does there exist finite time blowup solutions with prescribed continuous blowup rates?}  This question is suggested by Perelman in her ICM survey \cite[Section~5.4]{PerelmanICM2022}. The present paper carries out such a construction under the assumption $(\mathrm{H1})$. For every $\nu>\nu_0(d,p)$, we construct a radial finite time blowup solution with $$\lambda(t)=t^{-\frac12-\nu}.$$ The algebraic tail of $Q$ links the inner, self-similar, and remote regions. Matching across these scales yields an approximate solution with a controlled error, which is then lifted to an exact solution by linear energy estimates and a fixed-point argument. We also determine the Sobolev asymptotics on both sides of the scaling regularity and identify the limiting radiation profile.

\medskip

\subsection{Main result}
We impose the following nonresonance assumption on $d$ and $p$.
\medskip

 {\bf{(H1)}.}\label{H 1}
 Let $d\geq11$ and $p\in\mathbb{N}^{\ast}$ satisfy $2p+1>p_{JL}$. Let $\lambda_{\pm}$ be defined by \eqref{def lambda pm} and set
 \begin{equation}\label{def K}
 K=\frac{(d-2)p-2}{\gcd(2p,(d-2)p-2)}-1.
 \end{equation}
 Equivalently, the integer $K$ is given by 
\begin{equation*}%\label{def K 1}
K=\begin{cases}
(d-2)p-3,&\text{$d$ and $p$ are both odd},\\[3pt]
\dfrac{(d-2)p-2}{4}-1,&\text{$d$ is odd and $p\equiv 2\,(\text{mod 4})$},\\[5pt]
\dfrac{(d-2)p-2}{2}-1,&\text{otherwise}.
\end{cases}
\end{equation*}
 Assume that the following nonresonance conditions on $p, d$ hold for every integer $1\leq k\leq K$ and every $m,n\in\mathbb N_0$ with
$m+n\geq1$:
  \begin{equation}\label{ass H1}
  \begin{aligned}
  2j+\lambda_{+}m+\lambda_{-}n\neq& \lambda_{+}+\lambda_{-},&&j=1,2,\cdots, k, \quad k\in 2\mathbb{N}^{\ast}-1,\\
  2j+\lambda_{+}m+\lambda_{-}n\neq& \lambda_{\pm},&&j=2,3,\cdots, k, \quad k\in2\mathbb{N}^{\ast},\\
  2j+\lambda_{+}m+\lambda_{-}n\neq&0, &&j=3,4,\cdots, k, \quad k\in 2\mathbb{N}^{\ast}+1.
  \end{aligned}
  \end{equation}  
The set of pairs $(d,p)$ satisfying \eqref{ass H1} is nonempty. For example, $(d,p)=(11,5)$ satisfies $d\geq 11$, $2p+1>p_{JL}(d)$ and gives $\lambda_\pm=\frac{-43\pm\sqrt{89}}{10}$, $K=42$.

  \medskip

 %For even $k$, 
  %\begin{equation}\label{ass even}
 % \begin{aligned}
 % 2k+\lambda_{+}m+\lambda_{-}n\notin& \lambda_{\pm},&& m+n\geq 1\\
  %2(k-1)+\lambda_{+}m+\lambda_{-}n\notin& \lambda_{\pm}+4l,&&m,n\geq 1,\quad 0\leq l\leq [\frac{k-2}{2}]-1,\\
  %2(k-2)+\lambda_{+}m+\lambda_{-}n\notin& \lambda_{\pm}+4l,&& m\geq 0,\,n\geq 1,\quad 0\leq l\leq [\frac{k-3}{2}].
  %\end{aligned}
 % \end{equation} 
%For odd $k$,
  % \begin{equation}\label{ass odd}
  %\begin{aligned}
  %2k+\lambda_{+}m+\lambda_{-}n\notin& \{0,\,\lambda_{+}+\lambda_{-}\},&& m+n\geq 1\\
  %2(k-1)+\lambda_{+}m+\lambda_{-}n\neq& 4l_1,&&m,n\geq 1,\quad 0\leq l_1\leq [\frac{k-2}{2}]-1,\\
 % 2(k-1)+\lambda_{+}m+\lambda_{-}n\neq& \lambda_{+}+\lambda_{-}+4l_2,&& m,n\geq 1,\quad 0\leq l_2\leq [\frac{k-2}{2}],\\
  %2(k-2)+\lambda_{+}m+\lambda_{-}n\neq& 4l_1,&& m\geq 0,\,n\geq 1,\quad 0\leq l_1\leq [\frac{k-3}{2}]-1,\\
  %2(k-2)+\lambda_{+}m+\lambda_{-}n\neq& \lambda_{+}+\lambda_{-}+4l_2,&& m\geq 0,\,n\geq 1,\quad 0\leq l_2\leq [\frac{k-3}{2}].
   % \end{aligned}
 % \end{equation} 

We are now in a position to state the main result.
\begin{theorem}[Finite time type II blowup]\label{main thm}
{\sl Let $d\geq 11$ and $p\in\mathbb N^{\ast}$ satisfy
\begin{equation*}
2p+1>p_{JL}(d).
\end{equation*}
Let $\lambda_{\pm}$ be given by \eqref{def lambda pm}, and suppose that $(\mathrm{H1})$ holds. 
Set
\begin{equation*}
\nu_0(d,p)=\frac{\lfloor d/2\rfloor+1-s_c}{2|\lambda_+|}.
\end{equation*}
For every $\nu>\nu_0(d,p)$, fix an integer $\sigma$ satisfying
\begin{equation*}
\frac{d}{2}<\sigma<s_c+\min\Big\{2\nu|\lambda_+|,\frac{4\nu K}{1+2\nu}\Big\}.
\end{equation*}
Then there exist $T>0$ and a radial solution
\begin{equation*}
\psi\in C\big((0,T], \dot H^1(\mathbb R^d)\cap\dot H^\sigma(\mathbb R^d)\big)
\end{equation*}
to \eqref{NLS} of the form
\begin{equation}\label{form psi}
\psi(t,x)
=\lambda^{\frac1p}(t)Q(\lambda(t)x)+\zeta(t,x),
\end{equation}
where $\lambda(t)=t^{-\frac12-\nu}$, with the following properties:

\medskip
\noindent
{\rm (i) Asymptotic behavior above the scaling regularity.}
For every $s_c<s\leq\sigma$,
\begin{equation}\label{high singularity}
\lim\limits_{t\rightarrow 0}\lambda^{s_c-s}(t)\big\|\nabla^{s}\zeta(t,\cdot)\big\|_{L^2}=0.
\end{equation}
Consequently,
\begin{equation*}
\|\psi\|_{\dot{H}^s}=\big(\|Q\|_{\dot{H}^s}+o_{t\rightarrow 0}(1)\big)t^{-(\frac{1}{2}+\nu)(s-s_c)}.
\end{equation*}

\medskip
\noindent
{\rm (ii) Boundedness below the scaling regularity.}
For every $1\leq s<s_c$,
\begin{equation}\label{low singularity}
\limsup_{t\to 0}\|\psi(t)\|_{\dot H^s}<\infty.
\end{equation}

\noindent
{\rm (iii) Limiting radiation profile.} There exists a radial function $\zeta^\ast$ such that
\begin{equation}\label{regularity zeta star main}
\zeta^\ast\in\dot H^s(\mathbb R^d)\qquad\text{for every }s_c<s<s_c+2\nu|\lambda_+|,
\end{equation}
and
\begin{equation}\label{radiation convergence main}
\lim_{t\to 0}
\|\zeta(t)-\zeta^\ast\|_{\dot H^s}=0
\end{equation}
for every $s$ satisfying
\begin{equation*}
s_c<s\leq\sigma,\qquad s<s_c+\frac{4\nu}{1+2\nu}.
\end{equation*}}
\end{theorem}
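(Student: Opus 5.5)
The proof follows the Krieger--Schlag--Tataru / Perelman prescribed-rate scheme. We build an approximate solution $\psi_{\rm app}$ with a very small error, and then solve backwards from $t=0$ for the remainder. Time runs forward from $t=0$, and $\lambda(t)=t^{-\frac12-\nu}\to\infty$ as $t\to0$. We work in three regions:
\begin{itemize}
\item the inner region $r\lesssim \lambda^{-1}t^{\epsilon}$, i.e.\ $y=\lambda r\lesssim t^{-\nu+\epsilon}$ in self-similar units;
\item the self-similar region $r\sim\sqrt t$, with variable $z=r/\sqrt t$;
\item the remote region $r\gtrsim1$.
\end{itemize}

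\textbf{Inner region.} We write $\psi=\lambda^{1/p}e^{i\alpha(t)}(Q+\sum_k t^{2\nu k}\,w_k(y))$, with $\alpha$ chosen trivially or as a log correction. Each $w_k$ solves $\mathcal L w_k=F_k$, where $\mathcal L$ is the linearization of $\Delta Q+Q^{2p+1}$ acting on real and imaginary parts. The source $F_k$ is generated by $i\partial_t$ acting on the previous terms: the $\lambda'/\lambda$ term and the $it^{2\nu}$ factors with $\lambda^{-2}\partial_t=t^{2\nu}\cdot t\partial_t$. The kernel of $\mathcal L$ is known, namely $\Lambda Q$ and $iQ$ together with their singular partners. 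Using these, we solve by variation of constants and obtain expansions as $y\to\infty$ of the form $y^{-1/p}\sum c\, y^{2j+\lambda_+m+\lambda_-n}$. This is where (H1) enters: the exponents of the source must not coincide with homogeneous exponents ($\lambda_\pm$, $0$, $\lambda_++\lambda_-$, according to parity). Otherwise logarithms appear, which destroy the closed form of the expansion.

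\textbf{Self-similar region.} The inner expansion reorganized in $z$ reads $t^{-1/(2p)}z^{-1/p}\big(c_\infty+\sum t^{\nu|\lambda_+|m+\dots}z^{\lambda_+m+\lambda_-n}\cdots\big)$. Its leading term $c_\infty |x|^{-1/p}$ is the singular stationary solution, which is time-independent. The self-similar corrections satisfy ODEs in $z$, linearized around $c_\infty z^{-1/p}$, with data at $z\to0$ fixed by matching. We solve these ODEs with outgoing behaviour as $z\to\infty$, which is oscillatory like $e^{iz^2/4}$ times powers. In the remote region the solution is a smooth regular radiation, namely $c_\infty|x|^{-1/p}\chi$ plus a correction, and it is matched to the self-similar tails.

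Truncating at order $K$ and at a finite number of $(m,n)$ terms gives an error $e$ with
\[
\|\nabla^s e(t)\|_{L^2}\lesssim t^{N}\lambda^{s-s_c}
\]
for all $s\le\sigma$, where $N$ is large. The restriction $\sigma<s_c+\min\{2\nu|\lambda_+|,4\nu K/(1+2\nu)\}$ reflects two facts:
\begin{itemize}
\item the first discarded inner term scales like $t^{2\nu K}$, which compared with $\lambda^{2}$ gives $\frac{4\nu K}{1+2\nu}$;
\item the singular $|x|^{-1/p}$ tail weighted by the first $\lambda_+$ correction determines the finite Sobolev regularity of the radiation profile.
\end{itemize}

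\textbf{Exact solution.} We set $\psi=\psi_{\rm app}+\varepsilon$ and solve the equation for $\varepsilon$ with $\varepsilon(t)\to0$ as $t\to0$. This is done by a fixed point on $[t_n,T]$ with data $\varepsilon(t_n)=0$, followed by compactness as $t_n\to0$. The key estimate is an energy estimate for $i\varepsilon_t+\Delta\varepsilon+V\varepsilon+W\bar\varepsilon=f$ in $\dot H^1\cap\dot H^\sigma$, where $V,W$ come from the linearization around $\psi_{\rm app}$. The potentials have size $\lambda^2$ near the core, which a naive Gronwall bound cannot absorb. We therefore use a modified energy: the high norm is taken as $\|\nabla^\sigma\varepsilon\|^2$ corrected by lower-order terms adapted to $\lambda$, together with a Hardy-type control, using $\sigma>d/2$ to put $\varepsilon$ in $L^\infty$ via interpolation. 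Since the error decays like $t^N$ with $N$ large, a loss of size $\exp(C\int\lambda^2)$ would be fatal. To avoid it, we first conjugate to the inner variable $y$ and use the non-negativity, up to finite rank, of the linearized operator, as in Merle--Rapha\"el--Rodnianski. The singular background $|x|^{-2}$ coupling between $\varepsilon$ and $\bar\varepsilon$ is handled by Hardy's inequality, which is valid because the JL condition makes $\mathrm{Discr}>0$. This makes the constant $(2p+1)c_\infty^{2p}<(d-2)^2/4$, which is exactly the Hardy threshold.

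\textbf{Conclusions (i)--(iii).} All three follow from the structure of the construction.
\begin{itemize}
\item For (i), $\zeta=\psi_{\rm app}-\lambda^{1/p}Q(\lambda\cdot)+\varepsilon$, and both the inner corrections (carrying a factor $t^{2\nu}$) and $\varepsilon$ are $o(\lambda^{s-s_c})$.
\item For (ii), for $s<s_c$ the profile $\lambda^{1/p}Q(\lambda\cdot)$ has $\dot H^s$ norm $\lambda^{s-s_c}\|Q\|_{\dot H^s}$. This is finite, since $Q\sim|x|^{-1/p}$ lies in $\dot H^s$ locally at infinity exactly when $s<s_c$, and it is bounded as $t\to0$.
\item For (iii), $\zeta^*$ is the limit of the self-similar and remote pieces, essentially $-c_\infty|x|^{-1/p}$ cut off together with radiation generated by the $\lambda_+$ terms. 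Its $\dot H^s$ regularity range is read off from the homogeneity $|x|^{-1/p+\ldots}$, and convergence follows from the time decay of the differences.
\end{itemize}

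The main obstacle is the energy estimate with a large-potential, singular-coupled background that loses no powers of $t$. The analysis of resonances under (H1) is long but algebraic.
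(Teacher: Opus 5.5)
Your overall architecture matches the paper: matching across three regions, (H1) to keep the inner recursion free of logarithms, and a backward solution for the remainder. But the step you yourself call the main obstacle is not supplied, and the mechanism you indicate is not enough for the range of $\sigma$ in the statement.

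\textbf{The linear estimate.} Because (H1) is only assumed up to order $K$, the inner expansion stops at $N_{\rm in}=K$, so the error is not $O(t^N)$ with $N$ free. In the variables $\tau=\frac1{2\nu}t^{-2\nu}$, $y=\lambda x$, it decays only like $\tau^{-1-K}$, and like $\tau^{-1-\frac{K+\gamma}{2}}$ for the inner/self-similar matching error. The full range $\sigma<s_c+\frac{4\nu K}{1+2\nu}$ therefore needs a backward propagator bound whose loss is exactly the scaling exponent, $(s/\tau)^{\frac{1+2\nu}{4\nu}(\sigma-s_c)+\epsilon}$ (Proposition~\ref{prop linear est}), i.e.\ only a $t^{-\epsilon}$ loss in the original variables.

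In the $(\tau,y)$ frame the linearized flow is $i\partial_\tau f=-Hf-\frac{1+2\nu}{4\nu\tau}i\Lambda f+\frac{1}{2\nu\tau}\mathbf B(f)+\hat{\mathcal F}_1(f)$. Coercivity of $H$ only removes the $e^{C\tau}$ loss. Under the JL condition this coercivity is genuine positivity, $\langle L_\pm f,f\rangle\ge C\|\nabla f\|_{L^2}^2$, with no finite-rank defect. The two $\tau^{-1}$ terms are not integrable:
\begin{itemize}
\item Via $[H,\Lambda]=2H+2pQ^{2p-1}\Lambda Q\,\sigma_3M$, the scaling term gives the exact rate $\frac{(1+2\nu)(s_c-1)}{2\nu\tau}$ plus the wrong-sign localized term $-\frac{(1+2\nu)p}{2\nu\tau}\langle Q^{2p-1}\Lambda Q\,M\psi,\psi\rangle$.
\item The first inner correction $t^{2\nu}\chi_1$ gives $\frac{1}{2\nu\tau}\mathbf B$, where $\mathbf B$ is an $O(1)$ potential proportional to $\frac12+\nu$.
\end{itemize}
Bounded crudely by Hardy, each costs an extra power $\tau^{c\frac{1+2\nu}{\nu}}$ with $c$ of order one, i.e.\ effectively a fixed number of derivatives. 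The $\tau^{-1-K}$ error cannot absorb this near the top of the admissible $\sigma$-range.

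The missing ingredients are two corrections to the energy:
\begin{itemize}
\item A quadratic correction $-\frac1{2\nu\tau}\Re\langle\psi,\sigma_3\mathbf B(\psi)\rangle$ to the first-order energy. It cancels the leading $\mathbf B$ contribution, using $(\sigma_3\mathbf B)^*=\sigma_3\mathbf B$ and that $\chi_1$ is purely imaginary.
\item A localized Morawetz correction $-\frac\beta\tau\Im\langle\nabla\phi_A\cdot\nabla\psi,\sigma_3\psi\rangle$ with $\phi_A'=\chi_Ar^{1-\delta}$. Its time derivative produces the positive weighted form $K_A$ of Lemma~\ref{lem KA coercivity}, which dominates the localized terms up to $O(A^{-\delta})$ because $0\le Q^{2p-1}\Lambda Q\lesssim|y|^{-2}(1+|y|^{\delta})^{-1}$.
\end{itemize}
This structure is then carried to the energies $\langle H\psi_k,\sigma_3\psi_k\rangle$, $\psi_k=H^k\psi$, with $[H^k,\Lambda]$ and $H^k\mathbf B-(-1)^k\mathbf BH^k$ controlled by lower-order energies. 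Your ``$\|\nabla^\sigma\varepsilon\|^2$ corrected by lower-order terms adapted to $\lambda$'' contains none of this.

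\textbf{Part (ii).} Your argument is wrong. Since $Q(r)\sim c_\infty r^{-1/p}$, $\nabla^sQ\sim r^{-\frac1p-s}$ is square integrable at infinity only when $s>s_c$. The condition $s<s_c$ you quote is the one for local integrability at the origin. Thus $\|Q\|_{\dot H^s}=\infty$ for $1\le s<s_c$, and $\zeta$ is not in $\dot H^s$ either: it carries the tail $-c_\infty|x|^{-1/p}$ for $|x|\ge2\delta$, because $f_0$ is cut off there. Statement (ii) is a cancellation. Write
$$\psi=\lambda^{1/p}\Theta(|x|/\delta)Q(\lambda|x|)+\lambda^{1/p}\big[\chi^{ap}+\big(1-\Theta(\tfrac{\rho}{\lambda\delta})\big)Q\big](\lambda|x|)+\lambda^{1/p}f(\tau,\lambda x).$$
\begin{itemize}
\item The first term is uniformly bounded in $\dot H^s$, because $|x|^{-1/p}$ is locally $\dot H^s$ near the origin for $s<s_c$.
\item The second is bounded by the low-regularity estimate in Proposition~\ref{main prop}(i).
\item The third tends to zero.
\end{itemize}

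\textbf{Smaller points.}
\begin{itemize}
\item In (i) you must also show that the self-similar and remote parts of $\zeta$ are $o(\lambda^{s-s_c})$, not only the inner corrections.
\item The restriction $s<s_c+\frac{4\nu}{1+2\nu}$ in (iii) comes from the first inner correction, whose $\dot H^s$ contribution to $\zeta$ is of size $t^{2\nu-(\frac12+\nu)(s-s_c)}$.
\item The self-similar profiles are fully determined by matching as $y\to0$; there is no outgoing condition to impose at infinity. There they carry a non-oscillatory branch $y^{-2\mu_{m,n}^{l,j}}$, which is what matches $f_0$, together with an $e^{iy^2/4}$ branch coupled to a weaker $e^{-iy^2/4}$ branch.
\end{itemize}
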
   
   
\bigbreak

\noindent{\em Comments on Theorem \ref{main thm}.}
\bigbreak

\noindent{\em 1. On the assumptions.}
The Joseph--Lundgren condition
\begin{equation*}
    2p+1>p_{\mathrm{JL}}(d)
\end{equation*}
provides real characteristic exponents, a nonoscillatory expansion of $Q$, and the Hardy coercivity of the linearized operators $L_{\pm}$. These properties supply both the asymptotic structure needed for matching and the energy control needed for the remainder equation. The restriction $p\in\mathbb N^{\ast}$ makes
\begin{equation*}
    F(u)=|u|^{2p}u
\end{equation*}
polynomial in $u$ and $\bar{u}$. This allows the nonlinear powers and oscillatory phases to be tracked explicitly throughout the construction.

Assumption $(\mathrm{H1})$ excludes resonances in the inner construction up to order $K$. Since the characteristic exponents at infinity are $\lambda_\pm$ for $L_+$ and $0,\lambda_++\lambda_-$ for $L_-$, the assumption prevents the exponents arising at each order of the inner approximation from coinciding with the corresponding characteristic exponents. Resonances in the self-similar and outer constructions are treated separately, and the resulting logarithmic terms are included in the corresponding expansions.

\medskip

\noindent{\em 2. Effects of the slow decay of the ground state.}
The algebraic tail of $Q$ affects the construction of the approximate solution in all three regions. In the inner region, the different asymptotic behaviors
$$\Lambda Q(r)\sim r^{-\frac{1}{p}+\lambda_{+}},\qquad Q(r)\sim r^{-\frac{1}{p}},\qquad\text{as $r\rightarrow \infty$},$$
imply that the real and imaginary components of the inner iterative system have different asymptotic orders at infinity. This asymmetry is accounted for at each step by choosing the initial conditions to eliminate the unwanted homogeneous components. The resulting expansion of the approximate solution contains the full asymptotic structure of $Q$ and supplies the matching conditions for the self-similar construction.

In the self-similar region, the algebraic tail of $Q$ leads to an expansion around the singular homogeneous profile $c_\infty r^{-\frac1p}$, with linearized operator
\begin{equation*}
    \mathcal L(f,\overline f) =\left(-\Delta+\frac{2}{p}y^{-1}\partial_y-pc_\infty^{2p}y^{-2}+\frac{i}{2}y\partial_y\right)f-pc_\infty^{2p}y^{-2}\overline {f}.
\end{equation*}
The conjugate coupling links the two oscillatory branches: the positive phase branch generates a negative phase component whose amplitude is four powers of $y$ lower. This weaker branch must be considered because it enters the subsequent nonlinear interactions and outer matching.

Finally, the slow decay of $Q$ changes the outer expansion. For every \(r>0\),
\begin{equation*}
    \lambda(t)^{\frac1p}Q(\lambda(t)r)\longrightarrow c_\infty r^{-\frac1p}\qquad\text{as }t\to0.
\end{equation*}
The nonzero limit makes the nonlinear contribution of the nonoscillatory outer profile remain at leading order and requires the corrections $t^kf_k$. The outer expansion therefore combines this nonlinear contribution with both oscillatory branches and the additional oscillatory terms generated by their interactions.

\medskip

\noindent{\em 3. Transfer of asymptotic information across three scales.}
The construction of the approximate solution uses the three space variables
\begin{equation*}
    \rho=\lambda(t)r,\qquad y=t^{-\frac{1}{2}}r,\qquad  r,
\end{equation*}
corresponding respectively to the inner, self-similar, and remote regions. Matching transfers the asymptotic information carried by $Q$ from the concentrating core to the oscillatory profiles and the physical scale radiation.

To our knowledge, this is the first construction of such supercritical blowup regimes to treat the slow algebraic tail of $Q$, both oscillatory branches, and the first-order linear correction by combining matched asymptotic expansions with energy estimates. 

Gluing the local approximations by smooth cutoffs gives a global approximate solution. With $N_{in}=K$ fixed by $(\mathrm{H1})$, we choose $N_{ss}$ and $N_{out}$ sufficiently large so that, for every fixed $\frac{d}{2}<s<s_c+2\nu|\lambda_+|$, the error $R^{ap}$ for the approximate solution satisfies
\begin{equation*}%\label{global residual derivative estimate}
\begin{aligned}
\big\|R^{ap}(t)\big\|_{\dot{H}^s}\leq C_s\Big(&t^{2\nu(K+1)-(\frac12+\nu)(2+s-s_c)}\\
&+t^{2K\epsilon_1+(\nu-\epsilon_1)|\lambda_+|-1+(\frac12+\epsilon_1)(s_c-s)}
\Big),\quad 0<t\leq T.
\end{aligned}
\end{equation*}
This finite order estimate provides the error control used in the construction of the exact solution.

\medskip

\noindent{\em 4. The linear estimate for the remainder equation.}
The scaling $\lambda(t)$ introduces the term
\begin{equation*}
    -\frac{(1+2\nu)i}{4\nu\tau}(\frac{1}{p}+\rho\partial_{\rho}) f,\qquad\tau=\frac1{2\nu}t^{-2\nu},
 \end{equation*}
into the remainder equation. The approximate solution also contributes a linear perturbation whose leading part is determined by the first inner correction:
$$\mathcal F_1(f)=\frac1{2\nu\tau}\mathbf B(f)+\hat{\mathcal F}_1(f).$$
Both effects produce $\tau^{-1}$ contributions that are not controlled by the coercivity of the linearized operators alone. We introduce a quadratic correction to the first-order energy that cancels the leading contribution of $\frac1{2\nu\tau}\mathbf B(f)$. At the same time, the Morawetz correction provides the positive weighted control needed to cancel the localized contributions generated by the scaling term. Together with the higher-order energies, these corrections yield a backward propagator estimate with a polynomial loss.

\medskip With $N_{in}=K$ fixed, suitable choices of $\epsilon_1$, $N_{ss}$, and $N_{out}$ ensure that the error estimate of the approximate solution remains integrable after the polynomial loss in the linear estimate. The contraction mapping principle then yields a unique exact solution with the prescribed asymptotic behavior.

\medskip
\noindent{\em 5. On the role of the topology and the limiting radiation.}
The behavior of the blowup solution is fundamentally different on the two sides of the scaling regularity. For $s_c<s<\sigma$, the ground state belongs
to $\dot H^s$, and the radiation satisfies
\begin{equation*}
    \lambda(t)^{s_c-s}\|\zeta(t)\|_{\dot H^s}\to0.
\end{equation*}
The concentrating ground state therefore determines the leading behavior of the $\dot H^s$ norm:
\begin{equation*}
    \|\psi(t)\|_{\dot H^s}=\bigl(\|Q\|_{\dot H^s}+o(1)\bigr)t^{-\left(\frac12+\nu\right)(s-s_c)}.
\end{equation*}
For $1\leq s<s_c$, the core and the radiation have nonintegrable tails separately. Their cancellation in the full solution is therefore essential to the boundedness of $\|\psi(t)\|_{\dot H^s}$.

This cancellation also determines the limiting radiation. The outer approximation converges to $f_0$, while for every fixed $r>0$, the concentrating ground state satisfies
\begin{equation*}
    \lambda(t)^{\frac1p}Q(\lambda(t)r)\longrightarrow c_\infty r^{-\frac1p},\quad t\rightarrow 0.
\end{equation*}
By the decomposition \eqref{form psi}, the preceding limits give $\zeta^{\ast}(r)=f_0(r)-c_\infty r^{-\frac1p}$. The matching conditions at the origin give
\begin{equation*}
    \zeta^\ast(r)=O\left(r^{-\frac1p+2\nu|\lambda_+|}\right),
    \qquad r\to0,
\end{equation*}
while the cutoff of $f_0$ implies
\begin{equation*}
    \zeta^\ast(r)\sim-c_\infty r^{-\frac1p},
    \qquad r\to\infty.
\end{equation*}
These two asymptotics yield the natural regularity range
\begin{equation*}
   \zeta^\ast\in\dot H^s(\mathbb R^d),\qquad s_c<s<s_c+2\nu|\lambda_+|
\end{equation*}
of the limiting radiation. The strong convergence \eqref{radiation convergence main} proved here holds in the range
$$s_c<s\leq\sigma,\qquad s<s_c+\frac{4\nu}{1+2\nu}.$$

\medskip

\noindent{\em 6. Other type II blow-up regimes for supercritical NLS.}\,
Merle--Rapha\"{e}l--Rodnianski \cite{MerleRaphaelRodnianski15} construct quantized type II blowup regimes near $Q$ on finite codimensional manifolds of initial data. The present theorem establishes existence with continuously prescribed rates: for every $\nu>\nu_0(d,p)$ and every admissible integer $s$, we construct a radial finite time blowup solution in $C\bigl((0, T],\allowbreak \dot H^1(\mathbb R^d)\cap\allowbreak \dot H^s(\mathbb R^d)\bigr)$ with
\begin{equation*}
\lambda(t)=t^{-\frac12-\nu},
\end{equation*}
and determine its asymptotic structure from the concentrating core through the self-similar oscillations to the limiting radiation in physical variables.

\bigbreak

\medskip

\subsection{Strategy of the proof.} We now outline the overall strategy, which is implemented in detail throughout Section \ref{sec 2}- Section \ref{sec 6}. The proof of Theorem \ref{main thm} proceeds in two steps. The first step is a construction of a sufficiently good approximate solution of \eqref{NLS} that has the form \eqref{form psi}, \eqref{high singularity}, in the spirit of the work of Perelman \cite{Perelman14}. The second step is to construct the exact solution by solving the terminal Duhamel equation for a small remainder tending to zero as $\tau\to+\infty$. The main tools are energy estimates for the linearized evolution, proved in Section \ref{sec 6}, which yield a backward propagator bound and allow us to close the contraction argument.

\medskip To this end, we start by constructing a radial approximate solution and give the explicit oscillatory behavior in the self-similar region. 
The core of this step is to prove the following Proposition: 

\begin{proposition}\label{main prop}
{\sl Let $d\geq11$ and $p\in\mathbb N^{\ast}$ satisfy $2p+1>p_{JL}$, and assume $(\mathrm{H1})$. Fix $\nu>0$ and an integer $\sigma$ satisfying
$$\frac {d}{2}<\sigma<s_c+2\nu|\lambda_{+}|.$$
Let
\begin{equation*}
0<\epsilon_1<\nu, \qquad 0<\epsilon_2<\min\big\{\nu, \frac12\big\}, \qquad N_{\rm in}=K.
\end{equation*}
There exist sufficiently large integers
\begin{equation*}
N_{\rm ss}\geq
\max\Big\{2,\big\lfloor\frac{K-1}{2}\big\rfloor\Big\}, \qquad N_{\rm out}\geq 2,
\end{equation*}
and $\delta_0>0$ such that, for every $0<\delta\leq\delta_0$, there exist $T>0$ and a radial approximate solution
\begin{equation*}
    \psi^{ap}\in C^\infty\bigl((0,T]\times\mathbb R^d;\mathbb C\bigr)
\end{equation*}
to \eqref{NLS} with the following properties.

\medskip
\noindent
{\rm (i)} The approximate solution has the form
\begin{equation*}%\label{global approximate form}
    \psi^{ap}(t,x) =\lambda^{\frac1p}(t)\bigl(Q(\rho)+\chi^{ap}(t,\rho)\bigr), \qquad \rho=\lambda(t)|x|,\quad \lambda(t)=t^{-\frac{1}{2}-\nu}.
\end{equation*}
For all integers $k\geq 0$ and $0\leq l\leq\sigma$,
\begin{equation*}%\label{global Linfty estimate}
\bigl\|\langle\rho\rangle^{-k}\partial_\rho^l\chi^{ap}(t)\bigr\|_{L^\infty(\mathbb R^d)}\leq C_{k,l}\big(t^{\frac{\nu}{p}}+t^{(\nu-\epsilon_1)(\frac1p+|\lambda_+|)}\big),
\end{equation*}
and
\begin{equation*}
\begin{aligned}
&\Big\|\nabla_\rho^l\big(\chi^{ap}(t,\rho)+\Big(1-\Theta\big(\frac{\rho}{\lambda(t)\delta}\big)\big)Q(\rho)\Big)\Big\|_{L^2(\mathbb R^d)}
\leq Ct^{-(\frac12+\nu)(s_c-l)},&& 1\leq l<s_c,\\%\label{global low Sobolev estimate}\\
&\Big\|\nabla_\rho^l\chi^{ap}(t)\Big\|_{L^2(\mathbb R^d)}\leq C\Big(t^{2\nu}+t^{(1-2\epsilon_2)\nu|\lambda_+|}+t^{(\nu+\epsilon_2)(l-s_c)}\Big),&&s_c<l\leq\sigma.%\label{global high Sobolev estimate}
\end{aligned}
\end{equation*}

\medskip
\noindent
{\rm (ii)} The corresponding error 
$$R^{ap}=-i\partial_t \psi^{ap}-\Delta \psi^{ap}-|\psi^{ap}|^{2p}\psi^{ap}$$
satisfies, for every integer $1\leq l\leq\sigma$,
\begin{equation*}%\label{global residual derivative estimate}
\begin{aligned}
\big\|\nabla_x^{l}R^{ap}(t)\big\|_{L^2(\mathbb R^d)}\leq C_l\Big(&t^{2\nu(K+1)-(\frac12+\nu)(l+2-s_c)}\\
&+t^{2K\epsilon_1+(\nu-\epsilon_1)|\lambda_+|-1+(\frac12+\epsilon_1)(s_c-l)}
\Big),\quad 0<t\leq T.
\end{aligned}
\end{equation*}
In particular,
\begin{equation*}%\label{whole error est}
\begin{aligned}
\big\|R^{ap}(t)\big\|_{\dot H^1\cap\dot H^\sigma(\mathbb R^d)}\leq C\Big(&t^{2\nu(K+1)-(\frac12+\nu)(\sigma+2-s_c)}\\
&+t^{2K\epsilon_1+(\nu-\epsilon_1)|\lambda_+|-1+(\frac12+\epsilon_1)(s_c-\sigma)}\Big),\quad 0<t\leq T.
\end{aligned}
\end{equation*}}
\end{proposition}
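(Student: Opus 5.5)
We construct $\psi^{ap}$ by matched asymptotic expansions in the three regions named in the comments above: the inner variable $\rho=\lambda(t)r$, the self-similar variable $y=t^{-1/2}r$, and the remote variable $r$. The pieces are then glued with smooth cutoffs at $\rho\sim t^{-\epsilon_1}\cdot t^{-\nu}$-type scales (equivalently $y\sim t^{\epsilon_1}$), and at $y\sim t^{-\epsilon_2}$ (equivalently $r\sim t^{1/2-\epsilon_2}$). The outermost cutoff $\Theta(r/\delta)$ localizes the radiation.

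\textbf{Inner region.} Write $\psi=\lambda^{1/p}(Q+\chi)(\tau,\rho)$ and expand
\[
\chi=\sum_{k=1}^{K}t^{2\nu k}\bigl(\alpha_k(\rho)+i\beta_k(\rho)\bigr),
\]
with powers of $t^{2\nu}=\lambda^{-2}t^{-1}$ coming from the scaling term $\frac{1+2\nu}{2}t^{-1}\Lambda$ in the rescaled equation. At each order we must solve $L_+\alpha_k=F_k$ and $L_-\beta_k=G_k$, where $F_k,G_k$ are polynomial in the previously found profiles (here $p\in\mathbb N^\ast$ is used). We solve these by variation of constants from the origin, using the fundamental solutions of $L_\pm$. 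These behave like $r^{-1/p+\lambda_\pm}$ for $L_+$, and like $r^{-1/p}$ and $r^{-1/p+\lambda_++\lambda_-}$ for $L_-$. We show inductively that $\alpha_k,\beta_k$ admit expansions at infinity of the form
\[
\rho^{-1/p}\sum \rho^{2j+\lambda_+m+\lambda_-n}
\]
with polynomially bounded coefficients and no logarithms. Assumption (H1) is exactly what prevents the forcing exponents from hitting the characteristic exponents for $k\le K$. The initial data at the origin are chosen to kill the unwanted homogeneous components, which is the asymmetry between $\Lambda Q$ and $Q$ described in the comments.

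\textbf{Self-similar region.} Re-expanding the inner expansion in $y=\rho\,t^{\nu}$ gives matching data. We look for
\[
\psi=t^{-1/(2p)}\sum_{k}t^{\nu|\lambda_+|\cdot(\cdot)}\,w_{k}(y)
\]
with $w$ expanded around $c_\infty y^{-1/p}$. At each order we solve $\mathcal L(f,\bar f)=$ source. The homogeneous solutions at $y\to0$ are fixed by matching, and their behavior as $y\to\infty$ splits into a nonoscillatory part and the two branches $e^{\pm iy^2/4}$, the negative-phase branch lying four powers lower. Resonant exponents produce logarithms, which we carry along. Truncating at $N_{ss}\ge\lfloor (K-1)/2\rfloor$ ensures the expansion is accurate enough to meet the inner one on the overlap.

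\textbf{Remote region.} Write
\[
\psi=\sum_{k\le N_{out}}t^k f_k(r)+\text{oscillatory terms}.
\]
Here $f_0$ is $c_\infty r^{-1/p}$ plus a regular perturbation determined from the $y\to\infty$ data, cut off by $\Theta(r/\delta)$. The $f_k$ solve ODE recursions in which the nonlinearity of $f_0$ stays at leading order because $f_0\not\to0$ as $r\to0$. The oscillatory parts are $t^{a}e^{\pm ir^2/(4t)}$ times amplitudes obtained by transport equations together with their nonlinear interaction harmonics. The smallness $\delta\le\delta_0$ controls the nonlinear terms of the outer profile on its support.

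\textbf{Error estimates.} Gluing produces three error sources:
\begin{itemize}
\item truncation in the inner region, giving the term $t^{2\nu(K+1)-(1/2+\nu)(l+2-s_c)}$ after converting $\dot H^l_\rho$ to $\dot H^l_x$;
\item cutoff commutators on the overlap $\rho\sim t^{-\nu+\epsilon_1}$, where the mismatch is the next unmatched term $\sim t^{2K\epsilon_1}(t^{\nu-\epsilon_1})^{|\lambda_+|}$, giving the second term;
\item truncation of the self-similar and outer expansions, which can be made smaller than either by taking $N_{ss},N_{out}$ large.
\end{itemize}
The $L^\infty$ and Sobolev bounds in (i) follow from the same expansions. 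In the low-regularity bound for $1\le l<s_c$, the tail of $Q$ cancels against the $-c_\infty r^{-1/p}$ part of $f_0$, which is why $(1-\Theta)Q$ is added.

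\textbf{Main obstacle.} The hardest step is the inner induction. One has to track the exact set of exponents generated at order $k$, including the cross terms $\lambda_+m+\lambda_-n$ from $Q$'s expansion and the coupling between real and imaginary parts, and verify that (H1) excludes every resonance up to $K$. Alongside this, one must confirm that the matching data can be fed consistently into the self-similar problem for $\mathcal L$.
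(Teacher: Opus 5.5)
Your outline is correct and follows the paper's route. The paper uses the same ingredients you describe. There is the inner expansion $\sum_{k\le K}t^{2\nu k}\chi_k$, solved through $L_\pm$ with the constants $C_k$ removing the $\Theta_\pm$ components under $(\mathrm{H1})$. The self-similar profiles for $\mathcal L$ around $c_\infty y^{-1/p}$ are fixed by matching as $y\to0$ and carry both oscillatory branches at infinity. The outer expansion is built on the cut-off profile $f_0$, and the pieces are glued by cutoffs at $|x|\sim t^{1/2+\epsilon_1}$ and $|x|\sim t^{1/2-\epsilon_2}$, with $N_{\rm ss},N_{\rm out}$ taken large.

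One bookkeeping correction: the inner truncation remainder $\sum_{k>K}t^{2\nu k}G_k$ already produces both terms in (ii), the first from $\rho\lesssim1$ and the second from the edge $\rho\sim t^{-(\nu-\epsilon_1)}$. The overlap commutator and the nonlinear gluing term, which you did not list, are of the same or smaller size, so the final bound is unchanged.
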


\medskip The construction of $\psi^{ap}(t)$ will be achieved by considering separately the three regions that correspond to three different space scales: the inner region corresponding to $ |x| \leq t^ {1/2+ \epsilon_1}$,  the self-similar region defined by $ \frac 1 { 10} \, t^ { 1/2+\epsilon_1} \leq |x| \leq 10 \, t^ {1/2- \epsilon_2}$, and the remote region where $ |x|\geq t^ {1/2-\epsilon_2}$. Here $0< \epsilon_1< \nu$ and $0< \epsilon_2<\min\{\nu, \frac12\}$ are chosen later. The inner region is characterized as the zone of blow-up concentration, where the solution is viewed as a perturbation of the profile $t^{-\frac{1}{p}(1/2+\nu)} Q\big(\frac{x}{t^{1/2+\nu}}\big)$. In the self-similar region, the profile is uniquely determined by the matching conditions coming from the inner region. In the remote region, the profile remains essentially a free parameter $\delta$ of the construction.

\medskip 
 In Section \ref{sec 2}, we investigate the equation in the inner region $ |x| \leq t^ {1/2+ \epsilon_1}$. In this regime, we seek an approximate solution of the form
\begin{equation*}%\label{strategy inner ansatz}
    \psi_{\mathrm{in}}^{ap}(t,x) =t^{-\frac{1}{p}(\frac{1}{2}+\nu)}\Big( Q(\rho)+\sum_{k=1}^{K}t^{2\nu k}\chi_{k}(\rho)\Big),\quad \rho=t^{-\frac{1}{2}-\nu}|x|.
\end{equation*}
The functions $\chi_k$ for  $1\leq k \leq K$ are determined recursively by solving the system:
$$ \left\{\begin{array}{l} 
H\vec{\chi}_{k}=\mathcal{D}_{k}(\vec{\chi}_{1},\cdots,\vec{\chi}_{k-1}), \\
\chi_k(0)= i^kC_k ,\qquad\chi_k'(0)= 0,
\end{array}\right.$$
where the real constants $C_k$ are selected in Subsection \ref{subsec 2.2} to eliminate the unwanted homogeneous components at infinity. $H$ is the linearized operator around the ground state, defined by:
\begin{equation*}%\label{linQ}
H=-\Delta\sigma_3-(p+1)Q^{2p}\sigma_3-pQ^{2p}\sigma_3\sigma_1,
 \end{equation*}
with the Pauli matrices
\begin{equation*}
\  \sigma_1=\left(\begin{matrix}
0 &1\\
1 &0
\end{matrix}\right), \quad\sigma_3=\left(\begin{matrix}
1 &0\\
0 &-1
\end{matrix}\right).
\end{equation*}
As established in Subsection \ref{subsec 2.2}, the expansions of $Q$ and the fundamental solutions of $L_\pm$ yield termwise differentiable expansions of $\chi_k$ at infinity. For $1\leq k\leq K$,
\begin{equation*}
\chi_k(\rho)=O\bigl(\rho^{-\frac1p+2k+\lambda_+}\bigr), \qquad \rho\to+\infty.
\end{equation*}
Under $(\mathrm{H1})$, these expansions contain no logarithmic terms. The constants $C_k$ eliminate the unwanted homogeneous components, and the resulting profiles determine the matching data for the self-similar region. We use this approximation for $\rho\leq10t^{-(\nu-\epsilon_1)}$.

\medskip The aim of Section  \ref{sec 3} is to extend the approximate solution built in Section \ref{sec 2} to the self-similar region $ \frac {1}{10} t^ { 1/2+\epsilon_1} \leq |x| \leq 10 t^  {1/2-\epsilon_2}$. Taking into account the matching conditions from the inner region,  we shall look for an  approximate solution of the form:
\begin{align*}
\psi_{ss}^ {ap}( t, x) = t^{-\frac{1}{2p}}y^{-\frac{1}{p}} \big(c_{\infty}+v_{ss}^{ap}(t,y)\big),\,\,\,\,\,\,y=t^{-\frac{1}{2}}|x|,
\end{align*}
where $\alpha_{m,n}=|\lambda_{+}|m+|\lambda_{-}|n$ and
\begin{equation}\label{form ss intro}
 \begin{aligned}
v_{ss}^{ap}(t,y)=\sum\limits_{m+n\geq 1}&t^{\nu\alpha_{m,n}} A_{m,n}^{0,0}(y)+\sum\limits_{\substack{m,n\geq 1\\l\geq 0}}t^{\nu(\alpha_{m,n}+2+4l)}A_{m,n}^{l,1}(y)\\
&+\sum\limits_{\substack{m\geq 0,n\geq 1\\l\geq 0}}t^{\nu(\alpha_{m,n}+4+4l)}A_{m,n}^{l,2}(y).
\end{aligned}
\end{equation}
The expansion is organized by the profiles $A_{m,n}^{l,j}$, with indices in $\mathcal S$ defined in \eqref{def admissible}. They satisfy the recursive system
\begin{equation*}
\mathcal L(A_{m,n}^{l,j},\overline{A_{m,n}^{l,j}})
+i\mu_{m,n}^{l,j}A_{m,n}^{l,j}=g_{m,n}^{l,j},
\end{equation*}
where $\mu_{m,n}^{l,j}=-\nu(\alpha_{m,n}+2j+4l)$. The expansions at the origin match the inner profiles through $k+2l+j\leq K$. Beyond this range, logarithmic terms are allowed, with the homogeneous constants fixed as in Lemma \ref{lem ss 0}.

At infinity, the homogeneous solutions have a nonoscillatory branch of order $y^{-2\mu_{m,n}^{l,j}}$ and two coupled oscillatory branches
with respective leading orders
\begin{equation*}
e^{\frac{i}{4}y^2}y^{-2s_c+2\mu_{m,n}^{l,j}}, \qquad e^{-\frac{i}{4}y^2}y^{-2s_c+2\mu_{m,n}^{l,j}-4}.
\end{equation*}
Nonlinear interactions generate additional phases, whose orders are given in Lemma \ref{lem ss infinity}. These expansions determine
the outer matching data. We use the self-similar approximation for $10 t^{-\epsilon_1}\leq y\leq \frac{1}{10}t^{-\epsilon_2}$, with $0<\epsilon_2<\min\{\nu,\frac12\}$.

\medskip
In Section \ref{sec 4}, we construct an approximate solution $\psi_{out}^ {ap}$ that extends $\psi_{ss}^ {ap}$ to the whole space by solving the equation \eqref{NLS} with adapted Cauchy data in the remote region $|x|\geq t^ {1/2-\epsilon_2}$. 
Let $\Theta$ be a radial smooth cutoff function on $\mathbb{R}^d$ that equals $1$ on the unit ball and vanishes outside the ball of radius $2$. For a sufficiently small $\delta > 0$, we define $\Theta_\delta(r) = \Theta_0(r/\delta)$. With $\Omega$ defined
in \eqref{def outer Omega}, based on \eqref{form ss intro} and the expansion from the self-similar region, we seek an approximate solution in the following form:
\begin{equation*}%\label{third}
\begin{aligned}
 \psi_{out}^ {ap}(t, x)=& f_0(r)+\sum_{k=1}^{N_{out}}t^kf_k(r)+e^{\frac{ir^2}{4t}}\sum_{k=0}^{N_{out}}t^{s_c+k}A_{1,k}^{+}(t,r)+e^{-\frac{ir^2}{4t}}\sum_{k=0}^{N_{out}}t^{s_c+k+2}A_{1,k}^{-}(t,r)\\
+&e^{\frac{ir^2}{4t}}\sum_{\substack{2\leq q\leq 2p+1\\q-1\in2\mathbb{Z}}}\sum_{\substack{0\leq h\leq q\\ \mathcal{A}_{q,1,h}\neq \emptyset}}\sum_{k=0}^{N_{out}}t^{qs_c+2h+k}A_{q,1,h,k}(t,r)\\
&+\sum_{\substack{(q,s,h)\in\Omega\\2\leq q\leq N_{\rm out}}}e^{\frac{isr^2}{4t}}\sum_{k=0}^{N_{out}}t^{qs_c+2h+k}A_{q,s,h,k}(t,r),\qquad r=|x|,
\end{aligned}
\end{equation*}
where 
\begin{equation*}%\label{eq f0}
f_0(r)=\Theta(\frac{r}{\delta})r^{-\frac{1}{p}}\Big(c_{\infty}+\sum_{\substack{(m,n,l,j)\in\mathcal S\\m,n,l\leq N_{\rm ss}}}\beta_{m,n}^{l,j}r^{2\nu(\alpha_{m,n}+2j+4l)}\Big),
\end{equation*}
is determined by the matching conditions from the self-similar region and 
\begin{equation*}
\begin{aligned}
A_{1,k}^{\pm}(t, r)=&\sum_{(m,n,l,j)\in\mathcal{S}}t^{2\nu(\alpha_{m,n}+2j+4l)}g_{m,n}^{l,j,k,\pm}(r),\\
A_{q,s,h,k}(t,r)=&\sum_{(m,n,l,j)\in\mathcal{S}_q}t^{2\nu(\alpha_{m,n}+2j+4l)}g_{q,s,h,m,n}^{l,j,k}(r).
\end{aligned}
\end{equation*}
A specific feature of the energy supercritical problem is the nonvanishing trace
\begin{equation*}
    \lambda^{\frac1p}(t)Q(\lambda(t)r)
    \longrightarrow
    c_\infty r^{-\frac1p}
\end{equation*}
on fixed physical scales. Consequently, the nonlinear contribution of the non-oscillatory part survives at leading order and requires the corrections \(t^k f_k\). The outer expansion also retains both oscillatory branches and the higher harmonics generated by their interactions. The recursive systems treat $s=0$, $s=1$, and $s\notin\{0,1\}$ separately. Matching fixes the homogeneous integration constants, and resonant integrations produce the logarithmic terms described in Subsection \ref{subsec 4.2}.

\medskip In Section \ref{sec 5} we complete the proof of Proposition \ref{main prop} by gluing the local approximations:
\begin{equation*}%\label{approximate psi intro}
\begin{aligned}
 \psi^{ap}(t,x):=& \Theta \big( t^{-\frac{1}{2}- \epsilon_1}r\big)\psi_{in}^ {ap}(t,x) +\big(1-\Theta \big( t^{-\frac{1}{2}- \epsilon_1}r\big)\big)\Theta \big( t^{-\frac{1}{2}+ \epsilon_2}r\big)\psi_{ss}^{ap}(t,x)\\
&+\big(1-\Theta \big( t^{-\frac{1}{2}+ \epsilon_2}r\big)\big)\psi_{out}^{ap}(t,x).
\end{aligned}
\end{equation*}
The local estimates and matching bounds in Section \ref{sec 2}- Section \ref{sec 4} yield the required global estimates. With $N_{in}=K$ fixed, we choose $N_{ss}$ and $N_{out}$ sufficiently large,
\begin{equation*}
\big\|R^{ap}(t)\big\|_{\dot{H}^l} \leq C_l\big(t^{2\nu(K+1)-(\frac12+\nu)(l+2-s_c)}+t^{2K\epsilon_1+(\nu-\epsilon_1)|\lambda_+|-1+(\frac12+\epsilon_1)(s_c-l)}\big),
\end{equation*}
for every fixed $\frac d2<s<s_c+2\nu|\lambda_+|$. Together with the bounds on $\psi^{ap}$, this completes the proof of Proposition \ref{main prop}.

\medskip Finally, Section \ref{sec 6} is devoted to lifting the approximate solution to an exact one. For $\nu$ and $\sigma$ as in Theorem \ref{main thm}, we write
\begin{align*}%\label{real solution intro}
\psi(t,x)=\psi^{ap}(t,x)+\lambda^{\frac{1}{p}}(t)f(\tau,y),\qquad y=\lambda(t)x,\,\,\,\,\tau=\frac{1}{2\nu}t^{-2\nu},
\end{align*}
where $\psi^{ap}$ is the approximate solution constructed in Sections \ref{sec 2}--\ref{sec 5}. 
%Substituting \eqref{approximate psi intro} and \eqref{real solution intro} into \eqref{NLS} yields an equation for the remainder $f$. Its analysis is based on suitable energy estimates for the linear equation
The associated linear equation is
\begin{equation*}
    i\partial_\tau f= -Hf-\frac{1+2\nu}{4\nu\tau}i\Lambda f+\mathcal F_1(f),
\end{equation*}
where $\mathcal F_1$ is the linear perturbation generated by the approximate solution. The assumptions on $d$ and $p$ imply the coercivity of $H$. A quadratic correction to the first-order energy cancels the leading linear contribution generated by $t^{2\nu}\chi_1$. Together with localized Morawetz corrections and higher-order energies, this yields the
backward propagator estimate in Proposition \ref{prop linear est}:
\begin{equation*}
    \|U(\tau,s)f\|_{\dot H^1\cap\dot H^\sigma} \lesssim\left(\frac{s}{\tau}\right)^{\frac{1+2\nu}{4\nu}(\sigma-s_c)+\epsilon}\|f\|_{\dot H^1\cap\dot H^\sigma},\qquad 0<\tau\leq s,
\end{equation*}
for sufficiently small $\epsilon>0$. The equation for $f$ is then written in the integral form
\begin{equation*}
    f(\tau) = i\int_\tau^\infty U(\tau,s) \big(\mathcal F_2(f)(s)+r(s)\big)\ud s,
\end{equation*}
where $\mathcal F_2$ is at least quadratic in $f$, and $r$ is the renormalized error of the approximate solution. We solve this equation in the weighted space
\begin{equation*}
X:=\Big\{f\in C([\tau_0,\infty),\,\dot{H}^{1}\cap\dot{H}^\sigma): \sup_{\tau\geq\tau_0}\tau^{\gamma}\|f(\tau)\|_{\dot{H}^1\cap\dot{H}^{\sigma}}<\infty\Big\},
\end{equation*}
where $\max\{1,\frac{1+2\nu}{4\nu}(\sigma-s_c)+\epsilon\}<\gamma<K$. Choosing $\epsilon_1$ as in \eqref{range of epsilon1} makes the renormalized error integrable with this weight after the polynomial loss in the linear estimate. For sufficiently large $\tau_0$, the integral operator is a contraction on a suitable closed ball of $X$. It therefore yields a unique remainder $f$, and hence a unique exact solution in the prescribed asymptotic class.

 \subsection{Acknowledgments} I would like to thank my advisor Lifeng Zhao for providing many invaluable suggestions. X. Li is supported
by National Natural Science Foundation of China under contract No. 12271497.

  \section{Approximate solution in the inner region}\label{sec 2}
\medbreak
\subsection{General scheme of the construction of the approximate solution in the inner region.} \label{subsec 2.1}
Define
\begin{align*}%\label{def inner region}
\mathcal{S}_{in}=\{(t,x)|0\leq |x|\leq 10t^{\frac{1}{2}+\epsilon_1}\},
\end{align*}
(where $0<\epsilon_1<\nu$ is a positive real number to be fixed later). In this section, we construct a family of radial approximate solutions $\psi_{in}^{ap}$ to \eqref{NLS} as a perturbation of the profile $\lambda^{\frac{1}{p}}(t)Q(\lambda(t)x)$. 
\medbreak
Writing 
\begin{align}\label{inner decomposition}
\psi(t,x)=\lambda^{\frac{1}{p}}(t)\big(Q(\rho)+\chi(t,\rho)\big),\quad \rho=\lambda(t)|x|,%=\lambda^{\frac{1}{p}}(t)(Q(\rho)+\chi(\rho,t)),
\end{align}
and $\lambda(t)=t^{-\frac{1}{2}-\nu}$. Performing a straightforward computation, we obtain
\begin{align}\label{eq of U in}
it^{1+2\nu}\chi_t=-\Delta \chi+(\frac{1}{2}+\nu)i t^{2\nu}\big(\Lambda Q+\Lambda \chi\big)+Q^{2p+1}-|Q+\chi|^{2p}(Q+\chi)
\end{align}
where the operator $\Lambda =\frac{1}{p}+\rho\cdot\nabla_{\rho}$.

\medskip In what follows, we seek a solution $\chi$ to \eqref{eq of U in} in the form
\begin{equation}\label{ansatz of chi}
\begin{aligned}
\chi(t,\rho)=\sum\limits_{k\geq 1}t^{2\nu k}\chi_k(\rho).
\end{aligned}
\end{equation}
By inserting this ansatz into \eqref{eq of U in}, we deduce the following system of equations for the vector $\vec{\chi}_k=\left(\begin{array}{l}
\chi_k\\
\overline{\chi_k}
\end{array}\right)$ with $k\geq 1$,
\begin{equation}\label{eq of chi k in}
\begin{aligned}
0=t^{2\nu}(H\vec{\chi}_1+\mathcal{N}_0)+\sum\limits_{k\geq 2}t^{2\nu k}\big(H\vec{\chi}_k+\mathcal{N}_1(\chi)\big)+\mathcal{N}_2(\chi)
\end{aligned}
\end{equation}
where
\begin{align*}
&H=-\Delta\sigma_3-(p+1)Q^{2p}\sigma_3-pQ^{2p}\sigma_3\sigma_1, \,\,\,
\sigma_1=\left(\begin{array}{cc}
0 & 1\\
1 & 0
\end{array} \right),\,\,
\sigma_3=\left(\begin{array}{cc}
1 & 0\\
0 & -1
\end{array} \right),\\
&\mathcal{N}_i(\chi)=\left(\begin{array}{l}
N_i(\chi)\\
-\overline{N_i(\chi)}
\end{array}\right),\,\,\,i=0,1,2,\\
&N_0=(\frac{1}{2}+\nu)i\Lambda Q,\quad N_1(\chi)=(\frac{1}{2}+\nu)i\Lambda\chi_{k-1}-2\nu i(k-1)\chi_{k-1},
\end{align*}
and
\begin{align*}
&N_2(\chi)=-|Q+\chi|^{2p}(Q+\chi)+Q^{2p+1}+(p+1)Q^{2p}\chi+pQ^{2p}\overline{\chi}.
\end{align*}

\medskip Therefore, we solve the following recurrent equations for $k\geq 1$,
\begin{equation}\label{chi ab system}
\begin{aligned}
H\vec{\chi}_{k}=\mathcal{D}_{k}(\vec{\chi}_{1},\cdots,\vec{\chi}_{k-1}),\quad \mathcal{D}_{k}=\left(\begin{array}{l}
D_{k} \\
-\overline{D_{k}}
\end{array}\right),
\end{aligned}
\end{equation}
subject to the initial conditions 
\begin{equation}\label{inner initial condtions}
\begin{aligned}
\chi_{k}(0)=i^{k}C_k,\quad\text{and}\quad \chi'_{k}(0)=0,
\end{aligned}
\end{equation}
where $C_k$ is to be determined subsequently, and $D_{k}$ depends on $Q$, $\chi_j$ for $j=1,\cdots ,k-1$. Furthermore, the source term $D_k$ can be decomposed into two parts:
\begin{align*}
 D_{k}=D_{k}^{(1)}+D_{k}^{(2)},
\end{align*}
where $D_{k}^{(1)}$ and $D_{k}^{(2)}$ are being contributions of $N_0+N_1(\chi)$ and $N_2(\chi)$ respectively. In view of \eqref{eq of chi k in}, we have 
\begin{equation}\label{D11 form}
D_{1}^{(1)}=-(\frac{1}{2}+\nu)i\Lambda Q,\,\,\,\,\,\,\,\text{and}\,\,\,\,\, \,\,D_{1}^{(2)}=0,
\end{equation}
 while for $k\geq 2$, $D_{k}^{(1)}$ takes the explicit form
\begin{equation}\label{def: D1}
\begin{aligned}
D_{k}^{(1)}(\rho)=&-(\frac{1}{2}+\nu)i\Lambda\chi_{k-1}(\rho)+2\nu i(k-1)\chi_{k-1}(\rho),
\end{aligned}
\end{equation}
and $D_{k}^{(2)}$ is determined by 
\begin{equation}\label{def: D2}
\begin{aligned}
N_2(\chi)=-\sum\limits_{k\geq 2}t^{2\nu k}D_{k}^{(2)}(\rho).
\end{aligned}
\end{equation}
\medskip

\subsection{Analysis of the functions $\chi_k$.}\label{subsec 2.2}
We subject \eqref{chi ab system} to initial conditions \eqref{inner initial condtions}. The goal of the present paragraph is to prove the following result.

\begin{lemma}\label{lem: inner region}
{\sl The system \eqref{chi ab system} admits a unique solution $\{\chi_{k}\}_{1\leq k\leq K}$ verifying 
\begin{itemize}
  \item[(1)]  For any $1\leq k\leq K$, $\chi_{k}$ is a $\mathcal{C}^{\infty}$-function on $\mathbb{R}_{+}$ with an even Taylor expansion at $0$.  \item[(2)] As $\rho\rightarrow+\infty$, the function $\chi_{k}$ has an asymptotic expansion of the following form:
  \begin{equation}\label{inner expansion infinity k1}
  \begin{aligned}
\chi_{1}(\rho)=&i\rho^{-\frac{1}{p}+2}\Big(\sum\limits_{m+n\geq 1}c_{m,n}^{1,0,0}\rho^{\lambda_{+}m+\lambda_{-}n}+\rho^{-2}\sum\limits_{m,n\geq 1}c_{m,n}^{1,0,1}\rho^{\lambda_{+}m+\lambda_{-}n}\Big),
\end{aligned}
\end{equation}
and, for $2\leq k\leq K$
  \begin{equation}\label{inner expansion infinity k2}
  \begin{aligned}
\chi_{k}(\rho)=i^k\rho^{-\frac{1}{p}+2k}\Big(&\sum\limits_{m+n\geq 1}c_{m,n}^{k,0,0}\rho^{\lambda_{+}m+\lambda_{-}n}+\rho^{-2}\sum\limits_{\substack{m,n\geq 1\\0\leq l\leq [\frac{k-1}{2}]}}c_{m,n}^{k,l,1}\rho^{\lambda_{+}m+\lambda_{-}n-4l}\\
&+\rho^{-4}\sum\limits_{\substack{m\geq 0,n\geq 1\\0\leq l\leq [\frac{k-2}{2}]}}c_{m,n}^{k,l,2}\rho^{\lambda_{+}m+\lambda_{-}n-4l}\Big),
\end{aligned}
\end{equation}
where the coefficients $c_{m,n}^{k,l,j}$ are real for $j=0,1,2$. Furthermore, this asymptotic expansion can be differentiated any number of times with respect to $\rho$.
\end{itemize}}
\end{lemma}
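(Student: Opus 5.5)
The plan is an induction on $k$. At each step the vector system is split into its real and imaginary parts. Since $H=-\Delta\sigma_3-(p+1)Q^{2p}\sigma_3-pQ^{2p}\sigma_3\sigma_1$, write $\chi_k=i^k(a_k+ib_k)$ with $a_k,b_k$ real and $D_k=i^k(E_k+iF_k)$. The system \eqref{chi ab system} then decouples into two scalar radial ODEs:
\begin{equation*}
L_+a_k=E_k,\qquad L_-b_k=F_k,\qquad L_+=-\Delta-(2p+1)Q^{2p},\quad L_-=-\Delta-Q^{2p},
\end{equation*}
with $a_k(0)=C_k$, $b_k(0)=0$ and $a_k'(0)=b_k'(0)=0$.

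A parity check fixes the structure. From \eqref{D11 form} and \eqref{def: D1}, together with the fact that $N_2$ is a polynomial in $\chi,\bar\chi$ with real coefficients, one sees inductively that $D_k\in i^k\mathbb{R}$ at leading order. I would also expect to show inductively that $b_k$ is forced to vanish, or more precisely that $F_k$ is of lower order (this is where the $\rho^{-2}$ and $\rho^{-4}$ sub-series originate). Regular solutions at $0$ with even Taylor expansion exist by the standard Frobenius/Picard argument for radial ODEs with smooth even coefficients, given that $Q$ is even and smooth. Uniqueness follows once $C_k$ is fixed, and this gives item (1).

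For the behaviour at infinity I use the fundamental systems. The homogeneous solutions of $L_+$ are $\Lambda Q\sim\rho^{-1/p+\lambda_+}$ and a second solution $\sim\rho^{-1/p+\lambda_-}$. Those of $L_-$ are $Q\sim\rho^{-1/p}$ and one $\sim\rho^{-1/p+\lambda_++\lambda_-}$. Both come with full expansions in powers $\rho^{\lambda_+m+\lambda_-n}$, inherited from \eqref{expansion Q}. The particular solution is written by variation of constants:
\begin{equation*}
a_k=\phi_1\int \phi_2E_k\rho^{d-1}-\phi_2\int\phi_1E_k\rho^{d-1},
\end{equation*}
and similarly for $b_k$. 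Inserting the inductive expansion of $E_k,F_k$ then gives the expansion termwise. For instance, a source term $\rho^{-1/p+2k-2+\lambda_+m+\lambda_-n}$ produces a particular term $\rho^{-1/p+2k+\lambda_+m+\lambda_-n}$ with a nonzero explicit coefficient, as long as $2k+\lambda_+m+\lambda_-n$ avoids the characteristic exponents. Assumption \eqref{ass H1} is exactly the exclusion of these coincidences: $\lambda_\pm$ for the $L_+$ branch at even $k$, and $0,\lambda_++\lambda_-$ for the $L_-$ branch at odd $k$. The shifts by $-2,-4,-4l$ track the orders at which each component enters, so no logarithms appear. Reality of the coefficients follows because every operation is real after the $i^k$ factorization.

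The main obstacle is the choice of $C_k$ and the bookkeeping of the homogeneous components. The regular solution at $0$ generically contains the slowly decaying homogeneous pieces, namely $\rho^{-1/p}$ in the $L_-$ branch and $\rho^{-1/p+\lambda_+}$ alone in the $L_+$ branch, with coefficients that do not fit \eqref{inner expansion infinity k2}. The form demanded has leading order $\rho^{-1/p+2k+\lambda_+}$, which must not be polluted by pure homogeneous terms of the wrong size. I would first try varying $C_k$, which adds a multiple of the regular $L_+$ homogeneous solution (essentially $\Lambda Q$) or of $Q$ depending on parity, so as to cancel the one unwanted coefficient. Any remaining homogeneous component is of the form $\rho^{-1/p+\lambda_+m+\lambda_-n}$ and is absorbed into the index set with $2k$ shifts, which is admissible. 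I expect the hardest check to be the nonlinear source $D_k^{(2)}$. Expanding $|Q+\chi|^{2p}(Q+\chi)$ multinomially with the inductive forms, I need to verify that the products of the $\rho^{-2}$ and $\rho^{-4-4l}$ sub-series stay within the stated ranges $l\le[\frac{k-1}{2}]$ and $l\le[\frac{k-2}{2}]$, including the requirement $m,n\ge1$ for $j=1$. Termwise differentiability then follows, since the expansions are generated by convergent or asymptotic series of ODE solutions whose derivatives satisfy the same integral representations.
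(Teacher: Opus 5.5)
Your overall route is the paper's. You split into the scalar problems for $L_\pm$, take the regular solution at the origin, and represent it with the fundamental pairs $(\Theta_\pm,\Phi_\pm)$. You spend the single real constant $C_k$ on removing the slow homogeneous mode ($Q\sim\rho^{-1/p}$ for odd $k$, $\Lambda Q\sim\rho^{-1/p+\lambda_+}$ for even $k$), and use $(\mathrm{H1})$ to exclude coincidences of $2j+\lambda_+m+\lambda_-n$ with the characteristic exponents. The decoupling you display is, however, only correct for even $k$. For odd $k$, writing $i^k=\pm i$, one has $a_k=\pm\Im\chi_k$ and $b_k=\mp\Re\chi_k$, hence $L_-a_k=E_k$ and $L_+b_k=F_k$. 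For $k=1$ this is the paper's $L_-v_1^-=-(\tfrac12+\nu)\Lambda Q$, not an $L_+$ equation. You use the right assignment later, so this is a slip.

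The substantive soft spot is the parity step, which must hold exactly, not ``at leading order'': $F_k\equiv0$, i.e. the paper's $G_k^-\equiv0$ for even $k$ and $G_k^+\equiv0$ for odd $k$.
\begin{itemize}
\item Your own remark proves it. If $\chi_j\in i^j\mathbb{R}$ for $j<k$, then $\bar\chi_j=(-1)^j\chi_j$, so each real-coefficient monomial in $\chi,\bar\chi$ contributes to the $t^{2\nu k}$ coefficient only in $i^k\mathbb{R}$. Also $D_k^{(1)}\in i^k\mathbb{R}$ by inspection. With zero data this gives $b_k\equiv0$.
\item Exactness is what makes one constant enough. If $F_k$ were merely of lower order, $b_k$ would be the solution with zero Cauchy data at the origin. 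Generically it carries a nonzero multiple of the slow mode of its operator at infinity ($Q$ for even $k$, $\Lambda Q$ for odd $k$), and no free constant remains to remove it. Both the form (2) and the reality of the coefficients would then fail.
\item Consequently the $\rho^{-2}$ and $\rho^{-4}$ sub-series do not come from $b_k$. They come from the decaying homogeneous components that survive the choice of $C_k$. $\Phi_-\sim\rho^{-1/p+\lambda_++\lambda_-}$ gives the top terms $j=1$, $l=\frac{k-1}{2}$ with $m,n\ge1$ (odd $k$). $\Phi_+\sim\rho^{-1/p+\lambda_-}$ gives the top terms $j=2$, $l=\frac{k-2}{2}$ with $n\ge1$ (even $k$). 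These are then carried to higher $k$ through $\Lambda\chi_{k-1}$ and the nonlinear products, each inversion of $L_\pm$ raising the power by $\rho^2$.
\end{itemize}
Finally, ``the one unwanted coefficient'' is only well defined after the paper's preliminary subtraction. Source terms whose particular responses decay more slowly than $\Theta_\pm$ make $\int^\infty\Phi_\pm G_k^\pm s^{d-1}\,\mathrm{d}s$ diverge. The paper therefore first removes them by explicit power terms, cut off near the origin by $\eta$. Only then does it define $C_k$ through $\int_0^\infty\Phi_\pm\tilde G_k^\pm s^{d-1}\,\mathrm{d}s$ for the remainder $\tilde G_k^\pm$. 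Your termwise particular solution is the same device, but it must be made explicit for $C_k$ to be meaningful.
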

\bigbreak

\medbreak

We now proceed with the proof of Lemma \ref{lem: inner region}. For $k\geq 1$, define
\begin{equation}\label{def G}
v_{k}^{+}=\Re{\chi_k},\,\,\,\,v_{k}^{-}=\Im{\chi_k},\,\,\,\,\text{and}\,\,\,\,G_{k}^{+}=\Re{D_k},\,\,\,\,G_{k}^{-}=\Im{D_k}.
\end{equation}
Solving equation \eqref{chi ab system} is then equivalent to solving the system
\begin{equation}\label{system vk}
L_{\pm}v_{k}^{\pm}=G_{k}^{\pm},
\end{equation}
where $L_{\pm}$ are the linearized operators 
\begin{equation*}%\label{Linearized op}
	L_{+} = - \Delta - (2p+1) Q^{2p},\qquad L_{-} = - \Delta - Q^{2p}.
\end{equation*}
Consequently, the subsequent analysis in Lemma \ref{lem: inner region} relies frequently on the asymptotic expansions of the basis solutions to $L_{\pm}f=0$, which are given as follows.
\medbreak

   \begin{lemma}\label{fundamental-inner} 
   {\sl Let $d\geq11$ and $p\in\mathbb N^{\ast}$ satisfy $2p+1>p_{JL}$. Choose the fundamental solutions $\{\Phi_{\pm}, \Theta_{\pm}\}$ of $L_{\pm}$ by
 \begin{equation*}%\label{basis inner}
 \begin{aligned}
 \Theta_-(r)= Q(r),\quad \Theta_+(r)=p\Lambda Q,\quad \Phi_{\pm}=\Theta_{\pm}\int_{r}^{\infty}\Theta_{\pm}^{-2}(s)s^{1-d}\ud s.
 \end{aligned}
 \end{equation*}
 Then, $\Theta_{\pm}(0)=1$ and $\Theta'_{\pm}(0)=0$ and 
 \begin{equation*}
    W[\Theta_{\pm},\Phi_{\pm}]=\Theta_{\pm}'(r)\Phi_{\pm}(r)-\Theta_{\pm}(r)\Phi_{\pm}'(r)=r^{1-d}.
\end{equation*}
In addition, as $r\rightarrow \infty$, 
  \begin{equation}\label{basis inner -}
\Theta_-(r)= Q(r),\,\,\,\,\,\Phi_-(r) =r^{-\frac{1}{p}+\lambda_{-}+\lambda_{+}}\sum\limits_{m+n\geq 0}\tilde{c}_{m,n}r^{\lambda_{+}m+\lambda_{-}n} ,
\end{equation}
and
\begin{equation}\label{basis inner +}
 \begin{aligned}
\Theta_+(r) =&  r^{-\frac{1}{p}}\Big(d_{0,0}^1\,r^{\lambda_{+}}+\sum\limits_{m+n\geq 1}d_{m,n}^1 r^{\lambda_{+}m+\lambda_{-}n}\Big),\\
  \Phi_+(r) =& r^{-\frac{1}{p}}\Big(d_{0,0}^{2}\,r^{\lambda_{-}}+\sum\limits_{m+n\geq 1}d_{m,n}^2 r^{\lambda_{+}m+\lambda_{-}n}\Big),
 \end{aligned}
\end{equation}
with some coefficients $\tilde{c}_{m,n}$, $d_{m,n}^j$, j=1,2 verifying $\tilde{c}_{0,0}=\frac{1}{2(s_c-1)c_{\infty}}$, $d_{0,0}^{1}=p\lambda_{+} c_{1,0}$ and $d_{0,0}^{2}=\frac{1}{p\lambda_{+}c_{1,0}(\lambda_{+}-\lambda_{-})}$.
}
\end{lemma}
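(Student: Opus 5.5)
We need to prove Lemma \ref{fundamental-inner}: the statements about the fundamental solutions $\Theta_\pm,\Phi_\pm$, their normalization at $0$, the Wronskian, and the asymptotic expansions at infinity.

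\textbf{Step 1: kernels and normalization.} Differentiating the scaling family $\mu^{1/p}Q(\mu r)$, which solves $\Delta u+u^{2p+1}=0$, at $\mu=1$ gives $L_+\Lambda Q=0$. Dividing $Q$ out of $\Delta Q+Q^{2p+1}=0$ gives $L_-Q=0$. The normalization $Q(0)=1$, $Q'(0)=0$ gives $\Theta_-(0)=1$ and $\Theta_-'(0)=0$. Since $\Lambda Q=\frac1pQ+rQ'$, we get $p\Lambda Q(0)=Q(0)=1$ and $(p\Lambda Q)'(0)=(p+1)Q'(0)=0$.

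\textbf{Step 2: reduction of order.} For the formula defining $\Phi_\pm$ to make sense we need $\Theta_\pm$ to be nonvanishing on $(0,\infty)$. This is clear for $\Theta_-=Q>0$. For $\Theta_+=p\Lambda Q$, we use that in the Joseph--Lundgren supercritical range $\Lambda Q>0$ on $(0,\infty)$. This is the classical monotonicity of the intersection number of $Q$ with the singular solution $c_\infty r^{-1/p}$, in the same spirit as \cite{MerleRaphaelRodnianski15}; I would invoke that known fact. The integral $\int_r^\infty\Theta_\pm^{-2}s^{1-d}\ud s$ converges because $\Theta_\pm^{-2}s^{1-d}$ decays like $s^{2/p-2\lambda_+-(d-1)}$ for $\Theta_+$ and like $s^{2/p-(d-1)}$ for $\Theta_-$. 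Both exponents are $<-1$, using $\lambda_++\lambda_-=\frac2p-(d-2)$ and $\lambda_-<\lambda_+$. Then $L_\pm\Phi_\pm=0$ follows from the standard reduction-of-order identity. Differentiating $\Phi_\pm$ gives directly
\[
\Theta_\pm'\Phi_\pm-\Theta_\pm\Phi_\pm'=\Theta_\pm\cdot\Theta_\pm^{-2}r^{1-d}=r^{1-d}.
\]

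\textbf{Step 3: asymptotics.} I insert the expansion \eqref{expansion Q} of $Q$, which is termwise differentiable. Applying $\Lambda$ to $r^{-1/p+\lambda_+m+\lambda_-n}$ multiplies it by $\lambda_+m+\lambda_-n$, so the $c_\infty$ term is killed. This yields the expansion of $\Theta_+$, with leading coefficient $d^1_{0,0}=p\lambda_+c_{1,0}$. For $\Phi_\pm$ I write
\[
\Theta_\pm^{-2}s^{1-d}=s^{-1+\lambda_\pm^*}\Bigl(a_0+\sum_{m+n\ge1}a_{m,n}s^{\lambda_+m+\lambda_-n}\Bigr),
\]
where $\lambda^*_+=\lambda_--\lambda_+$ and $\lambda^*_-=\lambda_++\lambda_-$. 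This uses $-(d-2)+\frac2p=\lambda_++\lambda_-$. Integrating termwise from $r$ to $\infty$ produces no logarithms, since all the resulting exponents are negative. Multiplying back by $\Theta_\pm$ gives the expansions in \eqref{basis inner -} and \eqref{basis inner +}.

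The leading constants follow directly. For $\Phi_+$,
\[
d^2_{0,0}=\frac{(d^1_{0,0})^{-1}}{\lambda_+-\lambda_-}=\frac{1}{p\lambda_+c_{1,0}(\lambda_+-\lambda_-)}.
\]
For $\Phi_-$,
\[
\tilde c_{0,0}=\frac{c_\infty^{-1}}{-(\lambda_++\lambda_-)}=\frac1{c_\infty(d-2-\frac2p)}=\frac{1}{2(s_c-1)c_\infty}.
\]
Convergence of these series for large $r$ follows from the convergent or asymptotic nature of \eqref{expansion Q}, treated as in \cite{MerleRaphaelRodnianski15}. Termwise differentiability follows because the functions satisfy an ODE with expandable coefficients.

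\textbf{Main obstacle.} The delicate point is the positivity of $\Lambda Q$ on $(0,\infty)$ in Step 2, which is the only step not reducible to algebra. I would cite it from the JL-supercritical literature rather than reprove it.
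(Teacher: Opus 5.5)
Steps 1 and 2, the Wronskian identity, the expansion of $\Phi_-$, and the three leading constants are all correct. The paper states this lemma without a separate proof; its one nontrivial input, $\Lambda Q>0$, is proved later in Lemma \ref{lem position Lambda Q}.

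\textbf{The gap in Step 3.} Your displayed expansion of $\Theta_+^{-2}s^{1-d}$ is not correct in general, so the $\Phi_+$ expansion does not follow as written.

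\emph{Why the expansion fails.} The $m=0$ terms of $p\Lambda Q$ are $p\,n\lambda_-c_{0,n}r^{-\frac1p+n\lambda_-}$. Dividing by the leading term $d^1_{0,0}r^{-\frac1p+\lambda_+}$ leaves relative corrections $r^{n\lambda_--\lambda_+}$. These are not of the form $r^{\lambda_+m+\lambda_-n}$ with $m,n\ge0$ unless $\lambda_-/\lambda_+\in\mathbb N$, which fails, e.g., for $(d,p)=(11,5)$.

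\emph{Consequence.} Termwise integration and multiplication by $\Theta_+$ then produce candidate terms $r^{-\frac1p+\lambda_+m+\lambda_-n}$ with $m<0$. The first is $r^{-\frac1p+2\lambda_--\lambda_+}$, whereas \eqref{basis inner +} asserts no such terms occur. They do cancel. Writing $\Theta_+=d^1_{0,0}r^{a}(1+er^{b}+\cdots)$ with $b=\lambda_--\lambda_+$, the two contributions to $r^{a+2b}$ are $\pm e/(b\,d^1_{0,0})$. But \emph{multiplying back} does not show this cancellation to all orders.

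\emph{Repair.} Argue through the equation rather than the integral formula.
\begin{itemize}
\item By \eqref{resonance +}, the formal recursion for a solution $\Phi^F=r^{-\frac1p+\lambda_-}\bigl(1+\sum_{m+n\ge1}a_{m,n}r^{\lambda_+m+\lambda_-n}\bigr)$ of $L_+f=0$ is never resonant. Indeed, $\lambda_-+\lambda_+m+\lambda_-n=\lambda_+$ would force $\lambda_-(1+n)=\lambda_+(1-m)$, which is impossible for $m,n\ge0$ when $\lambda_-<\lambda_+<0$.
\item Realize $\Phi^F$ as a true solution near infinity. Then $\Phi_+-d^2_{0,0}\Phi^F$ solves $L_+f=0$ and is $o(r^{-\frac1p+\lambda_-})$ by your leading-order computation.
\item Every nonzero combination $\alpha\Theta_++\beta\Phi^F$ decays no faster than $r^{-\frac1p+\lambda_-}$.
\end{itemize}
Hence $\Phi_+=d^2_{0,0}\Phi^F$. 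This is exactly the claimed form, free of logarithms and termwise differentiable.

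\textbf{The hypothesis $c_{1,0}\neq0$.} Your decay rate for $\Theta_+^{-2}s^{1-d}$ silently uses $c_{1,0}\neq0$, and so do the convergence of the integral defining $\Phi_+$ and the value of $d^2_{0,0}$. If $c_{1,0}=0$, then $\Lambda Q$ would lie on the $r^{-\frac1p+\lambda_-}$ branch and $\int_r^\infty\Theta_+^{-2}s^{1-d}\ud s$ would diverge. You need to justify this.

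\textbf{Positivity of $\Lambda Q$.} You cite intersection/monotonicity results for the rescaled family $\mu^{1/p}Q(\mu r)$, which is legitimate. The paper instead derives positivity from the Hardy-type coercivity of $L_+$ in Lemma \ref{lem coercivity}. At a first zero $R$, the truncation $\Lambda Q\,\mathbf{1}_{B_R}$ would be a nonzero $H^1_0(B_R)$ state with vanishing quadratic form; the paper phrases this through Dirichlet eigenvalue monotonicity. That route stays inside the paper, and it also settles $c_{1,0}\neq0$. If $c_{1,0}=0$, then $\Lambda Q$ is a solution of $L_+f=0$ that is $o(r^{-\frac1p+\lambda_+})$, hence $O(r^{-\frac1p+\lambda_-})$ and therefore in $\dot H^1$ because $\lambda_-<\lambda_+$. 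It would then be a zero mode of $L_+$, again contradicting coercivity.
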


\medskip

\begin{proof}[Proof of Lemma \ref{lem: inner region}]
{\bf Step 1. The case $k=1$.}
By \eqref{inner initial condtions}, \eqref{D11 form} and \eqref{system vk},
\begin{equation*}
L_{+}v_1^{+}=0,\qquad L_{-}v_1^{-}=-(1/2+\nu)\Lambda Q,
\end{equation*}
with initial condition
\begin{equation*}
v_1^{+}(0)=\partial_{\rho}v_{1}^{+}(0)=0,\quad\text{and}\quad v_{1}^{-}(0)=C_1,\,\,\, \partial_{\rho}v_1^{-}(0)=0.
\end{equation*}
According to the assumption $Q(0)=1$, one has $v_{1}^{+}(\rho)=0$ and 
\begin{equation*}
v_{1}^{-}(\rho)=C_1\Theta_{-}(\rho)+(\frac{1}{2}+\nu)\int_{0}^{\rho}\Big(\Theta_{-}(\rho)\Phi_{-}(s)-\Theta_{-}(s)\Phi_{-}(\rho)\Big)\Lambda Q(s)s^{d-1}\ud s.
\end{equation*}
Since $\Lambda Q$ is a $C^{\infty}$ function at $\rho = 0$, one can easily write a Taylor expansion for $v_1^{-}$, which proves (i) for $k = 1$. According to \eqref{basis inner -}, $\lambda_{+}<-2$ and $2s_c=2-\lambda_+-\lambda_-$, 
\begin{equation*}
\Phi_{-}(s)\Lambda Q(s)s^{d-1}\sim s^{\lambda_{+}+1},
\end{equation*}
we then choose the initial data
\begin{equation*}
C_1=-(\frac{1}{2}+\nu)\int_{0}^{\infty}\Phi_{-}(s)\Lambda Q(s)s^{d-1}\ud s.
\end{equation*}
Therefore,
\begin{equation*}
v_{1}^{-}(\rho)=-(\frac{1}{2}+\nu)\Big(\Theta_{-}(\rho)\int_{\rho}^{\infty}\Phi_{-}(s)\Lambda Q(s)s^{d-1}\ud s+\Phi_{-}(\rho)\int_{0}^{\rho}\Theta_{-}(s)\Lambda Q(s)s^{d-1}\ud s\Big),
\end{equation*}
as $\rho\rightarrow \infty$, the asymptotic behavior of $v_{1}^{-}$ is
\begin{equation*}
v_{1}^{-}(\rho)=\rho^{-\frac{1}{p}+2}\sum\limits_{m+n\geq 1}c_{m,n}^{1,0,0}\rho^{\lambda_{+}m+\lambda_{-}n}+\rho^{-\frac{1}{p}}\sum\limits_{m,n\geq 1}c_{m,n}^{1,0,1}\rho^{\lambda_{+}m+\lambda_{-}n}.
\end{equation*}
Combining this expansion with $v_{1}^{+}(\rho)=0$, one has
\begin{equation}\label{expansion chi1}
\begin{aligned}
\chi_{1}(\rho)=i\rho^{-\frac{1}{p}+2}\Big(\sum\limits_{m+n\geq 1}c_{m,n}^{1,0,0}\rho^{\lambda_{+}m+\lambda_{-}n}+\rho^{-2}\sum\limits_{m,n\geq 1}c_{m,n}^{1,0,1}\rho^{\lambda_{+}m+\lambda_{-}n}\Big),
\end{aligned}
\end{equation}
with some real coefficients $c_{m,n}^{1,0,j}$ for $j=0,1$.

\medskip

{\bf Step 2. In the case $k=2, 3$.} We treat the case $k=2$ first. By \eqref{def: D1} and \eqref{def: D2}, one can check that  $D_2$ is a smooth real valued radial function at the origin. Using \eqref{expansion chi1}, we obtain the following asymptotic expansion as $\rho\rightarrow \infty$:
\begin{equation}\label{expansion D21}
\begin{aligned}
D_2^{(1)}=&\rho^{-1/p+2}\Big(\sum\limits_{m+n\geq 1}d_{m,n}^{2,1,0}\rho^{\lambda_{+}m+\lambda_{-}n}+\rho^{-2}\sum\limits_{m,n\geq 1}d_{m,n}^{2,1,1}\rho^{\lambda_{+}m+\lambda_{-}n}\Big)
\end{aligned}
\end{equation}
and
\begin{equation}\label{expansion D22}
\begin{aligned}
D_{2}^{(2)}(\rho)=\rho^{-1/p+2}\Big(&\sum\limits_{m+n\geq 2}d_{m,n}^{2,2,0}\rho^{\lambda_{+}m+\lambda_{-}n}+\rho^{-2}\sum\limits_{\substack{m+n\geq 3\\m,n\geq 1}}d_{m,n}^{2,2,1}\rho^{\lambda_{+}m+\lambda_{-}n}\\
&+\rho^{-4}\sum\limits_{m,n\geq 2}d_{m,n}^{2,2,2}\rho^{\lambda_{+}m+\lambda_{-}n}\Big).
\end{aligned}
\end{equation}
Taking \eqref{expansion D21} and \eqref{expansion D22} into the definition \eqref{def G}, we find that $G_2^{-}(\rho)=0$ and
\begin{equation*}%\label{G2}
\begin{aligned}
G_2^{+}(\rho)=\rho^{-1/p+2}\Big(&\sum\limits_{m+n\geq 1}d_{m,n}^{2,0}\rho^{\lambda_{+}m+\lambda_{-}n}+\rho^{-2}\sum\limits_{m,n\geq 1}d_{m,n}^{2,1}\rho^{\lambda_{+}m+\lambda_{-}n}\\
&+\rho^{-4}\sum\limits_{m,n\geq 2}d_{m,n}^{2,2}\rho^{\lambda_{+}m+\lambda_{-}n}\Big),
\end{aligned}
\end{equation*}
where the coefficients $d_{m,n}^{2,j}$, $j=0,1,2$ are determined by the coefficients appearing in the asymptotic expansions \eqref{expansion D21} and \eqref{expansion D22}.  Solving $L_{\pm}v_2^{\pm}=G_2^{\pm}$ with the initial data
\begin{equation*}
v_2^{+}(0)=-C_2,\quad\partial_{\rho}v_2^{+}(0)=0,\quad\text{and}\quad v_2^{-}(0)=\partial_{\rho}v_2^{-}(0)=0.
\end{equation*}

\medskip For $v_2^{+}$, since $Q(0)=1$, we have $\Theta_{+}(0)=p\Lambda Q(0)=1$. Thus, the representation formula gives
\begin{equation*}
v_2^{+}(\rho)=-C_2\Theta_{+}(\rho)-\int_{0}^{\rho}\Big(\Theta_{+}(\rho)\Phi_{+}(s)-\Theta_{+}(s)\Phi_{+}(\rho)\Big)G_2^{+}(s)s^{d-1}\ud s.
\end{equation*}
It implies that $v_2^{\pm}$ are $C^{\infty}$ functions at zero.  According to the definitions \eqref{def Discr} and \eqref{def lambda pm}, under the assumption \eqref{ass H1}, for $m+n\geq 1$ with $m,n\geq 0$, we obtain
%\begin{equation*}
%\frac{1}{2}\Big(\lambda_{+}m+\lambda_{-}n-\lambda_{+}\Big)=\frac{1}{2}\Big(-(s_c-1)(m+n)+\frac{\sqrt{Discr}}{2}(m-n)+|\lambda_{+}|\Big)\notin \mathbb{N},
%\end{equation*}
%thus, for $s_c>1$,
\begin{equation}\label{k2 not resonance}
\lambda_{+}m+\lambda_{-}n-\lambda_{\pm}\neq -4.
\end{equation}
To preserve regularity at the origin, define the function $\eta\in C^\infty([0,\infty))$ such that
\begin{equation*}
\eta(\rho)=0\quad \text{for $\rho\leq1$},\qquad\eta(\rho)=1\quad\text{for $\rho\geq2$}.
\end{equation*}
 Therefore, we write 
\begin{equation}\label{decomp v2+}
\begin{aligned}
v_2^{+}(\rho)=\eta(\rho)\rho^{-\frac{1}{p}+4}\sum\limits_{\substack{m+n\geq 1\\\lambda_{+}m+\lambda_{-}n>-4+\lambda_{+}}}v_{m,n}^{2,0}\rho^{\lambda_{+}m+\lambda_{-}n}+\tilde{v}_2^{+}(\rho),
\end{aligned}
\end{equation}
where the constants $v_{m,n}^{2,0}$ depend on $d_{m',n'}^{2,0}$ for $0\leq m'\leq m$, $0\leq n'\leq n$.
Using the fact 
\begin{equation}\label{resonance +}
L_{+}(\rho^{-\frac{1}{p}+a})=-\rho^{-\frac{1}{p}+a-2}\Big((a-\lambda_{+})(a-\lambda_{-})+(2p+1)\sum\limits_{m+n\geq 1}c_{a}^{m,n,+}\rho^{\lambda_{+}m+\lambda_{-}n}\Big),
\end{equation}
then, by \eqref{k2 not resonance}, for $a=\lambda_{+}m+\lambda_{-}n+4$ with $a>\lambda_{+}$, the equation for $\tilde{v}_2^{+}$ is
\begin{equation}\label{eq tilde v2}
L_{+}\tilde{v}_2^{+}=\tilde{G}_2^{+},
\end{equation}
where
\begin{equation*}
\tilde G_2^+(\rho)=G_2^+(\rho)-L_+\Big(\eta(\rho)\rho^{-\frac{1}{p}+4}\sum\limits_{\substack{m+n\geq 1\\\lambda_{+}m+\lambda_{-}n>-4+\lambda_{+}}}v_{m,n}^{2,0}\rho^{\lambda_{+}m+\lambda_{-}n}\Big),
\end{equation*}
with the following asymptotic expansion as $\rho\rightarrow\infty$:
\begin{equation*}
\begin{aligned}
\tilde{G}_{2}^{+}(\rho)=\rho^{-\frac{1}{p}+2}\Big(&\sum_{\substack{m+n\geq 1\\ \lambda_{+}m+\lambda_{-}n<-4+\lambda_{+}}}\tilde{d}_{m,n}^{2,0}\rho^{\lambda_{+}m+\lambda_{-}n}+\sum\limits_{m,n\geq 1}d_{m,n}^{2,1}\rho^{\lambda_{+}m+\lambda_{-}n-2}\\
&+\sum\limits_{m,n\geq 2}d_{m,n}^{2,2}\rho^{\lambda_{+}m+\lambda_{-}n-4}\Big).
\end{aligned}
\end{equation*}

\medskip Choose the initial data
\begin{equation*}%\label{def initial C2}
C_2=-\int_{0}^{\infty}\Phi_{+}(s)\tilde{G}_2^{+}(s)s^{d-1}\ud s,
\end{equation*} 
then, the solution of \eqref{eq tilde v2} is
\begin{equation*}
\tilde{v}_2^{+}(\rho)=\Theta_{+}(\rho)\int_{\rho}^{\infty}\Phi_{+}(s)\tilde{G}_2^{+}(s)s^{d-1}\ud s+\Phi_{+}(\rho)\int_{0}^{\rho}\Theta_{+}(s)\tilde{G}_2^{+}(s)s^{d-1}\ud s.
\end{equation*}
By \eqref{basis inner +} and \eqref{k2 not resonance}, as $\rho\rightarrow+\infty$, it has the expansion
\begin{equation}\label{expan tilde v2}
\begin{aligned}
\tilde{v}_2^{+}(\rho)=\rho^{-\frac{1}{p}+4}\Big(&\sum\limits_{\substack{m+n\geq 1\\\lambda_{+}m+\lambda_{-}n<-4+\lambda_{+}}}\tilde{c}_{m,n}^{2,0,0}\rho^{\lambda_{+}m+\lambda_{-}n}+\sum\limits_{m,n\geq 1}\tilde{c}_{m,n}^{2,0,1}\rho^{\lambda_{+}m+\lambda_{-}n-2}\\
&+\sum\limits_{m,n\geq 2}\tilde{c}_{m,n}^{2,0,2}\rho^{\lambda_{+}m+\lambda_{-}n-4}\Big)+\rho^{-\frac{1}{p}}\sum_{m\geq 0,n\geq 1}c_{m,n}^{2,0,2}\rho^{\lambda_{+}m+\lambda_{-}n}.
\end{aligned}
\end{equation}
Combining \eqref{expan tilde v2} with \eqref{decomp v2+}, and taking into account that $v_{2}^{-}(\rho)=0$, this yields
\begin{equation}\label{expansion chi2}
\begin{aligned}
\chi_2(\rho)=\rho^{-\frac{1}{p}+4}\Big(&\sum\limits_{m+n\geq 1}c_{m,n}^{2,0,0}\rho^{\lambda_{+}m+\lambda_{-}n}+\sum\limits_{m,n\geq 1}c_{m,n}^{2,0,1}\rho^{\lambda_{+}m+\lambda_{-}n-2}\\
&+\sum\limits_{m\geq 0,n\geq 1}c_{m,n}^{2,0,2}\rho^{\lambda_{+}m+\lambda_{-}n-4}\Big),\quad \rho\rightarrow +\infty.
\end{aligned}
\end{equation}

\medskip Similarly, for the case $k=3$, we solve $L_{\pm}v_3^{\pm}=G_3^{\pm}$ with the initial data
\begin{equation*}
v_3^{-}(0)=-C_3,\quad\partial_{\rho}v_3^{-}(0)=0,\quad\text{and}\quad v_3^{+}(0)=\partial_{\rho}v_3^{+}(0)=0.
\end{equation*}
Then, by \eqref{expansion Q}, \eqref{expansion chi1} and \eqref{expansion chi2}, $G_3^{+}$ is zero and $G_3^{-}$ has the expansion
\begin{equation}\label{expansion G3-}
\begin{aligned}
G_3^{-}(\rho)=\rho^{-1/p+4}\Big(&\sum\limits_{m+n\geq 1}d_{m,n}^{3,0}\rho^{\lambda_{+}m+\lambda_{-}n}+\rho^{-2}\sum\limits_{m,n\geq 1}d_{m,n}^{3,1}\rho^{\lambda_{+}m+\lambda_{-}n}\\
&+\rho^{-4}\sum\limits_{m\geq 0,n\geq 1}d_{m,n}^{3,2}\rho^{\lambda_{+}m+\lambda_{-}n}+\rho^{-6}\sum_{m\geq 1,n\geq 2}
 d_{m,n}^{3,3}\rho^{\lambda_{+}m+\lambda_{-}n}
\Big),
\end{aligned}
\end{equation}
as $\rho$ goes to infinity, thus, $v_{3}^{+}=0$. Using \eqref{basis inner -} and \eqref{expansion G3-}, we find that
\begin{equation*}
\begin{aligned}
\Phi_{-}(\rho)G_3^{-}(\rho)\rho^{d-1}\sim\rho^{5}\Big(&\sum\limits_{m+n\geq 1}\hat{d}_{m,n}^{3,0}\rho^{\lambda_{+}m+\lambda_{-}n}+\sum\limits_{m,n\geq 1}\hat{d}_{m,n}^{3,1}\rho^{\lambda_{+}m+\lambda_{-}n-2}\\
&+\sum\limits_{m\geq 0,n\geq 1}\hat{d}_{m,n}^{3,2}\rho^{\lambda_{+}m+\lambda_{-}n-4}+\sum_{m\geq 1,n\geq 2}
 \hat{d}_{m,n}^{3,3}\rho^{\lambda_{+}m+\lambda_{-}n-6}\Big).
\end{aligned}
\end{equation*}
Since $\lambda_{-}<\lambda_{+}<-2$ and \eqref{ass H1},
\begin{equation}\label{k3 not resonance}
\lambda_{+}m+\lambda_{-}n+6\neq \{0,\lambda_{+}+\lambda_{-}\},\quad m+n\geq 1,
\end{equation}
and 
\begin{equation*}
3+\lambda_{-}+\lambda_{+}<-1,\quad \text{and}\quad 1+\lambda_{-}<-1.
\end{equation*}
Thus, write
\begin{equation}\label{decomp v3-}
\begin{aligned}
v_3^{-}(\rho)=\eta(\rho)\rho^{-\frac{1}{p}+6}\sum\limits_{\substack{m+n\geq 1\\\lambda_{+}m+\lambda_{-}n>-6}}v_{m,n}^{3,0}\rho^{\lambda_{+}m+\lambda_{-}n}+\tilde{v}_3^{-}(\rho),
\end{aligned}
\end{equation}
where the constants $v_{m,n}^{3,0}$ depend on $d_{m',n'}^{3,0}$ for $0\leq m'\leq m$, $0\leq n'\leq n$.
Using the fact 
\begin{equation}\label{resonance -}
L_{-}(\rho^{-\frac{1}{p}+a})=-\rho^{-\frac{1}{p}+a-2}\Big(a(a-\lambda_{+}-\lambda_{-})+\sum\limits_{m+n\geq 1}c_{a}^{m,n,-}\rho^{\lambda_{+}m+\lambda_{-}n}\Big),
\end{equation}
then
\begin{equation*}%\label{eq tilde v3}
L_{-}\tilde{v}_3^{-}=\tilde{G}_3^{-},
\end{equation*}
where
\begin{equation*}
\tilde G_3^{-}(\rho)=G_3^{-}(\rho)-L_-\Big(\eta(\rho)\rho^{-\frac{1}{p}+6}\sum\limits_{\substack{m+n\geq 1\\\lambda_{+}m+\lambda_{-}n>-6}}v_{m,n}^{3,0}\rho^{\lambda_{+}m+\lambda_{-}n}\Big),
\end{equation*}
with the following asymptotic expansion 
\begin{equation*}
\begin{aligned}
\tilde{G}_{3}^{-}(\rho)=\rho^{-\frac{1}{p}+4}\Big(&\sum_{\substack{m+n\geq 1\\ \lambda_{+}m+\lambda_{-}n<-6}}\tilde{d}_{m,n}^{3,0}\rho^{\lambda_{+}m+\lambda_{-}n}+\sum\limits_{m,n\geq 1}d_{m,n}^{3,1}\rho^{\lambda_{+}m+\lambda_{-}n-2}\\
&+\sum\limits_{m\geq 0,n\geq 1}d_{m,n}^{3,2}\rho^{\lambda_{+}m+\lambda_{-}n-4}+\sum_{m\geq 1,n\geq 2}
 d_{m,n}^{3,3}\rho^{\lambda_{+}m+\lambda_{-}n-6}\Big),\quad \text{as $\rho\rightarrow +\infty$.}
\end{aligned}
\end{equation*}

\medskip By \eqref{decomp v3-}, choose $C_3=-\int_{0}^{\infty}\Phi_{-}(s)\tilde{G}_3^{-}(s)s^{d-1}\ud s$, use \eqref{k3 not resonance}, we obtain
\begin{equation*}
\tilde{v}_3^-(\rho)=\Theta_-(\rho)\int_\rho^\infty\Phi_-(s)\widetilde G_3^-(s)s^{d-1}\,\ud s+\Phi_-(\rho)\int_0^\rho\Theta_-(s)\widetilde G_3^-(s)s^{d-1}\,\ud s.
\end{equation*}
This solution is regular at the origin and satisfies $\tilde v_3^-(0)=-C_3$ and $\partial_\rho\tilde v_3^-(0)=0$. The choice of $C_3$ eliminates the independent $\Theta_-$
component at infinity. Therefore, $\chi_3$ has the expansion
\begin{equation*}%\label{expan tilde v3}
\begin{aligned}
\chi_3(\rho)=i\rho^{-\frac{1}{p}+6}\Big(&\sum\limits_{m+n\geq 1}c_{m,n}^{3,0,0}\rho^{\lambda_{+}m+\lambda_{-}n}+\sum\limits_{\substack{m,n\geq 1\\0\leq l\leq 1}}c_{m,n}^{3,l,1}\rho^{\lambda_{+}m+\lambda_{-}n-2-4l}\\
&+\sum\limits_{m\geq 0,n\geq 1}c_{m,n}^{3,0,2}\rho^{\lambda_{+}m+\lambda_{-}n-4}\Big),\quad \text{as $\rho\rightarrow +\infty$.}
\end{aligned}
\end{equation*}

{\bf Step 3. In the case $k\geq 4$.} We proceed by induction and assume that we have found $\{\chi_{l}\}_{1\leq l\leq k-1}$ that verify  $(1)$, $(2)$. Then one can check that $D_{k}$  %defined by
%\begin{equation}\label{Dk}
%\begin{aligned}
%D_{k}(\rho)=&-i\big((\frac{1}{2}+\nu)\Lambda -2\nu(k-1)\big)\chi_{k-1}(\rho)\\
%&+\sum\limits_{\substack{2\leq l\leq k,\,j_1+j_2+...+j_l=k\\1\leq j_1,j_2,...,j_l\leq k-1} }d_{i,j_1,j_2,...,j_i}^{k,(2)}Q^{2p+1-l}(\rho)\chi_{j_1}(\rho)\chi_{j_2}(\rho)\cdots\chi_{j_l}(\rho),
%\end{aligned}
%\end{equation}
remains uniformly bounded of order $\mathcal{O}(1)$ at the origin. Inserting the asymptotic expansions of $\chi_j$ ($j=1,2,\cdots, k-1$) at infinity into $D_k$ yields distinct asymptotic expansions at infinity depending on whether $k$ is odd or even.  Specifically, if $k$ is even, $G_{k}^{-}(\rho)=0$ and
\begin{align}\label{even Gk}
%\begin{aligned}
G_{k}^{+}(\rho)
=&\rho^{-\frac{1}{p}+2(k-1)}\Big(\sum\limits_{m+n\geq 1}d_{m,n}^{k,0,0}\rho^{\lambda_{+}m+\lambda_{-}n}+\rho^{-2}\sum\limits_{\substack{m,n\geq 1\\0\leq l\leq [\frac{k-2}{2}]}}d_{m,n}^{k,l,1}\rho^{\lambda_{+}m+\lambda_{-}n-4l}\nonumber\\
&+\rho^{-4}\sum\limits_{\substack{m\geq 0,n\geq 1\\0\leq l\leq [\frac{k-3}{2}]}}d_{m,n}^{k,l,2}\rho^{\lambda_{+}m+\lambda_{-}n-4l}+\rho^{-2k}\sum\limits_{m,n\geq 2}d_{m,n}^{k,\frac{k-2}{2},2}\rho^{\lambda_{+}m+\lambda_{-}n}\Big).
%\end{aligned}
\end{align}
Conversely, if $k$ is odd, $G_{k}^{+}(\rho)=0$ and
\begin{align}\label{odd Gk}
%\begin{aligned}
G_{k}^{-}(\rho)
=&\rho^{-\frac{1}{p}+2(k-1)}\Big(\sum\limits_{m+n\geq 1}d_{m,n}^{k,0,0}\rho^{\lambda_{+}m+\lambda_{-}n}+\rho^{-2}\sum\limits_{\substack{m,n\geq 1\\0\leq l\leq [\frac{k-2}{2}]}}d_{m,n}^{k,l,1}\rho^{\lambda_{+}m+\lambda_{-}n-4l}\nonumber\\
&+\rho^{-4}\sum\limits_{\substack{m\geq 0,n\geq 1\\0\leq l\leq [\frac{k-3}{2}]}}d_{m,n}^{k,l,2}\rho^{\lambda_{+}m+\lambda_{-}n-4l}+\rho^{-2k}\sum\limits_{m\geq 1, n\geq 2}d_{m,n}^{k,\frac{k-1}{2},2}\rho^{\lambda_{+}m+\lambda_{-}n}\Big).
%\end{aligned}\Big)
\end{align}
The system $L_{\pm}v_k^{\pm}=G_k^{\pm}$ is solved subject to the initial conditions
\begin{equation}\label{inner initial k}
\begin{aligned}
\chi_{k}(0)=i^{k}C_k,\qquad \partial_{\rho}\chi_k(0)=0.
\end{aligned}
\end{equation}

\medskip Under the assumption \eqref{ass H1}, if $k$ is even, we write
\begin{equation}\label{decom vk+}
\begin{aligned}
v_{k}^{+}(\rho)=\rho^{-\frac{1}{p}+2k}\Big(&\sum\limits_{\substack{m+n\geq 1\\ \lambda_{+}m+\lambda_{-}n>-2k+\lambda_{+}}}v_{m,n}^{k,0,0}\rho^{\lambda_{+}m+\lambda_{-}n}\\
&+\sum\limits_{\substack{m,n\geq 1, 0\leq l\leq [\frac{k-2}{2}]-1\\\lambda_{+}m+\lambda_{-}n>-2(k-1)+4l+\lambda_{+}}}v_{m,n}^{k,l,1}\rho^{\lambda_{+}m+\lambda_{-}n-2-4l}\\
&+\sum\limits_{\substack{m\geq 0,n\geq 1, 0\leq l\leq [\frac{k-3}{2}]\\\lambda_{+}m+\lambda_{-}n>-2(k-2)+4l+\lambda_{+}}}v_{m,n}^{k,l,2}\rho^{\lambda_{+}m+\lambda_{-}n-4-4l}\Big)\eta(\rho)+\tilde{v}_{k}^{+}(\rho).%\quad \text{for even $k$,}
\end{aligned}
\end{equation}
At the same time, by \eqref{ass H1}, if $k$ is odd, we write
\begin{equation*}%\label{decom vk+}
\begin{aligned}
v_{k}^{-}(\rho)=\rho^{-\frac{1}{p}+2k}\Big(&\sum\limits_{\substack{m+n\geq 1\\ \lambda_{+}m+\lambda_{-}n>-2k}}v_{m,n}^{k,0,0}\rho^{\lambda_{+}m+\lambda_{-}n}+\sum\limits_{\substack{m,n\geq 1, 0\leq l\leq [\frac{k-2}{2}]-1\\\lambda_{+}m+\lambda_{-}n>-2(k-1)+4l}}v_{m,n}^{k,l,1}\rho^{\lambda_{+}m+\lambda_{-}n-2-4l}\\
&+\sum\limits_{\substack{m\geq 0,n\geq 1, 0\leq l\leq [\frac{k-3}{2}]-1\\\lambda_{+}m+\lambda_{-}n>-2(k-2)+4l}}v_{m,n}^{k,l,2}\rho^{\lambda_{+}m+\lambda_{-}n-4-4l}\Big)\eta(\rho)+\tilde{v}_{k}^{-}(\rho),%\quad \text{for odd $k$,}
\end{aligned}
\end{equation*}
where the coefficient $v_{m,n}^{k,l,j}$, $j=0,1,2$ depends on $d_{m',n'}^{k,l,j}$ for $0\leq m'\leq m$, $0\leq n'\leq n$. Then $\tilde{v}_{k}^{\pm}$ solves the equation $L_{\pm}\tilde{v}_{k}^{\pm}=\tilde{G}_{k}^{\pm}$ with the following expansions as $\rho$ goes to infinity:
\begin{itemize}
\item If $k$ is even, using \eqref{resonance +} and \eqref{even Gk}, we have
\begin{equation*}%\label{expansion tilde Gk+}
\begin{aligned}
\tilde{G}_{k}^{+}(\rho)=\rho^{-\frac{1}{p}+2(k-1)}\Big(&\sum\limits_{\substack{m+n\geq 1\\ \lambda_{+}m+\lambda_{-}n<-2k+\lambda_{+}}}\tilde{d}_{m,n}^{k,0,0}\rho^{\lambda_{+}m+\lambda_{-}n}\\
&+\sum\limits_{\substack{m,n\geq 1, 0\leq l\leq [\frac{k-2}{2}]-1\\\lambda_{+}m+\lambda_{-}n<-2(k-1)+4l+\lambda_{+}}}\tilde{d}_{m,n}^{k,l,1}\rho^{\lambda_{+}m+\lambda_{-}n-2-4l}\nonumber\\
&+\sum\limits_{\substack{\substack{m\geq 0,n\geq 1, 0\leq l\leq [\frac{k-3}{2}]\\\lambda_{+}m+\lambda_{-}n<-2(k-2)+4l+\lambda_{+}}}}\tilde{d}_{m,n}^{k,l,2}\rho^{\lambda_{+}m+\lambda_{-}n-4-4l}\\
&+\sum_{m,n\geq 1}\tilde{d}_{m,n}^{k,[\frac{k-2}{2}],1}\rho^{\lambda_{+}m+\lambda_{-}n-2-4[\frac{k-2}{2}]}+\sum_{m,n\geq 2}d_{m,n}^{k,\frac{k-2}{2},2}\rho^{\lambda_{+}m+\lambda_{-}n-2k}\Big).
\end{aligned}
\end{equation*}
\item If $k$ is odd, by \eqref{resonance -} and \eqref{odd Gk}, we obtain
\begin{equation*}
\begin{aligned}
\tilde{G}_{k}^{-}(\rho)=\rho^{-\frac{1}{p}+2(k-1)}\Big(&\sum\limits_{\substack{m+n\geq 1\\ \lambda_{+}m+\lambda_{-}n<-2k}}\tilde{d}_{m,n}^{k,0,0}\rho^{\lambda_{+}m+\lambda_{-}n}\\
&+\sum\limits_{\substack{m,n\geq 1, 0\leq l\leq [\frac{k-2}{2}]-1\\\lambda_{+}m+\lambda_{-}n<-2(k-1)+4l}}\tilde{d}_{m,n}^{k,l,1}\rho^{\lambda_{+}m+\lambda_{-}n-2-4l}\\
&+\sum\limits_{\substack{\substack{m\geq 0,n\geq 1, 0\leq l\leq [\frac{k-3}{2}]-1\\\lambda_{+}m+\lambda_{-}n<-2(k-2)+4l}}}\tilde{d}_{m,n}^{k,l,2}\rho^{\lambda_{+}m+\lambda_{-}n-4-4l}\\
&+\sum\limits_{m,n\geq 1}\tilde{d}_{m,n}^{k,[\frac{k-2}{2}],1}\rho^{\lambda_{+}m+\lambda_{-}n-2-4[\frac{k-2}{2}]}+\sum_{m\geq 0,n\geq 1}\tilde{d}_{m,n}^{k,[\frac{k-3}{2}],2}\rho^{\lambda_{+}m+\lambda_{-}n-4-4[\frac{k-3}{2}]}\\
&+\sum_{m\geq 1,n\geq 2}d_{m,n}^{k,\frac{k-1}{2},2}\rho^{\lambda_{+}m+\lambda_{-}n-2k}
\Big).
\end{aligned}
\end{equation*}
\end{itemize}

 Thus, we select $C_k$
\begin{equation*}
\begin{split}
C_k=\left\{
\begin{aligned}
&(-1)^{\frac{k}{2}} \int_{0}^{\infty}\Phi_{+}(s)\tilde{G}_{k}^{+}(s)s^{d-1}\ud s,\,\,\,\,\,\,\text{for even $k$}, \\
&(-1)^{\frac{k-1}{2}}\int_{0}^{\infty}\Phi_{-}(s)\tilde{G}_{k}^{-}(s)s^{d-1}\ud s,\,\,\,\,\,\,\text{for odd $k$}, 
\end{aligned}
\right.
\end{split}
\end{equation*}
then \eqref{inner initial k} implies that $\chi_k$ is smooth at the origin and has an even Taylor expansion. The above choice of $C_k$ eliminates the slow homogeneous component proportional to $\Theta_\pm$ in the asymptotic expansion of $\tilde v_k^\pm$ at infinity. Furthermore, the asymptotic expansion at infinity is a direct consequence of the integral representation
\begin{equation*}
\tilde{v}_k^{\pm}(\rho)=\Phi_{\pm}(\rho)\int_{0}^{\rho}\Theta_{\pm}(s)\tilde{G}_{k}^{\pm}(s)s^{d-1}\ud s+\Theta_{\pm}(\rho)\int_{\rho}^{\infty}\Phi_{\pm}(s)\tilde{G}_{k}^{\pm}(s)s^{d-1}\ud s.
\end{equation*}
We only focus on the case where $k$ is even, as the odd case is analogous. Substituting the expansion of $\tilde{G}_{k}^{+}$ and Lemma \ref{fundamental-inner} into this expression, and combining with \eqref{ass H1} and \eqref{decom vk+}, the asymptotic expansion of $\chi_k$ at infinity is given by
\begin{equation}\label{even chi expansion}
\begin{aligned}
\chi_{k}(\rho)=&\rho^{-\frac{1}{p}}\sum\limits_{m\geq 0,n\geq 1}c_{m,n}^{k,+}\rho^{\lambda_{+}m+\lambda_{-}n}+\rho^{-\frac{1}{p}+2k}\Big(\sum\limits_{m+n\geq 1}c_{m,n}^{k,0,0}\rho^{\lambda_{+}m+\lambda_{-}n}\\
&+\sum\limits_{\substack{m,n\geq 1\\0\leq l\leq [\frac{k-2}{2}]}}c_{m,n}^{k,l,1}\rho^{\lambda_{+}m+\lambda_{-}n-2-4l}+\sum\limits_{\substack{m\geq 0,n\geq 1\\0\leq l\leq [\frac{k-3}{2}]}}c_{m,n}^{k,l,2}\rho^{\lambda_{+}m+\lambda_{-}n-4-4l}\Big).
\end{aligned}
\end{equation}
Observing the index relation $[\frac{k-1}{2}]=[\frac{k-2}{2}]$, and the identity $-4-4l=-2k$ for $l=\frac{k-2}{2}$, the expansion \eqref{even chi expansion} can be recast into the form \eqref{inner expansion infinity k2}.
\end{proof}
\bigbreak

\subsection{Estimate of the approximate solution in the inner region}\label{sebsec 2.3}
Under the above notations, we choose $N_{in} = K$,
\begin{align}\label{def psi in}
\psi_{in}^{ap}(t,x)=\lambda^{\frac{1}{p}}(t)u_{in}^{ap}(t,\rho)=\lambda^{\frac{1}{p}}(t)\Big(Q(\rho)+\chi_{in}^{ap}(t,\rho)\Big),
\end{align}
where $\chi_{in}^{ap}(t,\rho)=\sum\limits_{k=1}^{K}t^{2\nu k}\chi_k(\rho)$ with $\rho=\lambda(t)|x|$. Thanks to Lemma \ref {lem: inner region}, $\chi_{in}^{ap}$  satisfies the following $L^{\infty}$
 estimates in $\mathcal{S}_{in}$ .
 \medbreak

\begin{lemma}\label{lem: sol chi in L infinity}
{\sl For any  integers $k, l\geq 0$, there exist a positive constant $C$ and a small positive time $T=T(N_{\rm in})$ such that for all  $0 < t \leq T$, the following estimates hold: 
\begin{equation}\label{inner L infinity}  
\left\|\langle\rho\rangle^{-k}\nabla_{\rho}^{l}\chi_{in}^{ap}\right\|_{L^{\infty}(\mathcal{S}_{in})} \leq C\,t^{2\nu}.
\end{equation}
}\end{lemma}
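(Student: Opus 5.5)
The plan is to bound each term of $\chi_{in}^{ap}=\sum_{k=1}^{K}t^{2\nu k}\chi_k$ separately and then use that $K=N_{\rm in}$ is finite. Since $\langle\rho\rangle^{-k}\leq 1$, it suffices to treat the weight $k=0$; write $j$ for the summation index to avoid confusion with the weight exponent. In the variable $\rho$, the region $\mathcal S_{in}$ is
\begin{equation*}
0\leq\rho\leq 10\,t^{-(\nu-\epsilon_1)}.
\end{equation*}
So the whole estimate reduces to pointwise bounds on $\partial_\rho^l\chi_j$ on this growing interval.

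\textbf{Step 1: pointwise bounds on each profile.} From Lemma \ref{lem: inner region}, each $\chi_j$ is smooth on $[0,\infty)$ with an even Taylor expansion at the origin. This gives $|\partial_\rho^l\chi_j|\leq C_{j,l}$ on $[0,2]$. For $\rho\geq 2$, I use the termwise differentiable expansions \eqref{inner expansion infinity k1}--\eqref{inner expansion infinity k2}. In these expansions the leading exponent is $-\frac1p+2j+\lambda_+$, attained at $(m,n)=(1,0)$, and every other exponent is strictly smaller. Each derivative lowers the exponent by one. Hence, for all $\rho\geq0$,
\begin{equation*}
|\partial_\rho^l\chi_j(\rho)|\leq C_{j,l}\langle\rho\rangle^{e_j},\qquad e_j:=-\tfrac1p+2j+\lambda_+ .
\end{equation*}
The derivative loss only improves this, so I discard it.

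\textbf{Step 2: summation over $\mathcal S_{in}$.} I use $\lambda_+<-2$, which is stated in Step 1 of the proof of Lemma \ref{lem: inner region}. It gives
\begin{equation*}
e_j<2j-2-\tfrac1p<2j-2.
\end{equation*}

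\emph{Case $j=1$.} Here $e_1<0$, so $t^{2\nu}|\partial^l_\rho\chi_1|\leq Ct^{2\nu}$ directly.

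\emph{Case $j\geq2$ with $e_j\leq 0$.} Again the profile is bounded, and the term is at most $Ct^{2\nu j}\leq Ct^{2\nu}$ for $t\leq1$.

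\emph{Case $j\geq2$ with $e_j>0$.} I use $\rho\leq 10t^{-(\nu-\epsilon_1)}$ together with $0<\nu-\epsilon_1<\nu$ and $e_j<2j-2$:
\begin{equation*}
t^{2\nu j}\langle\rho\rangle^{e_j}\leq C t^{2\nu j-(\nu-\epsilon_1)e_j}\leq C t^{2\nu j-\nu(2j-2)}=Ct^{2\nu}.
\end{equation*}

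Summing over the finitely many $1\leq j\leq K$ gives \eqref{inner L infinity}. The constant $C$ depends on $l$ and $K$, and we take $T\leq1$.

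The only delicate point is the uniformity in Step 1. One must check that the asymptotic expansions give a genuine bound $C\langle\rho\rangle^{e_j}$ for all $\rho\geq2$, and not only as $\rho\to\infty$. The plan is to truncate each expansion at a finite order with a remainder of lower order, which the termwise differentiability in Lemma \ref{lem: inner region} permits, and to cover the intermediate compact range by continuity.
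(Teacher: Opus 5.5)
Your proposal is correct and is the argument the paper has in mind. The paper gives no separate proof; it only invokes the expansions of Lemma~\ref{lem: inner region}, and your estimate $t^{2\nu j}\rho^{e_j}\leq Ct^{2\nu j-(\nu-\epsilon_1)e_j}\leq Ct^{2\nu}$ on $\rho\leq 10t^{-(\nu-\epsilon_1)}$, with $e_j=-\frac1p+2j+\lambda_+<2j-2$ coming from $\lambda_+<-2$, is exactly the bookkeeping behind that citation.

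The step you leave implicit, passing from $\partial_\rho^l$ to $\nabla_\rho^l$, is harmless. Near the origin the even Taylor expansion makes $\chi_j$ smooth on $\mathbb R^d$. For $\rho\geq1$, any radial $f$ satisfies $|\nabla^l f|\lesssim\sum_{a\leq l}\rho^{a-l}|\partial_\rho^a f|$, which keeps the bound $C\langle\rho\rangle^{e_j}$.
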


\medbreak

\medskip Along the same lines taking advantage of Lemma \ref {lem: inner region},  we get the following $L^{2}$  estimates:

\begin{lemma}\label{lem: sol chi in L 2}\label{L 2 est inner}
{\sl Under the above notations, we have for all $0 < t \leq T$:
\begin{equation*} 
\begin{aligned}
\big\|  \langle \rho\rangle^{-k}\partial_{\rho}^{l}\chi_{in}^ {ap}\big\|_{L_{\rho}^{2}(\mathcal{S}_{in})}& \leq Ct^{(\nu-\epsilon_1)(|\lambda_{+}|-s_c+k+l)+2\epsilon_1},&\\&  \qquad  \qquad \qquad\quad \forall \,\,0\leq k+l< s_c+2-|\lambda_{+}|, 
\end{aligned}
\end{equation*}
\begin{equation*} 
\begin{aligned}
\big \| \langle \rho\rangle ^{-k} \partial_{\rho}^{l}   \chi_{in}^ {ap}\big\|_{L_{\rho}^{2}(\mathcal{S}_{in})} &\leq Ct^{2\nu},\qquad \forall \,\,k+l>s_c+2-|\lambda_{+}|.
\end{aligned}
\end{equation*}
}
\end{lemma}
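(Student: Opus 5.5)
The plan is to estimate each term $t^{2\nu k}\chi_k$, $1\le k\le K$, separately on $\mathcal S_{in}$, which in the $\rho$ variable is the ball $\rho\le R(t):=10t^{\frac12+\epsilon_1}\lambda(t)=10t^{-(\nu-\epsilon_1)}$, and then sum over $k$. By Lemma \ref{lem: inner region}, each $\chi_k$ is smooth on $[0,\infty)$ with an even Taylor expansion at the origin. At infinity it has a termwise differentiable expansion whose leading power is $\rho^{-\frac1p+2k+\lambda_+}$, reached at $(m,n)=(1,0)$ in the $j=0$ sum; every other exponent in \eqref{inner expansion infinity k2} is strictly smaller since $\lambda_-<\lambda_+<0$. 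Hence for all $k',l\ge0$
\begin{equation*}
\big|\langle\rho\rangle^{-k'}\partial_\rho^{l}\chi_k(\rho)\big|\le C\langle\rho\rangle^{-\frac1p+2k+\lambda_+-k'-l},\qquad \rho\ge0 .
\end{equation*}

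Next I would compute the weighted $L^2$ norm in radial coordinates:
\begin{equation*}
\big\|\langle\rho\rangle^{-k'}\partial_\rho^l\chi_k\big\|_{L^2(\rho\le R)}^2\lesssim 1+\int_1^R \rho^{-\frac2p+4k+2\lambda_+-2k'-2l+d-1}\ud\rho .
\end{equation*}
Since $-\frac2p+d=2s_c$, the integrand exponent plus one equals $2(2k-|\lambda_+|+s_c-k'-l)$. If $k'+l<s_c+2k-|\lambda_+|$, the integral is $\lesssim R^{2(s_c+2k-|\lambda_+|-k'-l)}$. This gives
\begin{equation*}
t^{2\nu k}\big\|\langle\rho\rangle^{-k'}\partial_\rho^l\chi_k\big\|_{L^2(\mathcal S_{in})}\lesssim t^{2\nu k}\,t^{-(\nu-\epsilon_1)(s_c+2k-|\lambda_+|-k'-l)}=t^{(\nu-\epsilon_1)(|\lambda_+|-s_c+k'+l)+2k\epsilon_1}.
\end{equation*}
If instead $k'+l>s_c+2k-|\lambda_+|$, the integral converges and the bound is simply $Ct^{2\nu k}$.

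It remains to sum over $k$. In the first regime $k'+l<s_c+2-|\lambda_+|$, the condition $k'+l<s_c+2k-|\lambda_+|$ holds for every $k\ge1$, so every term has the first form. Since $t\le T<1$, the term with $k=1$ dominates, giving the bound $t^{(\nu-\epsilon_1)(|\lambda_+|-s_c+k'+l)+2\epsilon_1}$ as claimed. In the regime $k'+l>s_c+2-|\lambda_+|$, the $k=1$ term is bounded by $Ct^{2\nu}$. For each $k\ge2$, the term is either $Ct^{2\nu k}\le Ct^{2\nu}$ when the integral converges, or of the first form with exponent
\begin{equation*}
2\nu k-(\nu-\epsilon_1)(s_c+2k-|\lambda_+|-k'-l)\ge 2\nu k-(\nu-\epsilon_1)(2k-2)\ge 2\nu,
\end{equation*}
where the first inequality uses $s_c-|\lambda_+|-k'-l<-2$. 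So the sum is again $\lesssim t^{2\nu}$.

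The main obstacle is the boundary case $k'+l=s_c+2k-|\lambda_+|$ for some $k\ge2$, where the integral produces a logarithm $\log R$. The statement excludes equality only for $k=1$. For $k\ge2$, however, the logarithmic term is $t^{2\nu k}|\log t|\lesssim t^{2\nu}$, so it is harmless. Beyond this, the only points to check are that the differentiated expansions are uniform, which is guaranteed by Lemma \ref{lem: inner region}, and that the region $\rho\le1$ contributes $O(1)$ by smoothness.
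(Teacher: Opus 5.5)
Your proposal is correct and follows the route the paper uses implicitly. The paper only says the lemma follows ``along the same lines'' from Lemma~\ref{lem: inner region}, and it carries out the same termwise integration over $\rho\le 10t^{-(\nu-\epsilon_1)}$ explicitly in the proof of Lemma~\ref{lem: error chi in}. Your logarithmic boundary case is actually vacuous for integer $k,l$, since $s_c+2k-|\lambda_+|=2k+1+\frac{\sqrt{\operatorname{Discr}}}{2}\notin\mathbb Z$ by the Remark following the lemma, but your handling of it does no harm.
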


\medbreak

\begin{remark}
Under assumption $(\mathrm{H1})$, with integer $p$ satisfies $2p+1>p_{JL}$, one has
$$s_c+2-|\lambda_+|=3+\frac{\sqrt{\operatorname{Discr}}}{2}\notin\mathbb Z.$$
Indeed, otherwise $\sqrt{\operatorname{Discr}}$ would be a positive even integer. By the definition \eqref{def Discr}, this implies $2/p\in\mathbb Z$, hence
$p\in\{1,2\}$. For these two values,
\begin{equation*}
\begin{aligned}
\operatorname{Discr}
=\begin{cases}
(d-8)^2-24,&p=1,\\
(d-7)^2-20,&p=2.
\end{cases}
\end{aligned}
\end{equation*}
Therefore, $\sqrt{\operatorname{Discr}}$ is a positive even integer only when $(d,p)=(13,2)$, which is excluded by the assumption $(\mathrm{H1})$.
\end{remark}

\medskip Let us end this section by estimating the remainder term:
\begin{equation}\label{new def error inner}
\begin{aligned}
\mathcal{R}_{in}^{ap}(t,\rho)=%&[\eqref{eq of U in}U_{in}^{N_{in}}](t,\rho).
-it^{1+2\nu}\partial_{t}u_{in}^{ap}+(\frac{1}{2}+\nu)it^{2\nu}\Lambda u_{in}^{ap}-\Delta u_{in}^{ap}-|u_{in}^{ap}|^{2p}u_{in}^{ap}.
\end{aligned}
\end{equation}
One has
\begin{lemma}\label {lem: error chi in}
{\sl For any integer $l\geq 1$,  there exist a positive constant~$C_{l, K}$ and a small positive  time $T=T(K)$ such that for all  $0 < t \leq T$,  the  remainder term~$\mathcal{R}_{in}^ {ap}$ satisfies 
  \begin{equation}\label{error inner 1}
  \begin{aligned}
 \big \|\nabla^{l}\mathcal{R}_{in}^{ap}\big\|_{L_{\rho}^2(\mathcal{S}_{in})}\leq C t^{2\nu+(\nu-\epsilon_1)(|\lambda_{+}|+l-s_c)+2K\epsilon_1}+Ct^{2\nu(K+1)}.
 \end{aligned}
    \end{equation}  }
\end{lemma}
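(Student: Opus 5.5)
The plan is to substitute the ansatz $u_{in}^{ap}=Q+\sum_{k=1}^K t^{2\nu k}\chi_k$ into \eqref{new def error inner} and use the recursive system \eqref{chi ab system} to cancel all terms of order $t^{2\nu k}$ with $k\le K$. Since $t^{1+2\nu}\partial_t(t^{2\nu k}\chi_k)=2\nu k\,t^{2\nu(k+1)}\chi_k$ and $(\frac12+\nu)it^{2\nu}\Lambda$ also raises the power by one, \eqref{eq of chi k in} gives an exact identity of the form
\[
\mathcal R_{in}^{ap}=t^{2\nu(K+1)}\Big(\big(\tfrac12+\nu\big)i\Lambda\chi_K-2\nu K i\chi_K\Big)+\Big(N_2(\chi_{in}^{ap})+\sum_{k=2}^K t^{2\nu k}D_k^{(2)}\Big).
\]
The last bracket only contains monomials $Q^{2p+1-q}\chi_{j_1}\cdots\chi_{j_q}$ (and conjugates) with $j_1+\dots+j_q\ge K+1$. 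Here I use that $p\in\mathbb N^\ast$: $|Q+\chi|^{2p}(Q+\chi)$ is a polynomial in $\chi,\bar\chi$, so the residual is a finite sum and its total degree in $t^{2\nu}$ is at most $(2p+1)K$.

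Each such term has the form $t^{2\nu J}F_J(\rho)$ with $J\ge K+1$. Lemma \ref{lem: inner region} gives termwise differentiable expansions for $\chi_k$ and for $Q$. From these I read off that $\nabla^l F_J$ is smooth at the origin and satisfies $|\nabla^l F_J(\rho)|\lesssim \rho^{-\frac1p+2(J-1)+\lambda_+-l}$ as $\rho\to\infty$. The justification is that each $\chi_j$ grows like $\rho^{-\frac1p+2j+\lambda_+}$ and each factor of $Q$ carries $\rho^{-\frac1p}$. For a product with $q\ge2$ factors, the net power is $2J-\frac{2p+1}p+q\lambda_+$, which is no worse than $-\frac1p+2J-2+\lambda_+$ since $\lambda_+<-2$. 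The same count applies to the linear $\chi_K$ term, which behaves like $\rho^{-\frac1p+2K+\lambda_+}$.

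Next I integrate over $\mathcal S_{in}$, that is $\rho\le 10t^{-(\nu-\epsilon_1)}$. The weight is $\rho^{d-1}$, and $2(-\frac1p)+d=2s_c$. So
\[
\|\nabla^l(t^{2\nu J}F_J)\|_{L^2(\mathcal S_{in})}\lesssim t^{2\nu J}\Big(1+\rho_*^{\,s_c+2(J-1)+\lambda_+-l}\Big),\qquad \rho_*=10t^{-(\nu-\epsilon_1)}.
\]
I split into cases. If the exponent is negative, the bound is $t^{2\nu J}\le t^{2\nu(K+1)}$. If it is positive, the bound is
\[
t^{2\nu J-(\nu-\epsilon_1)(s_c+2J-2+\lambda_+-l)}=t^{2\nu+(\nu-\epsilon_1)(|\lambda_+|+l-s_c)+2(J-1)\epsilon_1},
\]
and $J-1\ge K$ gives the first term of \eqref{error inner 1}. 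The equality case cannot occur, because $s_c+\lambda_+\notin\mathbb Z$ by the Remark. The linear $\chi_K$ term falls under the same computation with $J=K+1$.

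The main obstacle is justifying the uniform growth bound $\rho^{-\frac1p+2(J-1)+\lambda_+}$ for all nonlinear monomials, including derivatives. I will first check whether the leading $Q^{2p-1}\chi_{j_1}\chi_{j_2}$ terms have an exponent sharp enough: the leading power $\lambda_+$ in each $\chi_j$ enters twice, which gives $2\lambda_+<\lambda_+-2$. If that holds, the higher-order products are handled by the same inequality, using only the leading terms in \eqref{inner expansion infinity k2} together with the differentiability of the expansions. Finally, $T=T(K)$ is chosen small so that the finitely many constants and the lower-order $t$ powers are dominated.
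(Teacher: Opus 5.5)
Your proposal is correct and follows the paper's proof. You write $\mathcal R_{in}^{ap}=\sum_{J=K+1}^{(2p+1)K}t^{2\nu J}F_J$ (the paper's $G_k$), bound $\nabla^lF_J$ at infinity by $\rho^{-\frac1p+2(J-1)+\lambda_+-l}$ using the inner expansions, and integrate over $\rho\le 10t^{-(\nu-\epsilon_1)}$, exactly as the paper does. Your power counting for the nonlinear monomials (which in fact only needs $(q-1)\lambda_+\le 0$, i.e. $\lambda_+<0$) and your use of $s_c+\lambda_+\notin\mathbb Z$ to rule out a logarithmic borderline exponent are details the paper leaves implicit.
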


\medbreak

\begin{proof}
In view of the computations carried out in the last subsection, substituting \eqref{ansatz of chi} into \eqref{new def error inner} and using the equation \eqref{eq of chi k in}, we have 
\begin{equation*}
\begin{aligned}
\mathcal{R}_{in}^{ap}(t,\rho)=\sum\limits_{K+1\leq k\leq (2p+1)K}t^{2\nu k}G_k(\rho),
\end{aligned}
\end{equation*}
where
\begin{equation*}
\begin{aligned}
G_k=&-\Delta\chi_k-(p+1)Q^{2p}\chi_k-pQ^{2p}\overline{\chi_k}\\
&+\big(\frac12+\nu\big)i\Lambda\chi_{k-1}-2\nu i(k-1)\chi_{k-1}-D_k^{(2)}.
\end{aligned}
\end{equation*}
Here $\chi_k=0$ for $k>K$, and $D_k^{(2)}$ is determined by $\chi_1,\ldots,\chi_K$ through \eqref{def: D2}.

\medskip Using the expansion of $\chi$ given in Lemma \ref{lem: inner region}, $G_k$ is $O(1)$ as $\rho$ close to $0$. And as $\rho\rightarrow +\infty$, it admits the following expansion that can be differentiated any number of times with respect to the variable $\rho$, 
\begin{equation}\label {Gk infinity}
\begin{aligned}
G_{k}(\rho) =i^k \rho^{-\frac{1}{p}+2(k-1)}\Big(&\sum\limits_{m+n\geq 1}g_{k}^{m,n,0}\rho^{\lambda_{+}m+\lambda_{-}n}+\rho^{-2}\sum\limits_{\substack{m,n\geq 1\\0\leq l\leq [\frac{k-1}{2}]}}g_{k,l}^{m,n,1}\rho^{\lambda_{+}m+\lambda_{-}n-4l}\\
&+\rho^{-4}\sum\limits_{\substack{m\geq 0,n\geq 1\\0\leq l\leq [\frac{k-2}{2}]}}g_{k,l}^{m,n,2}\rho^{\lambda_{+}m+\lambda_{-}n-4l}\Big),
\end{aligned}
\end{equation}
with some real constants $g_{k}^{m,n,0}$ and  $g_{k,l}^{m,n,j}$, $j=1,2$ that can be determined recursively in terms of the functions~$\chi_j$, for $j=1, \cdots, K$. Thus, for any $l\geq 0$, by \eqref{Gk infinity}, we deduce that
\begin{align*}
\big \|\nabla_{\rho}^{l}\mathcal{R}_{in}^{ap}\big\|^2_{L^2(\mathcal{S}_{in})}\lesssim& \sum\limits_{k=K+1}^{(2p+1)K}t^{4\nu k}\Big(C+\int_{1}^{t^{-(\nu-\epsilon_1)}}\rho^{-\frac{2}{p}+4(k-1)+2\lambda_{+}-2l}\rho^{d-1}\ud \rho\Big)\\
\leq& Ct^{4\nu+2(\nu-\epsilon_1)(|\lambda_{+}|-s_c+l)+4\epsilon_1K}+Ct^{4\nu(K+1)}.
\end{align*}
This ends the proof of the lemma.
\end{proof}

\bigbreak

\section{Approximate solution in the self-similar region}\label{sec 3}
\subsection{General scheme of the construction of the approximate solution in the  self-similar region}\label{subsec 3.1}  
This section aims to construct an approximate solution $\psi_{ss}^{ap}$ to \eqref{NLS} in the region:
\begin{align*}%\label{def self similar region}
\mathcal{S}_{\rm ss}=\big\{(t,x) \big| \frac{1}{10}t^{\frac{1}{2}+\epsilon_1}\leq |x|\leq 10t^{\frac{1}{2}-\epsilon_2}\big\},
\end{align*}
which extends the approximate solution~$\psi_{in}^ {ap}$ constructed in the inner region~$\mathcal{S}_{in}$. Here  $0<\epsilon_2<\min\{\nu,\frac12\}$ to be fixed later.

\medskip We shall look for the radial solution of \eqref{NLS} in the form: 
\begin{align}\label{exp ss 1}
\psi(t,x)=t^{-\frac{1}{2p}}y^{-\frac{1}{p}}(c_{\infty}+v(t,y)),\,\,\,\text{with}\,\,\, y=t^{-\frac{1}{2}}|x|.
\end{align}
By straightforward computations, we find that $v$ solves
\begin{align}\label{eq of v}
itv_t=-\Delta v+\frac{i}{2}y\partial_{y}v+\frac{2}{p}y^{-1}\partial_{y}v-py^{-2}c_{\infty}^{2p}(v+\bar{v})-\tilde{W},
\end{align}
with
\begin{align}\label{def tilde W}
\tilde{W}(t,y)=y^{-2}\Big(\big|c_{\infty}+v(t,y)\big|^{2p}\big(c_{\infty}+v(t,y)\big)-c_{\infty}^{2p+1}-(p+1)c_{\infty}^{2p}v(t,y)-pc_{\infty}^{2p}\overline{v}(t,y)\Big).
\end{align}
 The asymptotic behavior of the solution \eqref{exp ss 1} near the origin has to be coherent with that of \eqref{inner decomposition} at infinity. To determine this expansion, we combine the expansion of $Q$ with \eqref{inner expansion infinity k1} and \eqref{inner expansion infinity k2}. By using $\rho=t^{-\nu} y$, we obtain that as $y\rightarrow 0$,
\begin{equation*}
\begin{aligned}
\psi_{in}(t,x)=t^{-\frac{1}{2p}}y^{-\frac{1}{p}}\Big(&c_{\infty}+\sum\limits_{m+n\geq 1}t^{-\nu(\lambda_{+}m+\lambda_{-}n)}\sum\limits_{k\geq 0}i^{k}c_{m,n}^{k,0,0}y^{\lambda_{+}m+\lambda_{-}n+2k}\\
&+\sum\limits_{m,n\geq 1}t^{-\nu(\lambda_{+}m+\lambda_{-}n-2)}\sum\limits_{\substack{k\geq 1\\0\leq l\leq [\frac{k-1}{2}]}}i^{k}c_{m,n}^{k,l,1}t^{4\nu l}y^{\lambda_{+}m+\lambda_{-}n+2(k-1)-4l}\\
&+\sum\limits_{m\geq 0,n\geq 1}t^{-\nu(\lambda_{+}m+\lambda_{-}n-4)}\sum\limits_{\substack{k\geq 2\\0\leq l\leq [\frac{k-2}{2}]}}i^{k}c_{m,n}^{k,l,2}t^{4\nu l}y^{\lambda_{+}m+\lambda_{-}n+2(k-2)-4l}\Big),
\end{aligned}
\end{equation*}
where $c_{m,n}^{0,0,0}=c_{m,n}$ from the expansion of $Q$ and $c_{m,n}^{k,l,j}$ for $j=0,1,2$ defined in \eqref{inner expansion infinity k1}. Interchanging the summations, this expansion can be rewritten as
\begin{equation}\label{inner expansion in the ss region}
\begin{aligned}
\psi_{in}(t,x)=t^{-\frac{1}{2p}}y^{-\frac{1}{p}}\Big(&c_{\infty}+\sum\limits_{m+n\geq 1}t^{-\nu(\lambda_{+}m+\lambda_{-}n)}\sum\limits_{k\geq 0}i^{k}c_{m,n}^{k,0,0}y^{\lambda_{+}m+\lambda_{-}n+2k}\\
&+\sum\limits_{\substack{m,n\geq 1\\l\geq 0}}t^{-\nu(\lambda_{+}m+\lambda_{-}n-2-4l)}\sum\limits_{k\geq 0}(-1)^l i^{k+1}c_{m,n}^{k+2l+1,l,1}y^{\lambda_{+}m+\lambda_{-}n+2k}\\
&+\sum\limits_{\substack{m\geq 0,n\geq 1\\l\geq 0}}t^{-\nu(\lambda_{+}m+\lambda_{-}n-4-4l)}\sum\limits_{k\geq 0}(-1)^{l} i^{k+2}c_{m,n}^{k+2l+2,l,2}y^{\lambda_{+}m+\lambda_{-}n+2k}\Big),
\end{aligned}
\end{equation}
where $c_{m,n}^{k,0,0}=0$ for $k>K$ and $c_{m,n}^{k+2l+j,l,j}=0$ for $k+2l+j>K$, $j=1,2$.

\medskip Combining \eqref{inner expansion in the ss region} with \eqref{exp ss 1}, this leads us to look for the function $v(t,y)$ under the form: 
 \begin{equation}\label{form v}
 \begin{aligned}
v(t,y)=\sum\limits_{m+n\geq 1}&t^{\nu\alpha_{m,n}} A_{m,n}^{0,0}(y)+\sum\limits_{\substack{m,n\geq 1\\l\geq 0}}t^{\nu(\alpha_{m,n}+2+4l)}A_{m,n}^{l,1}(y)\\
&+\sum\limits_{\substack{m\geq 0,n\geq 1\\l\geq 0}}t^{\nu(\alpha_{m,n}+4+4l)}A_{m,n}^{l,2}(y),
\end{aligned}
\end{equation}
where $\alpha_{m,n}:=|\lambda_{+}|m+|\lambda_{-}|n$.
Now, substituting the ansatz \eqref{form v} into \eqref{def tilde W}, the nonlinear term has the form
\begin{equation}\label{tilde W}
\begin{aligned}
\tilde{W}(t,y)=\sum\limits_{m+n\geq 2}&t^{\nu\alpha_{m,n}}g_{m,n}^{0,0}(y)+\sum\limits_{\substack{m+n\geq 3\\m,n\geq 1, l\geq 0}}t^{\nu(\alpha_{m,n}+2+4l)}g_{m,n}^{l,1}(y)\\
&+\sum\limits_{\substack{m+n\geq 2\\n\geq 1, \,m,l\geq 0}}t^{\nu(\alpha_{m,n}+4+4l)}g_{m,n}^{l,2}(y).
\end{aligned}
\end{equation}

\medskip For $j\in\{0,1,2\}$, we define the admissible index sets:
\begin{equation*}
\begin{aligned}
S_0=&\{(m,n,l)\in \mathbb{N}^3:\, m+n\geq 1, l=0\},\\
S_1=&\{(m,n,l)\in \mathbb{N}^3:\, m,n\geq 1, l\geq 0\},\\
S_2=&\{(m,n,l)\in \mathbb{N}^3:\, m\geq 0, n\geq 1, l\geq 0\},
\end{aligned}
\end{equation*}
and 
\begin{equation}\label{def admissible}
\begin{aligned}
\tilde{\mathcal{S}}:=&\Big\{(1,0,0,0), (0,1,0,0), (1,1,l,1),(0,1,l,2): l\geq 0\Big\},\\
\mathcal{S}:=&\Big\{(m,n,l,j):\, j\in\{0,1,2\},\,(m,n,l)\in S_j\Big\}.
\end{aligned}
\end{equation}
%Then, $\widetilde{\mathcal S}$ is a subset of $\mathcal S$.

\medskip Taking \eqref{form v} and \eqref{tilde W} into the equation \eqref{eq of v}, for $(m,n,l,j)\in\mathcal{S}$, the function $A_{m,n}^{l,j}$ solves the following equation:
\begin{equation}\label{system ss}
\mathcal{L}(A_{m,n}^{l,j},\bar{A}_{m,n}^{l,j})+i\mu_{m,n}^{l,j}A_{m,n}^{l,j}=g_{m,n}^{l,j},
\end{equation}
with $g_{1,0}^{0,0}=g_{0,1}^{0,0}=g_{1,1}^{l,1}=g_{0,1}^{l,2}=0$ for $l\geq 0$. Here the operator $\mathcal{L}$ is
\begin{align*}%\label{def L ss}
\mathcal{L}(f,\bar{f})=\big(-\Delta+\frac{2}{p}y^{-1}\partial_{y}-pc_{\infty}^{2p}y^{-2}+\frac{i}{2}y\partial_{y}\big)f-pc_{\infty}^{2p}y^{-2}\bar{f},
\end{align*} 
and $\mu_{m,n}^{l,j}=-\nu(\alpha_{m,n}+2j+4l)$, $j=0,1,2$. Denote the functions
$$A_{m,n}^{l,j,+}:=\Re{A_{m,n}^{l,j}},\quad A_{m,n}^{l,j,-}:=\Im{A_{m,n}^{l,j}},$$
and 
$$g_{m,n}^{l,j,+}:=\Re{g_{m,n}^{l,j}},\quad g_{m,n}^{l,j,-}:=\Im{g_{m,n}^{l,j}}.$$
Writing \eqref{system ss} in real and imaginary parts gives
\begin{equation}\label{ss system 2}
\left\{\begin{aligned}
&\Big(-\partial_y^2-\frac{2s_c-1}{y}\partial_y-2p c_\infty^{2p}y^{-2}\Big)A_{m,n}^{l,j,+}-\Big(\frac {y}{2}\partial_y+\mu_{m,n}^{l,j}\Big)A_{m,n}^{l,j,-}=g_{m,n}^{l,j,+},\\
&\Big(-\partial_y^2-\frac{2s_c-1}{y}\partial_y\Big)A_{m,n}^{l,j,-}+\Big(\frac{y}{2}\partial_y+\mu_{m,n}^{l,j}\Big)A_{m,n}^{l,j,+}=g_{m,n}^{l,j,-}.
\end{aligned}
\right.
\end{equation}

\medbreak

\subsection{Analysis of the vector functions $A_{m,n}^{l,j}$} \label{subsec 3.2}
To construct the solutions $A_{m,n}^{l,j}$ for the system \eqref{system ss}, we first provide the fundamental solution for the homogeneous equation
\begin{equation}\label{homo ss}
\mathcal{L}(f,\bar{f})+i\mu_{m,n}^{l,j}f=0,
\end{equation}
 as stated in the following lemma.
 
\medskip

\begin{lemma}\label{lem basis of Lss 0}
{\sl Under the assumption $(\mathrm{H1})$, for each fixed $(m,n,l,j)\in\mathcal S$, equation \eqref{homo ss} admits a fundamental system $\{e_1^+,e_2^+,e_1^-,e_2^-\}$ over $\mathbb R$. As $y\rightarrow 0$, solutions have the expansions:
\begin{equation}\label{expansion basis ss 0}
\begin{aligned}
&e_1^{+}(y)=y^{\lambda_+}\big(\sum_{0\leq k\leq K}i^k a_{1,k}^{+}y^{2k}+o(y^{2K})\big), \quad  e_2^{+}(y) =y^{\lambda_-}\big(\sum_{0\leq k\leq K}i^k a_{2,k}^{+}y^{2k}+o(y^{2K})\big),\\
&e_1^{-}(y)=\sum_{0\leq k\leq K}i^{k+1} a_{1,k}^{-}y^{2k}+o(y^{2K}),\quad e_2^{-}(y)=y^{\lambda_{+}+\lambda_{-}} \big(\sum_{0\leq k\leq K}i^{k+1} a_{2,k}^{-}y^{2k}+o(y^{2K})\big),
\end{aligned}
\end{equation}
where $a_{q,0}^{\pm}=1$ and $a_{q,k}^{\pm}\in\mathbb R$ depend on $\mu_{m,n}^{l,j}$ for $q=1,2$. These asymptotics can be differentiated any number of times with respect to $y$. Furthermore, the Wronskian is
\begin{equation*}
\det\begin{pmatrix}
\Re e_1^+ & \Re e_2^+ & \Re e_1^- & \Re e_2^-\\
\partial_y\Re e_1^+ & \partial_y\Re e_2^+& \partial_y\Re e_1^- & \partial_y\Re e_2^-\\
\Im e_1^+ & \Im e_2^+ & \Im e_1^- & \Im e_2^-\\
\partial_y\Im e_1^+ & \partial_y\Im e_2^+& \partial_y\Im e_1^- & \partial_y\Im e_2^-
\end{pmatrix}=2(s_c-1)(\lambda_+-\lambda_-)\,y^{2-4s_c}.
\end{equation*}}
\end{lemma}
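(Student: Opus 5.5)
The real system \eqref{ss system 2} is a $4\times4$ first-order linear system, written in the unknown $(A^+,\partial_yA^+,A^-,\partial_yA^-)$, with a regular singular point at $y=0$. I will build the four solutions by a Frobenius-type construction at the origin and then compute the Wronskian from Liouville's formula.

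\textbf{Indicial exponents.} Near $y=0$ the leading parts are
\[
-\partial_y^2-\tfrac{2s_c-1}{y}\partial_y-2pc_\infty^{2p}y^{-2}\ \text{acting on } A^+,
\qquad
-\partial_y^2-\tfrac{2s_c-1}{y}\partial_y\ \text{acting on } A^-.
\]
The coupling terms $\mp(\frac y2\partial_y+\mu)$ are two orders lower, so they shift powers by $y^{2}$. Note that $2pc_\infty^{2p}=2(d-2-\frac1p)$ is the constant term of the characteristic equation. So the indicial roots of the $A^+$ equation are $\lambda_\pm$. The roots of the $A^-$ equation are $0$ and $2-2s_c=\lambda_++\lambda_-$, which uses $2s_c=2-\lambda_+-\lambda_-$.

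\textbf{Formal series.} Each root $\gamma\in\{\lambda_+,\lambda_-,0,\lambda_++\lambda_-\}$ is placed in its own component. I then solve recursively for power-series corrections $y^{\gamma+2k}$, which alternate between the real and imaginary components. This alternation is what produces the factor $i^k$ with real coefficients $a_{q,k}^\pm$.

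At step $k$, the term $y^{\gamma+2k}$ goes into the $+$ or $-$ component according to the parity of $k$. Solving for it requires that $\gamma+2k$ not be an indicial root of that component; when it is not, the coefficient is the source coefficient divided by the nonzero indicial polynomial.

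The possible coincidences are of the form
\[
2k+\lambda_\pm\in\{0,\lambda_++\lambda_-\}
\quad\text{and}\quad
2k+\{0,\lambda_++\lambda_-\}\ni\lambda_\pm,
\]
together with differences between roots of the same component, namely $\lambda_+-\lambda_-=\sqrt{\operatorname{Discr}}$ and $\lambda_++\lambda_-$. These are exactly the special cases $(m,n)=(1,0),(0,1),(1,1)$ of the nonresonance conditions \eqref{ass H1} for $k\le K$. So (H1) guarantees that the recursion is solvable up to order $K$ with no logarithms. The remark after Lemma \ref{L 2 est inner} handles the $\sqrt{\operatorname{Discr}}$ coincidence.

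\textbf{From formal to genuine solutions.} For each $\gamma$ I truncate the series at order $K$. The truncation leaves an error $O(y^{\gamma+2K+2-2})$. I then find a true solution by solving a Volterra integral equation for the remainder, using the variation-of-constants kernel built from the homogeneous model solutions $y^{\lambda_\pm}$ and $1,\,y^{\lambda_++\lambda_-}$.

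For the roots where the remainder must be subdominant relative to larger solutions (e.g.\ for $e_2^+$, since $\lambda_-$ is the smallest root), I integrate from $0$ against the appropriate kernel. Choosing the integration limits so that the kernel is integrable is the standard Levinson/Coddington–Levinson argument for regular singular points. The result is a remainder of size $o(y^{\gamma+2K})$, with the same bound for its derivatives by differentiating the integral equation.

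Real-coefficient solutions of the real $4\times4$ system give the four complex functions $e_q^\pm$. Uniqueness is not needed; I only need some choice with these asymptotics. I expect the main obstacle to be the bookkeeping that the admissible shifts $\gamma+2k$ never hit a root of the other component. The large negative gaps $\lambda_-<\lambda_+<-2$ and $\lambda_++\lambda_-<\lambda_\pm$ need care here, since a lower root can coincide with a higher one plus $2k$.

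\textbf{Wronskian.} Written as a first-order system $X'=MX$, the trace of $M$ is $-2\frac{2s_c-1}{y}$; the coupling blocks contribute no trace. Liouville's formula then gives $\det=Cy^{2-4s_c}$. I evaluate $C$ from the leading terms at $0$. The matrix is block-diagonal to leading order: the $\Re$ rows involve only $e_1^+,e_2^+$ and the $\Im$ rows only $e_1^-,e_2^-$, since $e_q^-$ is $i$ times a real leading term. The leading constant is therefore the product of the two $2\times2$ Wronskians:
\[
(\lambda_--\lambda_+)\,y^{\lambda_++\lambda_--1}\cdot(\lambda_++\lambda_-)\,y^{\lambda_++\lambda_--1}.
\]
Using $\lambda_++\lambda_-=2-2s_c$, this equals $(\lambda_+-\lambda_-)\,2(s_c-1)\,y^{2-4s_c}$, up to the sign fixed by column order. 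This gives the stated Wronskian.
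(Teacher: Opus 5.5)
Your proposal is correct and is essentially the argument the paper intends. The paper gives no separate proof, only Remark~\ref{remark basis ss 0}, which attributes possible $\ln y$ terms to even-integer gaps among the indicial roots $\lambda_+,\lambda_-,0,\lambda_++\lambda_-$ and notes that $(\mathrm{H1})$ rules these out through order $K$. The Wronskian is then handled by the same Abel-identity argument as in Lemma~\ref{lem basis of Lss infty}. Your block-diagonal leading-order evaluation gives exactly $2(s_c-1)(\lambda_+-\lambda_-)y^{2-4s_c}$ with the stated column order, so no sign hedging is needed.

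A few of the conditions you need are not literal instances of $(\mathrm{H1})$, but they hold anyway. The case $k=1$ follows from $\lambda_+<-2$. The condition $2s_c-2\neq 2k$ for $k\le K$ holds because $K+1$ is the numerator of $s_c-1$ in lowest terms, so $K=s_c-2$ whenever $s_c\in\mathbb Z$. In that integer case $\lambda_++\lambda_-+2K+2=0$. Consequently, for $e_2^-$ the root-$0$ part of your kernel may have to be integrated from a fixed $y_0$ rather than from $0$. This produces only a harmless $\ln y=o(y^{\lambda_++\lambda_-+2K})$.
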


\medskip 
\begin{remark}\label{remark basis ss 0}
Note that the fundamental solutions $\{e_1^{\pm},e_2^{\pm}\}$ may contain $\ln y$ terms when one of the following equalities holds for some integer $k\geq 1$:
\begin{equation*}
2(s_c-1)=2k,\qquad |\lambda_{+}|=2k,\qquad |\lambda_{-}-\lambda_{+}|=2k.
\end{equation*}
Here we have used the identity $|\lambda_{+}|+|\lambda_{-}|=2s_c-2$. Under assumption $(\mathrm{H1})$, no such logarithmic terms occur through order $K$, yielding the expansions \eqref{expansion basis ss 0}.
\end{remark}

\medskip

To satisfy the matching condition imposed by the inner expansion \eqref{inner expansion in the ss region}, as $y\rightarrow 0$, the solutions $A_{m,n}^{l,j}$ have to satisfy
\begin{equation}\label{match condition 0}
A_{m,n}^{l,j}(y)=\sum_{k= 0}^{K-2l-j}i^{k+j+2l} c_{m,n}^{k+2l+j,l,j}y^{\lambda_{+}m+\lambda_{-}n+2k}+o\big(y^{\lambda_+m+\lambda_-n+2(K-2l-j)}\big),
\end{equation}
for $(m,n,l,j)\in\mathcal{S}$ with $2l+j\leq K$. The higher-order coefficients are determined by \eqref{system ss} and the prescribed homogeneous constants.

\begin{lemma}\label{lem ss 0}
{\sl There exists a unique familly of solution $\{A_{m,n}^{l,j}\}_{(m,n,l,j)\in\mathcal{S}}$ to \eqref{system ss} admitting the asymptotic expansions of the form
\begin{equation}\label{expansion ss 0}
A_{m,n}^{l,j}(y)=\sum\limits_{k\geq 0}\sum\limits_{\ell\geq 0}d_{m,n}^{l,j,k,\ell}(\ln y)^{\ell}y^{\lambda_{+}m+\lambda_{-}n+2k}, \quad y\rightarrow 0,
\end{equation}
where the coefficients satisfy
\begin{equation}\label{ss coe 1}
d_{m,n}^{l,j,k,0}=i^{k+2l+j}c_{m,n}^{k+2l+j,l,j},\qquad d_{m,n}^{l,j,k,\ell}=0,\quad \ell\geq 1, \quad k+2l+j\leq K,
\end{equation}
and
\begin{equation}\label{ss coe 2}
\begin{aligned}
\Re{d_{m,n}^{l,j,k,0}}=&0,\quad \lambda_{+}m+\lambda_{-}n+2k\in\{\lambda_{+},\, \lambda_{-}\},\\
\Im{d_{m,n}^{l,j,k,0}}=&0,\quad \lambda_{+}m+\lambda_{-}n+2k\in\{0,\, \lambda_{+}+\lambda_{-}\},
\end{aligned}
\qquad k+2l+j> K.
\end{equation}
For each fixed $(m,n,l,j)\in\mathcal S$ and $k\geq0$, only finitely many $\ell\geq 1$ such that $d_{m,n}^{l,j,k,\ell}\neq 0$. The asymptotic expansion \eqref{expansion ss 0} can be differentiated any number of times with respect to $y$.}
\end{lemma}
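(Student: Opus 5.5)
We construct the family $\{A_{m,n}^{l,j}\}$ by induction on the exponent $\alpha_{m,n}+2j+4l$, ordered increasingly. Since the nonlinearity in \eqref{tilde W} is a polynomial in $v,\bar v$ of degree at least two, the source $g_{m,n}^{l,j}$ involves only profiles $A_{m',n'}^{l',j'}$ whose exponents are strictly smaller. Hence at each step $g_{m,n}^{l,j}$ is a known function. By the induction hypothesis, it has an expansion at $y\to0$ of the form $y^{-2}\sum_{k,\ell} g^{k,\ell}(\ln y)^\ell y^{\lambda_+m+\lambda_-n+2k}$. This follows by multiplying the expansions \eqref{expansion ss 0} of the lower profiles: products of the powers $y^{\lambda_+m'+\lambda_-n'}$ add indices, and the $y^{-2}$ prefactor in \eqref{def tilde W} shifts by two. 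The first-generation profiles $A_{1,0}^{0,0},A_{0,1}^{0,0},A_{1,1}^{l,1},A_{0,1}^{l,2}$ have zero source and are pure homogeneous solutions. We take them as the appropriate combination of $e_1^+$ (resp. $e_2^+$, or a combination of $e_1^+,e_2^+,e_2^-$ for the $j=1,2$ cases) from Lemma \ref{lem basis of Lss 0}, chosen so that their small-$y$ expansions reproduce \eqref{match condition 0}.

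\textbf{Formal expansion.} For general $(m,n,l,j)$ we first solve \eqref{system ss} formally near $0$. We insert the ansatz \eqref{expansion ss 0} and use the real/imaginary form \eqref{ss system 2}. The leading part of $\mathcal L$ at $y\to0$ acts on $y^{-1/p}$-normalized powers $y^a$ as the indicial operators of $L_+$ and $L_-$. In \eqref{ss system 2}, $A^+$ is acted on by the $L_+$-type indicial polynomial $-(a-\lambda_+)(a-\lambda_-)$, and $A^-$ by the $L_-$-type polynomial $-a(a-\lambda_+-\lambda_-)$. The terms $\frac y2\partial_y+\mu$ raise the power by two and couple the components. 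Thus the coefficient of $y^{\lambda_+m+\lambda_-n+2k}$ is determined recursively from lower $k$ by dividing by these indicial values. A logarithm is forced exactly when $\lambda_+m+\lambda_-n+2k$ hits $\{\lambda_\pm\}$ for the real part, or $\{0,\lambda_++\lambda_-\}$ for the imaginary part. At such an index the coefficient is free, and we fix it by \eqref{ss coe 2} (real part zero, resp. imaginary part zero).

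\textbf{Matching and absence of logarithms for $k+2l+j\le K$.} For these indices we must check two things. First, no resonance occurs: this is precisely the content of (H1), after translating $2(k+2l+j)+\lambda_+m+\lambda_-n$ versus the characteristic exponents, with parity of $k$ distinguishing real and imaginary components as in \eqref{ass H1}. Second, the recursively produced coefficients coincide with $i^{k+2l+j}c_{m,n}^{k+2l+j,l,j}$. The second point follows because the inner expansion \eqref{inner expansion in the ss region} is itself a formal solution of the same equation. Rewriting \eqref{eq of U in} in the variables $(t,y)$ gives \eqref{eq of v}, and regrouping powers of $t$ gives exactly \eqref{system ss} order by order. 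The recursion is uniquely determined when there is no resonance, so the coefficients agree.

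\textbf{Genuine solution.} We then pass from the formal series to an actual solution. We truncate the formal expansion at a high order $M$ and multiply by a cutoff near $0$, producing a remainder source $O(y^{\lambda_+m+\lambda_-n+2M-2})$. We solve for the correction by variation of constants with the fundamental system $\{e_q^\pm\}$, integrating from $0$. The Wronskian $2(s_c-1)(\lambda_+-\lambda_-)y^{2-4s_c}$ of Lemma \ref{lem basis of Lss 0} makes these integrals convergent for $M$ large, and the correction is $o(y^{\lambda_+m+\lambda_-n+2M'})$ with $M'$ large. Uniqueness follows because any two solutions with the same expansion differ by a homogeneous solution that is $o(y^{N})$ for all $N$, and by \eqref{expansion basis ss 0} such a solution must vanish. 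Termwise differentiability follows from the ODE.

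\textbf{Main obstacle.} The main obstacle is the bookkeeping in the inductive step. We must show that the index set $\mathcal S$ is closed under the nonlinear interactions, so that products land on indices $(m,n,l,j)\in\mathcal S$ with only the three families $j=0,1,2$. We must also show that the logarithmic powers stay finite for each $k$. We would first verify closure by tracking the exponents $\alpha+2j+4l$ additively, as in \eqref{tilde W}. Finiteness of the logarithmic powers holds because each resonance raises the $\ln$ power by at most one, and only finitely many resonances occur below a given $k$.
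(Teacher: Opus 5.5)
Your proposal is correct and follows essentially the paper's route: induction over $\mathcal S$, a formal recursion at $y\to0$ governed by the $L_+$- and $L_-$-type indicial polynomials, logarithms only at resonant exponents with the free constants fixed by \eqref{ss coe 1}--\eqref{ss coe 2}, and then an actual solution built from the fundamental system of Lemma \ref{lem basis of Lss 0}; your two extra arguments (that the inner expansion is itself a formal solution of \eqref{eq of v}, so the recursion is compatible with \eqref{ss coe 1}, and uniqueness via the distinct leading exponents $\lambda_++\lambda_-<\lambda_-<\lambda_+<0$ of the homogeneous solutions) make explicit two points the paper only asserts. One correction: (H1) does not exclude every indicial coincidence with $k+2l+j\le K$ (e.g.\ the imaginary part of $A_{1,1}^{0,0}$ at $y^{\lambda_++\lambda_-}$, where $k+2l+j=0$), but nothing breaks, because at such indices the free coefficient is set by \eqref{ss coe 1} and your compatibility argument shows the forcing vanishes there, so (H1) is only needed through the log-free inner expansions of Lemma \ref{lem: inner region}.
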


\begin{proof}
The first two lines of \eqref{system ss} together with the matching condition \eqref{match condition 0} yield
\begin{equation*}
A_{1,0}^{0,0}(y)=c_{1,0}^{0,0,0}e_1^{+}(y,\,\mu_{1,0}^{0,0}),\,\,\,\,\,A_{0,1}^{0,0}(y)=c_{0,1}^{0,0,0}e_2^{+}(y,\,\mu_{0,1}^{0,0}),
\end{equation*}
and for any $l\geq 0$,
\begin{equation*}
A_{1,1}^{l,1}(y)=(-1)^l c_{1,1}^{2l+1,l,1}e_2^{-}(y,\,\mu_{1,1}^{l,1}),\,\,\,\,\,A_{0,1}^{l,2}(y)=(-1)^{l+1}c_{0,1}^{2l+2,l,2}e_2^{+}(y,\,\mu_{0,1}^{l,2}),
\end{equation*}
where $e_i^{\pm}$, $i=1,2$ are defined in Lemma \ref{lem basis of Lss 0}. 

\medskip We proceed by induction to determine $A_{m,n}^{l,j}$ for all $(m,n,l,j)\in\mathcal S\setminus\widetilde{\mathcal S}$. Assume that we have found the solutions $A_{\tilde{m},\tilde{n}}^{l,j}$ for $\tilde{m}+\tilde{n}<m+n$ that satisfy the expansion \eqref{expansion ss 0}. Then, as $y\rightarrow 0$, using \eqref{expansion ss 0}, \eqref{def tilde W} and \eqref{tilde W}, $g_{m,n}^{l,j}$ admits the asymptotic expansion of the form
\begin{equation}\label{g mn expansion}
\begin{aligned}
g_{m,n}^{l,j}(y)=\sum\limits_{k\geq 0}\sum_{\ell\geq 0}\alpha_{m,n}^{l,j,k,\ell}(\ln y)^{\ell} y^{\lambda_{+}m+\lambda_{-}n+2k-2},\quad (m,n,l,j)\in \mathcal{S},
\end{aligned}
\end{equation}
where the sum over $\ell$ is finite for each fixed $k$, and $\{\alpha_{m,n}^{l,j,k,\ell}\}_{\ell\geq 1}$ is zero for $k+2l+j\leq K$.

 According to the direct computations
\begin{equation}\label{ss linear ln y}
\begin{aligned}
&\big(-\Delta+\frac2p y^{-1}\partial_y-2pc_\infty^{2p}y^{-2}\big)\big(y^a(\ln y)^\ell\big)\\
&\quad=-y^{a-2}\Big((a-\lambda_+)(a-\lambda_-)(\ln y)^\ell +\ell(2a-\lambda_+-\lambda_-)(\ln y)^{\ell-1}+\ell(\ell-1)(\ln y)^{\ell-2}\Big),\\
&\big(-\Delta+\frac2p y^{-1}\partial_y\big)\big(y^a(\ln y)^\ell\big)\\
&\quad=-y^{a-2}\Big( a(a-\lambda_+-\lambda_-)(\ln y)^\ell+\ell(2a-\lambda_+-\lambda_-)(\ln y)^{\ell-1}+\ell(\ell-1)(\ln y)^{\ell-2}\Big),
\end{aligned}
\end{equation}
and
\begin{equation*}
\big(\frac {y}{2}\partial_y+\mu_{m,n}^{l,j}\big)\big(y^a(\ln y)^\ell\big)=y^a \big(\left(\frac {a}{2}+\mu_{m,n}^{l,j}\right)(\ln y)^\ell
+\frac{\ell}{2}(\ln y)^{\ell-1}\big),
\end{equation*}
the first equation of \eqref{ss system 2} is resonant when $\lambda_{+}m+\lambda_{-}n+2k\in\{\lambda_+,\lambda_-\}$, whereas the second equation of \eqref{ss system 2} is resonant when $\lambda_{+}m+\lambda_{-}n+2k\in\{0,\lambda_{+}+\lambda_{-}\}$. A nonzero resonant forcing polynomial produces one additional power of $\ln y$.

For fixed $(m,n,l,j)\in\mathcal S$, let
 \begin{equation*}%\label{def ss 0 set}
 \mathcal{N}=\Big\{(m,n,k):\lambda_{+}m+\lambda_{-}n+2k\in\{0,\lambda_{+},\lambda_{-},\lambda_{+}+\lambda_{-}\},\,k\geq K\Big\}.
 \end{equation*}
We write
\begin{equation*}%\label{decomp ss 0}
A_{m,n}^{l,j}(y)=\sum_{(m,n,k)\in\mathcal{N}}\sum_{\ell\geq 0}\tilde{d}_{m,n}^{l,j,k,\ell}(\ln y)^{\ell+1}y^{\lambda_{+}m+\lambda_{-}n+2k}+\tilde{A}_{m,n}^{l,j}(y),
\end{equation*}
where $\tilde d_{m,n}^{l,j,k,\ell}=0$ if $(m,n,k)\notin\mathcal N$, and write
\begin{equation*}
\begin{aligned}
\tilde{d}_{m,n}^{l,j,k,\ell}=&\tilde{d}_{m,n}^{l,j,k,\ell,+}+i\tilde{d}_{m,n}^{l,j,k,\ell,-}, \quad d_{m,n}^{l,j,k,\ell}=d_{m,n}^{l,j,k,\ell,+}+id_{m,n}^{l,j,k,\ell,-},\\
&\alpha_{m,n}^{l,j,k,\ell}=\alpha_{m,n}^{l,j,k,\ell,+}+i\alpha_{m,n}^{l,j,k,\ell,-}.
\end{aligned}
\end{equation*}
If $\lambda_{+}m+\lambda_{-}n+2k\in\{\lambda_+,\lambda_-\}$, we choose
\begin{equation*}
\begin{aligned}
\tilde d_{m,n}^{l,j,k,\ell,+}=&-\frac{1}{(\ell+1)(2(\lambda_{+}m+\lambda_{-}n+2k)-\lambda_+-\lambda_-)}\\
&\qquad\times\Big(\alpha_{m,n}^{l,j,k,\ell,+}+\big(\mu_{m,n}^{l,j}+\frac{\lambda_{+}m+\lambda_{-}n+2k-2}{2}\big)d_{m,n}^{l,j,k-1,\ell,-}\\
&\qquad\quad\quad+\frac{\ell+1}{2}d_{m,n}^{l,j,k-1,\ell+1,-}+(\ell+1)(\ell+2)\tilde d_{m,n}^{l,j,k,\ell+1,+}\Big),\qquad \tilde d_{m,n}^{l,j,k,\ell,-}=0.
\end{aligned}
\end{equation*}
If $\lambda_{+}m+\lambda_{-}n+2k\in\{0,\lambda_++\lambda_-\}$, we choose
\begin{equation*}
\begin{aligned}
\tilde d_{m,n}^{l,j,k,\ell,-}=&-\frac{1}{(\ell+1)(2(\lambda_{+}m+\lambda_{-}n+2k)-\lambda_+-\lambda_-)}\\
&\qquad\times\Big(\alpha_{m,n}^{l,j,k,\ell,-}-\big(\mu_{m,n}^{l,j}+\frac{\lambda_{+}m+\lambda_{-}n+2k-2}{2}\big)
 d_{m,n}^{l,j,k-1,\ell,+}\\
 &\qquad\qquad-\frac{\ell+1}{2} d_{m,n}^{l,j,k-1,\ell+1,+}+(\ell+1)(\ell+2)\tilde d_{m,n}^{l,j,k,\ell+1,-}
\Big),\qquad\tilde d_{m,n}^{l,j,k,\ell,+}=0.
\end{aligned}
\end{equation*}
Then, applying \eqref{g mn expansion} and \eqref{ss linear ln y}, $\tilde{A}_{m,n}^{l,j}$ satisfies
\begin{equation}\label{eq tilde A ss 0}
\mathcal L\bigl(\tilde A_{m,n}^{l,j},\overline{\tilde A_{m,n}^{l,j}}\bigr)+i\mu_{m,n}^{l,j}\tilde A_{m,n}^{l,j}=\tilde g_{m,n}^{l,j},
\end{equation}
where
\begin{equation*}
\begin{aligned}
\tilde g_{m,n}^{l,j}(y)=g_{m,n}^{l,j}-\sum_{(m,n,k)\in\mathcal N}\sum_{\ell\geq0}\Big(&\mathcal{L}\big(
 \tilde d_{m,n}^{l,j,k,\ell}y^{\lambda_{+}m+\lambda_{-}n+2k}(\ln y)^{\ell+1},
 \overline{\tilde d_{m,n}^{l,j,k,\ell}} y^{\lambda_{+}m+\lambda_{-}n+2k}(\ln y)^{\ell+1}\big)\\
&+i\mu_{m,n}^{l,j} \tilde d_{m,n}^{l,j,k,\ell}y^{\lambda_{+}m+\lambda_{-}n+2k}(\ln y)^{\ell+1}\Big),
\end{aligned}
\end{equation*}
with the following asymptotic expansion as $y\rightarrow 0$:
\begin{equation*}
\begin{aligned}
\tilde{g}_{m,n}^{l,j}(y)=\sum\limits_{k\geq 0}\sum_{\ell\geq 0}\tilde{\alpha}_{m,n}^{l,j,k,\ell}(\ln y)^{\ell} y^{\lambda_{+}m+\lambda_{-}n+2k-2}.
\end{aligned}
\end{equation*}
%with some coefficients $\tilde{\alpha}_{m,n}^{l,j,k,\ell}$.

Using the fundamental system in Lemma \ref{lem basis of Lss 0}, after writing $\tilde{A}_{m,n}^{l,j}=\tilde{A}_{m,n}^{l,j,+}+i\tilde{A}_{m,n}^{l,j,-}$, the solution of \eqref{eq tilde A ss 0} has the expansion:
\begin{equation*}
\begin{aligned}
\tilde{A}_{m,n}^{l,j,+}(y)=&\sum_{\substack{k\geq0\\ \lambda_{+}m+\lambda_{-}n+2k\notin\{\lambda_+,\lambda_-\}}}\sum_{\ell\geq0}d_{m,n}^{l,j,k,\ell,+}(\ln y)^\ell y^{\lambda_{+}m+\lambda_{-}n+2k}\\
&+\sum_{\substack{k\geq0\\ \lambda_{+}m+\lambda_{-}n+2k\in\{\lambda_+,\lambda_-\}}}
d_{m,n}^{l,j,k,0,+}y^{\lambda_{+}m+\lambda_{-}n+2k},
\end{aligned}
\end{equation*}
and
\begin{equation*}
\begin{aligned}
\tilde{A}_{m,n}^{l,j,-}(y)=&\sum_{\substack{k\geq0\\ \lambda_{+}m+\lambda_{-}n+2k\notin\{0,\lambda_{+}+\lambda_{-}\}}}\sum_{\ell\geq0}d_{m,n}^{l,j,k,\ell,-}(\ln y)^\ell y^{\lambda_{+}m+\lambda_{-}n+2k}\\
&+\sum_{\substack{k\geq0\\ \lambda_{+}m+\lambda_{-}n+2k\in\{0,\lambda_{+}+\lambda_{-}\}}}
d_{m,n}^{l,j,k,0,-}y^{\lambda_{+}m+\lambda_{-}n+2k}.
\end{aligned}
\end{equation*}
The coefficients are determined recursively by the matching conditions \eqref{ss coe 1} and \eqref{ss coe 2}. Each sum over $\ell$ is finite. This completes the induction.

\end{proof}

\bigbreak
We next study the behavior of $A_{m,n}^{l,j}$ as $y \rightarrow +\infty$. To this purpose, we first give the fundamental systems for \eqref{homo ss}.

\begin{lemma}\label{lem basis of Lss infty}
{\sl  For each $(m,n,l,j)\in\mathcal S$, equation \eqref{homo ss} admits four complex valued solutions $f_{h,m,n}^{l,j,\pm}$, $h=1,2$, which form a fundamental system over $\mathbb R$. As $y\to+\infty$, they satisfy
\begin{equation}\label{ss basis infinity}
\begin{aligned}
f_{1,m,n}^{l,j,+}(y)&=y^{-2\mu_{m,n}^{l,j}} (1+O(y^{-2})),\quad f_{2,m,n}^{l,j,+}(y)=e^{\frac{iy^2}{4}} y^{-2s_c+2\mu_{m,n}^{l,j}}(1+O(y^{-2})),\\
f_{1,m,n}^{l,j,-}(y)&=iy^{-2\mu_{m,n}^{l,j}}(1+O(y^{-2})),\quad f_{2,m,n}^{l,j,-}(y)=ie^{\frac{iy^2}{4}} y^{-2s_c+2\mu_{m,n}^{l,j}} (1+O(y^{-2})).
\end{aligned}
\end{equation}
These asymptotics can be differentiated any number of times with respect to $y$. More precisely, for every integer $k\geq0$,
\begin{equation*}
\begin{aligned}
\left|\partial_y^k\left(f_{1,m,n}^{l,j,+}(y)-y^{-2\mu_{m,n}^{l,j}}\right)\right|\leq Cy^{-2\mu_{m,n}^{l,j}-2-k},
\end{aligned}
\end{equation*}
and
\begin{equation*}
\begin{aligned}
\left|\partial_y^k\left(f_{2,m,n}^{l,j,+}(y)-e^{\frac{iy^2}{4}}y^{-2s_c+2\mu_{m,n}^{l,j}}\right)\right|\leq C y^{-2s_c+2\mu_{m,n}^{l,j}+k-2}.
\end{aligned}
\end{equation*}
The same estimates hold for $f_{h,m,n}^{l,j,-}$, $h=1,2$. Furthermore, the Wronskian is defined by
\begin{equation*}
W[f_1^\pm,f_2^\pm](y)=:\det F[f_1^\pm,f_2^\pm](y),
\end{equation*}
where 
\begin{equation}\label{def F}
F[f_1^\pm,f_2^\pm](y)=\begin{pmatrix}
\Re f_1^+ & \Re f_2^+ & \Re f_1^- & \Re f_2^-\\
\partial_y\Re f_1^+ & \partial_y\Re f_2^+
& \partial_y\Re f_1^- & \partial_y\Re f_2^-\\
\Im f_1^+ & \Im f_2^+ & \Im f_1^- & \Im f_2^-\\
\partial_y\Im f_1^+ & \partial_y\Im f_2^+
& \partial_y\Im f_1^- & \partial_y\Im f_2^-
\end{pmatrix}
\end{equation}
and $f_h^\pm=f_{h,m,n}^{l,j,\pm}$, $h=1,2$. Then $W(y)=\frac14\,y^{2-4s_c}$.}
\end{lemma}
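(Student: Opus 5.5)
The plan is to rewrite \eqref{homo ss} as the real $4\times4$ first-order system \eqref{ss system 2} (with $g=0$) in the unknowns $(A^+,\partial_yA^+,A^-,\partial_yA^-)$, and to build the four solutions by WKB-type formal expansions at $y=\infty$. These are then upgraded to true solutions by a Volterra integral equation. Write $\mu=\mu_{m,n}^{l,j}$.

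\emph{Step 1: the nonoscillatory branch.} Test the ansatz $f=y^{a}\sum_k b_k y^{-2k}$. In \eqref{homo ss} the dominant term at infinity is $\frac{i}{2}y\partial_y f+i\mu f$, which kills $y^{a}$ exactly when $a=-2\mu$. The remaining part of $\mathcal L$ lowers the power by two, so the coefficients $b_k$ (complex, coupled through $\bar f$) are determined recursively by inverting $\frac{i}{2}(a-2k)+i\mu=-ik$, which is nonzero for $k\ge1$. Taking $b_0=1$ gives $f_1^+$, and taking $b_0=i$ gives $f_1^-$; these are genuinely independent over $\mathbb R$ because the equation is only $\mathbb R$-linear.

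\emph{Step 2: the oscillatory branch.} Test $f=e^{iy^2/4}y^{b}\sum_k b_k y^{-2k}$. Conjugating $-\Delta+\frac{i}{2}y\partial_y$ by $e^{iy^2/4}$ cancels the $y^2$ terms. The $O(1)$ terms give the transport condition $-\frac{i}{2}(2b+d)+\frac{i}{p}+\dots+i\mu=0$, which yields $b=-2s_c+2\mu$. The term $\bar f$ produces $e^{-iy^2/4}y^{b-2}$. This is exactly the coupled negative-phase component, which is to be corrected by $e^{-iy^2/4}y^{b-4}(\dots)$, then again by positive phase, and so on. I would therefore use a two-phase ansatz $e^{iy^2/4}\sum(\cdot)+e^{-iy^2/4}\sum(\cdot)$ and solve the recursion order by order. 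The normalization $b_0=1$ gives $f_2^+$ and $b_0=i$ gives $f_2^-$. All corrections are $O(y^{-2})$ relative to the leading term, which is consistent with the stated error bounds; each $y$-derivative of the phase costs a factor $y$, and this accounts for the $+k$ in the oscillatory estimate.

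\emph{Step 3: from formal to exact solutions.} This is the main obstacle. Truncate each formal series at high order $N$, so that the residual is $O(y^{-M})$ for large $M$. Then solve for the correction via variation of constants for the $4\times4$ system, using the approximate fundamental matrix built from the four truncated expansions. The approximate Wronskian is explicitly computable, and by Abel's formula applied to \eqref{ss system 2} the Wronskian is a constant multiple of $y^{1-2s_c}\cdot y^{1-2s_c}$. The difficulty is that the nonoscillatory and oscillatory solutions grow at different rates, so the propagator $\Phi(y)\Phi(s)^{-1}$ has polynomial growth. I would choose $M$ large enough to absorb this growth and integrate from $+\infty$, which gives a contraction on $[Y,\infty)$ for $Y$ large. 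The solution is then extended to all $y>0$ by linear ODE theory. The derivative bounds follow from the same integral representation, differentiated, using the equation to trade second derivatives.

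\emph{Step 4: the Wronskian.} Abel's identity for the real system (trace of the coefficient matrix equal to $-2(2s_c-1)/y$) gives $W(y)=Cy^{2-4s_c}$. The constant is evaluated from the leading asymptotics. The nonoscillatory pair contributes, up to leading order, the determinant structure with entries $y^{-2\mu}$ and $iy^{-2\mu}$. For the oscillatory pair, the derivative of the phase brings down $\frac{iy}{2}$. The $4\times4$ determinant therefore reduces to a product of two $2\times2$ blocks. Computing these blocks with $\partial_y y^{-2\mu}=O(y^{-2\mu-1})$, which is negligible against $\frac{y}{2}y^{-2s_c+2\mu}$, the powers combine as $y^{-4\mu}\cdot y^{-4s_c+4\mu}\cdot y^2$ times the constant $\frac14$. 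This gives $W=\frac14y^{2-4s_c}$, and in particular shows that the four solutions form a fundamental system.
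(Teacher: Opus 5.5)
Your proposal is correct and follows essentially the same route as the paper: a formal nonoscillatory expansion, a two-phase ($e^{\pm iy^2/4}$) oscillatory expansion, truncation to an approximate fundamental matrix $F_N$ that is lifted to an exact one by a Volterra equation from $+\infty$ (the polynomial growth of $F_N$ and $F_N^{-1}$ is absorbed by taking $N$ large), and finally Abel's identity combined with the leading-order determinant to get $\frac14 y^{2-4s_c}$. The one step the paper makes explicit and you leave implicit is that the exact solutions obtained for different truncation orders coincide ($G_N=G_M$); this is what gives the nonoscillatory derivative bounds for every $k$ with a single solution, since each derivative of the remainder may cost a factor $y$ while the required bound gains $y^{-1}$.
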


\begin{proof} For $(m,n,l,j) \in \mathcal{S}$, the linearized equation \eqref{homo ss} is
\begin{equation}\label{new homo ss}
f''+\frac{2s_c-1}{y}f'-\frac{i}{2}yf'-i\mu_{m,n}^{l,j}f+\frac{pc_{\infty}^{2p}}{y^2}\big(f+\bar{f}\big)=0.
\end{equation}
Then, \eqref{new homo ss} admits basis solutions $f_{1,m,n}^{l,j,\pm}$ exhibiting the following expansion as $y\rightarrow +\infty$
\begin{equation*}\label{ss infinity basis +}
f_{1,m,n}^{l,j,+}(y) = y^{-2\mu_{m,n}^{l,j}}\big(1+\sum_{k\geq 1}i^k\beta_{k}^{+}y^{-2k}\big),
\end{equation*}
and
\begin{equation*}\label{ss infinity basis -}
f_{1,m,n}^{l,j,-}(y) = i y^{-2\mu_{m,n}^{l,j}}\big(1+\sum_{k\geq 1}i^k\beta_{k}^{-}y^{-2k}\big),
\end{equation*}
where the real coefficients $\beta_{k}^{\pm}$ for $k\geq 1$ are uniquely determined by
\begin{equation*}
\beta_{k}^{\pm}=\frac{1}{k}\Big(\big(2\mu_{m,n}^{l,j}+2k-2\big)\big(2\mu_{m,n}^{l,j}+2k-2s_c\big)+pc_{\infty}^{2p}(1\pm(-1)^{k-1})\Big)\beta_{k-1}^{\pm},
\end{equation*}
with $\beta_{0}^{\pm}=1$.

\medskip For the oscillatory branches, according to the terms $\frac{i}{2}yf'$ and $\frac{pc_{\infty}^{2p}}{y^2}\bar{f}$, we seek basis solutions admits the following expansion as $y\rightarrow \infty$,
\begin{equation*}
f_{2,m,n}^{l,j,+}(y)=U_{m,n}^{l,j}(y)+V_{m,n}^{l,j}(y),\quad f_{2,m,n}^{l,j,-}(y)=i\big(U_{m,n}^{l,j}(y)-V_{m,n}^{l,j}(y)\big),
\end{equation*}
with 
\begin{equation*}
\begin{aligned}
U_{m,n}^{l,j}(y)=&e^{\frac{iy^2}{4}}y^{-2s_c+2\mu_{m,n}^{l,j}}\big(1+\sum\limits_{k\geq 1}\alpha_{m,n}^{l,j,k,1}y^{-2k}\big),\\
V_{m,n}^{l,j}(y)=&e^{\frac{-iy^2}{4}}y^{-2s_c-4+2\mu_{m,n}^{l,j}}\big(\alpha_{m,n}^{l,j,0,2}+\sum\limits_{k\geq 1}\alpha_{m,n}^{l,j,k,2}y^{-2k}\big).
\end{aligned}
\end{equation*}
Matching the terms of order $e^{-\frac{iy^2}{4}}y^{-2s_c-2+2\mu_{m,n}^{l,j}}$ in \eqref{new homo ss} yields:
\begin{equation*}
\Big((-\frac{i}{2})^2-\frac{i}{2}(-\frac{i}{2})\Big)\alpha_{m,n}^{l,j,0,2}+pc_{\infty}^{2p}=0,
\end{equation*}
then $\alpha_{m,n}^{l,j,0,2}=2pc_{\infty}^{2p}$. The coefficient $\alpha_{m,n}^{l,j,k,1}$ is uniquely determined by $\alpha_{m,n}^{l,j,k-1,1}$, $\bar{\alpha}_{m,n}^{l,j,k-3,2}$ for $k\geq 1$, and $\alpha_{m,n}^{l,j,k,2}$ is uniquely determined by $\bar{\alpha}_{m,n}^{l,j,k,1}$, $\alpha_{m,n}^{l,j,k-1,2}$ and $\alpha_{m,n}^{l,j,k-2,2}$.

\medskip We next compute the Wronskian for $\{f_{k,m,n}^{l,j,\pm}\}_{k=1,2}$. Writing the solution of \eqref{new homo ss} by $f=u+iv$, the equation is equivalent to
\begin{equation}\label{new homo ss 1}
Y'=A(y)Y,\qquad Y=(u,u',v,v')^{\mathsf T},
\end{equation}
where
\begin{equation*}
\begin{aligned}
A(y)=
\begin{pmatrix}
0&1&0&0\\
-\dfrac{2pc_{\infty}^{2p}}{y^2}&-\dfrac{2s_c-1}{y}&-\mu_{m,n}^{l,j}&-\dfrac {y}{2}\\
0&0&0&1\\
\mu_{m,n}^{l,j}&\dfrac {y}{2}&0&-\dfrac{2s_c-1}{y}
\end{pmatrix}.
\end{aligned}
\end{equation*}

For an integer $N\geq1$, define the truncated functions $\{f_{k,m,n}^{l,j,\pm,(N)}\}_{k=1,2}$
\begin{equation*}
\begin{aligned}
f_{1,m,n}^{l,j,+,(N)}(y) = y^{-2\mu_{m,n}^{l,j}}\Big(1+\sum_{k=1}^{N}i^k\beta_{k}^{+}y^{-2k}\Big),&\quad
f_{1,m,n}^{l,j,-,(N)}(y) = i y^{-2\mu_{m,n}^{l,j}}\Big(1+\sum_{k=1}^{N} i^k\beta_{k}^{-}y^{-2k}\Big),\\
f_{2,m,n}^{l,j,+,(N)}(y)=U_{m,n}^{l,j,(N)}+V_{m,n}^{l,j,(N)},&\quad f_{2,m,n}^{l,j,-,(N)}(y)=i\big(U_{m,n}^{l,j,(N)}-V_{m,n}^{l,j,(N)}\big),
\end{aligned}
\end{equation*}
with
\begin{equation*}
\begin{aligned}
U_{m,n}^{l,j,(N)}=&e^{\frac{iy^2}{4}}y^{-2s_c+2\mu_{m,n}^{l,j}}\Big(1+\sum\limits_{k=1}^{N}\alpha_{m,n}^{l,j,k,1}y^{-2k}\Big)\\
V_{m,n}^{l,j,(N)}=&e^{-\frac{iy^2}{4}}y^{-2s_c-4+2\mu_{m,n}^{l,j}}\Big(\alpha_{m,n}^{l,j,0,2}+\sum\limits_{k=1}^{N}\alpha_{m,n}^{l,j,k,2}y^{-2k}\Big).
\end{aligned}
\end{equation*}
And the corresponding matrix
\begin{equation*}
F_N(y)=\begin{pmatrix}
\Re f_{1,N}^+ & \Re f_{2,N}^+ & \Re f_{1,N}^- & \Re f_{2,N}^-\\
\partial_y\Re  f_{1,N}^+ & \partial_y\Re f_{2,N}^+& \partial_y\Re f_{1,N}^- & \partial_y\Re f_{2,N}^-\\
\Im f_{1,N}^+ & \Im f_{2,N}^+ & \Im f_{1,N}^- & \Im f_{2,N}^-\\
\partial_y\Im  f_{1,N}^+ & \partial_y\Im f_{2,N}^+& \partial_y\Im f_{1,N}^- & \partial_y\Im f_{2,N}^-
\end{pmatrix}
\end{equation*}
where $f_{h,N}^{\pm}=f_{h,m,n}^{l,j,\pm,(N)}$ for $h=1,2$. Then
\begin{equation}\label{WN ss}
W_N(y):=W\big[f_{1,m,n}^{l,j,\pm,(N)},f_{2,m,n}^{l,j,\pm,(N)}\big]=\det F_N=\frac{1}{4}y^{2-4s_c}\big(1+O(y^{-2})\big).
\end{equation}
Thus $F_{N}$ is invertible for sufficiently large $y$. Since it has finite sums of terms, there exists a positive $\alpha$ independent of
$N$, such that, for every $k\geq0$,
\begin{equation}\label{est FN ss}
\|\partial_y^k F_N(y)\|+\|\partial_y^k F_N^{-1}(y)\|\leq Cy^{\alpha+k},
\end{equation}
where the norm is defined by 
\begin{equation*}
\|M\|:=\sup\limits_{\|f\|_{L^\infty}=1}|Mf|,\qquad M\in\mathbb{R}^{4\times 4}.
\end{equation*}

Therefore, substituting $\{f_{h,m,n}^{l,j,(N),\pm}\}_{h=1,2}$ into the equation \eqref{new homo ss 1},
\begin{equation*}%\label{est infinity ss FN}
\left\|\partial_y^k\bigl(F_N'-A F_N\bigr)\right\|\leq C y^{\alpha-2N+k}.
\end{equation*}
Consequently, the matrix
\begin{equation*}
R_N= F_N^{-1} \bigl( F_N'-A F_N\bigr)
\end{equation*}
satisfies
\begin{equation}\label{est RN ss}
\|\partial_y^k R_N(y)\| \leq C y^{2\alpha+k-2N}.
\end{equation}

\medskip Choose $N$ such that $2N>2\alpha+1$, and lift the truncated basis solutions to the exact solutions. Write
\begin{equation}\label{def GN}
 G_N= F_N(I+B_N),
\end{equation}
then $G_N$ satisfies equation \eqref{new homo ss 1} is equivalent to
\begin{equation*}
B_N'=-R_N(I+B_N).
\end{equation*}
We impose $B_N(y)\rightarrow 0$ as $y\rightarrow +\infty$, and solve
\begin{equation}\label{BN basis infinity}
B_N(y)=\int_y^\infty R_N(s)(I+B_N(s))\,\ud s.
\end{equation}
According to \eqref{est RN ss}, for $y_0$ sufficiently large, we have $\int_{y_0}^{\infty}\|R_{N}(s)\|\ud s<\frac{1}{4}$. Thus \eqref{BN basis infinity} defines a contraction on $C_b\big([y_0,\infty),\mathbb{R}^4\times\mathbb{R}^4\big)$. Hence, there exists a unique bounded solution satisfying
$\|B_N\|_{L^\infty(y\geq y_0)}\leq1/3$. Thus \eqref{est RN ss} and \eqref{BN basis infinity} imply
\begin{equation}\label{est BN}
\|B_N(y)\|\leq2\int_y^\infty\|R_N(s)\|\,\ud s\leq C y^{2\alpha+1-2N}.
\end{equation}
In particular, if $I+B_N$ is invertible, then $G_N$ is an exact fundamental matrix. For every $k\geq0$, by  \eqref{est FN ss}, \eqref{def GN} and \eqref{est BN}
\begin{equation}\label{exact basis remainder}
\left\|\partial_y^k(G_N-F_N)\right\|\leq C y^{3\alpha+1-2N+k}.
\end{equation}

\medskip We next prove that the resulting fundamental matrix is independent of $N$. Indeed, for $M\neq N$ satisfying $2M>2\alpha+1$, we assume without loss of generality that $M>N$. By the definitions of $F_M$ and $F_N$ and \eqref{est BN}, we obtain
\begin{equation}\label{est FM FN}
\| F_M(y)- F_N(y)\|\leq C y^{\alpha-2N},\qquad \|B_M\|\leq Cy^{2\alpha+1-2N}.
\end{equation}
Using \eqref{est FN ss}, we obtain for sufficiently large $y$,
\begin{equation}\label{est limit I}
\|F_N^{-1}F_M-I\|=\| F_N^{-1}(F_M-F_N)\|\leq C y^{2\alpha-2N},
\end{equation}
goes to $0$ by $2N>2\alpha+1$. Thus, the estimates \eqref{est FM FN} and \eqref{est limit I} imply
\begin{equation}\label{inverse GN GM}
G_N^{-1}G_M=(I+B_N)^{-1}F_N^{-1}F_M(I+B_M)\rightarrow I,\quad \text{as $y\rightarrow +\infty$.}
\end{equation}
On the other hand, since $G_M$ and $G_N$ both solution of \eqref{new homo ss 1}, we deduce that
\begin{equation*}
\frac{\ud}{\ud y}
( G_N^{-1} G_M)=-G_N^{-1} G_N' G_N^{-1} G_M+G_N^{-1}G_M'=0.
\end{equation*}
Combining this identity with \eqref{inverse GN GM}, we obtain $G_N=G_M$.

\medskip Finally, by the definition \eqref{def F}, for every sufficiently large $N$,
\begin{equation*}
F= F_N(I+B_N), \qquad F'=A(y)F.
\end{equation*}
Applying \eqref{exact basis remainder} with $G_N=F$ and choosing $N$ sufficiently large for each derivative order yields
\eqref{ss basis infinity} and the stated differentiated asymptotics.

Abel's identity gives
\begin{equation*}
W'(y)=\operatorname{tr}A(y) W(y)=-\frac{4s_c-2}{y}W(y).
\end{equation*}
Hence, by \eqref{WN ss}, we obtain $W(y)=\frac{1}{4}y^{2-4s_c}$.
\end{proof}

\bigbreak

\begin{lemma}\label{lem ss infinity}
{\sl For $(m,n,l,j)\in\mathcal{S}$, given coefficients $\lambda_{m,n}^{l,j,1}$ and $\lambda_{m,n}^{l,j,2}$, there exists a unique solution to \eqref{system ss} of the following form
\begin{equation}\label{expansion A infinity}
\begin{aligned}
A_{m,n}^{l,j}(y)=&A_{m,n,1}^{l,j}(y)+A_{m,n,2}^{l,j}(y),\,\,\,(m,n,l,j)\in\tilde{\mathcal{S}},\\
A_{m,n}^{l,j}(y)=&A_{m,n,1}^{l,j}(y)+A_{m,n,2}^{l,j}(y)+A_{m,n,3}^{l,j}(y),\,\,\,(m,n,l,j)\in\mathcal{S}\setminus\tilde{\mathcal S},
\end{aligned}
\end{equation}
where $\{A_{m,n,k}^{l,j}\}_{(m,n,l,j)\in\mathcal{S}}$, $k=1,2$ solve \eqref{homo ss} and have the following asymptotic expansion as $y\rightarrow \infty$:
\begin{equation}\label{expansion tilde 12}
\begin{aligned}
A_{m,n,1}^{l,j}(y)=&y^{-2\mu_{m,n}^{l,j}}\sum\limits_{q\geq 0}a_{m,n,1}^{l,j,q}y^{-2q},\\
A_{m,n,2}^{l,j}(y)=&e^{\frac{i}{4}y^{2}}y^{-2s_c+2\mu_{m,n}^{l,j}}\sum\limits_{q\geq 0}a_{m,n,2}^{l,j,q,+}y^{-2q}+
e^{-\frac{i}{4}y^{2}}y^{-2s_c+2\mu_{m,n}^{l,j}-4}\sum_{q\geq0}a_{m,n,2}^{l,j,q,-}y^{-2q},
\end{aligned}
\end{equation}
with $a_{m,n,1}^{l,j,0}=\lambda_{m,n}^{l,j,1}$, $a_{m,n,2,+}^{l,j,0}=\lambda_{m,n}^{l,j,2}$ and $a_{m,n,2}^{l,j,0,-}=2pc_\infty^{2p}\overline{\lambda_{m,n}^{l,j,2}}$. The interaction part has the form
\begin{equation*}%\label{form A infinity2}
\begin{aligned}
A_{m,n,3}^{l,1}(y)=&\sum\limits_{|k|\leq m+n-1}e^{\frac{i}{4}ky^2}y^{-2s_c|k|-2}A_{m,n,3}^{l,1,k}(y),\\
A_{m,n,3}^{l,j'}(y)=&\sum\limits_{|k|\leq m+n}e^{\frac{i}{4}ky^2}y^{-2s_c|k|-2}A_{m,n,3}^{l,j',k}(y),\,\,\,j'=0,2,
\end{aligned}
\end{equation*}
where $A_{m,n,3}^{l,j,k}$, $j=0,1,2$ have the following asymptotic expansion as $y\rightarrow+\infty$
\begin{equation}\label{expansion tilde 3 1}
\begin{aligned}
A_{m,n,3}^{l,j',0}(y)=&O\Big(y^{-2\mu_{m,n}^{l,j'}}\Big),\\
A_{m,n,3}^{l,j',1}(y)=&O\Big(\sum_{\substack{1\leq s\leq m+n\\s-1\in2\mathbb N_0}}
\sum_{\theta=0}^{2l+j'}
\sum_{\substack{|i|\leq m,\,|j|\leq n\\i\equiv m\,(\text{mod 2}),\ j\equiv n\,(\text{mod 2})\\i+j\leq m+n-2s}}
y^{-2s_c(s-1)-2\mu_{i,j}^{l-\theta,j'}}(1+\ln y)\Big),\\
A_{m,n,3}^{l,j',-1}(y)=&O\Big(\sum_{\substack{1\leq s\leq m+n\\s-1\in2\mathbb N_0}}
\sum_{\theta=0}^{2l+j'}
\sum_{\substack{|i|\leq m,\,|j|\leq n\\i\equiv m\,(\text{mod 2}),\ j\equiv n\,(\text{mod 2})\\i+j\leq m+n-2s}}y^{-2s_c(s-1)-2\mu_{i,j}^{l-\theta,j'}-2}\Big),\\
\end{aligned}
\end{equation}
where the sums over $s$ are restricted to $s\leq m+n-1$ when $j'=1$ and the factor $1+\ln y$ in the $k=1$ estimate can be omitted if $(s-1)s_c+\mu_{m+i,n+j}^{2l-\theta,2j'}\neq-1$. 

For $k\neq -1,0,1$ and $j'=0,\,2$,
\begin{equation}\label{expansion tilde 3 2}
\begin{aligned}
A_{m,n,3}^{l,1,k}(y)=&O\Big(y^{2\mu_{m,n}^{l,1}-2}\Big),\qquad k=\pm(m+n-1),\\
A_{m,n,3}^{l,j',k}(y)=&O\Big(y^{2\mu_{m,n}^{l,j'}-2}\Big),\quad k=\pm(m+n),\\
A_{m,n,3}^{l,1,k}(y)=&O\Big(\sum_{\substack{|k|\leq s\leq m+n-1\\s-k\in2\mathbb N_0\\0\leq \theta\leq 2l+1}}
\sum_{\substack{|i|\leq m,\,|j|\leq n\\i\equiv m\,(\text{mod 2}),\ j\equiv n\,(\text{mod 2})\\i+j\leq m+n-2s}}y^{-2s_c(s-|k|)-2\mu_{i,j}^{l-\theta,1}-2}\Big),\quad |k|\leq m+n-2,\\
A_{m,n,3}^{l,j',k}(y)=&O\Big(\sum_{\substack{|k|\leq s\leq m+n\\s-k\in2\mathbb N_0\\0\leq \theta\leq 2l+j'}}
\sum_{\substack{|i|\leq m,\,|j|\leq n\\i\equiv m\,(\text{mod 2}),\ j\equiv n\,(\text{mod 2})\\i+j\leq m+n-2s}}
y^{-2s_c(s-|k|)-2\mu_{i,j}^{l-\theta,j'}-2}
\Big),\quad |k|\leq m+n-1.
\end{aligned}
\end{equation}
The asymptotic expansions \eqref{expansion tilde 12}, \eqref{expansion tilde 3 1} and \eqref{expansion tilde 3 2} can be differentiated any number of times with respect to $y$.}
\end{lemma}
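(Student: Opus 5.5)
We argue by induction on $m+n$, in the same order as in the proof of Lemma \ref{lem ss 0}. For $(m,n,l,j)\in\tilde{\mathcal S}$ the source vanishes, so \eqref{system ss} is homogeneous. Lemma \ref{lem basis of Lss infty} then gives the general solution at infinity as a real combination of $f_{1,m,n}^{l,j,\pm}$ and $f_{2,m,n}^{l,j,\pm}$. The two complex parameters $\lambda_{m,n}^{l,j,1},\lambda_{m,n}^{l,j,2}$ fix these four real coefficients uniquely: set $A_{m,n,1}^{l,j}=\Re\lambda^{1}f_1^++\Im\lambda^{1}f_1^-$, and similarly $A_{m,n,2}^{l,j}$ from $f_2^\pm$. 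The $e^{-iy^2/4}$ branch with leading coefficient $2pc_\infty^{2p}\overline{\lambda^{2}}$ comes from the $V$ part of $f_2^\pm=U\pm V$ (with the factor $i$ for $f_2^-$), since $\alpha^{l,j,0,2}_{m,n}=2pc_\infty^{2p}$. The expansions in \eqref{expansion tilde 12} are the series of Lemma \ref{lem basis of Lss infty}, including their differentiability.

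For $(m,n,l,j)\notin\tilde{\mathcal S}$, assume the statement holds for all indices with smaller $m+n$. Insert those expansions into $\tilde W$ from \eqref{def tilde W}. Since $p\in\mathbb N^*$, $\tilde W$ is a polynomial in $v,\bar v$ times $y^{-2}$, so $g_{m,n}^{l,j}$ is a finite sum of products of lower profiles. A product of $q$ factors carrying phases $e^{\pm iy^2/4}$ produces $e^{iky^2/4}$ with $k\equiv q$ (mod $2$) and $|k|\le q\le m+n$ (at most $m+n-1$ when $j=1$, because $g_{1,1}^{l,1}=0$ forces one more factor to be nonoscillatory). Each oscillatory factor decays like $y^{-2s_c}$ relative to the nonoscillatory ones. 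Tracking the time exponents $\nu(\alpha_{m,n}+2j+4l)$ gives the decomposition of $g_{m,n}^{l,j}$ into harmonics $e^{iky^2/4}y^{-2s_c|k|-2}h_k(y)$, with $h_k$ bounded by the sums in \eqref{expansion tilde 3 1}--\eqref{expansion tilde 3 2} (without the extra $y^{-2}$ or log). We then construct a particular solution $A_{m,n,3}^{l,j}=\sum_k e^{iky^2/4}y^{-2s_c|k|-2}A^{l,j,k}_{m,n,3}$ harmonic by harmonic, and set $A_{m,n}^{l,j}$ to be this particular solution plus the homogeneous part prescribed by $\lambda^{1},\lambda^{2}$. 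Uniqueness holds because the four homogeneous coefficients are fixed by those data together with the normalization that $A_3$ contains no homogeneous leading terms.

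To solve one harmonic, substitute $e^{iky^2/4}w$ into $\mathcal L+i\mu$. The operator $-\Delta+\frac i2y\partial_y$ acting on $e^{iky^2/4}w$ produces the zeroth-order term $\frac{k(k-1)}{4}y^2w$ plus first-order transport terms $\sim iy(1-k)w'$. This splits the harmonics into two cases. For $k\ne0,1$ the coefficient $k(k-1)/4\neq0$, so the equation is algebraic at leading order. We solve it by a formal series in $y^{-2}$ gaining $y^{-2}$, which explains the extra factor $y^{-2}$ and the bound $O(y^{2\mu-2})$ at the extreme harmonics. For $k=0$ and $k=1$ the leading part is the transport operator $\frac i2 y\partial_y+i\mu$, respectively $-\frac i2 y\partial_y+\dots$, acting on $w$. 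Integrating $y^{-1}\times$source gives the same power, except in the resonant case where the source exponent matches the homogeneous exponent $-2\mu$ (resp. $-2s_c+2\mu$). That case is exactly the condition $(s-1)s_c+\mu^{2l-\theta,2j'}_{m+i,n+j}=-1$ and produces the factor $\ln y$. The conjugate coupling $pc_\infty^{2p}y^{-2}\bar f$ maps harmonic $k$ to $-k$. It is lower order by $y^{-2}$, so we handle it iteratively in the formal series; this is why $A^{l,j,-1}_{m,n,3}$ carries an extra $y^{-2}$ (the four-powers-lower branch already seen in $V$).

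Finally, the formal solution is truncated at order $N$ and its residual is $O(y^{-2N+C})$. We lift it to an exact solution by variation of parameters with the fundamental matrix $F$ of Lemma \ref{lem basis of Lss infty}, integrating from $+\infty$ and using $W=\frac14y^{2-4s_c}$ and $\|F^{\pm1}\|\lesssim y^{\alpha}$. The correction is $O(y^{-2N+C'})$, so we take $N$ large. As in the proof of Lemma \ref{lem basis of Lss infty}, independence of $N$ follows because two such lifts differ by a homogeneous solution decaying faster than every basis element, hence zero. Differentiability comes from differentiating the Duhamel formula. I expect the main obstacle to be the combinatorial bookkeeping: verifying that the exponents produced by the multilinear interactions are exactly the index ranges $(s,\theta,i,j)$ in \eqref{expansion tilde 3 1}--\eqref{expansion tilde 3 2}, and isolating precisely when $\ln y$ appears. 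I would settle this first by proving an auxiliary claim describing the phase and exponent set of $\tilde W$, closed under the products above, before doing any analysis.
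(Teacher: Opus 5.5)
Your proposal is correct and follows essentially the paper's argument: induction on $m+n$, a harmonic decomposition of $g_{m,n}^{l,j}$, explicit treatment of the $k=1$ harmonic, and integration of the remainder against the fundamental system of Lemma \ref{lem basis of Lss infty}. The $k=1$ harmonic is the only one that can resonate with a homogeneous branch, giving $\ln y$ when $(s-1)s_c+\mu_{m+i,n+j}^{2l-\theta,2j'}+q+1=0$; the paper's case ``$=-3$'' is your $q=2$ feedback through $k=-1$. The differences are technical: the paper builds the $k=1$ correction from the exact solutions $y^{a}f_{2,m,n}^{l,j',\pm}$ and $\ln y\, f_{2,m,n}^{l,j',\pm}$ rather than a WKB series, and your truncate-and-lift step spells out what it only calls ``integrating''.

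There are two small slips. First, the first-order term generated on $e^{iky^2/4}w$ is $\frac{i}{2}(1-2k)\,y\,w'$; your $iy(1-k)w'$ would vanish at $k=1$, contradicting the transport operator you use there. Second, the extra $y^{-2}$ in $A_{m,n,3}^{l,j',-1}$ comes from the algebraic solve at $k=-1$, where $\frac{k(k-1)}{4}=\frac12$; the conjugate feedback from $k=1$ is four powers lower.
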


\medbreak

\begin{proof} For $(m,n,l,j)\in\tilde{\mathcal{S}}$, decomposing $A_{m,n}^{l,j}$ in the basis $f_{1,m,n}^{l,j,\pm}$, $f_{2,m,n}^{l,j,\pm}$ one gets
\begin{equation*}
A_{m,n}^{l,j}(y) = A_{m,n,1}^{l,j}(y)+A_{m,n,2}^{l,j}(y)
\end{equation*} 
where 
\begin{equation*}
A_{m,n,h}^{l,j}(y)=\Re \lambda_{m,n}^{l,j,h}f_{h,m,n}^{l,j,+}(y)+\Im \lambda_{m,n}^{l,j,h} f_{h,m,n}^{l,j,-}(y),
\end{equation*}
with coefficients $\lambda_{m,n}^{l,j,h}$ for $h=1,2$.

Consequently, as $y\rightarrow +\infty$, one has
\begin{equation*}\label{ss infinity fund}
\begin{aligned}
A_{m,n}^{l,j}(y) =& y^{-2\mu_{m,n}^{l,j}}\big(\lambda_{m,n}^{l,j,1}+O(y^{-2})\big) +e^{\frac{iy^2}{4}}y^{-2s_c+2\mu_{m,n}^{l,j}}\big(\lambda_{m,n}^{l,j,2} +O(y^{-2})\big).
\end{aligned}
\end{equation*} 
This asymptotic can be differentiated any number of times with respect to $y$. 

\medskip We next determine $A_{m,n}^{l,j}$ inductively for all $(m,n,l,j)\in\mathcal{S}\setminus\tilde{\mathcal S}$. Assume that we have found the solutions $\{A_{\tilde{m},\tilde{n}}^{l,j'}\}_{(\tilde{m},\tilde{n},l,j')\in\mathcal{S}}$ that satisfy the expansion \eqref{expansion A infinity} for $0\leq \tilde{m}\leq m$ and $0\leq \tilde{n}\leq n$. It follows from the asymptotics \eqref{def tilde W}, \eqref{expansion A infinity} and \eqref{ss basis infinity} that $g_{m,n}^{l,j'}$ in \eqref{system ss} has the following behaviors as $y\rightarrow +\infty$,
\begin{equation}\label{g1 infinty}
\begin{aligned}
g_{m,n}^{l,1}(y)=&\sum_{s=0}^{m+n-1}\sum_{\substack{|k|\leq s\\s-k\in2\mathbb N_0}}e^{\frac{i}{4}ky^2}y^{-2s s_c-2}\\
&\times\sum_{\substack{q\geq0\\0\leq \theta\leq 2l+1}}\sum_{\substack{|i|\leq m,\,|j|\leq n\\i\equiv m\,(\text{mod 2}),\ j\equiv n\,(\text{mod 2})\\i+j\leq m+n-2s}}b_{m,n,i,j}^{l,1,k,s,\theta,q}y^{-2\mu_{i,j}^{l-\theta,1}-2q},\qquad y\to+\infty,
\end{aligned}
\end{equation}
and for $j'\in\{0,2\}$,
\begin{equation}\label{g02 infinity}
\begin{aligned}
g_{m,n}^{l,j'}(y)=&\sum_{s=0}^{m+n}\sum_{\substack{|k|\leq s\\s-k\in2\mathbb N_0}}e^{\frac{i}{4}ky^2}y^{-2s s_c-2}\\
&\times\sum_{\substack{q\geq0\\0\leq \theta\leq 2l+j'}}\sum_{\substack{|i|\leq m,\,|j|\leq n\\i\equiv m\,(\text{mod 2}),\ j\equiv n\,(\text{mod 2})\\i+j\leq m+n-2s}}b_{m,n,i,j}^{l,j',k,s,\theta,q}y^{-2\mu_{i,j}^{l-\theta}-2q,j'},\qquad y\to+\infty,
\end{aligned}
\end{equation}
with coefficients $b_{m,n,i,j}^{l,j',k,s,\theta}$.  Let us write $A_{m,n}^{l,j'}$ as
\begin{equation}\label{expan A mn 0}
\begin{aligned}
A_{m,n}^{l,j'}(y)
=& \sum_{\substack{1\leq s\leq m+n\\s-1\in2\mathbb N_0\\0\leq \theta\leq 2l+j'}}\sum_{\substack{|i|\leq m,\,|j|\leq n\\i\equiv m\,(\text{mod 2}),\ j\equiv n\,(\text{mod 2})\\i+j\leq m+n-2s}}y^{-2\big((s-1)s_c+\mu_{m+i,n+j}^{2l-\theta,2j'}+1\big)}\\
&\times\Big(a_{m,n,i,j}^{l,j',s,\theta,0,+}f_{2,m,n}^{l,j',+}(y)+a_{m,n,i,j}^{l,j',s,\theta,0,-}f_{2,m,n}^{l,j',-}(y)\Big)
+A_{m,n}^{l,j',0}(y).
\end{aligned}
\end{equation}
For $j'=1$, all sums over $s$ below are restricted to $s\leq m+n-1$. Assume that
\begin{equation*}
(s-1)s_c+\mu_{m+i,n+j}^{2l-\theta,2j'}+1\neq0,
\end{equation*}
and choose
\begin{equation*}
\begin{aligned}
a_{m,n,i,j}^{l,j',s,\theta,0}=a_{m,n,i,j}^{l,j',s,\theta,0,+}+i a_{m,n,i,j}^{l,j',s,\theta,0,-}=-\frac{i b_{m,n,i,j}^{l,j',1,s,\theta,0}}{
(s-1)s_c+\mu_{m+i,n+j}^{2l-\theta,2j'}+1}.
\end{aligned}
\end{equation*}
Then $A_{m,n}^{l,j',0}$ solves
\begin{equation*}
\mathcal{L}(A_{m,n}^{l,j',0},\overline{A_{m,n}^{l,j',0}})+i\mu_{m,n}^{l,j'}A_{m,n}^{l,j',0}=g_{m,n}^{l,j',0},\quad j'=0,1,2,
\end{equation*}
with
\begin{equation*}
\begin{aligned}
g_{m,n}^{l,j',0}(y)=&g_{m,n}^{l,j'}(y)-2\sum_{\substack{1\leq s\leq m+n\\s-1\in2\mathbb N_0\\0\leq \theta\leq 2l+j'}}\sum_{\substack{|i|\leq m,\,|j|\leq n\\i\equiv m\,(\text{mod 2}),\ j\equiv n\,(\text{mod 2})\\i+j\leq m+n-2s}}
\big((s-1)s_c+\mu_{m+i,n+j}^{2l-\theta,2j'}+1\big)\\
&\times y^{-2\big((s-1)s_c+\mu_{m+i,n+j}^{2l-\theta,2j'}+1\big)}
\Big(2y^{-1}\partial_y-\big(4+2(s-2)s_c+2\mu_{m+i,n+j}^{2l-\theta,2j'}\big)y^{-2}-\frac i2\Big)\\
&\qquad\qquad\qquad\qquad\qquad\quad\times
\Big(a_{m,n,i,j}^{l,j',s,\theta,0,+}f_{2,m,n}^{l,j',+}(y)+a_{m,n,i,j}^{l,j',s,\theta,0,-}f_{2,m,n}^{l,j',-}(y)\Big).
\end{aligned}
\end{equation*}

\medskip Using $|\lambda_-|>|\lambda_+|$ and $\nu>0$, we define
\begin{equation*}%\label{def constant L}
L:=\left\lfloor\nu\bigl(\alpha_{2m-2,2n}+4j'+8l\bigr)\right\rfloor.
\end{equation*}
Indeed, on the above index range,
\begin{equation*}
-\mu_{m+i,n+j}^{2l-\theta,2j'}=\nu\bigl(\alpha_{m+i,n+j}+4j'+8l-4\theta\bigr)\leq\nu\bigl(\alpha_{2m-2,2n}+4j'+8l\bigr).
\end{equation*}
We repeat the above process to eliminate the terms
\begin{equation*}
\begin{aligned}
e^{\frac i4y^2}\sum_{\substack{1\leq s\leq m+n\\s-1\in2\mathbb N_0\\0\leq \theta\leq 2l+j'}}y^{-2ss_c-2}\sum_{q=0}^{L}\sum_{\substack{|i|\leq m,\,|j|\leq n\\i\equiv m\,(\text{mod 2}),\ j\equiv n\,(\text{mod 2})\\i+j\leq m+n-2s}}b_{m,n,i,j}^{l,j',1,s,\theta,q}y^{-2\mu_{i,j}^{l-\theta,j'}-2q}
\end{aligned}
\end{equation*}
in \eqref{g1 infinty} and \eqref{g02 infinity}. We use the identities
\begin{equation*}
\begin{aligned}
\mathcal L\left(y^{a}f_{2,m,n}^{l,j',\pm}, y^{a}\overline{f_{2,m,n}^{l,j',\pm}}\right)+i\mu_{m,n}^{l,j'}y^{a}f_{2,m,n}^{l,j',\pm}=a y^{a}\Big(-\frac2y\partial_y-\big(a+2s_c-2\big)y^{-2}+\frac i2\Big) f_{2,m,n}^{l,j',\pm},
\end{aligned}
\end{equation*}
and
\begin{equation*}
\begin{aligned}
\mathcal L\left( \ln y f_{2,m,n}^{l,j',\pm},  \ln y \overline{f_{2,m,n}^{l,j',\pm}}\right)+i\mu_{m,n}^{l,j'}\ln y f_{2,m,n}^{l,j',\pm}=-\Big(2y^{-1}\partial_y+(2s_c-2)y^{-2}-\frac{i}{2}\Big)f_{2,m,n}^{l,j',\pm}.
\end{aligned}
\end{equation*}

We rewrite \eqref{expan A mn 0} as
\begin{equation}\label{expan A mn}
\begin{aligned}
A_{m,n}^{l,j'}(y)=&\sum_{\substack{1\leq s\leq m+n\\s-1\in2\mathbb N_0\\0\leq \theta\leq 2l+j'}}
\sum_{\substack{|i|\leq m,\,|j|\leq n\\i\equiv m\,(\text{mod 2}),\ j\equiv n\,(\text{mod 2})\\i+j\leq m+n-2s}}\\
&\times\Big(\sum_{\substack{0\leq q\leq L\\(s-1)s_c+\mu_{m+i,n+j}^{2l-\theta,2j'}+q+1\neq0}}
y^{-2((s-1)s_c+\mu_{m+i,n+j}^{2l-\theta,2j'}+q+1)}\\
&\qquad\qquad\qquad\qquad\qquad\times\Big(a_{m,n,i,j}^{l,j',s,\theta,q,+}f_{2,m,n}^{l,j',+}(y)+a_{m,n,i,j}^{l,j',s,\theta,q,-}f_{2,m,n}^{l,j',-}(y)\Big)\\
&\qquad+\sum_{\substack{0\leq q\leq L\\(s-1)s_c+\mu_{m+i,n+j}^{2l-\theta,2j'}+q+1=0}}\ln y\Big(a_{m,n,i,j}^{l,j',s,\theta,q,+}f_{2,m,n}^{l,j',+}(y)+a_{m,n,i,j}^{l,j',s,\theta,q,-}f_{2,m,n}^{l,j',-}(y)\Big)\Big)\\
&+A_{m,n}^{l,j',L}(y).
\end{aligned}
\end{equation}
The coefficients $a_{m,n,i,j}^{l,j',s,\theta,q,\pm}$ are determined recursively from the source coefficients and the expansions \eqref{ss basis infinity} of the fundamental solutions. Then $A_{m,n}^{l,j',L}$ satisfies
\begin{equation}\label{eq ss A infinity}
\mathcal{L}(A_{m,n}^{l,j',L},\overline{A_{m,n}^{l,j',L}})+i\mu_{m,n}^{l,j'}A_{m,n}^{l,j',L}=g_{m,n}^{l,j',L},\quad j'=0,1,2,
\end{equation}
with 
\begin{equation}\label{eq ss g infinity}
\begin{aligned}
g_{m,n}^{l,1,L}(y)=&\sum_{|k|\leq m+n-1}e^{\frac{i}{4}ky^2}y^{-2s_c|k|-2}g_{m,n,k}^{l,1,L}(y),\\
g_{m,n}^{l,j',L}(y)=&\sum_{|k|\leq m+n}e^{\frac{i}{4}ky^2}y^{-2s_c|k|-2}g_{m,n,k}^{l,j',L}(y),\quad j'=0,2.
\end{aligned}
\end{equation}
Here the contributions with different $s$ and $\theta$ are included in $g_{m,n,k}^{l,j',L}$.

\medskip For every integer $\ell\geq0$, $g_{m,n,\pm 1}^{l,j',L}$ has the following behaviors as $y\rightarrow +\infty$,
\begin{equation*}
\begin{aligned}
\partial_y^{\ell} g_{m,n,1}^{l,j',L}(y)=&O\Big(\sum_{\substack{1\leq s\leq m+n\\s-1\in2\mathbb N_0\\0\leq \theta\leq 2l+j'}}
\sum_{\substack{|i|\leq m,\,|j|\leq n\\i\equiv m\,(\text{mod 2}),\ j\equiv n\,(\text{mod 2})\\i+j\leq m+n-2s}}
y^{-2s_c(s-1)-2\mu_{i,j}^{l-\theta,j'}-2(L+1)-\ell}\Big),\\
\partial_y^{\ell} g_{m,n,-1}^{l,j',L}(y)=&O\Big(\sum_{\substack{1\leq s\leq m+n\\s-1\in2\mathbb N_0\\0\leq \theta\leq 2l+j'}}
\sum_{\substack{|i|\leq m,\,|j|\leq n\\i\equiv m\,(\text{mod 2}),\ j\equiv n\,(\text{mod 2})\\i+j\leq m+n-2s}}y^{-2s_c(s-1)-2\mu_{i,j}^{l-\theta,j'}-\ell}\Big),
\end{aligned}
\end{equation*}
for $j'=0,1,2$. For $j'=1$, all sums over $r$ below are restricted to $s\leq m+n-1$.

Similarly, the $g_{m,n,k}^{l,j',L}$ for $k\neq \pm1$ satisfy
\begin{equation*}
\begin{aligned}
\partial_y^{\ell} g_{m,n,k}^{l,1,L}(y)&= O\Big(y^{2\mu_{m,n}^{l,1}-\ell}\Big),k=\pm(m+n-1),\\
\partial_y^{\ell} g_{m,n,k}^{l,1,L}(y)&=O\Big(\sum_{\substack{|k|\leq s\leq m+n-1\\s-k\in2\mathbb N_0\\0\leq \theta\leq 2l+1}}
\sum_{\substack{|i|\leq m,\,|j|\leq n\\i\equiv m\,(\text{mod 2}),\ j\equiv n\,(\text{mod 2})\\i+j\leq m+n-2s}}y^{-2s_c(s-|k|)-2\mu_{i,j}^{l-\theta,1}-\ell}\Big),&&|k|\leq m+n-2,
\end{aligned}
\end{equation*}
and for $j'=0,\,2$,
\begin{equation*}
\begin{aligned}
\partial_y^{\ell} g_{m,n,k}^{l,j',L}(y)&=O\Big(y^{2\mu_{m,n}^{l,j'}-\ell}\Big),&& k=\pm(m+n),\\
\partial_y^{\ell} g_{m,n,k}^{l,j',L}(y)&=O\Big(\sum_{\substack{|k|\leq s\leq m+n\\s-k\in2\mathbb N_0\\0\leq \theta\leq 2l+j'}}
\sum_{\substack{|i|\leq m,\,|j|\leq n\\i\equiv m\,(\text{mod 2}),\ j\equiv n\,(\text{mod 2})\\i+j\leq m+n-2s}}y^{-2s_c(s-|k|)-2\mu_{i,j}^{l-\theta,j'}-\ell}\Big),&& |k|\leq m+n-1.
\end{aligned}
\end{equation*}

\medskip Integrating \eqref{eq ss A infinity} with \eqref{eq ss g infinity}, one gets
\begin{equation}\label{def ss A1L}
\begin{aligned}
A_{m,n}^{l,j',L}(y)=
\sum_{h=1}^{2}\Big(\Re\lambda_{m,n}^{l,j',h}f_{h,m,n}^{l,j',+}(y)+\Im\lambda_{m,n}^{l,j',h}f_{h,m,n}^{l,j',-}(y)\Big)
+\tilde A_{m,n}^{l,j',L}(y),
\end{aligned}
\end{equation}
where $\lambda_{m,n}^{l,j',h}\in\mathbb C$, $h=1,2$ are constants, and
\begin{equation}\label{expan tilde A ss}
\begin{aligned}
\tilde{A}_{m,n}^{l,1,L}(y)=&\sum\limits_{|k|\leq m+n-1}e^{\frac{i}{4}ky^{2}}y^{-2s_c |k|-2}\tilde{A}_{m,n,k}^{l,1,L}(y),\\
\tilde{A}_{m,n}^{l,j',L}(y)=&\sum\limits_{|k|\leq m+n}e^{\frac{i}{4}ky^{2}}y^{-2s_c |k|-2}\tilde{A}_{m,n,k}^{l,j',L}(y),\quad j'=0,2.
\end{aligned}
\end{equation}

For every integer $\ell\geq0$, the nonoscillatory and negative phase for $j'=0,1,2$ satisfy
\begin{equation*}
\begin{aligned}
\partial_y^{\ell}\tilde A_{m,n,0}^{l,j',L}(y)&=O\Big(y^{-2\mu_{m,n}^{l,j'}-\ell}\Big),\\
\partial_y^{\ell}\tilde A_{m,n,-1}^{l,j',L}(y)&=O\Big(\sum_{\substack{1\leq s\leq m+n\\s-1\in2\mathbb N_0\\0\leq \theta\leq 2l+j'}}
\sum_{\substack{|i|\leq m,\,|j|\leq n\\i\equiv m\,(\text{mod 2}),\ j\equiv n\,(\text{mod 2})\\i+j\leq m+n-2s}}y^{-2s_c(s-1)-2\mu_{i,j}^{l-\theta,j'}-2-\ell}\Big).
\end{aligned}
\end{equation*}

The negative phase response also contributes to the positive phase through $-pc_\infty^{2p}y^{-2}\overline{\tilde A_{m,n}^{l,j',L}}$.
The corresponding response has power
\begin{equation*}
y^{-2s s_c-6-2\mu_{i,j}^{l-\theta,j'}},
\end{equation*}
which coincides with the positive phase homogeneous power $y^{-2s_c+2\mu_{m,n}^{l,j'}}$ precisely when
\begin{equation*}
(s-1)s_c+\mu_{m+i,n+j}^{2l-\theta,2j'}=-3.
\end{equation*}
Hence, if this equality holds for some indices in the above sums, then
\begin{equation*}
\begin{aligned}
\partial_y^{\ell}\tilde A_{m,n,1}^{l,j',L}(y)=O\Big(
&\sum_{\substack{1\leq s\leq m+n\\s-1\in2\mathbb N_0\\0\leq \theta\leq 2l+j'}}
\sum_{\substack{|i|\leq m,\,|j|\leq n\\i\equiv m\,(\text{mod 2}),\ j\equiv n\,(\text{mod 2})\\i+j\leq m+n-2s}}y^{-2s_c(s-1)-2\mu_{i,j}^{l-\theta,j'}-\ell}\\
&\qquad\times\bigl(y^{-2(L+1)}+y^{-4}(1+\ln y)\bigr)\Big),
\qquad j'=0,1,2.
\end{aligned}
\end{equation*}
Otherwise, if
\begin{equation*}
(s-1)s_c+\mu_{m+i,n+j}^{2l-\theta,2j'}\neq-3
\end{equation*}
for every set of indices in the sums, then
\begin{equation*}
\begin{aligned}
\partial_y^{\ell}\tilde A_{m,n,1}^{l,j',L}(y)=O\Big(
&\sum_{\substack{1\leq s\leq m+n\\s-1\in2\mathbb N_0\\0\leq \theta\leq 2l+j'}}
\sum_{\substack{|i|\leq m,\,|j|\leq n\\i\equiv m\,(\text{mod 2}),\ j\equiv n\,(\text{mod 2})\\i+j\leq m+n-2s}}y^{-2s_c(s-1)-2\mu_{i,j}^{l-\theta,j'}-\ell}\\
&\qquad\times
\bigl(y^{-2(L+1)}+y^{-4}\bigr)\Big),\qquad j'=0,1,2.
\end{aligned}
\end{equation*}
For the remaining phases $k\notin\{-1,0,1\}$,
\begin{equation*}
\begin{aligned}
\partial_y^{\ell}\tilde A_{m,n,k}^{l,1,L}(y)&=O\Big(y^{2\mu_{m,n}^{l,1}-2-\ell}\Big),\quad\qquad k=\pm(m+n-1),\\
\partial_y^{\ell}\tilde A_{m,n,k}^{l,1,L}(y)&=O\Big(\sum_{\substack{|k|\leq s\leq m+n-1\\s-k\in2\mathbb N_0\\0\leq \theta\leq 2l+1}}
\sum_{\substack{|i|\leq m,\,|j|\leq n\\i\equiv m\,(\text{mod 2}),\ j\equiv n\,(\text{mod 2})\\i+j\leq m+n-2s}}
y^{-2s_c(s-|k|)-2\mu_{i,j}^{l-\theta,1}-2-\ell}\Big),\quad |k|\leq m+n-2,
\end{aligned}
\end{equation*}
and for $j'=0,\,2$,
\begin{equation*}
\begin{aligned}
\partial_y^{\ell}\tilde A_{m,n,k}^{l,j',L}(y)&=O\bigl(y^{2\mu_{m,n}^{l,j'}-2-\ell}\bigr),\quad\qquad k=\pm(m+n),\\
\partial_y^{\ell}\tilde A_{m,n,k}^{l,j',L}(y)&=O\Big(\sum_{\substack{|k|\leq s\leq m+n\\s-k\in2\mathbb N_0\\0\leq \theta\leq 2l+j'}}
\sum_{\substack{|i|\leq m,\,|j|\leq n\\i\equiv m\,(\text{mod 2}),\ j\equiv n\,(\text{mod 2})\\i+j\leq m+n-2s}}
y^{-2s_c(s-|k|)-2\mu_{i,j}^{l-\theta,j'}-2-\ell}\Big),\quad |k|\leq m+n-1.
\end{aligned}
\end{equation*}
Combining \eqref{expan A mn}, \eqref{def ss A1L},
and \eqref{expan tilde A ss} gives the asymptotic
decomposition, including both oscillatory branches
and the resonant logarithmic terms.

\end{proof}
\bigbreak

\subsection{Estimate of the approximate solution in the  self-similar region}\label{subsec 3.3}
Throughout this subsection, assume that $0<\epsilon_2<\min\big\{\nu,\frac12\big\}$. Under the above notations, for an integer $N_{ss} \geq 2$ satisfy
\begin{equation}\label{cond Nss}
N_{ss}\geq\Big\lfloor\frac{N_{in}-1}{2}\Big\rfloor.
\end{equation}
We set the approximate solution in the self-similar region as
\begin{equation}\label{def psi ss}
 \psi_{ss}^{ap}(t,x)=\lambda^{\frac{1}{p}}(t)u_{ss}^{ap}(t,\rho),\,\,\,\text{where}\,\,\,\,u_{ss}^{ap}(t,\rho)=Q(\rho)+\chi_{ss}^{ap}(t,\rho),\end{equation}
and
\begin{equation}\label{chi ss}
\chi_{ss}^{ap}(t,\rho)=\rho^{-\frac{1}{p}}\big(c_{\infty}+v_{ss}^{ap}(t,t^{\nu}\rho)\big)-Q(\rho),
\end{equation}
has the form
\begin{equation*}
\begin{aligned}
v_{ss}^{ap}(t,t^{\nu}\rho)=&\sum\limits_{\substack{m+n\geq 1\\0\leq m,n\leq N_{ss}}}t^{\nu(|\lambda_{+}|m+|\lambda_{-}|n)}A_{m,n}^{0,0}(t^{\nu}\rho)+\sum\limits_{\substack{1\leq m,n\leq N_{ss}\\ 0\leq l\leq N_{ss}}}t^{\nu(|\lambda_{+}|m+|\lambda_{-}|n+2+4l)}A_{m,n}^{l,1}(t^{\nu}\rho)\\
&+\sum\limits_{\substack{0\leq m,l\leq N_{ss}\\1\leq n\leq N_{ss}}}t^{\nu(|\lambda_{+}|m+|\lambda_{-}|n+4+4l)}A_{m,n}^{l,2}(t^{\nu}\rho).
\end{aligned}
\end{equation*}

\medskip The purpose of this paragraph is firstly to estimate the radial function $\chi_{ss}^{ap}$ defined by \eqref{chi ss} in the self-similar region $\mathcal{S}_{ss}$  and secondly to study the remainder term for $N_{ss}$ sufficiently large.

\begin{lemma}\label{lem L infinity ss}
{\sl There exists a constant $C=C(k,l,N_{ss})>0$ and a time $T=T(N_{ss})>0$ such that, for all $0<t\leq T$,
\begin{equation}\label{L infty est ss 1}
\begin{aligned}
  \|\rho^{-k}\partial_{\rho}^{l}\chi_{ss}^{ap}\|_{L_{\rho}^{\infty}(\mathcal{S}_{ss})}\leq& Ct^{(\nu-\epsilon_1)(\frac{1}{p}+|\lambda_{+}|+k+l)}+Ct^{(\nu+\epsilon_2)(\frac{1}{p}+k+l)+\nu|\lambda_{+}|(1-2\epsilon_2)},\\
  &\qquad\qquad\qquad\qquad\qquad\qquad\qquad 0\leq l<s_c+2\nu|\lambda_+|,
  \end{aligned}
  \end{equation}
  and
  \begin{equation}\label{L infty est ss 2}
\begin{aligned}
  \|\rho^{-k}\partial_{\rho}^{l}\chi_{ss}^{ap}\|_{L_{\rho}^{\infty}(\mathcal{S}_{ss})}\leq& Ct^{(\nu-\epsilon_1)(\frac{1}{p}+|\lambda_{+}|+k+l)}\\&+Ct^{(\nu+\epsilon_2)(\frac{1}{p}+k+l)+\nu|\lambda_{+}|(1+2\epsilon_2)-2\epsilon_2(l-s_c)},\quad l>s_c+2\nu|\lambda_+|.
  \end{aligned}
  \end{equation}
Moreover, the following $L^2$ estimates hold:
  \begin{equation*}\label{L 2 est ss 1}
   \big\|\rho^{-k}\partial_{\rho}^{l}\chi_{ss}^{ap}\big\|_{L_{\rho}^{2}(\mathcal{S}_{ss})}\leq Ct^{(\nu+\epsilon_2)(k+l-s_c)+(1-2\epsilon_2)\nu|\lambda_{+}|},\,\,0\leq k+l< s_c+2-|\lambda_{+}|,
   \end{equation*}
  
     \begin{equation*}\label{L 2 est ss 2}
   \begin{aligned}
   \big\|\rho^{-k}\partial_{\rho}^{l}\chi_{ss}^{ap}\big\|_{L_{\rho}^{2}(\mathcal{S}_{ss})}\leq& Ct^{2\nu}+Ct^{(\nu+\epsilon_2)(k+l-s_c)
+(1-2\epsilon_2)\nu|\lambda_+|},\\
 &\quad\qquad\qquad k+l>s_c+2-|\lambda_+|, \quad l-k\leq s_c+2\nu|\lambda_+|,  
  \end{aligned}
  \end{equation*}
   \begin{equation*}\label{L 2 est ss 3}
   \begin{aligned}
   \big\|\rho^{-k}\partial_{\rho}^{l}\chi_{ss}^{ap}\big\|_{L_{\rho}^{2}(\mathcal{S}_{ss})}\leq& Ct^{2\nu}\Big(1+t^{\epsilon_2(k-l+s_c+2\nu|\lambda_+|)}\Big),\quad l-k>s_c+2\nu|\lambda_+|.  
  \end{aligned}
  \end{equation*}
      }
\end{lemma}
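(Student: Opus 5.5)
The approach is to pass to the self-similar variable $y=t^\nu\rho$ and estimate $\chi_{ss}^{ap}$ termwise from the two asymptotic regimes of the profiles $A_{m,n}^{l,j}$. In $\mathcal S_{ss}$ one has $\frac1{10}t^{-(\nu-\epsilon_1)}\le\rho\le 10\,t^{-(\nu+\epsilon_2)}$, which is the same as $\frac1{10}t^{\epsilon_1}\le y\le 10t^{-\epsilon_2}$. We write
\[
\chi_{ss}^{ap}=\rho^{-\frac1p}\bigl(c_\infty+v_{ss}^{ap}\bigr)-Q(\rho),
\]
and then split the region at $y=1$.

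\emph{Inner part, $t^{\epsilon_1}\lesssim y\le 1$.} Here we use Lemma \ref{lem ss 0}. By the matching conditions \eqref{ss coe 1}, every term $t^{\nu(\alpha_{m,n}+2j+4l)}d^{l,j,k,0}_{m,n}y^{\lambda_+m+\lambda_-n+2k}$ with $k+2l+j\le K$ reproduces, after $y=t^\nu\rho$, a term of the large-$\rho$ expansion of $Q$ or of $t^{2\nu k'}\chi_{k'}$ (see \eqref{inner expansion in the ss region}). Hence
\[
\chi_{ss}^{ap}-\chi_{in}^{ap}-\bigl(\rho^{-1/p}c_\infty-Q\bigr)
\]
consists only of terms of the form $\rho^{-1/p}t^{\nu(\ldots)}y^{\ldots}$ of high order, coming from $k+2l+j>K$, plus truncation tails in $m,n,l$ beyond $N_{ss}$. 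The remaining pieces of $\chi_{ss}^{ap}$ are then bounded using $\rho\gtrsim t^{-(\nu-\epsilon_1)}$. The dominant contribution is the first tail term of $Q$, namely $\rho^{-1/p+\lambda_+}$, and it gives $t^{(\nu-\epsilon_1)(\frac1p+|\lambda_+|+k+l)}$. For this we use that $\partial_\rho^l$ and $\rho^{-k}$ each gain one power of $\rho^{-1}$ on power-type terms. The $\ln y$ factors appear only at orders beyond $K$. They cost at most $|\ln t|$, which is absorbed because those orders carry strictly more powers of $t$.

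\emph{Outer part, $1\le y\le 10t^{-\epsilon_2}$.} Here we use Lemma \ref{lem ss infinity}. Write $y=t^\nu\rho$ and $\partial_\rho=t^\nu\partial_y$. The leading pieces are then the following.
\begin{itemize}
\item The nonoscillatory branch $t^{-\mu}y^{-2\mu}=t^{\nu(\ldots)}y^{2\nu(\ldots)}$. For the worst index $(1,0,0,0)$ it is of size $(t y^2)^{\nu|\lambda_+|}$, hence at most $t^{\nu|\lambda_+|(1-2\epsilon_2)}$.
\item The oscillatory branches $e^{\pm iy^2/4}y^{-2s_c+2\mu}$.
\end{itemize}
Each $\partial_y$ falling on the phase costs a factor $y$. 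The resulting size is $\rho^{-1/p-k}t^{\nu l}y^{l}\times(\text{profile})$. Evaluating at $y\sim t^{-\epsilon_2}$, i.e.\ $\rho\sim t^{-(\nu+\epsilon_2)}$, gives the second terms in \eqref{L infty est ss 1}--\eqref{L infty est ss 2}.

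\emph{The threshold $l=s_c+2\nu|\lambda_+|$.} This threshold comes from comparing two growths. The oscillatory term behaves like $y^{l-2s_c-2\nu|\lambda_+|}$ after $l$ derivatives, while the nonoscillatory term behaves like $y^{2\nu|\lambda_+|}$. So the oscillatory part dominates at the outer edge exactly when $l>s_c+2\nu|\lambda_+|$, which produces the extra factor $t^{-2\epsilon_2(l-s_c)}$. Below the threshold the supremum is instead attained at $y\sim1$ or at the outer edge through the nonoscillatory branch. The interaction terms $A^{l,j}_{m,n,3}$ and the higher multi-indices carry additional powers $t^{\nu\alpha}$ with $y^{2\nu\alpha}\le t^{-2\epsilon_2\nu\alpha}$. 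Since $\epsilon_2<\frac12$, this yields a net gain $t^{\nu\alpha(1-2\epsilon_2)}$, so they are subordinate. Summation over the finitely many indices $\le N_{ss}$ is harmless.

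\emph{$L^2$ bounds.} These follow by integrating the same pointwise bounds against $\rho^{d-1}d\rho$ over $\mathcal S_{ss}$. The integrand $\rho^{-2/p-2k-2l+2\lambda_++d-1}$ is integrable at infinity exactly when $k+l>s_c+2-|\lambda_+|$, after accounting for the $\rho^2$ growth contributed by the $t^{2\nu}\chi_1$-type terms. Otherwise the integral is dominated by the outer edge $\rho\sim t^{-(\nu+\epsilon_2)}$. This produces the three regimes stated in the lemma, and the $t^{2\nu}$ term comes from the $\chi_1$ matching piece near $y\sim1$. The remark above, showing that $s_c+2-|\lambda_+|\notin\mathbb Z$, ensures that no logarithmic borderline case occurs.

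\emph{Main obstacle.} The hardest part is the bookkeeping of the oscillatory contributions at large $y$. This includes the negative-phase branch, which is four powers of $y$ lower, and the higher harmonics $e^{iky^2/4}$. For these one has to check that every combination of $\theta,s,i,j$ appearing in \eqref{expansion tilde 3 1}--\eqref{expansion tilde 3 2} is bounded by the two displayed rates. I would first bound each harmonic by $y^{l}\,y^{-2s_c|k|-2}\,y^{2\nu\alpha}$ and check that it is dominated by the $k=1$ term. I would then use $s_c>1$ to discard $|k|\ge2$.
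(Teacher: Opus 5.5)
Your termwise strategy in $y=t^\nu\rho$, split at $y\sim1$ and fed by Lemmas \ref{lem ss 0} and \ref{lem ss infinity}, is the argument the paper leaves implicit; the lemma is stated without a written proof. Your $L^\infty$ bounds and the outer-region bookkeeping are essentially right. There is one slip: after $l$ derivatives in $y$, the nonoscillatory branch behaves like $y^{2\nu|\lambda_+|-l}$, not $y^{2\nu|\lambda_+|}$. Only with the $-l$ does the comparison with $y^{l-2s_c-2\nu|\lambda_+|}$ give the threshold $l=s_c+2\nu|\lambda_+|$ that you state.

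The genuine gap is in the inner part, and it breaks your $L^2$ argument. You put the cancellation in the wrong place. By \eqref{ss coe 1} with $k=l=j=0$, the leading coefficient of $A^{0,0}_{m,n}$ is $c_{m,n}$. Hence $\sum_{m,n\le N_{ss}}t^{\nu\alpha_{m,n}}c_{m,n}y^{\lambda_+m+\lambda_-n}=\sum_{m,n\le N_{ss}}c_{m,n}\rho^{\lambda_+m+\lambda_-n}$ reproduces $\rho^{1/p}Q-c_\infty$ up to the truncation tail. So it is $\chi^{ap}_{ss}-\chi^{ap}_{in}$ that is of higher order near the inner edge (cf.\ Lemma \ref{lem interaction of psi in ss}). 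By contrast, $\chi^{ap}_{ss}-\chi^{ap}_{in}-(\rho^{-1/p}c_\infty-Q)\approx Q-\rho^{-1/p}c_\infty$ is not of higher order. The $Q$ tail $\rho^{-1/p+\lambda_+}$ is therefore not ``the dominant contribution''; it cancels. What actually holds on $\frac1{10}t^{\epsilon_1}\le y\le1$ is
\[
|\rho^{-k}\partial_\rho^l\chi^{ap}_{ss}|\le C\rho^{-\frac1p+\lambda_+-k-l}\bigl(y^2+\rho^{\lambda_+N_{ss}}\bigr),
\]
with leading piece $t^{2\nu}\chi_1$. Every surviving term carries either $y^{2k}$ with $k\ge1$, or, when $k=0$, a factor $t^{\nu(2j+4l)}\le t^{2\nu}\le y^2$. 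Logarithms enter only when $k+2l+j>K$.

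For $L^\infty$ your overestimate is harmless. The $L^2$ bounds, however, do not follow by ``integrating the same pointwise bounds''. An uncancelled $\rho^{-1/p+\lambda_+-k-l}$ near $\rho\sim t^{-(\nu-\epsilon_1)}$ has $L^2$ norm $\sim t^{(\nu-\epsilon_1)(k+l-s_c+|\lambda_+|)}$ as soon as $k+l>s_c-|\lambda_+|=1+\frac{\sqrt{\operatorname{Discr}}}{2}$. This exponent tends to $0$ as $\epsilon_1\to\nu$, while the claimed exponents stay bounded below for small $\epsilon_2$; take for instance $(d,p)=(11,5)$ with $k+l\in\{3,4\}$. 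Your appeal to the ``$\rho^2$ growth of the $t^{2\nu}\chi_1$-type terms'' and to $s_c+2-|\lambda_+|\notin\mathbb Z$ silently uses the cancelled profile, which contradicts your inner-part claim.

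To close the gap:
\begin{itemize}
\item prove the displayed bound;
\item integrate $t^{4\nu}\rho^{2(s_c+2-|\lambda_+|-k-l)-1}$ over $[\frac1{10}t^{-(\nu-\epsilon_1)},t^{-\nu}]$, which gives $t^{\nu(k+l-s_c+|\lambda_+|)}$ or $\le Ct^{2\nu}$ according to the sign of $k+l-s_c-2+|\lambda_+|$;
\item take $N_{ss}$ large so the truncation tail is negligible;
\item handle $y\ge1$ separately, where the uncancelled $Q$ tail is harmless because $\rho\ge t^{-\nu}$.
\end{itemize}
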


\medskip

Let us now define the remainder associated with the approximate profile
\begin{align*}\label{R ss}
\mathcal{R}_{ss}^{ap}(t,\rho):=%&[\eqref{eq of U in}U_{ss}^{N_{ss}}](t,\rho).
-it^{1+2\nu}\partial_{t}u_{ss}^{ap}+(\frac{1}{2}+\nu)it^{2\nu}\Lambda u_{ss}^{ap}-\Delta u_{ss}^{ap}-|u_{ss}^{ap}|^{2p}u_{ss}^{ap}.
\end{align*}
Using \eqref{def psi ss} and \eqref{chi ss}, it can be represented as 
\begin{align*}%\label{Rss and tilde Rss}
\mathcal{R}_{ss}^{ap}(t,\rho)=t^{2\nu}\rho^{-\frac{1}{p}}\tilde{\mathcal{R}}_{ss}^{ap}(t,t^{\nu}\rho),
\end{align*}
where with $y=t^\nu\rho$,
\begin{align*}%\label{tilde Rss}
\tilde{\mathcal{R}}_{ss}^{ap}(t,y)=%[\eqref{eq of v ss} v_{ss}^{N_{ss}}](t,y).
-it\partial_{t}v_{ss}^{ap}-\Delta v_{ss}^{ap}+\frac{2}{p}y^{-1}\partial_{y}v_{ss}^{ap}+\frac{i}{2}y\partial_{y}v_{ss}^{ap}-pc_{\infty}^{2p}y^{-2}\big(v_{ss}^{ap}+\overline{v_{ss}^{ap}}\big)-\tilde{W}(v_{ss}^{ap}).
\end{align*}
Substituting the asymptotic expansion of $v_{ss}^{ap}$ given in Lemma \ref{lem ss 0} and \ref{lem ss infinity} into system \eqref{system ss}, the terms with $m,n,l \leq N_{ss}$ cancel out. Consequently, the actual error induced by the
approximate solution $v^{ap}$ is given by
\begin{equation*}
\begin{aligned}
\tilde{\mathcal R}_{ss}^{ap}(t,y)
={}&\sum_{\substack{(m,n,l,j)\in\mathcal S\\m,n\leq N_{ss},\ l\leq N_{ss}}}t^{\nu(|\lambda_+|m+|\lambda_-|n+2j+4l)}g_{m,n}^{l,j}(y)-\tilde W(v_{ss}^{ap})(t,y).
\end{aligned}
\end{equation*}
The coefficients cancel by \eqref{system ss}. The remaining terms are the negatives of the nonlinear coefficients generated by the truncated profile \eqref{def psi ss} whose indices satisfy
$$ m>N_{ss}\quad\text{or}\quad n>N_{ss}\quad\text{or}\quad l>N_{ss}.$$
The last alternative applies only to $j=1,2$. As a direct consequence of Lemma \ref{lem ss 0} and \ref{lem ss infinity}, we obtain the following lemma:
\begin{lemma}\label{lem: error chi ss}
{\sl For  every integer $l\geq0$, there exist $C=C(l,N_{ss})>0$ and $T=T(N_{ss})>0$ such that, for $0<t\leq T$,
the following estimate holds
\begin{equation}\label{ss error est for any Nss}
\begin{aligned}
\|\nabla_{\rho}^{l}\mathcal{R}_{ss}^{ap}\|_{L^2}\leq&Ct^{(\nu-\epsilon_1)(|\lambda_{+}|(N_{ss}+1)+2-s_c)}\\
&+Ct^{(1-2\epsilon_2)\nu|\lambda_{+}|(N_{ss}+1)+(\nu+\epsilon_2)(2-s_c)}.
\end{aligned}
\end{equation}
}
\end{lemma}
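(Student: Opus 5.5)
We will estimate $\tilde{\mathcal R}_{ss}^{ap}$ in the variable $y=t^\nu\rho$ and then transfer the bound to $\mathcal R_{ss}^{ap}=t^{2\nu}\rho^{-1/p}\tilde{\mathcal R}_{ss}^{ap}(t,t^\nu\rho)$ on $\mathcal S_{ss}$. In $y$ the region is $\frac1{10}t^{-\epsilon_1}\lesssim y\lesssim 10t^{-\epsilon_2}$. By the cancellation already recorded, $\tilde{\mathcal R}_{ss}^{ap}$ consists only of the nonlinear coefficients $-g_{m,n}^{l,j}$, produced by $\tilde W(v_{ss}^{ap})$, whose indices satisfy $m>N_{ss}$, $n>N_{ss}$ or $l>N_{ss}$. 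Each such term carries the factor $t^{\nu(\alpha_{m,n}+2j+4l)}$. Because $\tilde W$ is a polynomial in $v,\bar v$ (here $p\in\mathbb N^\ast$) of degree at most $2p+1$ in the truncated sum, there are only finitely many such terms. The worst term is therefore the one with the smallest admissible exponent, namely $|\lambda_+|(N_{ss}+1)$, since $|\lambda_-|>|\lambda_+|$ and the index $l$ contributes $4\nu l\ge 4\nu(N_{ss}+1)$.

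The key step is a pointwise bound on each product term, obtained by splitting the region into two parts.

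\emph{Near the origin,} for $t^{-\epsilon_1}\lesssim y\le 1$, we use Lemma \ref{lem ss 0}. It gives $t^{\nu\alpha_{m,n}}A_{m,n}^{l,j}(y)\sim t^{\nu\alpha_{m,n}}y^{\lambda_+m+\lambda_-n}=\rho^{\lambda_+m+\lambda_-n}$, up to powers of $y^2$ and logarithms. The same holds for the generated products. A term of total index $(m,n)$ is thus bounded by $y^{-2}t^{\nu(2j+4l)}\rho^{\lambda_+m+\lambda_-n}$ times harmless factors. On $\rho\gtrsim t^{-(\nu-\epsilon_1)}$ this is at most $t^{(\nu-\epsilon_1)|\lambda_+|(N_{ss}+1)}$.

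\emph{Near infinity,} for $1\le y\lesssim t^{-\epsilon_2}$, we use Lemma \ref{lem ss infinity}. Every component, whether nonoscillatory, of either oscillatory branch, or an interaction phase, grows at most like $y^{-2\mu_{m,n}^{l,j}}=y^{2\nu(\alpha_{m,n}+2j+4l)}$ up to logarithms, and the oscillatory ones even decay by $y^{-2s_c}$. Hence $t^{\nu\alpha}A\lesssim (t^{\nu}y^{2\nu})^{\alpha}\lesssim t^{\nu\alpha(1-2\epsilon_2)}$. The $y^{-2}$ prefactor in $\tilde W$ is kept, and the logarithms are absorbed by lowering the exponent slightly; this loss can be hidden by enlarging $N_{ss}$ by one. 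Derivatives $\partial_y^k$ act termwise, since all expansions are differentiable. On the oscillatory branches each derivative costs at most a factor $y\lesssim t^{-\epsilon_2}$, which is again absorbed into $N_{ss}$.

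Next we convert to physical variables. We have $\partial_\rho^l=t^{\nu l}\partial_y^l$ and $\|\cdot\|_{L^2_\rho}=t^{-\nu d/2}\|\cdot\|_{L^2_y}$, together with $\rho^{-1/p}=t^{\nu/p}y^{-1/p}$. Integrating $y^{-2/p-4-2l+d-1}$ times the pointwise bounds over the two subregions, the boundary values $y\sim t^{-\epsilon_1}$ and $y\sim t^{-\epsilon_2}$ generate the factors $t^{(\nu-\epsilon_1)(2-s_c)}$ and $t^{(\nu+\epsilon_2)(2-s_c)}$, respectively. Equivalently, one may integrate directly in $\rho$ between $t^{-(\nu-\epsilon_1)}$ and $t^{-(\nu+\epsilon_2)}$. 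Combining these factors with the prefactor $t^{2\nu}$ gives the two terms of \eqref{ss error est for any Nss}. Since $l$ is fixed and $N_{ss}$ is free, any loss of the form $t^{-C\epsilon l}$ or a logarithm is dominated once $N_{ss}$ is large, so the constant may depend on $(l,N_{ss})$.

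The main obstacle is the bookkeeping at infinity. One must check that the interaction terms $A_{m,n,3}^{l,j,k}$ with high phases $|k|$, and the logarithmic resonances from Lemma \ref{lem ss infinity}, never grow faster than $y^{-2\mu}$. For this I would use the explicit bounds \eqref{expansion tilde 3 1}--\eqref{expansion tilde 3 2}, noting that every exponent listed there is at most $-2\mu_{m,n}^{l,j}$, since $\mu_{i,j}^{l-\theta,j'}\ge\mu_{m,n}^{l,j'}$ on the index range. This makes the growth estimate uniform over all product terms.
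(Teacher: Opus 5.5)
Your route is the one the paper intends: it records the cancellation and then asserts the lemma as a direct consequence of Lemmas \ref{lem ss 0} and \ref{lem ss infinity}. However, one step in your version does not prove the estimate as stated. You dispose of logarithms, and of the derivative cost on oscillatory factors, by ``enlarging $N_{ss}$ by one'' and by saying such losses are ``dominated once $N_{ss}$ is large''. That is not allowed here. $\mathcal R_{ss}^{ap}$ is built from a fixed truncation $N_{ss}$, and the exponents in \eqref{ss error est for any Nss} are tied to that same $N_{ss}$. The leftover coefficient with $(m,n,l,j)=(N_{ss}+1,0,0,0)$ carries exactly the power $t^{\nu|\lambda_+|(N_{ss}+1)}$ that appears in both terms. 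So there is no room for even a factor $|\ln t|$, and a loss $t^{-\epsilon_2 l}$ would contradict the $l$-independent exponents. As written, your argument only gives a weaker statement with an unspecified loss.

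What rescues the proof is that no loss actually occurs, and this is what you need to verify.

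\emph{Derivatives.} Since $\partial_\rho=t^\nu\partial_y$, a $\rho$-derivative costs $t^\nu y^{-1}=\rho^{-1}$ on nonoscillatory factors. On a phase $e^{iky^2/4}$ it costs $\lesssim t^\nu y\lesssim t^{\nu-\epsilon_2}$. Because $\epsilon_2<\nu$, derivatives only help, and nothing needs absorbing.

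\emph{Logarithms at infinity.} Lemma \ref{lem ss infinity} places them only on oscillatory components, and these carry at least an extra factor $y^{-2s_c}$ relative to $y^{-2\mu_{m,n}^{l,j}}$. Since $y^{-2s_c}(1+\ln y)$ is bounded on $y\ge 1$, and the nonoscillatory pieces $A_{m,n,1}^{l,j}$, $A_{m,n,3}^{l,j,0}$ are log-free, every factor satisfies $|t^{\nu(\alpha_{m,n}+2j+4l)}A_{m,n}^{l,j}(y)|\le C(ty^2)^{\nu(\alpha_{m,n}+2j+4l)}$ on $y\ge1$, with no logarithm.

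\emph{Logarithms near the origin.} By \eqref{ss coe 1} the leading coefficient is log-free when $2l+j\le K$. The higher terms satisfy $y^{2k}|\ln y|^\ell\lesssim 1$ on $(0,1]$. A constituent with $2l+j>K$ brings an extra positive power $t^{\nu(2j+4l)}$, which absorbs $|\ln t|^L$.

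With these facts your integration yields \eqref{ss error est for any Nss} exactly, for the given $N_{ss}$.

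Two smaller corrections. First, the self-similar range is $\frac1{10}t^{\epsilon_1}\le y\le 10t^{-\epsilon_2}$, not $t^{-\epsilon_1}$; your $\rho$-range is the correct one. Second, your claim that the $l>N_{ss}$ terms are subdominant tacitly uses $|\lambda_+|\le 4$. This does hold for all integer pairs $d\ge11$, $p\ge1$ satisfying the Joseph--Lundgren condition, but it should be stated.
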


\bigbreak
Let us end this section by investigating $\psi_{ss}^{ap}-\psi_{in}^{ap}$ in the intersection of the inner and self-similar regions,  namely
$$\mathcal{S}_{in} \cap \mathcal{S}_{ss}=\Big\{  \frac {1} {10} \,t^{\frac{1}{2}+\epsilon_1}  \leq |x|  \leq   10\, t^{\frac{1}{2}+\epsilon_1}\Big\}. $$ 
Substituting \eqref{inner expansion infinity k1} and \eqref{inner expansion infinity k2} into \eqref{def psi in}, and changing the variable $\rho$ to $y=t^{\nu}\rho$,  as in \eqref{inner expansion in the ss region}, we obtain, for integers $2\leq N_{in}\leq K$,
the following asymptotic expansion on the overlap:
\begin{equation*}%\label{expansion inner in interact}
\begin{aligned}
&\psi_{in}^{ap}(t,t^{1/2}y)\\
=&(t^{1/2}y)^{-\frac{1}{p}}\Big(c_{\infty}+\sum\limits_{m+n\geq 1}t^{-\nu(\lambda_{+}m+\lambda_{-}n)}\sum\limits_{0\leq k\leq N_{in}}i^k c_{m,n}^{k,0,0}y^{\lambda_{+}m+\lambda_{-}n+2k}\\
&\qquad\qquad+\sum\limits_{\substack{m,n\geq 1\\0\leq l\leq\lfloor(N_{in}-1)/2\rfloor}}t^{-\nu(\lambda_{+}m+\lambda_{-}n-2-4l)}\sum\limits_{0\leq k\leq N_{in}-2l-1}i^{k+1+2l}c_{m,n}^{k+2l+1,l,1}y^{\lambda_{+}m+\lambda_{-}n+2k}\\
&\qquad\qquad+\sum\limits_{\substack{m\geq 0,n\geq 1\\0\leq l\leq\lfloor(N_{in}-2)/2\rfloor}}t^{-\nu(\lambda_{+}m+\lambda_{-}n-4-4l)}\sum\limits_{0\leq k\leq N_{in}-2l-2}i^{k+2+2l}c_{m,n}^{k+2l+2,l,2}y^{\lambda_{+}m+\lambda_{-}n+2k}
\Big).
\end{aligned}
\end{equation*}
By Lemma \ref{lem ss 0} and the matching conditions,
\begin{equation*}
d_{m,n}^{l,j,k,0}=(-1)^l i^{k+j}c_{m,n}^{k+2l+j,l,j},\qquad d_{m,n}^{l,j,k,\ell}=0, \quad(\ell\geq1),\qquad k+2l+j\leq N_{in}.
\end{equation*}
Therefore, in view of \eqref{def psi ss} and \eqref{chi ss}, for integers $2\leq N_{in}\leq K$ and $N_{ss}\geq2$ satisfying \eqref{cond Nss}, we have
\begin{equation*}
\begin{aligned}
&(\psi_{in}^{ap}-\psi_{ss}^{ap})(t,x)\\
=&r^{-\frac{1}{p}}\Big(-\sum_{\substack{m+n\geq1\\0\leq m,n\leq N_{ss}}}\sum_{k\geq N_{in}+1}\sum_{\ell\geq0}d_{m,n}^{0,0,k,\ell}
\big(\ln(t^{-1/2}r)\big)^\ell\mathcal M_{m,n,k}^{0,0}(t,r)\\
&\qquad+\sum_{\substack{m,n\geq0,\ m+n\geq1\\m>N_{ss}\ \mathrm{or}\ n>N_{ss}}}\sum_{0\leq k\leq N_{in}}i^k c_{m,n}^{k,0,0}
\mathcal M_{m,n,k}^{0,0}(t,r)\Big)\\
&+r^{-\frac{1}{p}}\Big(-\sum_{\substack{1\leq m,n\leq N_{ss}\\0\leq l\leq N_{ss}}}\sum_{k\geq\max\{0,N_{in}-2l\}}
\sum_{\ell\geq0}d_{m,n}^{l,1,k,\ell}\big(\ln(t^{-1/2}r)\big)^{\ell}\mathcal M_{m,n,k}^{1,l}(t,r)\\
&\qquad\qquad+\sum_{\substack{m,n\geq1\\0\leq l\leq\lfloor(N_{in}-1)/2\rfloor\\m>N_{ss}\ \mathrm{or}\ n>N_{ss}}}
\sum_{0\leq k\leq N_{in}-2l-1}i^{k+1+2l}c_{m,n}^{k+2l+1,l,1}\mathcal M_{m,n,k}^{1,l}(t,r)\Big)\\
&+r^{-\frac{1}{p}}\Big(-\sum_{\substack{0\leq m\leq N_{ss}\\1\leq n\leq N_{ss}\\0\leq l\leq N_{ss}}}\sum_{k\geq\max\{0,N_{in}-2l-1\}}
\sum_{\ell\geq0}d_{m,n}^{l,2,k,\ell}\big(\ln(t^{-1/2}r)\big)^\ell\mathcal M_{m,n,k}^{2,l}(t,r)\\
&\qquad\qquad+\sum_{\substack{m\geq0,\ n\geq1\\0\leq l\leq\lfloor(N_{in}-2)/2\rfloor\\m>N_{ss}\ \mathrm{or}\ n>N_{ss}}}
\sum_{0\leq k\leq N_{in}-2l-2}i^{k+2+2l}c_{m,n}^{k+2l+2,l,2}\mathcal M_{m,n,k}^{2,l}(t,r)\Big),
\end{aligned}
\end{equation*}
where $r=|x|$ and
\begin{equation*}
\mathcal M_{m,n,k}^{j,l}(t,r)=t^{-(\frac12+\nu)(\lambda_+m+\lambda_-n)-k+\nu(2j+4l)}r^{\lambda_+m+\lambda_-n+2k}.
\end{equation*}
The coefficients come from the expansions \eqref{inner expansion infinity k1}, \eqref{inner expansion infinity k2} and \eqref{expansion ss 0}. On the overlap,
\begin{equation*}
\big|\mathcal M_{m,n,k}^{j,l}(t,r)\big|
\leq
Ct^{(\nu-\epsilon_1)(|\lambda_+|m+|\lambda_-|n)
+2\epsilon_1k+\nu(2j+4l)}.
\end{equation*}
Thus, we can prove the following estimate.

\medbreak
\begin{lemma}\label{lem interaction of psi in ss}
{\sl For every integer $l\geq0$, there exist $C=C(l,N_{in},N_{ss})>0$ and $T>0$ such that, for $0<t\leq T$,
\begin{equation}\label{est psi in ss any N}
\begin{aligned}
\Big|\nabla_{r}^{l}(\psi_{ss}^{ap}(t,r)-\psi_{in}^{ap}(t,r))\Big|\leq r^{-\frac{1}{p}-l}\Big(&Ct^{(\nu-\epsilon_1)|\lambda_{+}|+2(N_{in}+1)\epsilon_1}\\
&\qquad+Ct^{(\nu-\epsilon_1)|\lambda_+|(N_{ss}+1)}\Big),
\end{aligned}
\end{equation}
for $ \frac {1} {10} \,t^{\frac{1}{2}+\epsilon_1}  \leq r \leq   10\, t^{\frac{1}{2}+\epsilon_1}$.}
\end{lemma}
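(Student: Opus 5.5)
We bound the difference using the explicit representation of $(\psi_{in}^{ap}-\psi_{ss}^{ap})(t,x)$ displayed just before the statement. On the overlap we have $r\sim t^{1/2+\epsilon_1}$, equivalently $y=t^{-1/2}r\sim t^{\epsilon_1}$ and $\rho\sim t^{-(\nu-\epsilon_1)}$.

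The difference splits into two families.

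\emph{Tail terms.} These are the terms with $k$ beyond the matched range ($k\geq N_{in}+1$ for $j=0$, and $k+2l+j>N_{in}$ in general). They come from the self-similar profiles $A_{m,n}^{l,j}$ through the coefficients $d_{m,n}^{l,j,k,\ell}$ of Lemma \ref{lem ss 0}, possibly with logarithmic factors.

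\emph{Truncation terms.} These are the terms with $m>N_{ss}$ or $n>N_{ss}$. They come from the inner expansion, which is not truncated in $(m,n)$, with coefficients $c_{m,n}^{k,l,j}$ from Lemma \ref{lem: inner region}.

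For each monomial we use the bound already recorded,
\begin{equation*}
\big|\mathcal M_{m,n,k}^{j,l}(t,r)\big|\leq Ct^{(\nu-\epsilon_1)(|\lambda_+|m+|\lambda_-|n)+2\epsilon_1k+\nu(2j+4l)}.
\end{equation*}
Derivatives are handled with $\nabla_r^l$ falling on $r^{-\frac1p}\,r^{\lambda_+m+\lambda_-n+2k}(\ln(t^{-1/2}r))^\ell$. Each derivative costs a factor $r^{-1}$, and a logarithm differentiated produces $r^{-1}$ times lower powers of $\ln$. This gives the prefactor $r^{-\frac1p-l}$. The factors $|\ln(t^{-1/2}r)|^\ell\lesssim(1+|\ln t|)^{\ell}$ are absorbed by sacrificing an arbitrarily small power of $t$, which is harmless because the exponents below carry strict room. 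Alternatively one may note that the logarithms only appear for $k+2l+j>K$, where there is extra decay.

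For the tail terms we have $m+n\geq1$ and $2k+4l+2j\geq 2(N_{in}+1)$ (roughly), with $\nu>\epsilon_1$. The exponent is therefore at least $(\nu-\epsilon_1)|\lambda_+|+2\epsilon_1(N_{in}+1)$, using $\nu(2j+4l)\geq2\epsilon_1(j+2l)$ and $|\lambda_-|>|\lambda_+|$. This gives the first term in \eqref{est psi in ss any N}.

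For the truncation terms, $m>N_{ss}$ or $n>N_{ss}$ gives $|\lambda_+|m+|\lambda_-|n\geq|\lambda_+|(N_{ss}+1)$, hence the second term.

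\emph{Convergence of the series.} Summation over infinitely many $(m,n)$ and $k$ needs care, and this is the main technical point. The inner expansions in Lemma \ref{lem: inner region} come from convergent or asymptotic series of $Q$ at infinity. On the overlap, where $\rho\to\infty$, we instead write the difference as a finite sum plus a remainder estimated via the differentiable asymptotic expansions. Concretely, we truncate the expansion of $\chi_k$ and of $A_{m,n}^{l,j}$ at a fixed large order, and bound the remainder by the next power, which Lemmas \ref{lem: inner region} and \ref{lem ss 0} allow termwise in derivatives. For $A_{m,n}^{l,j}$ near $y=0$ the series in $k$ is similarly truncated, with the remainder $O(y^{\lambda_+m+\lambda_-n+2k_0})$ as $y\sim t^{\epsilon_1}\to0$.

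Only finitely many indices then remain, and the constant depends on $l,N_{in},N_{ss}$. Choosing $T$ small makes all higher monomials dominated by the two displayed ones.
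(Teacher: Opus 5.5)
Your proposal follows the paper's route. You use the explicit representation of $\psi_{in}^{ap}-\psi_{ss}^{ap}$ on the overlap and split the leftover into the self-similar tail $k+2l+j>N_{in}$ and the inner truncation $m>N_{ss}$ or $n>N_{ss}$. Each monomial is then bounded by the recorded estimate for $\mathcal M^{j,l}_{m,n,k}$, and each $r$-derivative costs a factor $r^{-1}$. Your replacement of the formal infinite sums by finite truncations, with remainders controlled through the differentiable asymptotic expansions, is more careful than the paper, which leaves this step implicit.

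One justification is wrong, however. Not every exponent has strict room. The monomial with $(m,n,l,j)=(1,0,0,0)$ and $k=N_{in}+1$ gives exactly $t^{(\nu-\epsilon_1)|\lambda_+|+2\epsilon_1(N_{in}+1)}$. For $N_{in}=K$, the case used later, this is precisely the first order at which Lemma \ref{lem ss 0} stops excluding $\ln y$ factors. So neither of your two arguments for absorbing logarithms covers it: there is no spare power of $t$ to sacrifice, and $k+2l+j>K$ gives no extra decay beyond the stated bound.

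The gap closes by showing that this particular term has no logarithm. Indeed $A_{1,0}^{0,0}=c_{1,0}e_1^+$, and in the Frobenius recursion for $e_1^+$ the exponents are $\lambda_++2k$. These can be resonant only for the imaginary-part equation, and only when $2k=|\lambda_+|$. Since $|\lambda_+|<s_c-1<2K$, that resonance lies below order $K$ and is excluded (Remark \ref{remark basis ss 0}).

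Every other logarithm-carrying monomial does have room, so your absorption argument works there. Either $(m,n)\neq(1,0)$, which gains at least $(\nu-\epsilon_1)\min\{|\lambda_-|-|\lambda_+|,|\lambda_+|\}$, or $(m,n)=(1,0)$ with $k\geq N_{in}+2$, which gains $2\epsilon_1$. The inner truncation terms are logarithm-free anyway.
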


\bigbreak

 \section{Approximate solution in the remote region}\label{sec 4}
   \subsection{General scheme of the construction of the approximate solution in the remote region}\label{subsec 4.1}
 In the previous section, we constructed an approximate solution $\psi_{ss}^{ap}$ in the self-similar region, which extends the inner approximation $\psi_{in}^{ap}$ developed in Section \ref{sec 2}.
We now define the remote region as
\begin{align*}\label{def remote region}
\mathcal{S}_{out}=\Big\{(t,x)\Big|\,|x|\geq \frac{1}{10}t^{1/2-\epsilon_2}\Big\}. 
\end{align*}
Our objective in this section is to construct an approximate solution in $\mathcal{S}_{out}$ with prescribed asymptotic data when $r\sim 1$,
which is determined by matching its behavior with $\psi_{ss}^{ap}$ in the overlap $\mathcal{S}_{ss} \cap \mathcal{S}_{out}$. 

\medskip Under the definition \eqref{def admissible}, for each $q\geq1$, we further define the $q$-th order admissible index set by
$$ \mathcal{S}_{q} := \left\{ (m,n,l,j)\in\mathcal{S}: j\neq0\ \text{or}\ m+n\geq q \right\}. $$
In particular, \(\mathcal{S}_{1}=\mathcal{S}\).

\medskip Note that in the limit $y\to \infty$, $r\to 0$, the main order terms of the expansion $v_{ss}^{ap}$ constructed in Section \ref{sec 3} is
\begin{equation}\label{main terms in out expansion}
\begin{aligned}
&\sum_{(m,n,l,j)\in\mathcal{S}}t^{\nu(|\lambda_{+}|m+|\lambda_{-}|n+2j+4l)}A_{m,n}^{l,j}(t^{-\frac{1}{2}}r)\\
\sim&\sum_{k\geq 0}\frac{t^k}{r^{2k}}\sum_{(m,n,l,j)\in\mathcal{S}}a_{m,n}^{k,0,l,j}r^{2\nu(\alpha_{m,n}+2j+4l)}\\&+e^{\frac{ir^2}{4t}}\sum_{k\geq 0}\frac{t^{k-s_c}}{r^{2k}}\sum_{(m,n,l,j)\in\mathcal{S}}a_{m,n}^{k,1,l,j}\big(\frac{r}{t}\big)^{-2s_c-2\nu(\alpha_{m,n}+2j+4l)},
\end{aligned}
\end{equation}
where $\alpha_{m,n}=|\lambda_{+}|m+|\lambda_{-}|n$. This imply that in the region $\frac{r}{t}\geq t^{-\frac{1}{2}-\epsilon_2}$, by $\lambda_{-}\leq \lambda_{+}\leq -2$ and $\nu>0$, we have to look for the solution of \eqref{eq of v} as a perturbation of the time independent profile:
\begin{align*}
\sum_{(m,n,l,j)\in\mathcal{S}}\beta_{m,n}^{l,j}r^{2\nu(\alpha_{m,n}+2j+4l)},
\end{align*}
with $\beta_{m,n}^{0}=a_{m,n}^{0,0}$, $\beta_{m,n}^{l,1}=a_{m,n}^{0,0,l,1}$ and $\beta_{m,n}^{l,2}=a_{m,n}^{0,0,l,2}$.

\medskip Let $\Theta\in C_{0}^{\infty}(\mathbb{R})$, $\Theta(\xi)=\left\{\begin{array}{l}
1,\,\,\,|\xi|\leq 1\\
0,\,\,\,|\xi|\geq 2
\end{array}\right.$. For $N\geq 2$ and $\delta>0$ we define
\begin{align}\label{eq f0}
f_0(r)=&\Theta(\frac{r}{\delta})r^{-\frac{1}{p}}\Big(c_{\infty}+\sum_{\substack{(m,n,l,j)\in\mathcal{S}\\m,n,l\leq N}}\beta_{m,n}^{l,j}r^{2\nu(\alpha_{m,n}+2j+4l)}\Big),\quad r=|x|.
\end{align}
Write $\psi(t,r)=f_0(r)+\chi(t,r)$. Then $\chi$ solves
\begin{align}\label{eq of chi out}
i\chi_t=-\Delta\chi-(p+1)|f_0|^{2p}\chi-p|f_0|^{2p-2}f_0^2\bar{\chi}-\Delta f_0-|f_0|^{2p}f_0-N(\chi),
\end{align}
where
\begin{align}\label{nonlinear out}
N(\chi)=|f_0+\chi|^{2p}(f_0+\chi)-|f_0|^{2p}f_0-(p+1)|f_0|^{2p}\chi-p|f_0|^{2p-2}f_0^2\bar{\chi}
\end{align}
contains the terms that are at least quadratic in $\chi$. According to \eqref{main terms in out expansion}, \eqref{eq f0}, and \eqref{eq of chi out}, we look for $\chi$ as 
\begin{equation}\label{ansatz chi out}
\begin{aligned}
\chi(t,r)=&\sum_{k\geq 1}t^kf_k(r)+e^{\frac{ir^2}{4t}}\sum_{k\geq 0}t^{s_c+k}A_{1,k}^{+}(t,r)+e^{-\frac{ir^2}{4t}}\sum_{k\geq 0}t^{s_c+k+2}A_{1,k}^{-}(t,r)\\
%+&e^{\frac{ir^2}{4t}}\sum_{\substack{2\leq q\leq 2p+1\\q-1\in2\mathbb{Z}}}\sum_{\substack{0\leq h\leq q\\ \mathcal{A}_{q,1,h}\neq \emptyset}}\sum_{k\geq 0}t^{qs_c+2h+k}A_{q,1,h,k}(t,r)\\
+&\sum_{ q\geq 2}\sum_{\substack{|s|\leq q\\q-s\in 2\mathcal{Z}}}e^{\frac{isr^2}{4t}}\sum_{0\leq h\leq q}\sum_{k\geq 0}t^{qs_c+2h+k}A_{q,s,h,k}(t,r),
\end{aligned}
\end{equation}
where
\begin{align*}
A_{1,k}^{\pm}(t, r)=\sum_{(m,n,l,j)\in\mathcal{S}}t^{2\nu(\alpha_{m,n}+2j+4l)}g_{m,n}^{l,j,k,\pm}(r),
\end{align*}
and 
\begin{align*}
A_{q,s,h,k}(t,r)=\sum_{(m,n,l,j)\in\mathcal{S}_q}t^{2\nu(\alpha_{m,n}+2j+4l)}g_{q,s,h,m,n}^{l,j,k}(r).
\end{align*}
 We use the index set
\begin{equation}\label{def outer Omega}
\Omega:=\left\{(q,s,h)\in\mathbb Z^3: q\geq2,\quad |s|\leq q,\quad q-s\in2\mathbb Z, \quad 0\leq h\leq q\right\},
\end{equation}
then $g_{q,s,h,m,n}^{l,j,k}=0$ if $(q,s,h)\notin \Omega$. 

\medskip Starting from \eqref{eq of chi out},we decompose the nonlinear remainder \eqref{nonlinear out} according to the number of oscillatory factors as
$$ N(\chi)=N_0^{out}(\chi)+N_1^{out}(\chi)+N_2^{out}(\chi), $$
where $N_0^{out}$ involves only the functions $f_k$, $N_1^{out}$ has formal order $q=1$, and $N_2^{out}$ contains the remaining terms of formal order $q\geq2$:
\begin{equation*}%\label{nonlinear out parts}
\begin{aligned}
N_0^{out}=&F(f_0+\sum_{k\geq 1}t^k f_k)-F(f_0)-(p+1)|f_0|^{2p}\sum_{k\geq 1}t^k f_k-p|f_0|^{2p-2}f_0^2\sum_{k\geq 1}t^k\bar{f_k},\\
N_1^{out}(\chi)=&(p+1)\Big(\big|f_0+\sum_{k\geq 1}t^k f_k\big|^{2p}-|f_0|^{2p}\Big)\Big(e^{\frac{ir^2}{4t}}\sum_{k\geq 0}t^{s_c+k}A_{1,k}^{+}+e^{-\frac{ir^2}{4t}}\sum_{k\geq 0}t^{s_c+k+2}A_{1,k}^{-}\Big)\\
&+p\Big(\big|f_0+\sum_{k\geq 1}t^k f_k\big|^{2p-2}\big(f_0+\sum_{k\geq 1}t^k f_k\big)^2-|f_0|^{2p-2}f_0^2\Big)\\
&\qquad\qquad\times\Big(e^{\frac{-ir^2}{4t}}\sum_{k\geq 0}t^{s_c+k}\overline{A_{1,k}^{+}}+e^{\frac{ir^2}{4t}}\sum_{k\geq 0}t^{s_c+k+2}\overline{A_{1,k}^{-}}\Big),
\end{aligned}
\end{equation*}
\begin{equation*}
\begin{aligned}
N_2^{out}(\chi)=&F\Big(f_0+\chi\Big)-F\Big(f_0+\sum_{k\geq 1}t^k f_k\Big)\\
&-(p+1)\Big|f_0+\sum_{k\geq 1}t^k f_k\Big|^{2p}\Big(e^{\frac{ir^2}{4t}}\sum_{k\geq 0}t^{s_c+k}A_{1,k}^{+}+e^{-\frac{ir^2}{4t}}\sum_{k\geq 0}t^{s_c+k+2}A_{1,k}^{-}\Big)\\
&-p\Big|f_0+\sum_{k\geq 1}t^k f_k\Big|^{2p-2}\Big(f_0+\sum_{k\geq 1}t^k f_k\Big)^2 \Big(e^{\frac{-ir^2}{4t}}\sum_{k\geq 0}t^{s_c+k}\overline{A_{1,k}^{+}}+e^{\frac{ir^2}{4t}}\sum_{k\geq 0}t^{s_c+k+2}\overline{A_{1,k}^{-}}\Big)\\
&-(p+1)|f_0|^{2p}\sum_{q\geq2}\sum_{\substack{|s|\leq q\\q-s\in 2\mathbb{Z}}}e^{\frac{isr^2}{4t}}\sum_{\substack{0\leq h\leq q\\ (q,s,h)\in \Omega}}\sum_{k\geq 0}t^{qs_c+2h+k}A_{q,s,h,k}\\
&-p|f_0|^{2p-2}f_0^2\sum_{q\geq2}\sum_{\substack{|s|\leq q\\q-s\in 2\mathbb{Z}}}e^{-\frac{isr^2}{4t}}\sum_{\substack{0\leq h\leq q\\(q,s,h)\in \Omega}}\sum_{k\geq 0}t^{qs_c+2h+k}\overline{A_{q,s,h,k}},
\end{aligned}
\end{equation*}
and $F(u)=|u|^{2p}u$. Therefore, taking \eqref{ansatz chi out} into these expressions, we deduce that
\begin{equation}\label{nonlinear out 1}
\begin{aligned}
N_0^{out}(t,r)=&\sum_{k\geq 2}t^{k}N_{0,k}(r),\\
N_{1}^{out}(t,r)=&\sum_{s=\pm1}e^{\frac{isr^2}{4t}}\sum_{k\geq 1}t^{s_c+k}\sum_{(m,n,l,j)\in\mathcal{S}}t^{2\nu(\alpha_{m,n}+2j+4l)}
N_{s,m,n}^{l,j,k}(r),
\end{aligned}
\end{equation}
and 
\begin{equation}\label{nonlinear out 2}
\begin{aligned}
N_2^{out}(t,r)=&\sum_{q\geq 2}\sum_{\substack{|s|\leq q\\q-s\in2\mathbb{Z}}}e^{\frac{isr^2}{4t}}\sum_{\substack{0\leq h\leq q\\k\geq 0}}t^{qs_c+2h+k}\sum_{(m,n,l,j)\in\mathcal{S}_q}t^{2\nu(\alpha_{m,n}+2j+4l)}N_{q,s,h,m,n}^{l,j,k}(r),
\end{aligned}
\end{equation}
where $N_{0,k}$, $N_{\pm1,m,n}^{l,j,k}$ and $N_{q,s,h,m,n}^{l,j,k}$ given by the above expressions of $N_0^{out}$ and $N_j^{out}(\chi)$, $j=1,2$.

\medskip We then substitute the ansatz  \eqref{ansatz chi out} into \eqref{eq of chi out} and use the following computations,
\begin{equation}\label{linear out 1}
\begin{aligned}
&\big(-i\partial_t-\Delta\big)\sum_{k\geq 1}t^{k}f_k(r)=-if_1-\sum_{k\geq 1}t^k\big(i(k+1)f_{k+1}+\Delta f_k\big),\\
&\big(-i\partial_t-\Delta\big)\Big(e^{\frac{ir^2}{4t}}\sum_{k\geq 0}t^{\alpha+k}g_k(r)\Big)\\
=&-e^{\frac{ir^2}{4t}}\Big[it^{\alpha-1}\big(\alpha+\frac{d}{2}+r\partial_r\big)g_0+\sum_{k\geq 0}t^{\alpha+k}\big(i(\alpha+k+1+\frac{d}{2}+r\partial_r)g_{k+1}+\Delta g_k\big)\Big],\\
&\big(-i\partial_t-\Delta\big)\Big(e^{-\frac{ir^2}{4t}}\sum_{k\geq 0}t^{\alpha+k+2}g_k(r)\Big)\\
=&e^{-\frac{ir^2}{4t}}\Big[t^{\alpha}\frac{r^2}{2}g_0+t^{\alpha+1}\Big(\frac{r^2}{2}g_1-i\big(\alpha+2-\frac{d}{2}-r\partial_r\big)g_0\Big)\\
&\qquad\quad+\sum_{k\geq 0}t^{\alpha+k+2}\Big(\frac{r^2}{2}g_{k+2}-i\big(\alpha+k+3-\frac{d}{2}-r\partial_r\big)g_{k+1}-\Delta g_k\Big)\Big],
\end{aligned}
\end{equation}
and 
\begin{equation}\label{linear out 2}
\begin{aligned}
&\big(-i\partial_t-\Delta\big)\Big(e^{\frac{isr^2}{4t}}\sum_{k\geq 0}t^{\alpha+k}g_k(r)\Big)\\
=&e^{\frac{isr^2}{4t}}\Big[t^{\alpha-2}\frac{s(s-1)r^2}{4}g_0+t^{\alpha-1}\Big(-i\big(\alpha+\frac{sd}{2}+sr\partial_r\big)g_0+\frac{s(s-1)r^2}{4}g_1\Big)\\
&\qquad+\sum_{k\geq 0}t^{\alpha+k}\Big(\frac{s(s-1)r^2}{4}g_{k+2}-i\big(\alpha+k+1+\frac{sd}{2}+sr\partial_r\big)g_{k+1}-\Delta g_k\Big)\Big].
\end{aligned}
\end{equation}

Thus, taking \eqref{nonlinear out 1} and \eqref{linear out 1} into \eqref{eq of chi out},  the equation of $f_k$ is 
\begin{equation}\label{out system 1}
\left\{
\begin{aligned}
&if_1+\Delta f_0+|f_0|^{2p}f_0=0,\\
&ikf_{k}+\Delta f_{k-1}+(p+1)|f_0|^{2p}f_{k-1}+p|f_0|^{2p-2}f_0^2\overline{f_{k-1}}+N_{0,k-1}=0,\,\,\,\,k\geq 2.
\end{aligned}
\right.
\end{equation}
with $N_{0,1}=0$. For any admissible pairs $(m,n,l,j)\in\mathcal{S}$, the equation of $g_{m,n}^{l,j,k,\pm}$ is equivelent to
\begin{equation}\label{out system 3}
\left\{
\begin{aligned}
&\frac{r^2}{2}g_{m,n}^{l,j,0,-}-p|f_0|^{2p-2}f_0^2 \overline{g_{m,n}^{l,j,0,+}}=0\\
&\frac{r^2}{2}g_{m,n}^{l,j,1,-}-i\big(s_c+2\nu(\alpha_{m,n}+2j+4l)+2-\frac{d}{2}-r\partial_r\big)g_{m,n}^{l,j,0,-}\\
&\qquad\qquad\qquad\qquad\qquad-p|f_0|^{2p-2}f_0^2\overline{g_{m,n}^{l,j,1,+}}-N_{-1,m,n}^{l,j,1}=0\\
&\frac{r^2}{2}g_{m,n}^{l,j,k,-}-i\big(s_c+2\nu(\alpha_{m,n}+2j+4l)+k+1-\frac{d}{2}-r\partial_r\big)g_{m,n}^{l,j,k-1,-}\\
&-\Delta g_{m,n}^{l,j,k-2,-}-(p+1)|f_0|^{2p}g_{m,n}^{l,j,k-2,-}-p|f_0|^{2p-2}f_0^2\overline{g_{m,n}^{l,j,k,+}}-N_{-1,m,n}^{l,j,k}=0,\,\,\,\,k\geq 2.
\end{aligned}
\right.
\end{equation}
and
 \begin{equation}\label{out system 2}
 \left\{
\begin{aligned}
&\big(s_c+2\nu(\alpha_{m,n}+2j+4l)+\frac{d}{2}+r\partial_r \big)g_{m,n}^{l,j,0,+}=0\\
&i\big(s_c+2\nu(\alpha_{m,n}+2j+4l)+\frac{d}{2}+k+r\partial_r \big)g_{m,n}^{l,j,k,+}+\Delta g_{m,n}^{l,j,k-1,+}\\
&\qquad\qquad\qquad\qquad+(p+1)|f_0|^{2p}g_{m,n}^{l, j,k-1,+}+N_{1,m,n}^{l,j,k-1}=0,\,\,\,\,\,k=1,2\\
&i\big(s_c+2\nu(\alpha_{m,n}+2j+4l)+\frac{d}{2}+k+r\partial_r\big)g_{m,n}^{l,j,k,+}+\Delta g_{m,n}^{l,j,k-1,+}\\
&\qquad+(p+1)|f_0|^{2p}g_{m,n}^{l,j,k-1,+}+p|f_0|^{2p-2}f_0^2\overline{g_{m,n}^{l,j,k-3,-}}+N_{1,m,n}^{l,j,k-1}=0,\,\,\,\,k\geq 3,
\end{aligned}
\right.
\end{equation}
with $N_{1,m,n}^{l,j,0}=0$. Similarly, for the higher order oscillation, for $q\geq 2$ and $(m,n,l,j)\in\mathcal{S}_q$, substituting \eqref{linear out 2} and \eqref{nonlinear out 2} into \eqref{eq of chi out}, we obtain the equation of $g_{q,s,h,m,n}^{l,j,k}$
\begin{equation}\label{out system 4}
\left\{
\begin{aligned}
&\frac{s(s-1)}{4}r^2g_{q,s,h,m,n}^{l,j,0}=0,\\
&\frac{s(s-1)}{4}r^2g_{q,s,h,m,n}^{l,j,1}-i\big(qs_c+2\nu(\alpha_{m,n}+2j+4l)+2h+\frac{sd}{2}+sr\partial_r \big)g_{q,s,h,m,n}^{l,j,0}=0\\
&\frac{s(s-1)}{4}r^2g_{q,s,h,m,n}^{l,j,k}-i\big(qs_c+2\nu(\alpha_{m,n}+2j+4l)+2h+k-1+\frac{sd}{2}+sr\partial_r \big)g_{q,s,h,m,n}^{l,j,k-1}\\
&-\Delta g_{q,s,h,m,n}^{l,j,k-2}-(p+1)|f_0|^{2p}g_{q,s,h,m,n}^{l,j,k-2}-p|f_0|^{2p-2}f_0^2\overline{g_{q,-s,h,m,n}^{l,j,k-2}}-N_{q,s,h,m,n}^{l,j,k-2}=0,\,\,\,\,k\geq 2.
\end{aligned}
\right.
\end{equation}
For $s=1$, the first equation of the system gives
\begin{equation*}
\begin{aligned}
g_{q,1,h,m,n}^{l,j,0}(r)={}&g_{q,1,h,m,n}^{l,j,0}(\delta)\left(\frac r\delta\right)^{-\frac1p-(q+1)s_c-2h-2\nu(\alpha_{m,n}+2j+4l)}.
\end{aligned}
\end{equation*}
The value $g_{q,1,h,m,n}^{l,j,0}(\delta)$ is fixed by matching with the self-similar region.

\medskip

\subsection{Analysis of the functions.}\label{subsec 4.2}
We solve the systems \eqref{out system 1}--\eqref{out system 4} with the homogeneous coefficients prescribed by matching. To formulate the result, we introduce the following function classes. We denote by $\mathcal A_{\mathcal S}$ the space of continuous functions $a\in C(\mathbb{R}_{+};\mathbb C)\cap C^\infty(\mathbb{R}_{+}^{\ast};\mathbb C)$ satisfying:
\begin{itemize}
\item[(i)] $\operatorname{supp}a\subset[0,2\delta]$.

\item[(ii)]
For $0\leq r<\delta$, there exist an integer $L$ such that the function $a$ admits an absolutely convergent
expansion of the form
\begin{equation}\label{expansion a}
a(r)=a_0%+\sum_{(m,n,l,j)\in\mathcal{S}}a_{m,n}^{l,j}r^{2\nu(\alpha_{m,n}+2j+4l)},
+\sum_{\substack{(m,n,l,j)\in\mathcal S\cup\{(0,0,0,0)\}\\qs_c+h+2\nu(\alpha_{m,n}+2j+4l)>0}} \sum_{\ell =0}^{L} a_{m,n}^{l,j,\ell} r^{qs_c+h+2\nu(\alpha_{m,n}+2j+4l)} (\ln r)^{\ell},
\end{equation}
with some coefficients $a_{m,n}^{l,j,q}$ and $q,h\in\mathbb N_0$.
\end{itemize}

\medskip For each integer $k\geq1$, we denote by $\mathcal B_{k,\mathcal S}$ the space of continuous functions $b\in C(\mathbb{R}_{+};\mathbb C)\cap C^\infty(\mathbb{R}_{+}^{\ast};\mathbb C)$ satisfying:
\begin{itemize}
\item[(i)]
For $0\leq r<\delta$, there exist an integer $L$ such that the function $b$ admits an absolutely convergent expansion of the form
\begin{equation}\label{expansion b}
b(r)=b_k+\sum_{\substack{(m,n,l,j)\in\mathcal S\cup\{(0,0,0,0)\}\\qs_c+h+2\nu(\alpha_{m,n}+2j+4l)>0}} \sum_{\ell =0}^{L} b_{m,n}^{l,j,k,q,h,\ell}
r^{qs_c+h+2\nu(\alpha_{m,n}+2j+4l)}(\ln r)^{\ell},
\end{equation}
with some coefficients $b_{m,n}^{l,j,k,q,h,\ell}$ and $q,h\in\mathbb N_0$.

\item[(ii)]
For $r\geq2\delta$, 
\begin{equation*}\label{expansion b infinity}
b(r)=\sum_{a=0}^{k-1}\sum_{\ell=0}^{L}b_{q,h,a,\ell}^{k,\infty}r^{qs_c+2h+a}(\ln r)^\ell,\qquad q,h\in\mathbb N_0,
\end{equation*}
where only finitely many coefficients $b_{q,h,a,\ell}^{k,\infty}$ are nonzero.
%$b$ is a finite sum of terms $r^{q s_c+2h}$, $q,h\in\mathbb N_0$ multiplied by polynomials of degree at most $k-1$.
\end{itemize}

\medskip
\begin{remark}
The exponents in the expansions \eqref{expansion a} and \eqref{expansion b} at the origin are
\begin{equation*}
    qs_c+h+2\nu(\alpha_{m,n}+2j+4l)>0, \qquad q,h\in\mathbb N_0,\quad (m,n,l,j)\in\mathcal S\cup\{(0,0,0,0)\}.
\end{equation*}
The terms $2\nu(\alpha_{m,n}+2j+4l)$ originate from matching with the self-similar profiles, while the additional shifts arise in the outer recursion. Logarithmic terms may be inherited from the matching data or generated by resonances. Indeed, the equation
\begin{equation*}
    (r\partial_r-k)f(r)=r^\beta
\end{equation*}
admits the particular solution
\begin{equation*}
    f(r)=\frac{1}{\beta-k}r^\beta, \qquad\text{for $\beta\neq k$}.
\end{equation*}
For $\beta=k$, one obtains
\begin{equation*}
    f(r)=r^k\ln r.
\end{equation*}
More generally,
\begin{equation*}
(r\partial_r-k)\left(\frac{r^k(\ln r)^{\ell+1}}{\ell+1}\right)=r^k(\ln r)^\ell,\qquad \ell\in\mathbb N_0.
\end{equation*}
Thus a resonant forcing term increases the logarithmic degree by one. Since the outer systems \eqref{out system 1}-\eqref{out system 4} are solved inductively, logarithmic terms produced at one order enter the inhomogeneous terms $N_{s,m,n}^{l,j,k}$ and $N_{q,s,h,m,n}^{l,j,k}$ determining the higher order coefficients, and repeated resonances may generate higher powers of $\ln r$. 
\end{remark}

\medskip By the closure of the index set $\mathcal S$, for any $k$, one has
\begin{equation*}\label{note 2}
\begin{aligned}
r\partial_{r}\mathcal{A}_{\mathcal{S}}\subset\mathcal{A}_{\mathcal{S}}, \,\,\, r\partial_{r}\mathcal{B}_{k,\mathcal{S}}\subset\mathcal{B}_{k, \mathcal{S}},\,\,\,\mathcal{A}_{\mathcal{S}}\mathcal{B}_{k, \mathcal{S}}\subset\mathcal{A}_{\mathcal{S}}.
\end{aligned}
\end{equation*}
Moreover, \eqref{eq f0}, \eqref{out system 2}, and \eqref{out system 4} imply
\begin{equation*}
    f_0\in r^{-\frac1p}\mathcal A_{\mathcal S},\qquad g_{m,n}^{l,j,0,+}(r)
    \sim r^{-\frac1p-2s_c-2\nu(\alpha_{m,n}+2j+4l)}\quad\text{as $r\rightarrow 0$},
\end{equation*}
and
\begin{equation*}
    g_{q,s,h,m,n}^{l,j,k}=0,\qquad k=0,1, \quad s\notin\{0,1\}.
\end{equation*}

\medskip We are now in a position to prove the following result.
\begin{lemma}\label{lem remote expansion}
{\sl Let $d\geq11$ and $p\in\mathbb{N}^{\ast}$ satisfy $2p+1>p_{JL}$. There exist unique solutions $\{f_k\}_{k\geq 1}$, $\{g_{m,n}^{l,j,k,\pm}\}_{k\geq 0, (m,n,l,j)\in\mathcal{S}}$ and 
$\{g_{q,s,h,m,n}^{l,j,k}\}_{\substack{k\geq 0,(q,s,h)\in\Omega,\\
(m,n,l,j)\in\mathcal{S}}}$ of systems \eqref{out system 1} to \eqref{out system 4} verifying
\begin{equation}\label{out expansion}
\begin{aligned}
&f_k\in r^{-\frac{1}{p}-2k}\mathcal{A}_{\mathcal{S}},\, g_{m,n}^{l,j,k,-}\in r^{-\frac{1}{p}-2s_c-2\nu(\alpha_{m,n}+2j+4l)-2(k+2)}\mathcal{A}_{\mathcal{S}},\\
&g_{q,0,h,m,n}^{l,j,k}\in r^{-\frac{1}{p}-2qs_c-4h-2\nu(\alpha_{m,n}+2j+4l)-2k}\mathcal{A}_{\mathcal{S}},\,\,k\geq 1\\
&g_{q,s,h,m,n}^{l,j,k}\in r^{-\frac{1}{p}-2qs_c-4h-2\nu(\alpha_{m,n}+2j+4l)-2k}\mathcal{A}_{\mathcal{S}},\,\,s\neq 0, 1,\,\,k\geq 2\\
&g_{m,n}^{l,j,k,+}\in r^{-\frac{1}{p}-2s_c-2\nu(\alpha_{m,n}+2j+4l)-2k}\mathcal{B}_{k+1,\mathcal{S}},\,\,k\geq 1,\\
&g_{q,1,h,m,n}^{l,j,k}\in r^{-\frac{1}{p}-2q s_c-4h-2\nu(\alpha_{m,n}+2j+4l)-2k}\mathcal{B}_{k+1,\mathcal{S}},\,\,k\geq 0,
\end{aligned}
\end{equation}
where
\begin{equation*}
g_{q,0,h,m,n}^{l,j,0}=0,\qquad g_{q,s,h,m,n}^{l,j,0}=g_{q,s,h,m,n}^{l,j,1}=0 \quad\text{for }s\notin\{0,1\},
\end{equation*}
and
\begin{equation*}
\begin{aligned}
g_{m,n}^{l,j,0,+}(r) &\sim r^{-\frac1p-2s_c-2\nu(\alpha_{m,n}+2j+4l)},\\
g_{q,1,h,m,n}^{l,j,0}(r) &\sim r^{-\frac1p-(q+1)s_c-2h-2\nu(\alpha_{m,n}+2j+4l)}.
\end{aligned}
\end{equation*}}
\end{lemma}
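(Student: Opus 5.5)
The plan is to solve the three families of systems by a single induction on the total order. For the outer profiles $t^k f_k$ and $g_{m,n}^{l,j,k,\pm}$ the order is $k$. For $g_{q,s,h,m,n}^{l,j,k}$ the order is $qs_c+2h+k$, together with the index $(m,n,l,j)$ ordered by $\alpha_{m,n}+2j+4l$. At each step we will check that the source terms lie in the claimed weighted classes, and then invert the relevant scalar operator. Three such operators occur.

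\begin{itemize}
\item Multiplication by $ik$ in \eqref{out system 1}.
\item Multiplication by $\frac{r^2}{2}$, or by $\frac{s(s-1)}{4}r^2$, in \eqref{out system 3} and in \eqref{out system 4} for $s\notin\{0,1\}$.
\item The first order transport operator $i\bigl(\beta+r\partial_r\bigr)$, with $\beta=s_c+2\nu(\alpha_{m,n}+2j+4l)+\frac d2+k$, in \eqref{out system 2}; and its analogue $-i\bigl(qs_c+\cdots+\frac d2+r\partial_r\bigr)$ for $s=1$ in \eqref{out system 4}. This is the only operator that requires an integration constant.
\end{itemize}

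The algebraic closure properties stated just before the lemma will be used throughout: $r\partial_r$ preserves $\mathcal A_{\mathcal S}$ and $\mathcal B_{k,\mathcal S}$, and $\mathcal A_{\mathcal S}\mathcal B_{k,\mathcal S}\subset\mathcal A_{\mathcal S}$. Together with $f_0\in r^{-1/p}\mathcal A_{\mathcal S}$, they give $|f_0|^{2p}\in r^{-2}\mathcal A_{\mathcal S}$ and $|f_0|^{2p-2}f_0^2\in r^{-2}\mathcal A_{\mathcal S}$. Hence $\Delta$ and the potential terms both lower the power by exactly $r^{-2}$. Since $p\in\mathbb N^\ast$, the nonlinear sources $N_{0,k}$, $N_{\pm1,m,n}^{l,j,k}$ and $N_{q,s,h,m,n}^{l,j,k}$ are finite polynomial combinations of earlier profiles and their conjugates. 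Counting weights then shows that each source has the same singular weight at $r=0$ as the linear terms it is paired with. The conjugation swaps $s\to-s$ and costs no extra weight because $|f_0|^{2p-2}f_0^2$ is nonoscillatory.

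Step 1 handles $f_k$. From \eqref{out system 1}, $f_1=i(\Delta f_0+|f_0|^{2p}f_0)$. Inductively $f_k=\frac ik(\cdots)$, and each application of $\Delta$ or of the potential costs $r^{-2}$, so $f_k\in r^{-1/p-2k}\mathcal A_{\mathcal S}$. Compact support in $[0,2\delta]$ is preserved because every term contains a factor $f_0$ or one of its derivatives.

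Step 2 handles the negative phase. In \eqref{out system 3} we divide by $r^2/2$. The leading relation $g^{l,j,0,-}_{m,n}=2r^{-2}p|f_0|^{2p-2}f_0^2\,\overline{g^{l,j,0,+}_{m,n}}$ gives the exponent $-\frac1p-2s_c-2\nu(\cdots)-4$. Since $\overline{g^{+}}$ enters through the factor $f_0$, the result lies in $\mathcal A_{\mathcal S}$ rather than $\mathcal B$. The case $s\notin\{0,1\}$ in \eqref{out system 4} is identical: the equations at $k=0,1$ force $g^{0}=g^{1}=0$, and the case $s=0$ gives $g^0=0$ and determines $g^{k-1}$ algebraically from the second line. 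The required weights then follow by the same counting.

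Step 3 is the positive phase $s=1$, which I expect to be the main obstacle. The equation $(\beta+r\partial_r)g=F$ is solved by
\[
g(r)=r^{-\beta}\Bigl(c+\int_{\delta}^{r}\rho^{\beta-1}F(\rho)\,\ud\rho\Bigr),
\]
with $c$ fixed by the matching data $\lambda_{m,n}^{l,j,2}$ from Lemma~\ref{lem ss infinity}. Termwise, a forcing $r^{\gamma}(\ln r)^\ell$ produces $r^{\gamma}(\ln r)^\ell/(\beta+\gamma)$ when $\beta+\gamma\neq0$, and a log-raised term when the exponents resonate, exactly as in the preceding Remark. This is why logarithms appear and why the finite log degree $L$ must be tracked.

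For $r\geq2\delta$ the source no longer contains $f_0$. Only $\Delta g^{k-1,+}$ survives, and by induction it is a finite sum of powers $r^{qs_c+2h+a}(\ln r)^\ell$ with $a\leq k-1$. Integrating against $r^{\beta-1}$ produces the new homogeneous tail $r^{-\beta}$ together with such powers. After factoring out $r^{-1/p-2s_c-\cdots-2k}$, these become polynomial-type terms of degree at most $k$. This is the source of the index $k+1$ in the class $\mathcal B_{k+1,\mathcal S}$.

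The delicate points are twofold. First, resonant exponents must produce only finitely many logarithms at each order. Second, the constants must be chosen consistently with the self-similar expansion, which gives uniqueness once the matching coefficients are prescribed. Uniqueness in every other step is immediate, because those equations are algebraic. The coupling term $p|f_0|^{2p-2}f_0^2\,\overline{g^{k-3,-}}$ in \eqref{out system 2} is handled by Step 2, which is why the induction interleaves the $+$ and $-$ systems in increasing $k$.
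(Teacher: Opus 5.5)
Your proposal is correct and follows essentially the same route as the paper. Both split \eqref{out system 4} by the value of $s$, so that $f_k$, $g_{m,n}^{l,j,k,-}$ and the $s\neq1$ profiles are determined by algebraic recursions (multiplication by $ik$, by $r^2/2$ or $s(s-1)r^2/4$, or by a positive constant when $s=0$), while the $s=1$ branches are solved as first-order transport equations with the integration constant fixed by matching; the homogeneous tail $r^{-\beta}$ is what produces the index $k+1$ in $\mathcal B_{k+1,\mathcal S}$. Your integration base point $\delta$ and your ordering of the induction are cosmetic variants of the paper's $\int_0^r$/$\int_r^\infty$ splitting in \eqref{compu g 1+} and its induction on $k$.
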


\begin{proof}[Proof of Lemma \ref{lem remote expansion}]

\noindent{\bf Step 1. Reduction of system \eqref{out system 4} according to the value of $s$.}
According to \eqref{out system 4}, for $s\neq 0,1$, $g_{q,s,h,m,n}^{l,j,k}=0$, $k=0,1$. Then the system \eqref{out system 4} is equivalent to 
\begin{equation}\label{out system 4 neq 01}
\begin{aligned}
&\frac{s(s-1)r^2}{4}g_{q,s,h,m,n}^{l,j,2}=N_{q,s,h,m,n}^{l,j,0},\\
&\frac{s(s-1)r^2}{4}g_{q,s,h,m,n}^{l,j,3}-i\Big(qs_c+2\nu\big(\alpha_{m,n}+2j+4l\big)+2h+2\\
&\qquad\qquad\qquad\qquad\qquad\quad+\frac{sd}{2}+sr\partial_{r}\Big)g_{q,s,h,m,n}^{l,j,2}=N_{q,s,h,m,n}^{l,j,1},\\
&\frac{s(s-1)r^2}{4}g_{q,s,h,m,n}^{l,j,k}-i\Big(qs_c+2\nu\big(\alpha_{m,n}+2j+4l\big)+2h+k-1\\
&\qquad\qquad\qquad\qquad\qquad\quad+\frac{sd}{2}+sr\partial_{r}\Big)g_{q,s,h,m,n}^{l,j,k-1}\\
&=\Delta g_{q,s,h,m,n}^{l,j,k-2}+(p+1)|f_0|^{2p}g_{q,s,h,m,n}^{l,j,k-2}+p|f_0|^{2p-2}f_0^2 \overline{g_{q,-s,h,m,n}^{l,j,k-2}}+N_{q,s,h,m,n}^{l,j,k-2},\,\,\,\,k\geq 4.
\end{aligned}
\end{equation}

For $s=0$, the first two lines of system \eqref{out system 4} becomes
\begin{equation*}
\big(qs_c+2\nu(\alpha_{m,n}+2j+4l)+2h\big)g_{q,0,h,m,n}^{l,j,0}=0.
\end{equation*}
By the definition of $\mathcal{S}$ and $\Omega$, $qs_c+2\nu(\alpha_{m,n}+2j+4l)+2h>0$, we have $g_{1,0,h,m,n}^{j,0,l}=0$. Hence the system \eqref{out system 4} is equivalent to 
\begin{equation}\label{out system 4 0}
\begin{aligned}
\big(qs_c+2\nu(\alpha_{m,n}+2j+4l)+2h+1\big)g_{q,0,h,m,n}^{l,j,1}=&iN_{q,0,h,m,n}^{l,j,0},\\
\big(qs_c+2\nu(\alpha_{m,n}+2j+4l)+2h+k\big)g_{q,0,h,m,n}^{l,j,k}=&i\Big(\Delta g_{q,0,h,m,n}^{l,j,k-1}+(p+1)|f_0|^{2p}g_{q,0,h,m,n}^{l,j,k-1}\\
&+p|f_0|^{2p-2}f_0^2\overline{g_{q,0,h,m,n}^{l,j,k-1}}+N_{q,0,h,m,n}^{l,j,k-1}\Big),\,\,\,k\geq 2.
\end{aligned}
\end{equation}
 For $s=1$, the system \eqref{out system 4} becomes
\begin{equation}\label{out system 4 1}
\begin{aligned}
&\big(qs_c+2\nu(\alpha_{m,n}+2j+4l)+2h+\frac{d}{2}+r\partial_{r}\big)g_{q,1,h,m,n}^{l,j,0}=0\\
&\big(qs_c+2\nu(\alpha_{m,n}+2j+4l)+2h+k+\frac{d}{2}+r\partial_{r}\big)g_{q,1,h,m,n}^{l,j,k}=i\Big(\Delta g_{q,1,h,m,n}^{l,j,k-1}\\
&\qquad\qquad+(p+1)|f_0|^{2p}g_{q,1,h,m,n}^{l,j,k-1}+p|f_0|^{2p-2}f_0^2\overline{g_{q,-1,h,m,n}^{l,j,k-1}}+N_{q,1,h,m,n}^{l,j,k-1}\Big),\,\,\,k\geq 1.
\end{aligned}
\end{equation}

\medskip
\noindent{\bf Step 2. The lowest functions on $k$.}
For $k=0$ in systems \eqref{out system 3}, \eqref{out system 4 1}, $k=1$ in system \eqref{out system 1}, \eqref{out system 2}, \eqref{out system 4 0}, and $k=2$ in system \eqref{out system 4 neq 01} give for any $(m,n,l,j)\in\mathcal{S}$,
\begin{align}
&f_1=i\big(\Delta f_0+|f_0|^{2p}f_0\big),\label{out k11}\\
&\frac{r^2}{2}g_{m,n}^{l,j,0,-}=p|f_0|^{2p-2}f_0^2\overline{g_{m,n}^{l,j,0,+}},\label{out k12}\\
&\frac{s(s-1)r^2}{4}g_{q,s,h,m,n}^{l,j,2}=N_{q,s,h,m,n}^{l,j,0},\,\,\,\,s\neq 0,1, \label{out k13}\\
&\big(qs_c+2\nu(\alpha_{m,n}+2j+4l)+2h+1\big)g_{q,0,h,m,n}^{l,j,1}=iN_{q,0,h,m,n}^{l,j,0},\label{out k14}\\
&\big(\frac{1}{p}+2s_c+2\nu(\alpha_{m,n}+2j+4l)+1+r\partial_{r}\big)g_{m,n}^{l,j,1,+}=D_{1,m,n}^{l,j,1,+}(r),\label{out k16}\\
&\big(\frac{1}{p}+(q+1)s_c+2\nu(\alpha_{m,n}+2j+4l)+2h+1+r\partial_{r}\big)g_{q,1,h,m,n}^{l,j,1}=D_{q,1,h,m,n}^{l,j,1},\label{out k15}
\end{align}
where 
\begin{equation}\label{def D1,m,n,l,j,1,+}
\begin{aligned}
D_{1,m,n}^{l,j,1,+}(r)=&i\big(\Delta g_{m,n}^{l,j,0,+}+(p+1)|f_0|^{2p}g_{m,n}^{l,j,0,+}\big), \\
D_{q,1,h,m,n}^{l,j,1}=&i\big(\Delta g_{q,1,h,m,n}^{l,j,0}+(p+1)|f_0|^{2p}g_{q,1,h,m,n}^{l,j,0}+p|f_0|^{2p-2}f_0^2\overline{g_{q,-1,h,m,n}^{l,j,0}}+N_{q,1,h,m,n}^{l,j,0}\big).
\end{aligned}
\end{equation}
Here we have used the fact that $s_c+\frac{d}{2}=\frac{1}{p}+2s_c$ in \eqref{out system 2} and \eqref{out system 4 1}.

\medskip Recall that $f_0\in r^{-\frac{1}{p}}\mathcal{A}_{\mathcal{S}}$, $g_{m,n}^{l,j,0,+}\sim r^{-\frac{1}{p}-2s_c-2\nu(\alpha_{m,n}+2j+4l)}$. By \eqref{out k11} and \eqref{out k12} that
\begin{equation*}
f_1\in r^{-\frac{1}{p}-2}\mathcal{A}_{\mathcal{S}},\,\,\,\,\,\,\,g_{m,n}^{j,0,l,-}\in r^{-\frac{1}{p}-2s_c-2\nu(\alpha_{m,n}+2j+4l)-4}\mathcal{A}_{\mathcal{S}}.
\end{equation*}
According to \eqref{nonlinear out 2}, we find that $N_{q,s,h,m,n}^{l,j,0}$ depend on $f_0$ and $g_{m,n}^{l,j,0,\pm}$. Hence $$N_{q,s,h,m,n}^{l,j,0}\in r^{-\frac{1}{p}-2qs_c-4h-2-2\nu(\alpha_{m,n}+2j+4l)}\mathcal{A}_{\mathcal{S}}.$$
 Therefore, we get from \eqref{out k13} and \eqref{out k14} that
\begin{equation*}
g_{q,0,h,m,n}^{l,j,1}\in r^{-\frac{1}{p}-2qs_c-4h-2-2\nu(\alpha_{m,n}+2j+4l)}\mathcal{A}_{\mathcal{S}},
\end{equation*}
and 
\begin{equation*}
g_{q,s,h,m,n}^{l,j,2}\in r^{-\frac{1}{p}-2qs_c-4h-4-2\nu(\alpha_{m,n}+2j+4l)}\mathcal{A}_{\mathcal{S}},\,\,\,s\neq 0,1.
\end{equation*}

\medskip It remains to consider \eqref{out k16} and \eqref{out k15}. Write 
\begin{equation*}
g_{m,n}^{l,j,1,+}(r)=r^{-\frac{1}{p}-2s_c-2\nu(\alpha_{m,n}+2j+4l)-1}\tilde{g}_{m,n}^{l,j,1,+}(r),
\end{equation*} 
then  \eqref{out k16} is equivalent to
\begin{equation}\label{eq g1+}
\partial_r \tilde{g}_{m,n}^{l,j,1,+}(r)=r^{-2}\tilde{D}_{1,m,n}^{l,j,1,+}(r),
\end{equation}
where 
\begin{equation}\label{def D1 out}
\tilde{D}_{1,m,n}^{l,j,1,+}(r)=r^{\frac{1}{p}+2s_c+2\nu(\alpha_{m,n}+2j+4l)+2}D_{1,m,n}^{l,j,1,+}(r).
\end{equation}
 Substituting the expansions of $f_0$ and $g_{m,n}^{l,j,0,+}$ into \eqref{def D1,m,n,l,j,1,+}, we obtain 
 $$D_{1,m,n}^{l,j,1,+}\in r^{-\frac{1}{p}-2s_c-2\nu(\alpha_{m,n}+2j+4l)-2}\mathcal{B}_{1,\mathcal{S}},$$
  then \eqref{def D1 out} implies
 \begin{itemize}
\item For $0\leq r<\delta$, $\tilde{D}_{1,m,n}^{l,j,1,+}$ admits an absolutely convergent expansion of the form
\begin{equation*}
\tilde{D}_{1,m,n}^{l,j,1,+}(r)=d_{m,n}^{l,j,1,+,0}+\sum_{(m',n',l',j')\in\mathcal{S}}d_{m',n'}^{l',j',1}r^{2\nu(\alpha_{m',n'}+2j'+4l')}.
\end{equation*}
\item For $r\geq 2\delta$, $\tilde{D}_{1,m,n}^{l,j,1,+}$ is a constant.
\end{itemize}
Clearly, there exists a unique solution $\tilde{g}_{m,n}^{l,j,1,+}$ of \eqref{eq g1+} such that 
$$g_{m,n}^{l,j,1,+}\in r^{-\frac{1}{p}-2s_c-2\nu(\alpha_{m,n}+2j+4l)-2}\mathcal{B}_{2,\mathcal{S}}.$$
 For every $(m',n',l',j')\in\mathcal{S}$, $\tilde{g}_{m,n}^{l,j,1,+}$ is given by 
\begin{equation}\label{compu g 1+}
\begin{aligned}
\tilde{g}_{m,n}^{l,j,1,+}(r)=&C_{m,n}^{l,j,1,+}+\sum_{2\nu(\alpha_{m',n'}+2j'+4l')=1}d_{m',n'}^{l',j',1}\,\ln r\\
&+\int_{0}^{r} s^{-2}\Big(\tilde{D}_{m,n}^{l,j,1,+}(s)-d_{m,n}^{l,j,1,+,0}-\sum\limits_{\substack{(m',n',l',j')\in\mathcal{S}\\2\nu(\alpha_{m',n'}+2j'+4l')\leq 1}}d_{m',n'}^{l',j',1}s^{2\nu(\alpha_{m',n'}+2j'+4l')}\Big)\ud s\\
&-\int_{r}^{\infty}s^{-2}\Big(d_{m,n}^{l,j,1,+,0}+\sum\limits_{\substack{(m',n',l',j')\in\mathcal{S}\\2\nu(\alpha_{m',n'}+2j'+4l')< 1}}d_{m',n'}^{l',j',1}s^{2\nu(\alpha_{m',n'}+2j'+4l')}\Big)\ud s.
\end{aligned}
\end{equation}
The matching condition from self-similar region fixes $C_{m,n}^{l,j,1,+}$ uniquely.

\medskip Similarly, for the equation \eqref{out k15} with \eqref{def D1,m,n,l,j,1,+}, write 
\begin{equation*}
g_{q,1,h,m,n}^{l,j,1}(r)=r^{-\frac{1}{p}-2q s_c-2\nu(\alpha_{m,n}+2j+4l)-4h-2}\tilde{g}_{q,1,h,m,n}^{l,j,1}(r),
\end{equation*}
and 
\begin{equation*}
D_{q,1,h,m,n}^{l,j,1}=r^{-\frac{1}{p}-2q s_c-2\nu(\alpha_{m,n}+2j+4l)-4h-2}\tilde{D}_{q,1,h,m,n}^{l,j,1}(r),
\end{equation*}
then,
$$\big(r\partial_{r}-(q-1)s_c-2h-1\big)\tilde{g}_{q,1,h,m,n}^{l,j,1}=\tilde{D}_{q,1,h,m,n}^{l,j,1},$$
 and $\tilde{D}_{q,1,h,m,n}^{l,j,1}\in\mathcal{B}_{1,\mathcal{S}}$. Similar to the calculation of $g_{m,n}^{l,j,1,+}$, with logarithmic powers increasing only at $q's_c+h'+2\nu(\alpha_{m',n'}+2j'+4l')=(q-1)s_c+2h+1$ for $(m',n',l',j')\in\mathcal{S}$. We deduce that the equation \eqref{out k15} has a unique solution 
$$g_{q,1,h,m,n}^{l,j,1}\in r^{-\frac{1}{p}-2q s_c-4h-2-2\nu(\alpha_{m,n}+2j+4l)}\mathcal{B}_{2,\mathcal{S}}.$$

\medskip

\noindent{\bf Step 3. Induction on the expansion of functions on $k$.}
We next proceed by induction. Suppose we have solved \eqref{out system 1}-\eqref{out system 4} with $\ell=1,2,\cdots k-1$, $k\geq 4$, and have found $\{f_\ell\}_{1\leq \ell\leq k-1}$, $\{g_{m,n}^{l,j,\ell,\pm}\}_{\substack{(m,n,l,j)\in\mathcal{S}\\1\leq \ell\leq k-1}}$ and $\{g_{q,s,h,m,n}^{l,j,\ell}\}_{\substack{(m,n,l,j)\in\mathcal{S},(q,s,h)\in\Omega\\1\leq \ell\leq k-1}}$ verifying \eqref{out expansion}. Consider $\ell=k$. It follows from systems \eqref{out system 1}--\eqref{out system 3} and \eqref{out system 4 neq 01}--\eqref{out system 4 0} that we have to solve the following equations:
\begin{align}
&f_k=\frac{i}{k}\big(\Delta f_{k-1}+(p+1)|f_0|^{2p}f_{k-1}+p|f_0|^{2p-2}f_0^2\overline{f_{k-1}}+N_{0,k-1}\big),\label{eq fk}\\
&\frac{r^2}{2}g_{m,n}^{l,j,k,-}(r)=i\big(-\frac{1}{p}+2\nu(\alpha_{m,n}+2j+4l)+k+1-r\partial_{r}\big)g_{m,n}^{l,j,k-1,-}\nonumber\\
&\qquad\qquad\quad+\Delta g_{m,n}^{l,j,k-2,-}+(p+1)|f_0|^{2p}g_{m,n}^{l,j,k-2,-}+p|f_0|^{2p-2}f_0^2\overline{g_{m,n}^{l,j,k,+}}+N_{-1,m,n}^{l,j,k},\label{eq g- k}
\end{align}
and 
\begin{equation}\label{eq g s neq 01}
\begin{aligned}
\frac{s(s-1)r^2}{4}g_{q,s,h,m,n}^{l,j,k}=&i\Big(qs_c+2\nu\big(\alpha_{m,n}+2j+4l\big)+2h+k-1+\frac{sd}{2}\\
&\quad+sr\partial_{r}\Big)g_{q,s,h,m,n}^{l,j,k-1}+\Delta g_{q,s,h,m,n}^{l,j,k-2}+(p+1)|f_0|^{2p}g_{q,s,h,m,n}^{l,j,k-2}\\
&+p|f_0|^{2p-2}f_0^2 \overline{g_{q,-s,h,m,n}^{l,j,k-2}}+N_{q,s,h,m,n}^{l,j,k-2},
\end{aligned}
\end{equation}
for $s\in\mathbb{Z}$, $s\neq 1$. By \eqref{nonlinear out 1}, \eqref{nonlinear out 2} and \eqref{out expansion}, the right handside of \eqref{eq fk}-\eqref{eq g s neq 01} satisfy
\begin{equation*}
\begin{aligned}
&N_{0,k-1}\in r^{-\frac{1}{p}-2k}\mathcal{A}_{\mathcal{S}}, \,\,\,N_{-1,m,n}^{l,j,k}\in r^{-\frac{1}{p}-2s_c-2\nu(\alpha_{m,n}+2j+4l)-2k-2}\mathcal{A}_{\mathcal{S}},\\
&N_{q,s,h,m,n}^{l,j,k-2}\in r^{-\frac{1}{p}-2q s_c-4h-2(k-2)-2\nu(\alpha_{m,n}+2j+4l)-2}\mathcal{A}_{\mathcal{S}},\quad s\neq 0,1,\\
&N_{q,0,h,m,n}^{l,j,k-1}\in r^{-\frac{1}{p}-2q s_c-4h-2(k-1)-2\nu(\alpha_{m,n}+2j+4l)-2}\mathcal{A}_{\mathcal{S}}.
\end{aligned}
\end{equation*}
Thus, with the prescribed matching normalization,
\begin{equation*}
\begin{aligned}
&f_k\in r^{-\frac{1}{p}-2k}\mathcal{A}_{\mathcal{S}},\, g_{m,n}^{l,j,k,-}\in r^{-\frac{1}{p}-2s_c-2\nu(\alpha_{m,n}+2j+4l)-2(k+2)}\mathcal{A}_{\mathcal{S}},\\
&g_{q,s,h,m,n}^{l,j,k}\in r^{-\frac{1}{p}-2 qs_c-4h-2k-2\nu(\alpha_{m,n}+2j+4l)}\mathcal{A}_{\mathcal{S}},\quad\text{for $s\neq 1$}.
\end{aligned}
\end{equation*}

\medskip We finally consider the equation of $g_{m,n}^{j,k,l,+}$ and $g_{q,1,h,m,n}^{j,k,l}$  from the systems \eqref{out system 2}, \eqref{out system 4 1}
\begin{equation}\label{eq s=1}
\begin{aligned}
&\big(\frac{1}{p}+2s_c+2\nu(\alpha_{m,n}+2j+4l)+k+r\partial_{r}\big)g_{m,n}^{l,j,k,+}=D_{m,n}^{l,j,k,+},\\
&\big(\frac{1}{p}+(q+1)s_c+2\nu(\alpha_{m,n}+2j+4l)+2h+k+r\partial_{r}\big)g_{q,1,h,m,n}^{l,j,k}=D_{q,1,h,m,n}^{l,j,k},
\end{aligned}
\end{equation}
where $D_{m,n}^{l,j,k,+}$ and $D_{q,1,h,m,n}^{l,j,k}$ are given by:
\begin{equation*}
\begin{aligned}
D_{m,n}^{l,j,k,+}=&i\Big(\Delta g_{m,n}^{l,j,k-1,+}+(p+1)|f_0|^{2p}g_{m,n}^{l,j,k-1,+}+p|f_0|^{2p-2}f_0^2\overline{g_{m,n}^{l,j,k-3,-}}+N_{1,m,n}^{l,j,k-1}\Big),\\
D_{q,1,h,m,n}^{l,j,k}=&i\Big(\Delta g_{q,1,h,m,n}^{l,j,k-1}+(p+1)|f_0|^{2p}g_{q,1,h,m,n}^{l,j,k-1}+p|f_0|^{2p-2}f_0^2\overline{g_{q,-1,h,m,n}^{l,j,k-1}}+N_{q,1,h,m,n}^{l,j,k-1}\Big).
\end{aligned}
\end{equation*}
Setting
\begin{equation*}
\begin{aligned}
\tilde{D}_{m,n}^{l,j,k,+}(r):=&r^{\frac{1}{p}+2s_c+2\nu(\alpha_{m,n}+2j+4l)+2k}D_{m,n}^{l,j,k,+}(r),\\
\tilde{D}_{q,1,h,m,n}^{l,j,k}(r):=& r^{\frac{1}{p}+2q s_c+4h+2\nu(\alpha_{m,n}+2j+4l)+2k}D_{q,1,h,m,n}^{l,j,k}(r),
\end{aligned}
\end{equation*}
by \eqref{nonlinear out 1}, \eqref{nonlinear out 2} and \eqref{out expansion},  $\tilde{D}_{m,n}^{l,j,k,+}\in\mathcal{B}_{k,\mathcal{S}}$ and $\tilde{D}_{q,1,h,m,n}^{l,j,k}\in \mathcal{B}_{k,\mathcal{S}}$.

\medskip We only treat the first equation of the system \eqref{eq s=1}, since the second equation can be handled in the same way. Write
\begin{equation*}%\label{def of gk+}
g_{m,n}^{l,j,k,+}(r)=r^{-\frac{1}{p}-2s_c-2\nu(\alpha_{m,n}+2j+4l)-k}\tilde{g}_{m,n}^{l,j,k,+}(r),
\end{equation*}
then
\begin{equation}\label{eq tilde g1k+}
\partial_r\widetilde g_{m,n}^{l,j,k,+}(r) = r^{-k-1} \widetilde D_{m,n}^{l,j,k,+}(r).
\end{equation}
\medskip Hence, for $0< r<\delta$, by the definition of $\mathcal{B}_{k,\mathcal{S}}$,
\begin{equation*}
\tilde{D}_{m,n}^{l,j,k+}(r)=d_{m,n}^{l,j,k,+,0}+\sum\limits_{(m',n',l',j')\in\mathcal{S}}\sum_{0\leq a\leq L}d_{m',n'}^{l',j'k',+,a}r^{2\nu(\alpha_{m',n'}+2j'+4l')}(\ln r)^{a}.
\end{equation*}
Following the procedure in \eqref{compu g 1+} for the case $k=1$, we integrate \eqref{eq tilde g1k+} to obtain
\begin{equation*}
\tilde g_{m,n}^{l,j,k,+}(r)=C_{m,n}^{l,j,k,+}-\frac{d_{m,n}^{l,j,k,+,0}}{k}r^{-k}+\sum_{(m',n',l',j')\in\mathcal S}\sum_{0\leq a\leq L}\tilde d_{m',n'}^{l',j',k,+,a}r^{2\nu(\alpha_{m',n'}+2j'+4l')-k}(\ln r)^a,
\end{equation*}
where the increase by one occurs only when $(m',n',l',j')\in \mathcal{S}$ satisfying
\begin{equation*}
2\nu(\alpha_{m',n'}+2j'+4l')=k.
\end{equation*}
Therefore,
\begin{equation*}
\tilde g_{m,n}^{l,j,k,+}\in r^{-k}\mathcal B_{k+1,\mathcal S},
\end{equation*}
and consequently
\begin{equation*}
g_{m,n}^{l,j,k,+}\in r^{-\frac1p-2s_c-2\nu(\alpha_{m,n}+2j+4l)-2k} \mathcal B_{k+1,\mathcal S}.
\end{equation*}

\end{proof}

\bigbreak

\subsection{Estimate of the approximate solution in the remote region}\label{subsec 4.3}
Under the above notations, set for any integer $N_{out}\geq 2$,
\begin{equation}\label{def psi out}
\begin{aligned}
\psi_{out}^{ap}(t,x)=&f_0(r)+\sum_{1\leq k\leq N_{out}}t^kf_k(r)+e^{\frac{ir^2}{4t}}\sum_{\substack{0\leq k\leq N_{out}\\(m,n,l,j)\in\mathcal{S}}}t^{s_c+2\nu(\alpha_{m,n}+2j+4l)+k}g_{m,n}^{l,j,k,+}(r)\\
&+e^{-\frac{ir^2}{4t}}\sum_{\substack{0\leq k\leq N_{out}\\(m,n,l,j)\in\mathcal{S}}}t^{s_c+2\nu(\alpha_{m,n}+2j+4l)+k+2}g_{m,n}^{l,j,k,-}(r)\\
&+\sum_{\substack{(q,s,h)\in \Omega\\2\leq q\leq N_{out}}}e^{\frac{isr^2}{4t}}\sum_{\substack{0\leq k\leq N_{out}\\(m,n,l,j)\in\mathcal{S}_q}}t^{qs_c+2\nu(\alpha_{m,n}+2j+4l)+2h+k}g_{q,s,h,m,n}^{l,j,k}(r),
\end{aligned}
\end{equation}
and 
\begin{align*}\label{def chi out}
u_{out}^{ap}(t,\rho)=\lambda^{-\frac{1}{p}}(t)\psi_{out}^{ap}(t,t^{\frac{1}{2}+\nu}\rho).
\end{align*}

As a direct consequence of the previous analysis, we get
\begin{lemma}\label {lem Uout est}
{\sl There exists $T(N_{out},\delta)>0$ such that for $0< t\leq T(N_{out},\delta)$, the following holds.

(1). The profile $\psi_{out}^{ap}$ verifies
  \begin{equation}\label {L infinity out}
  \begin{aligned}
   \left \| r^{-k}\partial_{r}^l \big(\psi_{out}^{ap}-f_0\big)\right\|_{L^{\infty}(\mathcal{S}_{out})}\leq t^{-(\frac{1}{2}-\epsilon_2)(\frac{1}{p}+k+l)}\big(&Ct^{2\epsilon_2}+C t^{2\epsilon_2(s_c-l)+\nu(1+2\epsilon_2)|\lambda_{+}|}\big),   \\ 
   &l-k\leq \frac{1}{p}+2s_c+2\nu|\lambda_{+}|.
   \end{aligned}
   \end{equation}
   \begin{align}
&\left\| \partial_{r}^l \big( \psi_{out}^{ap}-f_0\big)\right\|_{L^{2}(\mathcal{S}_{out})}\leq C\delta^{s_c-l}+C t^{(\frac{1}{2}+\epsilon_2)(2\nu|\lambda_{+}|+s_c-l)},\quad 0\leq l\leq s_c,\label {L2 out small} \\ 
&\left\|\partial_{r}^l \big(\psi_{out}^{ap}-f_0\big)\right\|_{L^{2}(\mathcal{S}_{out})}\leq C\delta^{s_c-l}+Ct^{(\frac{1}{2}-\epsilon_2)(s_c-l)}, \quad s_c<l<s_c+2\nu|\lambda_{+}|.\label {L2 out large}
\end{align}
(2). For any $k\geq 0$, $0\leq l\leq s_c+2\nu|\lambda_{+}|$ and $\frac{1}{10}t^{\frac{1}{2}-\epsilon_2}\leq r\leq 10 t^{\frac{1}{2}-\epsilon_2}$, one has
\begin{align}\label{match est out}
\left|\partial_{r}^{l}\big(\psi_{ss}^{ap}-\psi_{out}^{ap}\big)\right|\leq r^{-\frac{1}{p}-l}\big(C t^{2\epsilon_2(N_{out}+1)}+Ct^{\nu(1-2\epsilon_2)|\lambda_{+}|(N_{ss}+1)}\big).
\end{align}}
\end{lemma}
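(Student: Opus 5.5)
The plan is to read everything off the expansions in Lemma \ref{lem remote expansion} and the definition \eqref{def psi out}, treating each summand separately. Write $\psi_{out}^{ap}-f_0$ as a finite sum of terms of the form $t^{\gamma}e^{\frac{isr^2}{4t}}g(r)$, where $g$ lies in one of the classes $r^{-\beta}\mathcal A_{\mathcal S}$ or $r^{-\beta}\mathcal B_{k+1,\mathcal S}$. For $r<\delta$ each such $g$ is bounded by $Cr^{-\beta}$, up to logarithms; for $r\geq 2\delta$ the $\mathcal A_{\mathcal S}$ pieces vanish and the $\mathcal B$ pieces are polynomially bounded. On $\mathcal S_{out}$ we have $r\geq \frac1{10}t^{\frac12-\epsilon_2}$, so the ratio $t/r^2\lesssim t^{2\epsilon_2}$ is small. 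This is the mechanism behind all the bounds.

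\textbf{$L^\infty$ bound \eqref{L infinity out}.} Each derivative $\partial_r$ acting on $e^{\frac{isr^2}{4t}}$ costs a factor $r/t$. Each power $r^{-k}$ costs $r^{-k}$.
\begin{itemize}
\item For the nonoscillatory terms $t^kf_k\sim t^k r^{-\frac1p-2k}$, we get $t^{-(\frac12-\epsilon_2)(\frac1p+k+l)}t^{2\epsilon_2}$ at the inner edge. The leading term $k=1$ gives the first summand.
\item For the oscillatory terms with $q=1$, the size is
\[
t^{s_c+2\nu\alpha}\,r^{-\frac1p-2s_c-2\nu\alpha}\,(r/t)^l .
\]
Evaluated at $r\sim t^{\frac12-\epsilon_2}$, with leading $\alpha=|\lambda_+|$, this gives the second summand $t^{2\epsilon_2(s_c-l)+\nu(1+2\epsilon_2)|\lambda_+|}$ relative to the scaling factor. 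The constraint $l-k\leq \frac1p+2s_c+2\nu|\lambda_+|$ guarantees that the worst point is the inner boundary rather than $r\sim\delta$.
\item The terms with $q\geq2$ carry extra factors $(t/r^2)^{s_c(q-1)}$ and are subordinate.
\end{itemize}

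\textbf{$L^2$ bounds \eqref{L2 out small}, \eqref{L2 out large}.} I would integrate the same pointwise bounds in $r^{d-1}\,dr$ over $[\frac1{10}t^{\frac12-\epsilon_2},2\delta]$, using $s_c=\frac d2-\frac1p$.
\begin{itemize}
\item The $f_k$ terms give integrands $r^{-\frac2p-4k-2l+d-1}$. For $l\leq s_c$ the integral is dominated by $r\sim\delta$, producing $C\delta^{s_c-l}$ when $t$ is small; for $l>s_c$ the inner endpoint contributes.
\item The oscillatory $q=1$ terms, after $l$ derivatives, behave like $t^{s_c+2\nu\alpha-l}r^{-\frac1p-2s_c-2\nu\alpha+l}$. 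Squared and integrated, this is dominated by the inner endpoint when $l<s_c+2\nu|\lambda_+|$, giving $t^{(\frac12-\epsilon_2)(s_c-l)}$-type bounds, or by $r\sim\delta$, giving the $t^{(\frac12+\epsilon_2)(\cdots)}$ form in \eqref{L2 out small}.
\item The exponent bookkeeping splits exactly at $l=s_c$. This split dictates the two regimes.
\end{itemize}
The region $r\geq2\delta$ contributes only the $\mathcal B$-class polynomial tails multiplied by $t^{s_c+\cdots}$. These are harmless on bounded time, but they require checking that differentiating the phase does not destroy $L^2$ integrability. I would handle this by noting that the cutoff $\Theta(r/\delta)$ in $f_0$ makes all the sources vanish for $r\geq 2\delta$, so the only $\mathcal B$ pieces there are polynomials of degree less than $k$ at order $t^k$; and I would choose $T(N_{out},\delta)$ small.

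\textbf{Matching bound \eqref{match est out}.} This is the main obstacle. The plan is to compare, on the overlap $r\sim t^{\frac12-\epsilon_2}$, the large-$y$ expansion of $v_{ss}^{ap}$ from Lemma \ref{lem ss infinity}, rewritten in $(t,r)$ as in \eqref{main terms in out expansion}, with the small-$r$ expansion of $\psi_{out}^{ap}$ from \eqref{expansion a}, \eqref{expansion b}. Both are double series in $t/r^2$ and $r^{2\nu(\cdot)}$, with phases $e^{isr^2/4t}$. The constants $\beta_{m,n}^{l,j}$, $C^{l,j,k,+}_{m,n}$ and $g_{q,1,h,m,n}^{l,j,0}(\delta)$ were fixed precisely so that coefficients agree term by term. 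This includes the logarithms: $\ln y=\ln r-\frac12\ln t$ has to be reconciled with the resonant $\ln r$ terms on the outer side, and this identification needs care.

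The mismatch is then the tails. Truncating at $k>N_{out}$ in $t/r^2$ leaves a relative error $(t/r^2)^{N_{out}+1}\lesssim t^{2\epsilon_2(N_{out}+1)}$. Truncating at $m,n,l>N_{ss}$ leaves $t^{\nu\alpha}y^{2\nu\alpha}$-type terms with $y\sim t^{-\epsilon_2}$, giving $t^{\nu(1-2\epsilon_2)|\lambda_+|(N_{ss}+1)}$. Derivatives cost $r^{-l}$ after accounting for phases; the phases are identical on both sides, so they cancel in the difference.

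The delicate point is verifying that the coefficient identification really covers the $\mathcal B$-class logarithmic and $r^{-k}$ terms generated in Step 3 of Lemma \ref{lem remote expansion}. I would first check this on the leading branch $g^{l,j,1,+}_{m,n}$ via \eqref{compu g 1+}, then argue inductively in $k$.
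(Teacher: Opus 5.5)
Your overall route is the same as the paper's: you read every bound off the expansions of Lemma~\ref{lem remote expansion} term by term, using $t/r^2\lesssim t^{2\epsilon_2}$ on $\mathcal S_{out}$. The paper gives nothing more than calling the lemma a direct consequence of the previous analysis. Your $L^\infty$ computation for \eqref{L infinity out} is correct, including the role of the constraint $l-k\le\frac1p+2s_c+2\nu|\lambda_+|$.

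Your $L^2$ bookkeeping is slightly off, though harmlessly.
\begin{itemize}
\item For the $q=1$ branch, the integral over $r\ge\frac1{10}t^{\frac12-\epsilon_2}$ is always dominated by the inner edge when $l<s_c+2\nu|\lambda_+|$. It produces exactly $t^{(\frac12+\epsilon_2)(s_c+2\nu|\lambda_+|-l)}$. This is the second term in \eqref{L2 out small}, and it is $\le1$ in \eqref{L2 out large}. It is not a contribution from $r\sim\delta$.
\item The factor $t^{(\frac12-\epsilon_2)(s_c-l)}$ is a crude bound for the inner-edge contribution $t^{(\frac12-\epsilon_2)(s_c-l)+2\epsilon_2k}$ of the non-oscillatory terms $t^kf_k$. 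Their threshold is $l=s_c-2k$, not $l=s_c$.
\item $C\delta^{s_c-l}$ absorbs what remains of the $t^kf_k$ terms, in particular $t^k\delta^{s_c-2k-l}$ from $r\sim\delta$, once $t$ is small depending on $\delta$.
\end{itemize}

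The genuine problem is in part (2): the phases do not cancel in the difference. Each oscillatory mismatch has the form $e^{\frac{isr^2}{4t}}(A_{ss}-A_{out})$. Every derivative landing on the phase costs $r/t=r^{-1}\cdot r^2/t\sim r^{-1}t^{-2\epsilon_2}$ on the overlap. So ``derivatives cost $r^{-l}$'' is false as stated.

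What rescues \eqref{match est out} is the mechanism you already used for the second summand of \eqref{L infinity out}. At $r\sim t^{\frac12-\epsilon_2}$ the oscillatory amplitudes carry, relative to $r^{-\frac1p}$, the extra factor
\[
t^{s_c+2\nu\alpha}r^{-2s_c-2\nu\alpha}=t^{2\epsilon_2 s_c+(1+2\epsilon_2)\nu\alpha},\qquad \alpha\ge|\lambda_+| .
\]
Hence an oscillatory mismatch of relative size $\varepsilon(t)$ satisfies
\[
\big|\partial_r^l(\cdot)\big|\lesssim r^{-\frac1p-l}\,\varepsilon(t)\,t^{2\epsilon_2(s_c-l)+(1+2\epsilon_2)\nu|\lambda_+|}.
\]
The last factor is $\le1$ because $l\le s_c+2\nu|\lambda_+|$ and $\epsilon_2<\frac12$ make the exponent at least $(1-2\epsilon_2)\nu|\lambda_+|>0$. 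This is why part (2) restricts the range of $l$. Your argument never uses that restriction, and for large $l$ the $r^{-l}$ rate is not available.

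With this repaired, (2) does reduce, as you say, to the term-by-term identification of the two double expansions. The $\ln y=\ln r-\frac12\ln t$ reconciliation you flag is a real point. The paper does not treat it separately; it simply asserts that matching fixes the homogeneous constants.
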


Let us now consider  the remainder
  \begin{equation*}
  \begin{aligned}
  \mathcal{R}_{out}^ {ap}(t,\rho)=&% [ \eqref{eq of U in} U_{out}^ {N_{out}} ](t,\rho)\, . 
  -it^{1+2\nu}\partial_{t}u_{out}^{ap}+(\frac{1}{2}+\nu)it^{2\nu}\Lambda u_{out}^{ap}-\Delta u_{out}^{ap}-|u_{out}^{ap}|^{2p}u_{out}^{ap},\\
  R_{out}^{ap}(t,r)=&-i\partial_{t}\psi_{out}^{ap}-\Delta \psi_{out}^{ap}-|\psi_{out}^{ap}|^{2p}\psi_{out}^{ap}.
  \end{aligned}
  \end{equation*}
We have
$$\mathcal{R}_{out}^ {ap}(t,\rho) =\lambda^{-\frac{1}{p}-2}(t)R_{out}^{ap} \Big(t,\frac{\rho}{\lambda(t)}\Big) .$$

It follows from the expression of $\psi_{out}^{ap}$ that for $ l\geq 0$,
\begin{equation*}
\left\| \partial_{r}^{l}   R_{out}^ {ap}(t,r)\right\|_{L^2(\mathcal{S}_{out})} \leq C t^{2\epsilon_2(N_{out}+1)-1+(1/2-\epsilon_2)(s_c-l)}\big(1+t^{2\epsilon_2(s_c-l)+\nu(1+2\epsilon_2)|\lambda_+|}\big),
\end{equation*}
provided $N_{out}$ is sufficiently large. Then, we obtain the following lemma.
\medbreak

  \begin{lemma}\label {lem Rout}
{\sl For every integer $l\geq 0$,  there exist $C>0$ and $T>0$ such that, for $0<t\leq T$,
\begin{equation}\label {R out est} 
\left\| \partial_{\rho}^{l}   \mathcal{R}_{out}^ {ap}(t, \rho)\right\|_{L_{\rho}^2(\mathcal{S}_{out})} \leq Ct^{2\nu+2\epsilon_2(N_{out}+1)-(\nu+\epsilon_2)(s_c-l)}\big(1+t^{2\epsilon_2(s_c-l)+\nu(1+2\epsilon_2)|\lambda_+|}\big),
\end{equation}
provided that $N_{out}$ is sufficiently large. In particular, if $l<s_c+2\nu|\lambda_+|$, then
\begin{equation*}
\left\| \partial_{\rho}^{l}   \mathcal{R}_{out}^ {ap}(t, \rho)\right\|_{L_{\rho}^2(\mathcal{S}_{out})} \leq Ct^{2\nu+2\epsilon_2(N_{out}+1)-(\nu+\epsilon_2)(s_c-l)}.
\end{equation*}}
\end{lemma}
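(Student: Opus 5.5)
The plan is to reduce the estimate for $\mathcal R_{out}^{ap}$ to the physical-variable bound on $R_{out}^{ap}$ stated just before the lemma, and then to change variables. We use the identity $\mathcal R_{out}^{ap}(t,\rho)=\lambda^{-\frac1p-2}(t)R_{out}^{ap}(t,\rho/\lambda(t))$ with $\lambda(t)=t^{-\frac12-\nu}$. Differentiating $l$ times in $\rho$ produces a factor $\lambda^{-l}$. The radial $L^2$ norm in dimension $d$ picks up $\lambda^{d/2}$. Hence
\begin{equation*}
\big\|\partial_\rho^l\mathcal R_{out}^{ap}\big\|_{L^2_\rho}=\lambda^{-\frac1p-2-l+\frac d2}\big\|\partial_r^l R_{out}^{ap}\big\|_{L^2_r}=t^{(\frac12+\nu)(2+l-s_c)}\big\|\partial_r^l R_{out}^{ap}\big\|_{L^2_r},
\end{equation*}
since $\frac d2-\frac1p=s_c$. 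The region $\mathcal S_{out}$ is mapped to itself under this rescaling, so no cutoff issue arises.

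The main step is therefore the bound on $\|\partial_r^l R_{out}^{ap}\|_{L^2(\mathcal S_{out})}$. By construction, the systems \eqref{out system 1}--\eqref{out system 4} cancel every term of the expansion of $R_{out}^{ap}$ up to order $N_{out}$ in $t$ within each phase. What remains is of three kinds:
\begin{itemize}
\item[(a)] the top linear terms, namely $-\Delta t^{N_{out}}f_{N_{out}}$ and the analogous top-order terms in each oscillatory branch coming from \eqref{linear out 1}--\eqref{linear out 2};
\item[(b)] nonlinear terms of total order beyond $N_{out}$, together with phases $q>N_{out}$;
\item[(c)] the error from $\Delta f_0+|f_0|^{2p}f_0$ on the support of $\Theta'(r/\delta)$, which is absorbed into $f_1$ exactly and so produces nothing extra.
\end{itemize}
By Lemma \ref{lem remote expansion}, each function of index $k$ behaves like $r^{-\frac1p-2k}$ times an element of $\mathcal A_{\mathcal S}$ or $\mathcal B_{k+1,\mathcal S}$. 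For $\mathcal B$, polynomial growth holds only for $r\ge2\delta$, and is harmless there because $t\le T(\delta)$. Consequently a leftover term of order $t^{N_{out}+1}$ is bounded pointwise by $t^{N_{out}+1}r^{-\frac1p-2(N_{out}+1)-2}$, up to the oscillatory weights $t^{s_c}(r/t)^{\cdots}$. Each $r$-derivative falling on $e^{\pm ir^2/4t}$ costs $r/t$. On $r\ge\frac1{10}t^{\frac12-\epsilon_2}$, the ratio $t/r^2\le Ct^{2\epsilon_2}$. So the leftover is $t^{2\epsilon_2(N_{out}+1)}$ times the size of the leading profile $r^{-\frac1p-2}$, or its oscillatory analogue.

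Computing the $L^2$ norm over $r\ge t^{\frac12-\epsilon_2}$ then gives $t^{2\epsilon_2(N_{out}+1)-1+(\frac12-\epsilon_2)(s_c-l)}$. The nonoscillatory part is dominated by its inner boundary, since $s_c-l<2+\cdots$ makes the integral concentrate at small $r$. The oscillatory part carries an extra factor from the phase-derivative count, and this yields the correction $t^{2\epsilon_2(s_c-l)+\nu(1+2\epsilon_2)|\lambda_+|}$; here the $t^{2\nu\alpha}$ weights contribute at least $\alpha_{1,0}=|\lambda_+|$. Combining with the scaling identity above, and using $(\frac12+\nu)(2+l-s_c)-1+(\frac12-\epsilon_2)(s_c-l)=2\nu-(\nu+\epsilon_2)(s_c-l)$, yields \eqref{R out est}.

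For the special case $l<s_c+2\nu|\lambda_+|$, the exponent $2\epsilon_2(s_c-l)+\nu(1+2\epsilon_2)|\lambda_+|$ is positive. Indeed, $2\epsilon_2(l-s_c)<4\epsilon_2\nu|\lambda_+|<\nu(1+2\epsilon_2)|\lambda_+|$ because $\epsilon_2<\frac12$. The second term is therefore bounded by $1$.

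The main obstacle I expect is bookkeeping the oscillatory remainders. Repeated derivatives of $e^{isr^2/4t}$ lose powers $r/t$. Log factors from resonances and the cross-coupling terms $g^{k-3,-}$ must all be shown to be dominated by the same gain $t/r^2\le t^{2\epsilon_2}$. For this I would take $N_{out}$ large enough that the loss of $l$ derivatives, at most $(r/t)^l$, is compensated, with logarithms absorbed into an arbitrarily small loss of power.
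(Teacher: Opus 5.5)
Your proposal is correct and follows the same route as the paper. The paper likewise takes the physical-variable bound $\|\partial_r^l R_{out}^{ap}\|_{L^2(\mathcal S_{out})}\lesssim t^{2\epsilon_2(N_{out}+1)-1+(\frac12-\epsilon_2)(s_c-l)}\big(1+t^{2\epsilon_2(s_c-l)+\nu(1+2\epsilon_2)|\lambda_+|}\big)$ from the truncation structure of $\psi_{out}^{ap}$ (each omitted order gains $t/r^2\lesssim t^{2\epsilon_2}$ on $\mathcal S_{out}$), and transfers it through $\mathcal R_{out}^{ap}(t,\rho)=\lambda^{-\frac1p-2}(t)R_{out}^{ap}(t,\rho/\lambda(t))$ exactly as in your exponent computation. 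One minor precision: the correction $t^{\nu(1+2\epsilon_2)|\lambda_+|}$ comes from the factor $(t/r)^{2\nu\alpha}$ in the leading oscillatory term $t^{s_c+2\nu\alpha}g^{0,+}\sim r^{-\frac1p}(t/r^2)^{s_c}(t/r)^{2\nu\alpha}$, evaluated at $r\sim t^{\frac12-\epsilon_2}$ with $\alpha\geq|\lambda_+|$, not from the weight $t^{2\nu\alpha}$ alone.
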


\bigbreak

   \section{Approximate solution in the whole space}\label{sec 5}
We are now in a position to conclude the proof of Proposition \ref{main prop}.
Fix $\epsilon_2$ such that $0<\epsilon_2<\min\big\{\nu,\frac{1}{2}\big\}$ and consider the radial approximate solution $\psi^{ap}(t,x)$:
\begin{equation}\label {def rad psi ap}
\begin{aligned}
 \psi^{ap}(t,x):=& \Theta \big( t^{-\frac{1}{2}- \epsilon_1}r\big)\psi_{in}^ {ap}(t,x) +\big(1-\Theta \big( t^{-\frac{1}{2}- \epsilon_1}r\big)\big)\Theta \big( t^{-\frac{1}{2}+ \epsilon_2}r\big)\psi_{ss}^{ap}(t,x)\\
&+\big(1-\Theta \big( t^{-\frac{1}{2}+ \epsilon_2}r\big)\big)\psi_{out}^{ap}(t,x),
\end{aligned}
\end{equation}
where $\psi_{in}^{ap}$, $\psi_{ss}^{ap}$ and $\psi_{out}^{ap}$ are defined in \eqref{def psi in}, \eqref{def psi ss} and \eqref{def psi out}. Write $\psi^{ap}$ as
\begin{equation*}\label{def chi ap}
\psi^{ap}(t,x)=\lambda^{\frac{1}{p}}\big(Q(\rho)+\chi^{ap}(t,\rho)\big),\quad \rho=\lambda(t) |x|.
\end{equation*}

\medskip For $\rho\geq \frac{1}{10}t^{-(\nu+\epsilon_2)}$, according to the estimate \eqref{L infinity out}, we write the $\chi^{ap}$ as
\begin{equation*}
\chi^{ap}(t,\rho)=\lambda^{-\frac{1}{p}}\big(\psi^{ap}(t,x)-f_0(x)\big)+\big(\lambda^{-\frac{1}{p}}f_0(x)-Q(\rho)\big).
\end{equation*}
By the expansions \eqref{expansion Q} and \eqref{eq f0}, we have
\begin{equation*}
\Big|\lambda^{-\frac{1}{p}}f_0(\frac{\rho}{\lambda})-Q(\rho)\Big|\leq C\big|\Theta(\frac{\rho}{\lambda\delta})-1\big|\rho^{-\frac{1}{p}}+C\Theta(\frac{\rho}{\lambda \delta})\lambda^{-2\nu|\lambda_{+}|}\rho^{-\frac{1}{p}+2\nu|\lambda_{+}|}+C\rho^{-\frac{1}{p}-|\lambda_{+}|}.
\end{equation*}
This implies that for $l\geq 0$, and $0<\epsilon_2<\min\{\nu,\frac12\}$,
\begin{equation}\label{L infty f0}
\begin{aligned}
\Big\|\nabla^l\big(\lambda^{-\frac{1}{p}}f_0\big(\frac{\rho}{\lambda}\big)-Q(\rho)\big)\Big\|_{L^{\infty}(\rho\geq \frac{1}{10}t^{-\nu-\epsilon_2)}}\leq& t^{(\frac{1}{2}+\nu)(\frac{1}{p}+l)},\quad  l\leq 2\nu|\lambda_+|-\frac{1}{p},\\
\Big\|\nabla^l\big(\lambda^{-\frac{1}{p}}f_0\big(\frac{\rho}{\lambda}\big)-Q(\rho)\big)\Big\|_{L^{\infty}(\rho\geq \frac{1}{10}t^{-\nu-\epsilon_2)}}\leq& C t^{(\nu+\epsilon_2)(\frac1p+l)+(1-2\epsilon_2)\nu|\lambda_+|},\quad  l> 2\nu|\lambda_+|-\frac{1}{p}.
\end{aligned}
\end{equation}

\medskip By Lemma \ref{lem: sol chi in L infinity}, Lemma \ref {lem L infinity ss}, and Lemma \ref {lem Uout est}, there exists a positive parameter $\delta_0(N_{out})$ such that, for any $\delta \leq \delta_0(N_{out})$, there exists a positive time $T=T(\delta)$ for which the approximate solution $\psi^{ap}$ defined by \eqref{def rad psi ap} satisfies the following $L^{\infty}$ estimates:
\begin{equation} \label{L infinity whole}
\big \| \langle \rho\rangle^{-k} \partial_{\rho}^{l} \chi^ {ap} (t)\big\|_{L^{\infty}(\R^d)} \leq  C\big( t^{\frac{\nu}{p}}+t^{(\nu-\epsilon_1)(\frac{1}{p}+|\lambda_{+}|)}\big).
  \end{equation} 

\medbreak

 Along the same lines, using Lemma \ref {lem: sol chi in L 2}, \eqref{L2 out small}, \eqref{L2 out large} and \eqref{L infty f0}, we obtain
 \begin{equation}\label{whole L2 est 0}
 \begin{aligned}
 &\big\|\nabla^{l}\chi^{ap}(t)\big\|_{L^2(\rho\leq 10 t^{-\nu-\epsilon_2})}\leq %C t^{(1-2\epsilon_2)\nu|\lambda_{+}|-(\nu+\epsilon_2)(s_c-l)}, &&0<l<s_c,\\
 C t^{-(\nu+\epsilon_2)(s_c-l)}, &&0<l<s_c,\\
 &\big\|\nabla^{l}\chi^{ap}(t)\big\|_{L^2(\rho\leq 10 t^{-\nu-\epsilon_2})}\leq Ct^{2\nu}+Ct^{(1-2\epsilon_2)\nu|\lambda_{+}|}, &&s_c<l<s_c+2\nu|\lambda_{+}|,\\
 &\big\|\nabla^{l}\big(\chi^{ap}(t)+\big(1-\Theta(\frac{\rho}{\lambda(t)\delta})\big)Q\big)\big\|_{L^2(\rho\geq \frac{1}{10} t^{-\nu-\epsilon_2})}\leq Ct^{-(\frac{1}{2}+\nu)(s_c-l)}, &&0<l<s_c,\\
  &\big\|\nabla^{l}\chi^{ap}(t)\big\|_{L^2(\rho\geq \frac{1}{10} t^{-\nu-\epsilon_2})}\leq Ct^{(\nu+\epsilon_2)(l-s_c)},&&s_c<l<s_c+2\nu|\lambda_{+}|.
 \end{aligned}
 \end{equation}
Here $C$ depends on $\delta$, $l$, and the approximation orders, and $T=T(\delta)>0$ is chosen sufficiently small
so that $10t^{\frac12-\epsilon_2}<\delta$ for $0<t\leq T$.

\medskip Inequalities \eqref{whole L2 est 0} imply, in particular
\begin{equation} \label{whole L2 est}
\begin{aligned}
  &\big\|\nabla^{l}\big(\chi^{ap}(t)+\big(1-\Theta(\frac{\rho}{\lambda(t)\delta})\big)Q\big)\big\|_{L^2(\mathbb{R}^d)}\leq C t^{-(\frac{1}{2}+\nu)(s_c-l)},&&0<l<s_c,\\
  &\big\|\nabla^{l}\chi^{ap}(t)\big\|_{L^2(\mathbb{R}^d)}\leq Ct^{2\nu}+C t^{(\nu+\epsilon_2)(l-s_c)},&&s_c<l<s_c+2\nu|\lambda_{+}| .\end{aligned}
  \end{equation}

\medskip
Finally, we consider the error associated with the global approximate profile:
\begin{align}\label{def of global error}
R^{ap}=%[\eqref{NLS}\psi^{ap}(t,r)].
-i\partial_{t}\psi^{ap}-\Delta \psi^{ap}-|\psi^{ap}|^{2p}\psi^{ap}.
\end{align}
Since $0<\epsilon_2<\min\{\nu,\frac12\}$ and $\epsilon_1>0$, for sufficiently small $t>0$, the support of $\Theta(t^{-\frac{1}{2}-\epsilon_1}r)$ is contained in the region where $\Theta(t^{-\frac{1}{2}+\epsilon_2}r)$ is identically one. Then for all multi-indices $\alpha$, $\beta$ with $|\alpha|, |\beta|\geq 1$, there holds
\begin{equation*}
\begin{aligned}
&\big(\partial^{\alpha}\Theta(t^{-\frac{1}{2}-\epsilon_1}r)\big)\Theta(t^{-\frac{1}{2}+\epsilon_2}r)=\partial^{\alpha}\Theta(t^{-\frac{1}{2}-\epsilon_1}r),\\
&\big(1-\Theta(t^{-\frac{1}{2}-\epsilon_1}r)\big)\partial^{\alpha}\Theta(t^{-\frac{1}{2}+\epsilon_2}r)=\partial^{\alpha}\Theta(t^{-\frac{1}{2}+\epsilon_2}r),\\
&\partial^{\alpha}\Theta(t^{-\frac{1}{2}-\epsilon_1}r)\cdot\partial^{\beta}\Theta(t^{-\frac{1}{2}+\epsilon_2}r)=0.
\end{aligned}
\end{equation*}
Therefore, substituting \eqref{def rad psi ap} into \eqref{def of global error}, we decompose the error into the following four terms:
\begin{align*}
R^{ap}=E_1+E_2+E_3+E_4,
\end{align*}
where  $\tilde{\Theta}(\xi)=\xi \cdot \nabla \Theta(\xi)$ and 
\begin{equation*}
\begin{aligned}
E_1= & (\frac{1}{2}+\epsilon_1)i t^{-1}\tilde{\Theta}(t^{-\frac{1}{2}-\epsilon_1}r)\big(\psi_{in}^{ap}-\psi_{ss}^{ap}\big)-2 t^{-(\frac{1}{2}+\epsilon_1)}\nabla{\Theta}(t^{-\frac{1}{2}-\epsilon_1}r)\cdot\nabla\big(\psi_{in}^{ap}-\psi_{ss}^{ap}\big)\\
&-t^{-2(\frac{1}{2}+\epsilon_1)}\Delta\Theta(t^{-\frac{1}{2}-\epsilon_1}r)\big(\psi_{in}^{ap}-\psi_{ss}^{ap}\big),\\
%\label{E1}\\
E_2= &(\frac{1}{2}-\epsilon_2)it^{-1}\tilde{\Theta}(t^{-\frac{1}{2}+\epsilon_2}r)\big(\psi_{ss}^{ap}-\psi_{out}^{ap}\big)  -2 t^{-(\frac{1}{2}-\epsilon_2)}\nabla{\Theta}(t^{-\frac{1}{2}+\epsilon_2}r)\cdot\nabla\big(\psi_{ss}^{ap}-\psi_{out}^{ap}\big)\\
&-t^{-2(\frac{1}{2}-\epsilon_2)}\Delta\Theta(t^{-\frac{1}{2}+\epsilon_2}r)\big(\psi_{ss}^{ap}-\psi_{out}^{ap}\big), \\%\label{E2}\\
E_3=&\Theta\big(t^{-\frac{1}{2}-\epsilon_1}r\big)R_{in}^{ap}+\big(1-\Theta\big(t^{-\frac{1}{2}-\epsilon_1}r\big)\big)\Theta\big(t^{-\frac{1}{2}+\epsilon_2}r\big)R_{ss}^{ap}+\big(1-\Theta\big(t^{-\frac{1}{2}+\epsilon_2}r\big)\big)R_{out}^{ap},\\%\label{E3}\\
E_4=&\Theta\big(t^{-\frac{1}{2}-\epsilon_1}r\big)\big(|\psi_{in}^{ap}|^{2p}\psi_{in}^{ap}-|\psi^{ap}|^{2p}\psi^{ap}\big)\\
&+\big(1-\Theta\big(t^{-\frac{1}{2}-\epsilon_1}r\big)\big)\Theta\big(t^{-\frac{1}{2}+\epsilon_2}r\big)\big(|\psi_{ss}^{ap}|^{2p}\psi_{ss}^{ap}-|\psi^{ap}|^{2p}\psi^{ap}\big)\\
&+\big(1-\Theta\big(t^{-\frac{1}{2}+\epsilon_2}r\big)\big)\big(|\psi_{out}^{ap}|^{2p}\psi_{out}^{ap}-|\psi^{ap}|^{2p}\psi^{ap}\big).%\label{E4}
\end{aligned}
\end{equation*}
Here
\begin{equation*}
R_{in}^{ap}(t,x)=t^{-(\frac{1}{2}+\nu)(\frac{1}{p}+2)}\mathcal{R}_{in}^{ap}(t, t^{-\frac{1}{2}-\nu}r),\quad R_{ss}^{ap}(t,x)=t^{-(\frac{1}{2}+\nu)(\frac{1}{p}+2)}\mathcal{R}_{ss}^{ap}(t, t^{-\frac{1}{2}-\nu}r).
\end{equation*}

\medbreak 
According to the estimates \eqref{L infinity whole} and \eqref{whole L2 est}, we set
\begin{equation*}
\frac d2<s<s_c+2\nu|\lambda_+|,\qquad0<\epsilon_1<\nu.
\end{equation*}
First, we address $E_1$. By \eqref{est psi in ss any N}, 
\begin{equation}\label{est E1 whole}
\begin{aligned}
\|E_1(t)\|_{\dot H^1\cap\dot H^s}\leq C\Big(&t^{2K\epsilon_1+(\nu-\epsilon_1)|\lambda_+|-1+(\frac12+\epsilon_1)(s_c-s)}\\
&+t^{(\nu-\epsilon_1)|\lambda_+|(N_{ss}+1)+(\frac12+\epsilon_1)(s_c-s-2)}\Big).
\end{aligned}
\end{equation}
Similarly, using \eqref{match est out}, we get for $E_2$ 
\begin{equation}\label{est E2 whole}
\begin{aligned}
\|E_2(t)\|_{\dot H^1\cap\dot H^s}\leq C\Big(&t^{2\epsilon_2(N_{out}+1)-1+(\frac12-\epsilon_2)s_c-(\frac12+\epsilon_2)s}\\
&+t^{(1-2\epsilon_2)\nu|\lambda_+|(N_{ss}+1)-1+(\frac12-\epsilon_2)s_c-(\frac12+\epsilon_2)s}\Big).
\end{aligned}
\end{equation}
Next, we consider $E_3$. By \eqref{error inner 1}, \eqref{ss error est for any Nss}, and \eqref{R out est},  it is apparent that
\begin{equation}\label{est E3 whole}
\begin{aligned}
\|E_3(t)\|_{\dot H^1\cap\dot H^s}\leq C\Big(&t^{2\nu(K+1)-(\frac12+\nu)(s+2-s_c)}\\
&+t^{2K\epsilon_1+(\nu-\epsilon_1)|\lambda_+|-1+(\frac12+\epsilon_1)(s_c-s)}\\
&+t^{(\nu-\epsilon_1)|\lambda_+|(N_{ss}+1)+(\frac12+\epsilon_1)(s_c-2)-(\frac12+\nu)s}\\
&+t^{(1-2\epsilon_2)\nu|\lambda_+|(N_{ss}+1)+(\frac12-\epsilon_2)(s_c-2)-(\frac12+\nu)s}\\
&+t^{2\epsilon_2(N_{out}+1)-1+(\frac12-\epsilon_2)s_c-(\frac12+\epsilon_2)s}\Big).
\end{aligned}
\end{equation}
Finally, applying the estimates \eqref{inner L infinity}, \eqref{L infty est ss 1}, \eqref{L infty est ss 2}, \eqref{L infinity out}, and \eqref{est psi in ss any N}, \eqref{match est out}, we can check that
\begin{equation}\label{est E4 whole}
\begin{aligned}
\|E_4(t)\|_{\dot H^1\cap\dot H^s}
\leq C\Big(&t^{2(\nu-\epsilon_1)|\lambda_+|+4K\epsilon_1+2\epsilon_1-1+(\frac12+\epsilon_1)(s_c-s)}\\
&+t^{2(\nu-\epsilon_1)|\lambda_+|(N_{ss}+1)+(\frac12+\epsilon_1)(s_c-s-2)}\\
&+t^{4\epsilon_2(N_{out}+1)-1+2\epsilon_2 +(\frac12-\epsilon_2)s_c-(\frac12+\epsilon_2)s}\\
&+t^{2(1-2\epsilon_2)\nu|\lambda_+|(N_{ss}+1)-1+2\epsilon_2+(\frac12-\epsilon_2)s_c-(\frac12+\epsilon_2)s}\Big).
\end{aligned}
\end{equation}

Fix $0<\epsilon_2<\min\{\nu,\frac12\}$ and choose $N_{ss}$, $N_{out}$ sufficiently large. Combining
\eqref{est E1 whole}--\eqref{est E4 whole}, we obtain
\begin{equation*}\label{whole error est}
\begin{aligned}
\|R^{ap}(t)\|_{\dot H^1\cap\dot H^s(\mathbb R^d)}\leq C\Big(&t^{2\nu(K+1)-(\frac12+\nu)(s+2-s_c)}\\
&+t^{2K\epsilon_1+(\nu-\epsilon_1)|\lambda_+|-1+(\frac12+\epsilon_1)(s_c-s)}\Big),\qquad 0<t\leq T.
\end{aligned}
\end{equation*}

 \bigbreak

 \section{Exact solution}\label{sec 6}
 Throughout this section, assume $(\mathrm{H1})$ and set 
  \begin{equation*}
\nu_0(d,p)=\frac{\lfloor d/2\rfloor+1-s_c}{2|\lambda_+|}.
\end{equation*}
Fix $\nu>\nu_0$ and an integer $s$ satisfying
\begin{equation*}
\frac{d}{2}<s<s_c+\min\Big\{2\nu|\lambda_+|,\frac{4\nu K}{1+2\nu}\Big\}.
\end{equation*}
We seek a radial blowup solution $\psi\in \dot{H}^1(\mathbb{R}^{d})\cap\dot{H}^{s}(\mathbb{R}^{d})$ to \eqref{NLS} of the form 
\begin{align*}\label{def of psi}
\psi(t,x)=\lambda^{\frac{1}{p}}(t)u(\tau,y),\quad y=\lambda(t)x,\,\,\,\tau=\frac{1}{2\nu}t^{-2\nu},
\end{align*}
where $ \lambda(t)=t^{-\frac{1}{2}-\nu}$ as defined previously. We further decompose $u$ as 
\begin{equation}\label{def u ap}
\begin{aligned}
u(\tau,y)=u^{ap}(\tau,y)+f(\tau,y),\,\,\,\,u^{ap}(\tau,y)=\lambda^{-\frac{1}{p}}(t)\psi^{ap}(t,x),
\end{aligned}
\end{equation}
where the profile $\psi^{ap}$ is the approximate solution constructed in Section \ref{sec 2}-- Section \ref{sec 5}. We get the following equation for the remainder $f$
\begin{equation}\label{eq f}
i \vec f_{\tau} =-H\vec  f-\frac{1+2\nu}{4\nu\tau}i\Lambda\vec  f+\mathcal F(f) +r,\quad \vec f={f\choose \bar f},
\end{equation}
where
\begin{equation}\label{def of remainder}
\begin{split}
&H =-\Delta\sigma_3-(p+1)Q^{2p}\sigma_3-pQ^{2p}\sigma_3\sigma_1,\,\,\,\Lambda =\frac{1}{p}+y\cdot \nabla_y\\
&\mathcal F(f)=\mathcal{F}_1(f)+\mathcal{F}_2(f),\quad \mathcal{F}_i(f)=\left(\begin{array}{l}
F_i(f)\\
-\overline{F_i(f)}
\end{array}\right),\quad i=1,2,\\
&F_1(f)=\mathcal V_1(\tau)f+\mathcal V_2(\tau) \overline{f},\\
&\mathcal V_1(\tau)=(p+1)\big(|u^{ap}|^{2p}-Q^{2p}\big),\quad 
\mathcal V_2(\tau)=p\big( |u^{ap}|^{2p-2}(u^{ap})^2 - Q^{2p}\big),\\
&F_2(f)=|u^{ap} +f|^{2p} (u^{ap} +f) -|u^{ap}|^{2p} u^{ap} -
(p+1) |u^{ap}|^{2p} f -p (u^{ap})^2 |u^{ap}|^{2p-2} \overline{f},\\
&r = \left( \begin{array}{cc}
\texttt{r}\\
-\overline{\texttt{r}}
\end{array} \right), \quad \texttt{r}(\tau,y) = -t^{(\frac{1}{2}+\nu)(\frac{1}{p}+2)}R^{ap}(t,x),
\end{split}
\end{equation}
and $R^{ap}$ being the error given by the approximate solution. Note that by Proposition \ref{main prop}, for the above choice of $s$,
\begin{align}
&\|u^{ap}(\tau)\|_{W^{s,\infty}(\mathbb{R}^d)}\leq C,\label{non est}\\
&\|\mathcal{V}_j\|_{L^{\infty}}+\sum_{1\leq|\alpha|\leq s}
\big\||y|\partial_y^\alpha\mathcal V_j(\tau)\big\|_{L^\infty}\leq C\tau^{-1},\quad j=1,2\label{1 non est}\\
&\|r\|_{\dot{H}^{l}}\leq C\big(t^{2\nu(K+1)}+t^{2\nu+(\nu-\epsilon_1)(|\lambda_{+}|+l-s_c)+2K\epsilon_1}\big),\quad 1\leq l\leq s.\label{error est}
\end{align}

\medskip We first consider the linearized equation $i \vec f_{\tau} =-H\vec  f-\frac{1+2\nu}{4\nu\tau}i\Lambda\vec  f+\mathcal F_1(f)$. Denote $U(\tau,s)$ as the propagator associated to this equation. In subsection \ref{subsec 6.1}, we prove the following linear estimate.

\begin{proposition}\label{prop linear est}
{\sl Assume that $d\geq11$ and $p\in\mathbb{N}^{\ast}$ satisfies $2p+1>p_{JL}$. For every $\sigma$ satisfying
$$\frac{d}{2}<\sigma<s_c+2\nu|\lambda_{+}|,$$
and every sufficiently small $\epsilon>0$, there exist constants $C_\sigma>0$ and $\tau_0(\epsilon)> 1$ such that
\begin{equation}\label{linear est}
\|U(\tau,s)f\|_{\dot H^{1}\cap\dot H^{\sigma}} \leq C_\sigma\left(\frac{s}{\tau}\right)^{\frac{1+2\nu}{4\nu}(\sigma-s_c)+\epsilon}\|f\|_{\dot H^{1}\cap\dot H^{\sigma}},
\end{equation}
for all $\tau_0\leq\tau\leq s$ and all $f\in\dot H^1_{\mathrm{rad}}(\mathbb R^d;\mathbb C^2) \cap\dot H^\sigma_{\mathrm{rad}}(\mathbb R^d;\mathbb C^2)$.}
\end{proposition}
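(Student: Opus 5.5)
We propose to prove \eqref{linear est} by energy estimates on the linear equation
\begin{equation*}
i\partial_\tau\vec f=-H\vec f-\frac{1+2\nu}{4\nu\tau}i\Lambda\vec f+\mathcal F_1(f),
\end{equation*}
run backward from time $s$ to time $\tau$. The plan has two estimates, one at the level $\dot H^1$ and one at the level $\dot H^\sigma$, which are then combined by Gronwall.

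\textbf{Energy at the $\dot H^1$ level.} Write $f=f_1+if_2$. Then $H$ acts through $L_+$ on $f_1$ and $L_-$ on $f_2$. Under $2p+1>p_{JL}$ we use the Hardy-type coercivity of $L_\pm$ on radial $\dot H^1$: since $(2p+1)c_\infty^{2p}<(d-2)^2/4$, the pointwise bound $Q^{2p}\le c_\infty^{2p}|y|^{-2}$ together with Hardy's inequality should give
\begin{equation*}
\langle L_\pm g,g\rangle\ge c\|g\|_{\dot H^1}^2 .
\end{equation*}
The base energy is $E_1(\tau)=\langle L_+f_1,f_1\rangle+\langle L_-f_2,f_2\rangle$, whose derivative contains no contribution from $H$.

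\textbf{Terms in $dE_1/d\tau$.} Two terms survive.
\begin{itemize}
\item The scaling term contributes $\frac{1+2\nu}{4\nu\tau}$ times commutators $[L_\pm,\Lambda]$. Using $[-\Delta,\Lambda]=-2\Delta$ up to the $1/p$ shift, together with $\Lambda(Q^{2p})$, this gives a multiple of $\frac{1}{\tau}(\|f\|_{\dot H^1}^2\cdot(1-s_c)\text{-type factor})$ plus localized potential terms $\frac1\tau\int |y\cdot\nabla Q^{2p}||f|^2$.
\item $\mathcal F_1$ contributes $\tau^{-1}$ terms by \eqref{1 non est}. Their leading part is $\frac{1}{2\nu\tau}\mathbf B(f)$, coming from $t^{2\nu}\chi_1$ with $\chi_1=iv_1^-$ purely imaginary.
\end{itemize}
The $\mathbf B$ term is not sign-definite. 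We cancel it by adding to $E_1$ the correction $\frac{1}{\tau}\langle Mf,f\rangle$, where $M$ is a bounded quadratic form chosen to solve $[H,M]\approx\mathbf B$. Concretely, $\mathbf B$ pairs $f_1$ with $f_2$ through the potential $Q^{2p-1}v_1^-$. The natural choice is the off-diagonal form $\int Q^{2p-1}v_1^- f_1f_2$-type terms, corrected so that their $\tau$-derivative along the $H$-flow reproduces $-\mathbf B$. The remaining localized terms of order $\tau^{-1}$ are absorbed by a Morawetz correction $\frac{c}{\tau}\Im\int \bar f\,\phi(y)\,y\cdot\nabla f$ with $\phi$ a cutoff. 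Its time derivative yields a positive localized quantity $\int_{|y|\lesssim R}|\nabla f|^2+|y|^{-2}|f|^2$, with coefficient $\tau^{-1}$, which dominates the bad localized terms. The outcome is
\begin{equation*}
\Bigl|\frac{d}{d\tau}\tilde E_1\Bigr|\le\frac{C_1+\epsilon}{\tau}\tilde E_1,\qquad \tilde E_1\simeq\|f\|_{\dot H^1}^2 .
\end{equation*}

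\textbf{Energy at the $\dot H^\sigma$ level.} We apply the same scheme with $E_\sigma=\langle H^{\sigma-1}\ldots\rangle$, or more simply with $\|\nabla^\sigma f\|_{L^2}^2$ plus lower-order corrections. Here the scaling term gives exactly
\begin{equation*}
\frac{1+2\nu}{4\nu\tau}\cdot2(\sigma-s_c)\|\nabla^\sigma f\|^2 ,
\end{equation*}
because $\Lambda$ scales $\dot H^\sigma$ with weight $\sigma-s_c$. Commutators of $\nabla^\sigma$ with $Q^{2p}$ and with $\mathcal V_j$ are lower order. They are controlled by interpolation and by the $\dot H^1$ energy, using \eqref{non est} and the requirement $\sigma>d/2$, which gives an algebra property for the $W^{s,\infty}$ potentials. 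Integrating backward from $s$ to $\tau$ then yields the factor $(s/\tau)^{\frac{1+2\nu}{4\nu}(\sigma-s_c)+\epsilon}$, with the $\epsilon$ absorbing the corrections. The $\dot H^1$ part loses only $(s/\tau)^{C\epsilon}$ once the corrections are used, so choosing $\tau_0$ large makes all error coefficients small.

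\textbf{Main obstacle.} The hard step is the first-order energy: building the quadratic correction cancelling $\frac1{2\nu\tau}\mathbf B$ while keeping the Morawetz positivity. The difficulty is that $Q$ has a slow tail, so the potentials decay only like $|y|^{-2}$ and the commutator terms are critical for Hardy. I would first try the explicit off-diagonal corrector built from $v_1^-$, check that the leftover is of the form $\tau^{-1}\int |y|^{-2-\delta}|f|^2$ using the decay $\Lambda Q\sim\rho^{-1/p+\lambda_+}$, and then absorb it via the Morawetz gain together with the coercivity.
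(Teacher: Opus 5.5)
Your $\dot H^1$ scheme matches the paper in outline. Your off-diagonal corrector $\int Q^{2p-1}v_1^-f_1f_2$ is, up to a constant, exactly the paper's $-\frac{1}{2\nu\tau}\Re\langle\psi,\sigma_3\mathbf B(\psi)\rangle$. No equation $[H,M]\approx\mathbf B$ needs solving: you are just adding the potential energy of $t^{2\nu}\mathbf B$, which is legitimate because $\sigma_3\mathbf B$ is self-adjoint, and the leftover is $O(\tau^{-2})$. The argument nevertheless has two genuine gaps.

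\textbf{Higher-order energy.} Taking $\|\nabla^\sigma f\|_{L^2}^2$ and treating the commutators with $Q^{2p}$ as errors does not work. With $f=u+iv$, the autonomous part of the flow is $u_\tau=-L_-v$, $v_\tau=L_+u$, and
\[
\frac{d}{d\tau}\bigl(\|\nabla^\sigma u\|_{L^2}^2+\|\nabla^\sigma v\|_{L^2}^2\bigr)=2\langle\nabla^\sigma u,\nabla^\sigma(Q^{2p}v)\rangle-2(2p+1)\langle\nabla^\sigma v,\nabla^\sigma(Q^{2p}u)\rangle .
\]
Its leading part is $-4p\int Q^{2p}\nabla^\sigma u\cdot\nabla^\sigma v$. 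This term carries an $O(1)$ coefficient rather than $O(\tau^{-1})$. So do the commutators $[\nabla^\sigma,Q^{2p}]$: they are lower order in derivatives but, by Hardy, still of size $\|\nabla^\sigma f\|_{L^2}^2$. Integrated over $[\tau,s]$ they produce $e^{C(s-\tau)}$, and no interpolation with $\dot H^1$ turns that into a power of $s/\tau$.

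The paper instead works with $\psi_k=H^k\psi$ and $\langle H\psi_k,\sigma_3\psi_k\rangle$, which the autonomous flow conserves exactly. Every remaining term then carries a factor $\tau^{-1}$; these come from $[H^k,\Lambda]=2kH^k+2p\sum_\ell H^\ell(Q^{2p-1}\Lambda Q\,\sigma_3M)H^{k-1-\ell}$ and from $H^k\mathbf B-(-1)^k\mathbf BH^k$, which has order $2k-1$. Even so, $-p\langle Q^{2p-1}\Lambda Q\,M\psi_k,\psi_k\rangle$ and $\Im\langle H\psi_k,\sigma_3\mathbf B(\psi_k)\rangle$ are top order, not lower order. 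They must be absorbed by a Morawetz term at the level of $\psi_k$. Only the rest is lower order, and it is absorbed by adding $C_kJ_0$ with $C_k$ large, using the extra rate $(1+2\nu)k/\nu$ supplied by the weight $\tau^{(1+2\nu)k/\nu}$. This gives the bound in $\dot H^1\cap\dot H^{2k+1}$, and the stated estimate follows by interpolation.

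\textbf{Morawetz weight and sign.} Take $\nabla\phi=\phi(y)y$ with $\phi$ a plain cutoff at scale $R$. The radial weight $(r\phi)'=\phi+r\phi'$ has average $-1$ on $[R,2R]$, so it takes negative values of size $O(1)$ there, and $\Delta^2$ of the multiplier is of size $R^{-2}$ there. Hence the Morawetz derivative is positive on $|y|<R$ only up to an error $C\int_{R\le|y|\le2R}(|\nabla g|^2+|g|^2|y|^{-2})$, with $C$ independent of $R$. Your coefficient $c$ must be large to beat the fixed bad term $\frac{(1+2\nu)p}{2\nu}\langle Q^{2p-1}\Lambda Q\,M\psi,\psi\rangle$. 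The error therefore costs $\frac{cC}{\tau}\|\nabla\psi\|_{L^2}^2$, an $O(1)$ loss in the exponent instead of $\epsilon$.

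The paper takes $\phi_A'=\chi_Ar^{1-\delta}$ (Lemma~\ref{lem KA coercivity}). Then the transition errors are $O(A^{-\delta})$ and $-\Delta^2\phi_A=\delta(d-\delta)(d-2-\delta)r^{-2-\delta}\ge0$ inside. The weighted Hardy constant $(\frac{d-2-\delta}{2})^2$ still beats $(2p+1)c_\infty^{2p}$ for small $\delta$. Averaging your functional over many dyadic $R$ would be an alternative fix.

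Finally, the $\dot H^1$ inequality must be one-sided,
\[
\frac{d}{d\tau}J_0\ge\frac1\tau\Bigl(\frac{(1+2\nu)(s_c-1)}{2\nu}-\epsilon\Bigr)J_0,
\]
so that the $\dot H^1$ norm decreases backward. Your two-sided bound $|\frac{d}{d\tau}\tilde E_1|\le\frac{C_1+\epsilon}{\tau}\tilde E_1$ only gives growth like $(s/\tau)^{\frac{1+2\nu}{4\nu}(s_c-1)}$. That exceeds the target exponent whenever $\sigma-s_c<s_c-1$, e.g.\ for $\sigma=\lfloor d/2\rfloor+1$, since $s_c>5$.
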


Before proving this, we establish the coercivity of the linearized operator $H$. This is the basic consequence of the Joseph--Lundgren condition and will also be used a lot in the proof of energy estimates.

\begin{lemma}[{\cite{PA}}]\label{lem coercivity}
{\sl Let $d\geq11$ and $p\in\mathbb{N}^{\ast}$ satisfies $2p+1>p_{JL}$. There exists a positive constant $C(p, d)$ such that
\begin{align}\label{coecivity}
\langle L_{\pm} f, f\rangle \geq C(p, d)\|\nabla f\|_{L^2}^2,\,\,\,\,C( p, d)=1-\frac{4(2p+1)c_{\infty}^{2p}}{(d-2)^2},
\end{align}
for all $f\in \dot{H}^1(\mathbb{R}^d)$.}
\end{lemma}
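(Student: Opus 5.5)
The estimate follows from Hardy's inequality once we have a pointwise bound on the potential. The bound we want is
\begin{equation*}
Q^{2p}(r)\leq c_\infty^{2p}r^{-2},\qquad r>0,
\end{equation*}
that is, $Q$ stays below the singular solution $c_\infty r^{-\frac1p}$. Granting this, for $f\in\dot H^1(\mathbb R^d)$ we get
\begin{equation*}
\langle L_+f,f\rangle=\|\nabla f\|_{L^2}^2-(2p+1)\int Q^{2p}|f|^2\geq\|\nabla f\|_{L^2}^2-(2p+1)c_\infty^{2p}\int\frac{|f|^2}{|x|^2}.
\end{equation*}
The sharp Hardy inequality $\int|f|^2|x|^{-2}\leq\frac{4}{(d-2)^2}\|\nabla f\|_{L^2}^2$ then gives the claimed constant $C(p,d)=1-\frac{4(2p+1)c_\infty^{2p}}{(d-2)^2}$. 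For $L_-$ the potential is $Q^{2p}\leq(2p+1)Q^{2p}$, so the same bound holds with the same constant. The constant is positive exactly because $\operatorname{Discr}=(d-2)^2-4(2p+1)c_\infty^{2p}>0$, which by \eqref{def Discr} is equivalent to the Joseph--Lundgren condition \eqref{JL condition}.

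The main step is the pointwise bound, and I would prove it by an Emden--Fowler reduction. Set $w(s)=r^{\frac1p}Q(r)$ with $s=\ln r$. Using $\Delta Q+Q^{2p+1}=0$, $w$ solves the autonomous equation
\begin{equation*}
w''+\Big(d-2-\frac2p\Big)w'+w^{2p+1}-c_\infty^{2p}w=0,
\end{equation*}
because $c_\infty^{2p}=\frac1p\big(d-2-\frac1p\big)$. The boundary behaviour is $w\to0$ as $s\to-\infty$ (since $Q(0)=1$) and $w\to c_\infty$ as $s\to+\infty$ by \eqref{expansion Q}. Linearizing at the equilibrium $c_\infty$ gives the characteristic equation whose roots are exactly $\lambda_\pm$ from \eqref{def lambda pm}. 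Under $\operatorname{Discr}>0$ these roots are real and negative, so $c_\infty$ is a stable node and not a focus.

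To show $w<c_\infty$ everywhere, I would use the damped Lyapunov function
\begin{equation*}
E(s)=\tfrac12 w'^2+\tfrac{1}{2p+2}w^{2p+2}-\tfrac{c_\infty^{2p}}{2}w^2,
\end{equation*}
which is nonincreasing in $s$ since the damping coefficient $d-2-\frac2p$ is positive. Combined with a phase-plane argument, the trajectory leaving the origin along its unstable manifold, with $w'>0$, enters the node region. The delicate point is to exclude the scenario in which $w$ overshoots $c_\infty$ and then returns monotonically to it. I would try a comparison/barrier argument in the $(w,w')$ plane: the straight lines through the node with slopes $\lambda_\pm$, together with the invariant region $\{0<w<c_\infty,\ w'>0\}$, show the orbit can never cross $w=c_\infty$. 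This is the classical non-intersection result of Wang and of Gui--Ni--Wang for supercritical Lane--Emden equations above $p_{JL}$, and one may alternatively just cite it.
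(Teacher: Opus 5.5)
Your reduction, the pointwise bound $Q^{2p}\le c_\infty^{2p}r^{-2}$ followed by the sharp Hardy inequality, is exactly what produces the stated constant $C(p,d)=\operatorname{Discr}/(d-2)^2$. It is also the route the paper relies on: the paper only cites the lemma, and its appendix invokes the same bound $V_+\le(2p+1)c_\infty^{2p}|y|^{-2}$ without proof. The Hardy step, the comparison $L_-\ge L_+$, the Emden--Fowler equation and the identification of the node eigenvalues with $\lambda_\pm$ are all correct.

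The one step to fix is the barrier. The set $\{0<w<c_\infty,\ w'>0\}$ is not invariant, because orbits leave it through $\{w=c_\infty,\ w'>0\}$. That exit is precisely the overshoot you need to exclude, so invoking this set is circular. What is invariant is the half-plane below an eigenline. Put $a=d-2-\frac2p$, $b=2pc_\infty^{2p}$, $F(w)=w^{2p+1}-c_\infty^{2p}w$, and take $\lambda\in\{\lambda_+,\lambda_-\}$, so that $\lambda^2+a\lambda+b=0$. Set $h=w'-\lambda(w-c_\infty)$. On $\{h=0\}$ one has
\[
h'=-(a+\lambda)w'-F(w)=b(w-c_\infty)-F(w)<0\qquad (w>0,\ w\neq c_\infty).
\]
This follows from strict convexity of $F$ on $(0,\infty)$ together with $F(c_\infty)=0$ and $F'(c_\infty)=b$; recall $w>0$ since $Q>0$. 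As $s\to-\infty$, $w\to0$ and $w'=r^{1/p}\Lambda Q\to0$, so $h\to\lambda c_\infty<0$. The orbit cannot reach the equilibrium $(c_\infty,0)$ in finite time, so $h<0$ for all $s$. At a first time with $w=c_\infty$ you would need $w'\ge0$, but there $w'=h<0$, a contradiction. Hence $w<c_\infty$, which is the bound you need.

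This barrier uses only that $\lambda_\pm$ are real, i.e.\ $\operatorname{Discr}\ge0$. So the Joseph--Lundgren condition enters twice: once here and once for positivity of the Hardy constant. The Lyapunov function and the unstable-manifold discussion are not needed. Alternatively, citing the non-intersection result of Wang and Gui--Ni--Wang, as you suggest, closes the argument at the same level as the paper.
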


By scaling invariance, $L_+\Lambda Q=0$. The coercivity of $L_+$ shows that $\Lambda Q(r)$ cannot vanish at any finite $r$. Since $\Lambda Q(0)=\frac1pQ(0)>0$, we conclude that $\Lambda Q>0$.

\begin{lemma}\label{lem position Lambda Q}    
{\sl Let $d\geq11$ and $p\in\mathbb{N}^{\ast}$ satisfy $2p+1>p_{JL}$. Then the function $\Lambda Q$ is positive.}
\end{lemma}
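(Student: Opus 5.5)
The plan is the standard Sturm-type argument: use $L_+\Lambda Q=0$ together with the coercivity of Lemma \ref{lem coercivity} to rule out any zero of $\Lambda Q$.

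First, $\Lambda Q=\frac1pQ+rQ'$ is smooth and radial, and $L_+\Lambda Q=0$. This follows by differentiating the scaling family $Q_\mu(r)=\mu^{1/p}Q(\mu r)$, which solves $\Delta Q_\mu+Q_\mu^{2p+1}=0$ for every $\mu>0$, with respect to $\mu$ at $\mu=1$. At the origin, $\Lambda Q(0)=\frac1pQ(0)=\frac1p>0$.

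Now suppose $\Lambda Q$ vanishes somewhere, and let $r_0>0$ be its first zero; $r_0$ is finite and positive because $\Lambda Q(0)>0$. Set $f=\Lambda Q\,\mathbf 1_{\{|x|<r_0\}}$. Then $f$ is Lipschitz, compactly supported and vanishes on $|x|=r_0$, so $f\in\dot H^1(\mathbb R^d)$. Integrating by parts on the ball $B_{r_0}$, the boundary term is $r_0^{d-1}\Lambda Q(r_0)\partial_r\Lambda Q(r_0)=0$, so
\begin{equation*}
\langle L_+f,f\rangle=\int_{B_{r_0}}\big(|\nabla\Lambda Q|^2-(2p+1)Q^{2p}(\Lambda Q)^2\big)\,dx=\int_{B_{r_0}}\Lambda Q\,L_+\Lambda Q\,dx=0.
\end{equation*}
Here $\langle L_+f,f\rangle$ denotes the quadratic form $\int|\nabla f|^2-(2p+1)Q^{2p}|f|^2$, which is how \eqref{coecivity} is to be read for $f\in\dot H^1$.

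Lemma \ref{lem coercivity} then gives $0\ge C(p,d)\|\nabla f\|_{L^2}^2$. The constant $C(p,d)$ is positive exactly because $\operatorname{Discr}>0$ by \eqref{def Discr}, so $\nabla f\equiv0$. Hence $f$ is constant on $B_{r_0}$, and since $f$ vanishes on the boundary sphere, $f\equiv0$ there. This contradicts $\Lambda Q(0)>0$, so $\Lambda Q$ has no zero. Being continuous and positive at the origin, it is positive everywhere.

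The only delicate point is justifying that the truncated function lies in $\dot H^1$ and that the quadratic form identity holds. This is routine since $\Lambda Q$ is smooth on the closed ball and the cutoff introduces only a Lipschitz kink on the sphere.
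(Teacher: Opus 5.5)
Your proof is correct, and it closes the contradiction differently from the paper. Both arguments start from the first zero of $\Lambda Q$ and use the fact that $\Lambda Q$ solves $L_+\Lambda Q=0$ on that ball with zero Dirichlet data. The paper then reads this as saying that $0$ is the principal Dirichlet eigenvalue of $L_+$ on $B_R$. It invokes strict domain monotonicity to get $\lambda_1(R')<0$ for $R'>R$, and extracts a test function $g$ with $\langle L_+g,g\rangle<0$, which contradicts the mere nonnegativity of $L_+$.

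You instead insert the zero extension $\Lambda Q\,\mathbf 1_{B_{r_0}}$ directly into the quadratic form and check that it vanishes. You then use that the constant in Lemma~\ref{lem coercivity} is strictly positive, $C(p,d)=\operatorname{Discr}/(d-2)^2>0$, to force $\nabla f\equiv 0$.

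Your route is shorter and fully elementary. It avoids the eigenvalue machinery, whose strict-monotonicity step itself hides a unique-continuation argument: one must rule out the zero extension being an eigenfunction that vanishes on an open set. The paper's route uses only that the quadratic form of $L_+$ is nonnegative, not the quantitative Hardy-type constant, so it is slightly more robust. In the present setting, where Lemma~\ref{lem coercivity} holds with $C(p,d)>0$, your shortcut loses nothing. You were also right to read $\langle L_+f,f\rangle$ as the quadratic form $\int|\nabla f|^2-(2p+1)Q^{2p}|f|^2$, since $f$ is only Lipschitz.
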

    
\begin{proof}
By differentiating $\Delta Q+Q^{2p+1}=0$ with respect to the scalling parameter, one obtains
\begin{align*}
L_{+}\Lambda Q=0,\,\,\,\,\Lambda Q=(\frac{1}{p}+r\partial_{r})Q(r).
\end{align*}
Moreover,
\begin{equation*}
\Lambda Q(0)=\frac{1}{p}Q(0)=\frac{1}{p}>0.
\end{equation*}

\medskip We prove by contradiction. If $\Lambda Q$ is not positive, then $\Lambda Q$ vanishes at some positive radius. Let
\begin{equation*}
R=\inf \{r>0:\Lambda Q(r)=0\}.
\end{equation*}
Then $\Lambda Q(r)>0$ for $0\leq r<R$ and $\Lambda Q(R)=0$. Hence $\Lambda Q$ is a positive solution to
 \begin{equation*}
\begin{split}
\left\{
\begin{aligned}
&L_{+}\Lambda Q=0,\quad \text{in}\,\, B_R, \\
&\Lambda Q|_{\partial B_R}=0,
\end{aligned}
\right.
\end{split}
\end{equation*}
and $0$ is the first eigenvalue of $L_{+}$ on $B_R$. Since $\Lambda Q>0$ in $B_R$, the eigenvalue corresponding to $\Lambda Q$ is the principal eigenvalue. Thus $\lambda_1(R)=0$.

\medskip Fix $R'>R$. Extending $\Lambda Q$ by zero from $B_R$ to $B_{R'}$, and denote by $\tilde{\Lambda Q}$, which belongs to $H_{0}^1(B_{R'})$. By the strict domain monotonicity of the principal Dirichlet eigenvalue, 
\begin{equation*}
\lambda_{1}(R')<\lambda_{1}(R)=0.
\end{equation*}
Therefore, there exists a nonzero radial function $\psi\in H_{0}^{1}(B_{R'})$ such that
\begin{equation*}
\langle L_{+}\psi,\psi\rangle_{B_{R'}}<0.
\end{equation*}

\medskip Extending $\psi$ by zero outside $B_{R'}$, and approximate it in $H^{1}$ by test functions, we obtain a radial $g\in C_{c}^{\infty}(\mathbb{R}^d)$ satisfies
\begin{equation*}
\langle L_{+}g,g\rangle <0
\end{equation*}
which contradicts the positive of the operator $L_{+}$ in Lemma \ref{lem coercivity}.
\end{proof}

\bigskip

\subsection{Proof of Proposition \ref{prop linear est}. }\label{subsec 6.1}
Consider the equation
\begin{equation}\label{linear flow}
\left\{
\begin{aligned}
&i\partial_\tau\psi=-H\psi-\frac{1+2\nu}{4\nu\tau}i\Lambda\psi+\mathcal{F}_1(\psi),\\
&\psi|_{\tau=s}=f,
\end{aligned}
\right.
\end{equation}
where
\begin{equation*}
H=-\Delta\sigma_3-(p+1)Q^{2p}\sigma_3-pQ^{2p}\sigma_3\sigma_1.
\end{equation*}

\medskip Let $\chi\in C_{c}^{\infty}$ be a cut off function satisfy
\begin{equation*}
\begin{aligned}
\chi(r)=1,\quad \text{for}\quad 0\leq r\leq 1,\qquad \chi(r)=0,\quad \text{for}\quad r\geq 2,
\end{aligned}
\end{equation*}
and for every $A>0$, $\chi_A(r)=\chi(\frac{r}{A})$. Set 
\begin{equation*}
M=(p+1)I+p\sigma_1.
\end{equation*}

\medskip Before proving Proposition \ref{prop linear est}, we introduce several key operators used throughout this subsection. Taking into account the expression \eqref{def u ap} and \eqref{def of remainder}, we decompose $\mathcal{F}_1$ as
\begin{align}\label{decomposition F1}
\mathcal{F}_{1}(\psi)=\tilde{\mathcal{F}}_1(\psi)+\hat{\mathcal{F}}_1(\psi),
\end{align}
with
\begin{align}\label{def B(f)}
\tilde{\mathcal{F}}_1(\psi)=t^{2\nu}\mathbf{B}(\psi),\,\,\,\,\,\mathbf{B}(\psi)=\left(\begin{array}{l}
B(\psi)\\
-\overline{B(\psi)}
\end{array}\right),
\end{align}
where $B(\psi)$ is determined by the first order corrections in the inner region as follows
\begin{equation*}
\begin{aligned}
B(\psi)=&p(p+1)Q^{2p-1}\big(\chi_{1}+\overline{\chi_{1}}\big)\psi+pQ^{2p-1}\big((p+1)\chi_{1}+(p-1)\overline{\chi_{1}}\big)\bar{\psi}=2pQ^{2p-1}\chi_{1}\bar{\psi}.
\end{aligned}
\end{equation*}
The second equality follows from the fact that $\chi_1$ is purely imaginary and $\hat{\mathcal{F}}_1(\psi)$ is the remainder term of $\mathcal{F}_{1}(\psi)$ defined in \eqref{def of remainder}. Therefore, we find that
\begin{equation}\label{B self adjoint}
(\sigma_3\mathbf{B})^{\ast}=\sigma_3\mathbf{B}.
\end{equation}
To track its evolution, we denote its derivative with respect to $\tau$ by 
$\mathbf{C}(\psi)=\left(\begin{array}{l}
C(\psi)\\
-\overline{C(\psi)}
\end{array}\right)$. In view of \eqref{linear flow}, $C(\psi):=(B(\psi))_{\tau}$ is given by
\begin{equation}\label{def C(f)}
\begin{aligned}
C(\psi)=2piQ^{2p-1}\chi_{1}\Big(\big(\Delta+(p+1)Q^{2p}\big)\bar{\psi}+pQ^{2p}\psi+\frac{1+2\nu}{4\nu\tau}i\overline{\Lambda\psi}+\overline{F_1(\psi)}\Big).
\end{aligned}
\end{equation}
We also introduce the higher order derivatives of $f$ associated with the linearized operator $H$, defined by
\begin{align}\label{def of psi k}
\psi_{k}=H^{k}\psi,\quad \text{for $k \geq 0$}.
\end{align}
\medskip

\subsubsection{\bf Energy control}
We first establish the estimate $\dot{H}^{1}$. Define the energy functional 
\begin{equation}\label{def J0}
J_0(\tau)=\langle H\psi,\sigma_3\psi\rangle -\frac{1}{2\nu\tau}\Re{\langle \psi,\sigma_3 \mathbf{B}(\psi)\rangle}-\frac{\beta}{\tau}\Im{\langle \nabla \phi_{A}\nabla\psi,\sigma_3\psi\rangle},
\end{equation}
where $\beta>0$ will be fixed below and $\phi_A\in C_{\mathrm{rad}}^1(\mathbb R^d)\cap C^\infty(\mathbb R^d\setminus\{0\})$ is defined by
\begin{equation*}\label{def phi}
\phi_{A}'(r)=\chi_{A}(r)r^{1-\delta}, \qquad 0<\delta<1.
\end{equation*}

\medskip Clearly, by coercivity estimate \eqref{coecivity}, we obtain that
\begin{equation}\label{bdd energy 0}
\big(1-\frac{4(2p+1)c_{\infty}^{2p}}{(d-2)^2}\big)\|\nabla\psi\|_{L^2}^2\leq \langle H\psi,\sigma_3\psi\rangle\leq \|\nabla \psi\|_{L^2}^2.
\end{equation}
Moreover, Hardy's inequality and the compact support of $\nabla\phi_{A}$ imply
\begin{equation}\label{bdd of morawetz}
\big|\langle\nabla\phi_{A}\nabla\psi,\sigma_3\psi\rangle\big|\leq C_{A}\|\nabla\psi\|_{L^2}^2.
\end{equation}
Thus, by $\tau=\frac{1}{2\nu}t^{-2\nu}$, for any $\tau_0\gg 1$, we deduce
\begin{equation}\label{bdd correct energy 0}
c_{1}\|\nabla\psi\|_{L^2}^2\leq J_0(\tau)\leq C_1\|\nabla \psi\|_{L^2}^2,\qquad \tau\geq \tau_0.
\end{equation}

\medskip We next compute the derivative $\frac{\ud}{\ud \tau}J_0(\tau)$. The contribution of the linearized operator $H$ in \eqref{linear flow} vanishes by the self-adjointness of $\sigma_3H$ and integration by parts gives
\begin{equation}\label{H Lambda identity proof}
\begin{aligned}
&\big\langle H\Lambda\psi,\sigma_3\psi\big\rangle+\big\langle H\psi,\sigma_3\Lambda\psi\big\rangle\\
&\qquad=2(1-s_c)\big\langle H\psi,\sigma_3\psi\big\rangle+2p\big\langle Q^{2p-1}\Lambda Q\,M\psi,\psi\big\rangle,
\end{aligned}
\end{equation}
then
\begin{equation*}
\frac{\ud}{\ud\tau}\langle H\psi,\sigma_3\psi\rangle=-\frac{1+2\nu}{2\nu\tau}\Big((1-s_c)\big\langle H\psi,\sigma_3\psi\big\rangle+p\big\langle Q^{2p-1}\Lambda Q\,M\psi,\psi\big\rangle\Big)-2\Im\big\langle H\psi,\sigma_3\mathcal{F}_1(\psi)\big\rangle.
\end{equation*}
For the second term in \eqref{def J0}, by \eqref{B self adjoint} and \eqref{def C(f)},
\begin{equation*}
\big\langle \psi,\sigma_3\mathbf{C}\big\rangle=\big\langle \psi,\sigma_3\mathbf{B}(\partial_{\tau}\psi)\big\rangle=\overline{\big\langle\partial_{\tau}\psi,\sigma_3\mathbf{B}(\psi)\big\rangle}.
\end{equation*}
Taking real parts, we obtain
\begin{equation*}
\Re{\big\langle \psi,\sigma_3\mathbf{C}(\psi)\big\rangle}=\Re{\big\langle \partial_{\tau}\psi,\sigma_3\mathbf{B}(\psi)\big\rangle},
\end{equation*}
and
\begin{equation*}
\frac{\ud}{\ud\tau}\Big(\frac{1}{2\nu\tau}\Re{\langle \psi,\sigma_3 \mathbf{B}(\psi)\rangle}\Big)=-\frac{1}{2\nu\tau^2}\Re{\big\langle \psi,\sigma_3\mathbf{B}(\psi)\big\rangle}+\frac{1}{\nu\tau}\Re{\big\langle \partial_{\tau}\psi,\sigma_3\mathbf{B}(\psi)\big\rangle}.
\end{equation*}
Since $t^{2\nu}=\frac{1}{2\nu\tau}$ and $\tilde{\mathcal{F}}_1(\psi)=t^{2\nu}\mathbf{B}(\psi)$, this implies
\begin{equation*}
\begin{aligned}
\frac{\ud}{\ud\tau}\Big(\frac{1}{2\nu\tau}\Re{\langle \psi,\sigma_3 \mathbf{B}(\psi)\rangle}\Big)=&-2\Im{\langle H\psi,\sigma_3\tilde{\mathcal{F}}_1(\psi)\big\rangle}+\frac{1}{\nu\tau}\Im{\big\langle\mathcal{F}_1(\psi),\sigma_3\mathbf{B}(\psi)\big\rangle}\\
&-\frac{1}{2\nu\tau^2}\Big(\Re{\big\langle \psi,\sigma_3\mathbf{B}(\psi)\big\rangle}+\frac{1+2\nu}{2\nu}\Re{\big\langle\Lambda\psi,\sigma_3\mathbf{B}(\psi)\big\rangle}\Big).
%\Re{\big\langle (iH\psi-\frac{1+2\nu}{4\nu\tau}\Lambda\psi-i\mathcal{F}_1(f)),\sigma_3\mathbf{B}(\psi)\big\rangle}
\end{aligned}
\end{equation*}
The third term in \eqref{def J0} gives
\begin{equation*}
\begin{aligned}
\frac{\ud}{\ud\tau}\Big(\frac{\beta}{\tau}\Im{\langle \nabla \phi_{A}\nabla\psi,\sigma_3\psi\rangle}\Big)=&-\frac{\beta}{\tau}\Re{\big\langle H\psi,\sigma_3\big(\Delta\phi_A \psi+2\nabla\phi_A \nabla\psi\big)\big\rangle}\\
&+\frac{\beta}{\tau}\Re{\big\langle\mathcal F_1(\psi),\sigma_3(\Delta\phi_A+2\nabla\phi_A\cdot\nabla)\psi\big\rangle}\\
&+\frac{\beta}{\tau^2}\Big(\frac{1+2\nu}{4\nu}\Im\big\langle \Lambda \psi,\sigma_3\big(\Delta\phi_A \psi+2\nabla\phi_A\nabla\psi\big)\big\rangle\\
&\qquad\quad-\Im\big\langle\nabla\phi_A \nabla \psi,\sigma_3\psi\big\rangle\Big).
\end{aligned}
\end{equation*}

Writing $\psi=\left(\begin{array}{l}
\psi_{+}+i\psi_-\\
\psi_{+}-i\psi_-
\end{array}\right)$ and using $\mathcal{F}_1(\psi)=(2\nu\tau)^{-1}\mathbf B(\psi)+\hat{\mathcal F}_1(\psi)$, we obatin that
\begin{equation*}%\label{J1 derivative}
\frac{\ud}{\ud \tau}J_0(\tau)=E_1+E_2+E_3+E_4,
\end{equation*}
where 
\begin{equation}\label{J1 derivative}
\begin{aligned}
E_1=&\frac{(1+2\nu)(s_c-1)}{2\nu\tau}\big\langle H\psi,\sigma_3\psi\big\rangle,\\
E_2=&\frac{1}{\tau}\Big(4\beta K_{A}(\psi)-\frac{(1+2\nu)p}{2\nu}\langle Q^{2p-1}\Lambda Q M\psi,\psi\rangle\Big),\\
E_3=&-2\Im{\big\langle H\psi,\sigma_3\hat{\mathcal{F}}_1(\psi)\big\rangle}-\frac{\beta}{\tau}\Re\big\langle\hat{\mathcal F}_1(\psi),
\sigma_3(\Delta\phi_A+2\nabla\phi_A\cdot\nabla)\psi\big\rangle,\\
E_4=&-\frac{1}{\nu\tau}\Im{\big\langle\mathcal{F}_1(\psi),\sigma_3\mathbf{B}(\psi)\big\rangle}\\
&-\frac{\beta}{\tau^2}\Big(\frac{1+2\nu}{4\nu}\Im\big\langle \Lambda \psi,\sigma_3\big(\Delta\phi_A+2\nabla\phi_A\cdot \nabla\big)\psi\big\rangle-\Im\big\langle\nabla\phi_A \nabla \psi,\sigma_3\psi\big\rangle\Big)\\
&+\frac{1}{2\nu\tau^2}\Big(\Re{\big\langle \psi,\sigma_3\mathbf{B}(\psi)\big\rangle}+\frac{1+2\nu}{2\nu}\Re{\big\langle\Lambda\psi,\sigma_3\mathbf{B}(\psi)\big\rangle}\\
&\qquad\qquad-\beta\Re\big\langle\mathbf B(\psi),\sigma_3(\Delta\phi_A+2\nabla\phi_A\cdot\nabla)\psi\big\rangle\Big),
\end{aligned}
\end{equation}
and
\begin{equation}\label{def KA}
K_{A}(\psi)=\Big\langle L_{+}\psi_{+},\big(\frac{\Delta \phi_A}{2}+\nabla\phi_A\cdot\nabla\Big)\psi_{+}\big\rangle+\Big\langle L_{-}\psi_{-},\big(\frac{\Delta \phi_A}{2}+\nabla\phi_A\cdot\nabla\Big)\psi_{-}\big\rangle.
\end{equation}

\medskip  It follows from the asymptotic expansion of $Q$ that
\begin{equation*}
\Lambda Q(r)=c_{1,0}\lambda_{+}r^{-\frac{1}{p}+\lambda_{+}}+o(r^{-\frac{1}{p}+\lambda_{+}}),\quad \lambda_{+}<-2,
\end{equation*}
and $Q$, $\Lambda Q$ are positive, there exists some $0<\delta<1$ that
\begin{equation*}
0\leq Q^{2p-1}\Lambda Q\leq \frac{C}{|y|^2(1+|y|^{\delta})}.
\end{equation*}
Fix such a $\delta$ in the definition of $\phi_{A}$ and let $A\gg 1$. Choosing $\beta>0$ sufficiently large, independently of $A$,
the \eqref{clam KA bdd} implies
\begin{equation}\label{J1 ques problem}
4\beta K_{A}(\psi)-\frac{(1+ 2\nu)p}{2\nu}\big\langle Q^{2p-1}\Lambda Q M\psi,\psi\big\rangle \geq -\frac{C_{\beta}}{A^{\delta}}\big\langle H\psi,\sigma_3\psi\big\rangle,
\end{equation}
then
\begin{equation}\label{bdd E2}
E_2 \geq -\frac{C_{\beta}A^{-\delta}}{\tau}\big\langle H\psi,\sigma_3\psi\big\rangle.
\end{equation}

Substituting \eqref{def of remainder} and \eqref{def B(f)} into \eqref{decomposition F1}, we obtain
\begin{equation*}
\begin{aligned}
\hat{F}_1(\psi)=&(p+1)\Big(|u^{ap}|^{2p}-Q^{2p}-p\,t^{2\nu}Q^{2p-1}\big(\chi_{1}+\bar{\chi}_{1}\big)\Big)\psi\\
&+p\Big(|u^{ap}|^{2p-2}(u^{ap})^2-Q^{2p}-t^{2\nu}Q^{2p-1}\big((p+1)\chi_{1}+(p-1)\bar{\chi}_{1}\big)\Big)\overline{\psi},
\end{aligned}
\end{equation*}
where $\chi_{ss}^{ap}$ and $\chi_{out}^{ap}$ the profiles defined in Sections \ref{sec 3} and \ref{sec 4}. Taking \eqref{def rad psi ap} and \eqref{def u ap} into account and using the fact that $\chi_1$ is purely imaginary and 
$$\Theta(t^{\nu-\epsilon_1}y)\Theta(t^{\nu+\epsilon_2}y)=\Theta(t^{\nu-\epsilon_1}y),$$
 we find
\begin{equation}\label{expression hat F1}
\begin{aligned}
\hat{F}_1(\psi)=&(p+1)\Big[pQ^{2p-1}(y)\Big(\Theta(t^{\nu-\epsilon_1}y)\sum\limits_{2\leq k\leq K}t^{2\nu k}\big(\chi_{k}(y)+\bar{\chi}_{k}(y)\big)\\
&\qquad\qquad+\big(1-\Theta(t^{\nu-\epsilon_1}y)\big)\Theta(t^{\nu+\epsilon_2}y)\big(\chi_{ss}^{ap}(t,y)+\overline{\chi_{ss}^{ap}(t,y)}\big)\\
&\qquad\qquad+\big(1-\Theta(t^{\nu+\epsilon_2}y)\big)\big(\chi_{out}^{ap}(t,y)+\overline{\chi_{out}^{ap}(t,y)}\big)\Big)+N_1(\chi^2)\Big]\psi\\
&+p\Big[Q^{2p-1}(y)\Big(%2\big(\Theta(t^{\nu-\epsilon_1}y)-1\big)t^{2\nu}\chi_{1}(y)\\
\Theta(t^{\nu-\epsilon_1}y)\sum\limits_{2\leq k\leq K}t^{2\nu k}\big((p+1)\chi_{k}(y)+(p-1)\overline{\chi_{k}(y)}\big)\\
&\qquad+\big(1-\Theta(t^{\nu-\epsilon_1}y)\big)\Theta(t^{\nu+\epsilon_2}y)\big((p+1)\chi_{ss}^{ap}(t,y)\\
&\qquad\qquad\qquad\qquad\qquad\qquad+(p-1)\overline{\chi_{ss}^{ap}(t,y)}-2t^{2\nu}\chi_1(y)\big)\\
&\qquad+\big(1-\Theta(t^{\nu+\epsilon_2}y)\big)\big((p+1)\chi_{out}^{ap}(t,y)\\
&\qquad\qquad\qquad\qquad\qquad+(p-1)\overline{\chi_{out}^{ap}(t,y)}-2t^{2\nu}\chi_1(y)\big)\Big)+N_2(\chi^2)\Big]\overline{\psi},
\end{aligned}
\end{equation}
where $N_i(\chi^2)$, $i=1,2$, are second order terms with respect to $u^{ap}-Q$ and the behaviors of $\chi_k(y)$, $1\leq k\leq K$ are given in Lemma \ref{lem: inner region}. According to the expansion \eqref{inner expansion infinity k1},
\begin{align}\label{est chi 1}
\big|\nabla_y^q(Q^{2p-1}\chi_1)\big|\leq C_q\langle y\rangle^{-|\lambda_+|-q}, \qquad q\geq0,
\end{align}
together with \eqref{inner L infinity}, \eqref{L infty est ss 1}, \eqref{L infty est ss 2} and \eqref{L infinity out},  and using $\lambda_{+}<-2$, we obtain
\begin{equation}\label{hat F1 est}
\begin{aligned}
&\|\mathcal{V}_1\|_{L^\infty}+\big\|\mathcal{V}_2-2pt^{2\nu}Q^{2p-1}\chi_1\big\|_{L^\infty}\\
&\quad+\big\||y|\nabla_y\mathcal{V}_1\big\|_{L^\infty}+\big\||y|\nabla_y \big(\mathcal{V}_2-2pt^{2\nu}Q^{2p-1}\chi_1\big) \big\|_{L^\infty}\\
&\qquad\leq C(1+|\ln t|)^L\Big(t^{4\nu}+t^{1+2\nu}+t^{(\nu-\epsilon_1)(2+|\lambda_+|(N_{ss}+1))}\Big).
\end{aligned}
\end{equation}
Choose  $N_{ss}$ sufficiently large, integrate by parts, and use Hardy's inequality to give
\begin{equation}\label{main in E3}
\Big|\big\langle H\psi, \sigma_3\hat{\mathcal{F}}_1(\psi)\big\rangle\Big|\leq C\big(\tau^{-1-\frac{1}{2\nu}}+\tau^{-2}\big)\|\nabla \psi\|_{L^2}^2.
\end{equation}
Since $\nabla\phi_A$ and $\Delta\phi_A$ are supported in $|y|\leq2A$, Hardy's inequality and \eqref{hat F1 est} implies
\begin{equation*}
\begin{aligned}
&\Big| \Re\big\langle\hat{\mathcal F}_1(\psi), \sigma_3(\Delta\phi_A+2\nabla\phi_A\cdot\nabla)\psi \big\rangle\Big|\leq C_A\big(\tau^{-2}+\tau^{-1-\frac1{2\nu}}\big)\|\nabla\psi\|_{L^2}^2.
\end{aligned}
\end{equation*}
Combining this with \eqref{main in E3}, we obtain
\begin{equation}\label{bdd E3}
\big|E_3\big|\leq C\big(\tau^{-1-\frac{1}{2\nu}}+\tau^{-2}\big)\|\nabla \psi\|_{L^2}^2.
\end{equation}

Finally, by the expression of $\mathcal{F}_1(\psi)$, $\mathbf{B}(\psi)$ and \eqref{1 non est}, 
\begin{equation*}
\begin{aligned}
\Big|\Im{\big\langle\mathcal{F}_1(\psi),\sigma_3\mathbf{B}(\psi)\big\rangle}\Big|\leq& C\big(\|\mathcal{V}_1\|_{L^{\infty}}+\|\mathcal{V}_2\|_{L^{\infty}}\big)\int_{\mathbb{R}^d} Q^{2p-1}|\chi_1||\psi|^2\ud y\\
\leq&\frac{C}{\tau}\|\nabla\psi\|_{L^2}^2.
\end{aligned}
\end{equation*}
The last inequality follows from Hardy's inequality and the fact that $|\lambda_+|>2$. For the additional Morawetz contribution, we similarly have
\begin{equation*}
\Big|\Re\big\langle\mathbf B(\psi),\sigma_3(\Delta\phi_A+2\nabla\phi_A\cdot\nabla)\psi\big\rangle\Big|\leq C_A\|\nabla\psi\|_{L^2}^2.
\end{equation*}
Thus
\begin{equation}\label{bdd E4}
|E4|\leq \frac{C}{\tau^2}\|\nabla \psi\|_{L^2}^2.
\end{equation}

\medskip Therefore, the estimates \eqref{bdd energy 0}, \eqref{bdd E2}, \eqref{bdd E3} and \eqref{bdd E4} imply that
\begin{equation*}\label{derivative of J1}
\begin{aligned}
\frac{\ud}{\ud\tau}J_0(\tau)\geq \frac{1}{\tau}\big(\frac{(1+2\nu)(s_c-1)}{2\nu}-\frac{C_{\beta}}{A^{\delta}}-C_{A,\beta}\tau^{-1}-C_{A,\beta}\tau^{-\frac1{2\nu}}\big)\langle H\psi,\sigma_3\psi\rangle,
\end{aligned}
\end{equation*}
For every sufficiently small $\epsilon>0$, taking $A$ sufficiently large and $\tau_0>0$ sufficiently large, therefore \eqref{bdd correct energy 0} implies
\begin{equation*}
\frac{\ud}{\ud \tau}J_0(\tau)\geq \frac{1}{\tau}\big(\frac{(1+2\nu)(s_c-1)}{2\nu}-\epsilon\big)J_0(\tau),\qquad \tau\geq \tau_0.
\end{equation*}
Integrating this inequality and using \eqref{bdd correct energy 0}, \eqref{linear flow} and $\psi(s)=f$, we conclude
\begin{equation}\label{H1 bdd}
\|U(\tau,s)f\|_{\dot{H}^{1}}\leq C\left(\frac{\tau}{s}\right)^{\frac{(1+2\nu)}{4\nu}(s_c-1)-\epsilon}\|f\|_{\dot{H}^1},\qquad \tau_0\leq \tau\leq s.
\end{equation}

\bigbreak

\subsubsection{\bf Control of the higher regularity}
\bigskip We next control the higher regularity. A direct computation gives
\begin{equation*}\label{commutator H Lambda}
[H,\Lambda]=2H+2pQ^{2p-1}\Lambda Q\,\sigma_3M.
\end{equation*}
Consequently,
\begin{equation*}
[H^k,\Lambda]=2kH^k+2p\sum_{\ell=0}^{k-1}H^\ell\big(Q^{2p-1}\Lambda Q\,\sigma_3M\big)H^{k-1-\ell}.
\end{equation*}

Since $\mathbf{B}=2pQ^{2p-1}\chi_1\sigma_1$ and $\sigma_3\sigma_1=-\sigma_1\sigma_3$, then
\begin{equation}\label{note B1}
(H\mathbf{B}+\mathbf{B}H)\psi=-2p\sigma_3\sigma_1\Big(2\nabla(Q^{2p-1}\chi_1)\cdot\nabla\psi+\Delta(Q^{2p-1}\chi_1)\psi\Big).
\end{equation}
Meanwhile, for every $k\geq 1$,
\begin{equation}\label{note B2}
H^k\mathbf{B}-(-1)^k\mathbf{B}H^k=\sum_{\ell=0}^{k-1} (-1)^\ell H^{k-1-\ell}(H\mathbf{B}+\mathbf{B}H)H^\ell
\end{equation}
has differential order at most $2k-1$. Thus, according to \eqref{decomposition F1}, \eqref{note B1} and \eqref{note B2}, we decompose $H^{k}\mathcal{F}_1(\psi)$ into three parts
\begin{equation*}
H^{k}\mathcal{F}_1(\psi)=\frac{(-1)^k}{2\nu\tau}\mathbf{B}(\psi_k)+\frac{1}{2\nu\tau}\Big(H^k\mathbf{B}(\psi)-(-1)^k\mathbf{B}(\psi_k)\Big)+H^{k}\hat{\mathcal{F}}_1(\psi).
\end{equation*}

\medskip Applying $H^k$ to \eqref{linear flow}, we obtain the equation for $H^{k}\psi$ is
\begin{equation}\label{equation psik}
\begin{aligned}
i\partial_\tau\psi_k=&-H\psi_k-\frac{1+2\nu}{4\nu\tau}i(\Lambda+2k)\psi_k+\frac{(-1)^k}{2\nu\tau}\mathbf{B}(\psi_k)\\
&-\frac{p(1+2\nu)}{2\nu\tau}i\sum_{\ell=0}^{k-1}H^\ell\big(Q^{2p-1}\Lambda Q\,\sigma_3M\psi_{k-1-\ell}\big)+H^{k}\hat{\mathcal{F}}_1(\psi)\\
&-\frac p{\nu\tau}\sum_{\ell=0}^{k-1}(-1)^{k-1-\ell}H^\ell\Big[\sigma_3\sigma_1\Big(2\nabla(Q^{2p-1}\chi_1)\cdot\nabla\psi_{k-1-\ell}
+\Delta(Q^{2p-1}\chi_1)\psi_{k-1-\ell}\Big)\Big],
\end{aligned}
\end{equation}
where $\psi_k$ have been defined in \eqref{def of psi k}.

\medskip Define the energy functional with the same Morawetz correction,
\begin{equation}\label{def energy k}
J_{k}(\tau)=\tau^{\frac{(1+2\nu)k}{\nu}}\Big(\langle H\psi_{k},\sigma_3\psi_k\rangle -\frac{\beta}{\tau}\Im\langle \nabla\phi_A\,\nabla\psi_k,\sigma_3\psi_k\rangle+C_k J_0(\tau)\Big),
\end{equation}
where $C_k>0$ will be chosen below. Applying \eqref{coecivity} and \eqref{bdd of morawetz} to $\psi$ gives
\begin{equation}\label{bdd energy k}
\big(1-\frac{4(2p+1)c_{\infty}^{2p}}{(d-2)^2}\big)\|\nabla\psi_k\|_{L^2}^2\leq \langle H\psi_k,\sigma_3\psi_k\rangle\leq \|\nabla \psi_k\|_{L^2}^2,
\end{equation}
and 
\begin{equation}\label{bdd of morawetz k}
\big|\langle\nabla \phi_A\,\nabla\psi_k,\psi_k\rangle\big|\leq C_A\|\nabla\psi_k\|_{L^2}^2.
\end{equation}
By Leibniz's rule and interpolation,
\begin{equation}\label{interpolation}
\|\psi\|_{\dot{H}^{m}}\leq C\|\psi\|_{\dot{H}^1}^{1-\theta}\|\psi\|_{\dot{H}^{2k+1}}^{\theta},\quad \theta=\frac{m-1}{2k},\quad 1\leq m\leq 2k+1.
\end{equation}
Meanwhile, since $H=-\Delta\sigma_3-Q^{2p}\sigma_3M$, and for every $m\geq 0$,
\begin{equation*}
\big|\partial_r^m Q^{2p}(r)\big|\leq \frac{C_m}{(1+r)^{2+m}},
\end{equation*}
then using Hardy's inequality and induction on $k$ gives
\begin{equation}\label{bdd for phi k}
\|H^k \psi\|_{\dot{H}^1}^2+\|\psi\|_{\dot{H}^1}^2\sim\|\psi\|_{\dot{H}^1\cap\dot{H}^{2k+1}}^2.
\end{equation}
Taking \eqref{bdd correct energy 0}, \eqref{bdd energy k}, \eqref{bdd of morawetz k} and \eqref{bdd for phi k} into \eqref{def energy k}, we deduce that 
\begin{equation}\label{bdd correct energy k}
J_{k}(\tau)\sim\tau^{\frac{(1+2\nu)k}{\nu}}\|\psi\|_{\dot{H}^1\cap\dot{H}^{2k+1}}^2.
\end{equation}

\medskip We next compute the derivative of $J_k$. Differentiating the first term in \eqref{def energy k}, by \eqref{H Lambda identity proof} and \eqref{equation psik}, we obtain
\begin{equation}\label{derivative energy k}
\begin{aligned}
\frac{\ud}{\ud\tau}\langle H\psi_k,\sigma_3\psi_k\rangle=\frac{1+2\nu}{2\nu\tau}\Big(&(s_c-2k-1)\big\langle H\psi_k,\sigma_3\psi_k\big\rangle-p\big\langle Q^{2p-1}\Lambda Q M\psi_k,\psi_k\big\rangle\\
&-\frac{2(-1)^k}{1+2\nu}\Im{\langle H\psi_k,\sigma_3\mathbf B(\psi_k)\rangle}+\mathcal{R}_k(\tau)\Big)-2\Im{\big\langle
H\psi_k,\sigma_3H^{k}\hat{\mathcal F}_1(\psi)\big\rangle},
\end{aligned}
\end{equation}
where 
\begin{align*}
\mathcal R_k(\tau)
=&-2p\sum_{\ell=0}^{k-1}\Re{\Big\langle H^{\ell+1}\big(Q^{2p-1}\Lambda Q\,\sigma_3M\psi_{k-1-\ell}\big),\sigma_3\psi_{k}\Big\rangle}\\
&-\frac{4p}{1+2\nu}\sum_{\ell=0}^{k-1}(-1)^{k-1-\ell}\Im \Big\langle H^{\ell+1}\Big(\sigma_3\sigma_1\big(2\nabla(Q^{2p-1}\chi_1)\cdot\nabla+\Delta(Q^{2p-1}\chi_1)\big)\psi_{k-1-\ell}\Big),\sigma_3\psi_k\Big\rangle.
\end{align*}
Applying the same computation of Morawetz correction to $\psi_k$, we have
\begin{equation}\label{derivative morawetz k}
\begin{aligned}
&\frac{\ud}{\ud\tau}\Big(\frac{\beta}{\tau}\Im\big\langle\nabla\phi_A\,\nabla\psi_k,\sigma_3\psi_k \big\rangle\Big)\\
=&-\frac{4\beta}{\tau}K_A(\psi_k)+\frac{\beta}{\tau}\Re{\big\langle H^k\hat{\mathcal F}_1(\psi),\sigma_3(\Delta\phi_A+2\nabla\phi_A\cdot\nabla)\psi_k\big\rangle}-\frac{\beta}{\tau^2}\mathcal{R}_{\phi_A}(\tau),
\end{aligned}
\end{equation}
where

\begin{equation*}
\begin{aligned}
\mathcal{R}_{\phi_A}(\tau)=&\Im\big \langle \nabla\phi_A\cdot\nabla\psi_k,\sigma_3\psi_k\big\rangle-\frac{1+2\nu}{4\nu}\Im\Big\langle (\Lambda +2k)\psi_k,\sigma_3\big(\Delta\phi_A+2\nabla\phi_A\cdot\nabla\big)\psi_k\Big\rangle\\
&-\frac{(-1)^k}{2\nu}\Re{\big\langle\mathbf{B}(\psi_k),\sigma_3(\Delta\phi_A+2\nabla\phi_A\cdot\nabla)\psi_k\big\rangle}\\
&-\frac{(1+2\nu)p}{2\nu}\sum_{\ell=0}^{k-1}\Im{\Big\langle H^{\ell}\big(Q^{2p-1}\Lambda Q\,\sigma_3M\psi_{k-1-\ell}\big),\sigma_3(\Delta\phi_A+2\nabla\phi_A\cdot\nabla)\psi_k\Big\rangle}\\
&+\frac p\nu
\sum_{\ell=0}^{k-1}(-1)^{k-1-\ell}\Re\Big\langle H^\ell\Big(\sigma_3\sigma_1\big(2\nabla(Q^{2p-1}\chi_1)\cdot\nabla+\Delta(Q^{2p-1}\chi_1)\big)\psi_{k-1-\ell}\Big),\\
&\qquad\qquad\qquad\qquad\qquad\qquad\qquad\qquad\qquad\qquad\qquad\sigma_3(\Delta\phi_A+2\nabla\phi_A\cdot\nabla)\psi_k\Big\rangle.
\end{aligned}
\end{equation*}

\medskip Thus, using \eqref{J1 derivative}, \eqref{derivative energy k} and \eqref{derivative morawetz k}, the derivative of $J_k$ can be separated into four parts,
\begin{equation}\label{derivative of Jk}
\begin{aligned}
\frac{\ud}{\ud\tau}J_k(\tau)=\tau^{\frac{(1+2\nu)k}{\nu}}\Big(E_{k,1}+E_{k,2}+E_{k,3}+E_{k,4}\Big),
\end{aligned}
\end{equation}
where
\begin{equation*}
\begin{aligned}
E_{k,1}=&\frac{1+2\nu}{2\nu\tau}\Big((s_c-1)\langle H\psi_k,\sigma_3\psi_k\rangle+2kC_kJ_0(\tau)\Big)+C_k E_1,\\
E_{k,2}=&\frac{1}{\tau}\Big(4\beta K_A(\psi_k)-\frac{(1+2\nu)p}{2\nu}\big\langle Q^{2p-1}\Lambda Q\,M\psi_k,\psi_k\big\rangle-\frac{(-1)^k}{\nu}\Im{\big\langle H\psi_k,\sigma_3\mathbf{B}(\psi_k)}\big\rangle\\
&\qquad+\frac{1+2\nu}{2\nu}\mathcal{R}_k(\tau)\Big)+C_k E_2,\\
E_{k,3}=&-2\Im{\big\langle H\psi_k,\sigma_3 H^{k}\hat{\mathcal F}_1(\psi)\big\rangle}-\frac{\beta}{\tau}\Re{\big\langle H^{k}\hat{\mathcal{F}}_1(\psi),\sigma_3\big(\Delta\phi_A+2\nabla\phi_A\cdot\nabla\big)\psi_{k} \big\rangle}+C_k E_3,\\
E_{k,4}=&\frac{\beta}{\tau^2}\Big(-\frac{(1+2\nu)k}{\nu}\Im{\big\langle\nabla\phi_A\nabla\psi_k,\sigma_3\psi_k\big\rangle}+\mathcal{R}_{\phi_A}(\tau)\Big)+C_k E_4.
\end{aligned}
\end{equation*}

\medskip We first estimate $E_{k,2}$. By \eqref{J1 ques problem}, for fixed $\beta>0$,
\begin{equation}\label{Jk morawetz est 1}
4\beta K_{A}(\psi_k)-\frac{(1+2\nu)p}{2\nu}\big\langle Q^{2p-1}\Lambda Q M\psi_k,\psi_k\big\rangle \geq -\frac{C_{\beta}}{A^{\delta}}\langle H\psi_k,\sigma_3\psi_k\rangle.
\end{equation}
According to the estimate \eqref{clam KA bdd} for $\psi_k$ and 
\begin{equation*}
\Big|\Im{\big\langle H\psi_k,\sigma_3\mathbf{B}(\psi_k)\big\rangle}\Big|\leq C\int_{|y|\leq A}\frac{1}{1+|y|^{\delta}}\big(|\nabla\psi_k|^2+\frac{|\psi_k|^2}{|y|^2}\big)\ud y+\frac{C}{A^{|\lambda_{+}|}}\big\langle H\psi_k,\sigma_3\psi_k\big\rangle,
\end{equation*}
we obtain that
\begin{equation}\label{Jk morawetz est 2}
\Big|\Im{\big\langle H\psi_k,\sigma_3\mathbf{B}(\psi_k)\big\rangle}\Big|\leq CK_{A}(\psi_k)+\frac{C}{A^{\delta}}\big\langle H\psi_k,\sigma_3\psi_k\big\rangle.
\end{equation}
Moreover,  \eqref{clam KA bdd} and \eqref{bdd energy k} gives
\begin{equation*}
K_A(\psi_k)\geq -\frac{C}{A^{\delta}}\|\nabla\psi_k\|_{L^2}^2\geq -\frac{C}{A^{\delta}}\big\langle H\psi_k,\sigma_3\psi_k\big\rangle,
\end{equation*}
where the constant $C$ changes from term to term. Then, increasing $\beta$ such that $\beta\geq \frac{C}{\nu}$, the estimates \eqref{Jk morawetz est 1} and \eqref{Jk morawetz est 2} imply
\begin{equation}\label{Jk morawetz est}
\begin{aligned}
4\beta K_A(\psi_k)-\frac{(1+2\nu)p}{2\nu}\big\langle Q^{2p-1}\Lambda Q\,M\psi_k,\psi_k\big\rangle-\frac{(-1)^k}{\nu}&\Im{\big\langle H\psi_k,\sigma_3\mathbf{B}(\psi_k)}\big\rangle\\
\geq& -\frac{C_{\beta}}{A^{\delta}}\big\langle H\psi_k,\sigma_3\psi_k\big\rangle.
\end{aligned}
\end{equation}
For the $\mathcal{R}_k$ term, the asymptotic expansion of $Q$ implies that for every $m\geq 0$
\begin{equation}\label{est chi 2}
\big|\partial_{r}^{m}Q^{2p}(r)\big|+\big|\partial_{r}^{m}(Q^{2p-1}\Lambda Q)(r)\big|\leq \frac{C_m}{(1+r)^{2+m}}.
\end{equation}
Using this bound together with \eqref{bdd for phi k}, Hardy's inequality and \eqref{interpolation},
one obtains, for $\eta>0$, 
\begin{equation}\label{bdd error energy k}
\big|\mathcal{R}_k\big|\leq \eta\langle H\psi_k,\sigma_3\psi_k\rangle+C_{k,\eta}J_0(\tau),
\end{equation}
which follows from Young's inequality. After fixing $\eta$ and $A$, choose $C_k$ sufficiently
large that
\begin{equation*}
\frac{1+2\nu}{2\nu}C_{k,\eta}\leq \frac{C_k}{A^\delta}.
\end{equation*}
Therefore, using \eqref{Jk morawetz est}, \eqref{bdd error energy k} and \eqref{bdd E2} in the derivative of $J_0$, we deduce that
\begin{equation}\label{bdd Ek2}
E_{k,2}\geq-\frac{1}{\tau}\big(\frac{C_{\beta}}{A^{\delta}}+\frac{(1+2\nu)\eta}{2\nu}\big)\big\langle H\psi_k,\sigma_3\psi_k\big\rangle-\frac{1}{\tau}\frac{C_k C_{\beta,\eta}}{A^{\delta}}J_0(\tau).
\end{equation}

\medskip We next estimate $E_{k,3}$. Integration by parts and Hardy's inequality give
\begin{equation}\label{Ek3 1}
\Big|\big\langle H\psi_k,\sigma_3H^k\hat{\mathcal F}_1(\psi)\big\rangle\Big|\leq C\|\nabla\psi_k\|_2 \|H^k\hat{\mathcal F}_1(\psi)\|_{\dot H^1}.
\end{equation}
Since the function $\Delta\phi_A+2\nabla\phi_A\cdot\nabla$ is supported in $|y|\leq2A$, we have
\begin{equation}\label{Ek3 2}
\Big|\big\langle H^k\hat{\mathcal F}_1(\psi), \sigma_3(\Delta\phi_A+2\nabla\phi_A\cdot\nabla)\psi_k\big\rangle\Big|\leq
C_{A}\|H^k\hat{\mathcal F}_1(\psi)\|_{\dot H^1}\|\nabla\psi_k\|_{L^2},
\end{equation}
by the Hardy inequality. From the expression \eqref{expression hat F1} of $\hat{\mathcal{F}}_1(\psi)$ and \eqref{hat F1 est}, we deduce that for $N_{ss}$ sufficiently large,
\begin{equation*}
\|\hat{\mathcal{F}}_1(\psi)\|_{\dot{H}^1}\leq C\big(\tau^{-\frac{3}{2}}+\tau^{-1-\frac{1}{4\nu}}\big)\|\nabla\psi\|_{L^2}.
\end{equation*}
We now prove the bound of $\|H^k\hat{\mathcal F}_1(\psi)\|_{\dot H^1}$ for $k\geq 1$ with $2k+1\leq\sigma+1$. For $1\leq j\leq 2k+1$, Leibniz's rule gives
\begin{equation*}
\big\|\nabla^{j}(\mathcal{V}_1\psi)\big\|_{L^2}\leq C_j\|\mathcal{V}_1\|_{L^{\infty}}\|\nabla^j \psi\|_{L^2}+C_j\sum_{q=1}^{j}\big\||y|\nabla^{q}\mathcal{V}_1\big\|_{L^{\infty}}\big\||y|^{-1}\nabla^{j-q}\psi\big\|_{L^2}.
\end{equation*}
Using the differentiated profile expansions in the argument
leading to \eqref{hat F1 est}, we obtain
\begin{equation*}
\begin{aligned}
&\|\mathcal V_1\|_{L^\infty}+\sum_{q=1}^{2k+1}\||y|\nabla_y^q\mathcal V_1\|_{L^\infty}\leq
C_k(1+|\ln t|)^L\Big(t^{4\nu}+t^{1+2\nu}+t^{(\nu-\epsilon_1)(2+|\lambda_+|(N_{ss}+1))}\Big).
\end{aligned}
\end{equation*}
Here $\epsilon_2<\nu$ prevents an additional time loss when differentiating the oscillatory terms in the self-similar region, while
$q-1-2s_c-2\nu|\lambda_+|\leq\sigma-2s_c-2\nu|\lambda_+|<-s_c$ ensures the required spatial decay of the differentiated oscillatory terms in the outer region. Choosing $N_{ss}$ sufficiently large that $(\nu-\epsilon_1)\bigl(2+|\lambda_+|(N_{ss}+1)\bigr)>\max\{4\nu,1+2\nu\}$, we obtain
\begin{equation*}
\|\mathcal V_1\|_{L^\infty}+\sum_{q=1}^{2k+1}\||y|\nabla_y^q\mathcal V_1\|_{L^\infty}
\leq C_k\big(\tau^{-\frac32}+\tau^{-1-\frac1{4\nu}}\big).
\end{equation*}
Combining this bound with the Hardy inequality and \eqref{hat F1 est}, we deduce that
\begin{equation*}
\big\|\nabla^j(\mathcal{V}_1\psi)\big\|_{L^2}\leq C\big(\tau^{-\frac{3}{2}}+\tau^{-1-\frac{1}{4\nu}}\big)\sum_{q=1}^{j}\|\nabla^{q}\psi\|_{L^2}.
\end{equation*}
An analogous calculation, using \eqref{expression hat F1} and \eqref{est chi 1}, yields the same bound for $\big(\mathcal V_2-2pt^{2\nu}Q^{2p-1}\chi_1\big)\bar\psi$. Then, by interpolation and the Hardy inequality, we obtain that
\begin{equation}\label{Ek3 21}
\|H^k\hat{\mathcal F}_1(\psi)\|_{\dot H^1}\leq C\sum_{j=1}^{2k+1}\big\|\nabla^{j}\hat{\mathcal{F}}_1(\psi)\big\|_{L^2}\leq C\big(\tau^{-\frac{3}{2}}+\tau^{-1-\frac{1}{4\nu}}\big)\big(\|\nabla\psi\|_{L^2}+\|\nabla^{2k+1}\psi\|_{L^2}\big).
\end{equation}
Therefore, substituting \eqref{Ek3 21} into \eqref{Ek3 2} and using \eqref{Ek3 1}, the estimate \eqref{bdd E3} in the bound for $J_0$, and the coercivity bound \eqref{bdd energy k}, we prove that
\begin{equation}\label{bdd Ek3}
\Big|E_{k,3}\Big|\leq C_{k,A,\beta}\big(\tau^{-\frac{3}{2}}+\tau^{-1-\frac{1}{4\nu}}\big)\big(\big\langle H\psi_k,\sigma_3\psi_k\big\rangle+C_k J_0(\tau)\big).
\end{equation}

By the compact support of $\nabla\phi_A$, Hardy's inequality, and \eqref{est chi 2}, \eqref{bdd for phi k}, we obtain
\begin{equation*}
\big|\Im\langle\nabla\phi_A\cdot\nabla\psi_k,\sigma_3\psi_k\rangle\big|+|\mathcal R_{\phi_A}(\tau)|\leq C_{k,A}\big(
\|\nabla\psi_k\|_{L^2}^2+\|\nabla\psi\|_{L^2}^2\big).
\end{equation*}
Here the two sums in $\mathcal R_{\phi_A}$ contain derivatives of $\psi$ of order at most $2k-1$. The pairings with $(\Delta\phi_A+2\nabla\phi_A\cdot\nabla)\psi_k$ are controlled by interpolation and Young's inequality. Together with \eqref{bdd E4}, this gives
\begin{equation}\label{bdd Ek4}
\big|E_{k,4}\big|\leq \frac{C_{k,A,\beta}}{\tau^2} \Big(\langle H\psi_k,\sigma_3\psi_k\rangle+C_kJ_0(\tau) \Big).
\end{equation}

\medskip We substitute \eqref{bdd Ek2}, \eqref{bdd Ek3} and \eqref{bdd Ek4} into \eqref{derivative of Jk}. Choose $\eta$ sufficiently small, then fix $A$ sufficiently large and choose $C_k$ to satisfy the stated conditions. Finally, increase $\tau_0$ if necessary. We obtain
\begin{equation*}
\begin{aligned}
\frac{\ud}{\ud\tau}J_k(\tau)\geq\tau^{\frac{(1+2\nu)k}{\nu}-1}\Big[
&\big(\frac{(1+2\nu)(s_c-1)}{2\nu}-\epsilon\big)\big\langle H\psi_k,\sigma_3\psi_k\big\rangle\\
&+C_k\big(\frac{(1+2\nu)(s_c-1+2k)}{2\nu}-\varepsilon\big)J_0-C_{k,\epsilon}\big\langle H\psi,\sigma_3\psi\big\rangle\Big],\quad \tau\geq \tau_0.
\end{aligned}
\end{equation*}
For $k\geq1$, choose $C_k$ sufficiently large that
\begin{equation*}
C_k\frac{(1+2\nu)k}{\nu}\geq2C_{k,\varepsilon}.
\end{equation*}
Since $\langle H\psi,\sigma_3\psi\rangle\leq2J_0$, this estimate implies
\begin{equation*}
\begin{aligned}
\frac{\ud}{\ud\tau}J_k(\tau)\geq\tau^{\frac{(1+2\nu)k}{\nu}-1}\Big[
&\big(\frac{(1+2\nu)(s_c-1)}{2\nu}-\epsilon-\frac{C}{\tau}\big)\big\langle H\psi_k,\sigma_3\psi_k\big\rangle\\
&+C_k\big(\frac{(1+2\nu)(s_c-1)}{2\nu}-\epsilon\big)J_0\Big].
\end{aligned}
\end{equation*}
According to this estimate, \eqref{bdd of morawetz k} and
\begin{equation*}
\left|\tau^{-\frac{(1+2\nu)k}{\nu}}J_k-\langle H\psi_k,\sigma_3\psi_k\rangle-C_kJ_0\right|\leq\frac{C}{\tau}\langle H\psi_k,\sigma_3\psi_k\rangle,
\end{equation*}
we obtain
\begin{equation*}
J_k'(\tau)\geq\frac{1}{\tau}\big(\frac{(1+2\nu)(s_c-1)}{2\nu}-2\epsilon\big)J_k(\tau), \qquad \tau\geq\tau_0.
\end{equation*}
Combining this bound with \eqref{bdd correct energy k} and
using $\psi(s)=f$, we conclude that
\begin{equation}\label{Hk bdd}
\|U(\tau,s)f\|_{\dot H^1\cap\dot H^{2k+1}}
\leq C_{k,\epsilon}
\left(\frac{\tau}{s}\right)^{
\frac{1+2\nu}{4\nu}(s_c-2k-1)-\epsilon}
\|f\|_{\dot H^1\cap\dot H^{2k+1}},
\qquad \tau_0\leq\tau\leq s.
\end{equation}

\medskip Finally, interpolation between \eqref{H1 bdd} and \eqref{Hk bdd} yields \eqref{linear est} for $\frac{d}{2}<\sigma<s_c+2\nu|\lambda_+|$. This completes the proof.

 \medskip 
 
\subsection{Proof of Theorem \ref{main thm}.} \label{subsec 6.2}

\medskip We now transform \eqref{eq f} into a fixed-point problem. Rewrite \eqref{eq f} in the following integral form
\begin{equation}\label{integral form sol}
f(\tau)=J(f)(\tau),
\end{equation}
where
\begin{equation}\label{integral eq}
J(f)(\tau)=i\int_{\tau}^{\infty}U(\tau,s)\big(\mathcal{F}_2(f)(s)+r(s)\big)\ud s.
\end{equation}
Slightly abusing notation we identify in \eqref{integral form sol} $\mathbb{C}^2$ vectors of the form 
$\left(\begin{array}{l}
f\\
\bar{f}
\end{array}\right)$
with their first component $f$.

By \eqref{def K}, we have $K+1\geq s_c-1$. The condition $d\geq 11$ and $p\in\mathbb{N}^{\ast}$ that satisfying $2p+1>p_{JL}$ gives
$s_c>2+\sqrt{d-1}>5$. Moreover, \eqref{def lambda pm} implies 
$|\lambda_+|<s_c-1$, while
\begin{equation*}
\lfloor d/2\rfloor+1-s_c\leq 2.
\end{equation*}
Consequently,
\begin{equation}\label{nu est 1}
2K\geq2s_c-4>s_c+1>|\lambda_+|+\lfloor d/2\rfloor+1-s_c.
\end{equation}

Set
\begin{equation*}
\nu_0(d,p)
=\frac{\lfloor d/2\rfloor+1-s_c}{2|\lambda_+|}.
\end{equation*}
Then, for every $\nu>\nu_0$,
\begin{equation*}
\lfloor d/2\rfloor+1-s_c<2\nu|\lambda_+|,
\end{equation*}
combining this estimate with \eqref{nu est 1} yield
\begin{equation*}
\frac{1+2\nu}{4\nu}\bigl(\lfloor d/2\rfloor+1-s_c\bigr)<\frac{|\lambda_+|+\lfloor d/2\rfloor+1-s_c}{2}<K.
\end{equation*}
Thus the interval
\begin{equation*}
\Big(\frac {d}{2},\,s_c+\min\Big\{2\nu|\lambda_+|,\frac{4\nu K}{1+2\nu}\Big\}\Big)
\end{equation*}
contains at least one integer, with $\lfloor d/2\rfloor+1$ being an admissible choice. We then fix an integer $\sigma$ satisfying
\begin{equation*}
\frac{d}{2}<\sigma<s_c+\min\Big\{2\nu|\lambda_+|,\frac{4\nu K}{1+2\nu}\Big\}.
\end{equation*}
Choose $\epsilon>0$ in \eqref{linear est} sufficiently small that
\begin{equation*}
0<\epsilon<
K-\frac{1+2\nu}{4\nu}(\sigma-s_c).
\end{equation*}
The upper endpoint is positive by the choice of $\sigma$.
Hence the exponent in \eqref{linear est} satisfies
\begin{equation*}
\frac{1+2\nu}{4\nu}(\sigma-s_c)+\epsilon<K.
\end{equation*}

\medskip
Since $K>1$, we may now choose $\gamma$ such that
\begin{equation}\label{def beta}
\max\Big\{1,\frac{1+2\nu}{4\nu}(\sigma-s_c)+\epsilon\Big\}<\gamma<K,
\end{equation}
and $\epsilon_1\in(0,\nu)$ satisfying
\begin{equation}\label{range of epsilon1}
\frac{\nu(2\gamma+s_c-1-|\lambda_{+}|)}{2K+s_c-1-|\lambda_{+}|}<\epsilon_1<\nu.
\end{equation}
The lower endpoint lies in $(0,\nu)$, since $s_c-1>|\lambda_+|$ and $1<\gamma<K$. In the following proof, we fix 
\begin{equation}\label{def epsilon1}
\epsilon_1=\frac{\nu(K+\gamma+s_c-1-|\lambda_{+}|)}{2K+s_c-1-|\lambda_{+}|}.\footnote{This choice has no specific meaning here. To simplify the exponent in the error estimate \eqref{error est}, we choose this $\epsilon_1\in (\frac{\nu(2\gamma+s_c-1-|\lambda_{+}|)}{2K+s_c-1-|\lambda_{+}|},\nu)$. Then $(\nu-\epsilon_1)(|\lambda_{+}|+l-s_c)+2K\epsilon_1=\nu(K+\gamma)+(\nu-\epsilon_1)(l-1)\geq\nu(K+\gamma)$ for $l\geq1$, with equality at $l=1$.} 
\end{equation} 

 \medskip Consider the Banach space
\begin{equation*}
X:=\Big\{f\in C([\tau_0,\infty),\,\dot{H}^{1}(\mathbb{R}^d)\cap\dot{H}^{\sigma}(\mathbb{R}^d)): \|f\|_{X}<\infty\Big\},
\end{equation*}
equipped with the norm
\begin{equation*}
\|f\|_{X}:=\sup_{\tau\geq\tau_0}\tau^{\gamma}\|f(\tau)\|_{\dot{H}^1\cap\dot{H}^{\sigma}}.
\end{equation*}
Our intention is to prove that $J$ defined in \eqref{integral eq} is a contraction of the unit ball
\begin{equation*}
\mathbb{B}:=\left\{f\in X:\|f\|_{X}\leq1\right\}
\end{equation*}
into itself, provided $\tau_0$ is sufficiently large.
 
\medskip Let $f\in\mathbb{B}$. By the estimate \eqref{non est}, 
\begin{equation*}
\|\mathcal{F}_2(f)\|_{\dot{H}^1\cap\dot{H}^{\sigma}}\leq C\|f\|_{\dot{H}^1\cap\dot{H}^{\sigma}}^2\leq C\tau^{-2\gamma}\|f\|_{X}^2.
\end{equation*}
Using \eqref{linear est}, we obtain
\begin{equation}\label{contraction nonlinear}
\begin{aligned}
\sup\limits_{\tau\geq \tau_0}\tau^{\gamma}\Big\|&\int_{\tau}^{\infty}U(\tau,s)\mathcal{F}_2(f)(s)\ud s\Big\|_{\dot{H}^1\cap\dot{H}^{\sigma}}\\
\leq &C\sup_{\tau\geq\tau_0}\tau^{\gamma-\frac{1+2\nu}{4\nu}(\sigma-s_c)-\epsilon}\int_\tau^{\infty} s^{\frac{1+2\nu}{4\nu}(\sigma-s_c)+\epsilon-2\gamma}\,\ud s\,\|f\|_{X}^2.
\end{aligned}
\end{equation}
Notice that 
\begin{equation}\label{equ 1}
2\gamma-\frac{1+2\nu}{4\nu}(\sigma-s_c)-\epsilon-1=(\gamma-\frac{1+2\nu}{4\nu}(\sigma-s_c)-\epsilon)+(\gamma-1)>0,
\end{equation}
by \eqref{def beta}. Then 
\begin{equation*}
\sup\limits_{\tau\geq \tau_0}\tau^{\gamma}\Big\|\int_{\tau}^{\infty}U(\tau,s)\mathcal{F}_2(f)(s)\ud s\Big\|_{\dot{H}^1\cap\dot{H}^{\sigma}}\leq C\tau_{0}^{1-\gamma}\|f\|_{X}^2.
\end{equation*}
According to the estimate \eqref{error est}, by $\sigma>1$, $t^{2\nu}=(2\nu\tau)^{-1}$ and \eqref{def epsilon1}, it follows that
\begin{equation}\label{error est fix epsilon1}
\|r\|_{\dot{H}^1\cap\dot{H}^{\sigma}}\leq C\big(\tau^{-1-K}+\tau^{-1-\frac{K+\gamma}{2}}\big).
\end{equation}
Using \eqref{linear est}, \eqref{def beta} and \eqref{error est fix epsilon1}, we obtain
\begin{equation}\label{contraction error}
\sup\limits_{\tau\geq \tau_0}\tau^{\gamma}\Big\|\int_{\tau}^{\infty}U(\tau,s)r(s)\ud s\Big\|_{\dot{H}^1\cap\dot{H}^{\sigma}}\leq C\tau_{0}^{-\frac{K-\gamma}{2}},
\end{equation}
where we used $\frac{1+2\nu}{4\nu}(\sigma-s_c)+\epsilon<\gamma<\frac{K+\gamma}{2}<K$.

Substituting \eqref{contraction nonlinear} and \eqref{contraction error} into \eqref{integral eq}, for $\tau_0$ sufficiently large
\begin{equation}\label{contraction 1}
\|J(f)\|_{X}\leq C\tau_0^{1-\gamma}+C\tau_{0}^{-\frac{K-\gamma}{2}}\leq \frac{1}{2}.
\end{equation} 
 
\medskip We next prove the contraction estimate. For any $f,g\in\mathbb{B}$, the local Lipschitz estimate for the nonlinear term gives
\begin{equation}\label{contraction difference}
\begin{aligned}
\big\|\mathcal{F}_2(f)-\mathcal{F}_2(g)\big\|_{\dot{H}^1\cap\dot{H}^{\sigma}}\leq& C\big(\|f\|_{\dot{H}^1\cap\dot{H}^{\sigma}}+\|g\|_{\dot{H}^1\cap\dot{H}^{\sigma}}\big)\|f-g\|_{\dot{H}^1\cap\dot{H}^{\sigma}}\\
\leq &C\tau^{-2\gamma}\|f-g\|_{X}.
\end{aligned}
\end{equation}
Then \eqref{linear est}, \eqref{def beta}, \eqref{equ 1} and \eqref{contraction difference} yield 
 \begin{equation}\label{contraction 2}
 \begin{aligned}
 \|J(f)-J(g)\|_{X}\leq& \frac{C}{2\gamma-\frac{1+2\nu}{4\nu}(\sigma-s_c)-\epsilon-1}\tau_0^{1-\gamma}\|f-g\|_{X}\leq \frac{1}{2}\|f-g\|_{X},
 \end{aligned}
 \end{equation}
 provided $\tau_0$ sufficiently large.
 
 \medskip Therefore, \eqref{contraction 1} and \eqref{contraction 2} means that for $\tau_0$ sufficiently large, $J$ is a contraction of the unit ball $\mathbb{B}$ into itself and consequently, has a unique fixed point $f$ that satisfies
 \begin{equation}\label{fixed point}
 \|f(\tau)\|_{\dot{H}^1\cap\dot{H}^{\sigma}}\leq \tau^{-\gamma},\,\,\,\,\tau\geq \tau_0.
 \end{equation}

Returning to the original variables, we obtain the exact solution
\begin{equation*}
\psi(t,x)=\psi^{ap}(t,x)+\lambda^{1/p}(t)f(\tau,\lambda(t)x).
\end{equation*}
For every integer $1\leq s\leq\sigma$, interpolation yields
\begin{equation*}
\begin{aligned}
\Big\|\lambda^{1/p}(t)f(\tau,\lambda(t)\,\cdot)\Big\|_{\dot H^s}&=\lambda^{s-s_c}(t)\|f\|_{\dot H^s}\\
&\leq C t^{2\nu\gamma-(\frac12+\nu)(s-s_c)}.
\end{aligned}
\end{equation*}
Indeed, by \eqref{def beta}
\begin{equation*}
2\nu\gamma-(\frac12+\nu)(s-s_c)\geq 2\nu\gamma-(\frac12+\nu)(\sigma-s_c)>0.
\end{equation*}
This implies
\begin{equation}\label{limite radiation}
\lambda^{s_c-s}(t)\Big\|\lambda^{1/p}(t)f(\tau,\lambda(t)\,\cdot)\Big\|_{\dot H^s}\leq Ct^{2\nu\gamma}\longrightarrow0,
\qquad 1\leq s\leq\sigma.
\end{equation}

\medskip We identify the limiting radiation by setting
$$\zeta^{\ast}(r)=f_0(r)-c_\infty r^{-\frac{1}{p}}.$$
The expansion \eqref{eq f0} implies, for every integer $l\geq0$,
\begin{equation*}
\big|\partial_r^l\zeta^{\ast}(r)\big|\leq C_l r^{-\frac1p+2\nu|\lambda_+|-l},\qquad 0<r<\delta.
\end{equation*}
Since $f_0$ is supported in $[0,2\delta]$, then
\begin{equation*}
\zeta^{\ast}(r)=-c_\infty r^{-\frac1p},\qquad r\geq2\delta.
\end{equation*}
Therefore, we deduce that
\begin{equation*}
\zeta^{\ast}\in\dot H^s(\mathbb R^d),\qquad s_c<s<s_c+2\nu|\lambda_+|.
\end{equation*}
Splitting the approximate radiation into the inner, self-similar, and outer regions, and using the differentiated profile expansions and the matching estimates \eqref{est psi in ss any N}, \eqref{match est out}, we obtain
\begin{equation}\label{approximate limit}
\begin{aligned}
&\left\|\lambda^{\frac1p}(t)\chi^{ap}(t,\lambda(t)\,\cdot)-\zeta^{\ast}\right\|_{\dot H^s}\\
&\quad\leq C\Big(t^{2\nu-(\frac12+\nu)(s-s_c)}+t^{\nu|\lambda_+|-\frac12(s-s_c)}t^{(\frac12-\epsilon_2)(s_c+2\nu|\lambda_+|-s)}\Big)
\longrightarrow0,
\end{aligned}
\end{equation}
provided that
$$s_c<s\leq\sigma,\qquad s<s_c+\frac{4\nu}{1+2\nu}.$$

Finally, the exact radiation satisfies
\begin{equation*}
\zeta(t,x)=\lambda^{\frac1p}(t)\chi^{ap}(t,\lambda(t)|x|)+\lambda^{\frac1p}(t)f(\tau,\lambda(t)x).
\end{equation*}
By \eqref{fixed point} and \eqref{approximate limit}, we deduce that
\begin{equation*}
\|\zeta(t)-\zeta^\ast\|_{\dot H^s}\longrightarrow0
\end{equation*}
for the stated range of $s$, proving \eqref{regularity zeta star main} and \eqref{radiation convergence main}. For every integer $1\leq s<s_c$, by \eqref{whole L2 est},
\begin{equation*}
\lambda^{s-s_c}(t)\Big\|\nabla_\rho^s\Big(\chi^{ap}(t,\rho)+\Big(1-\Theta\Big(\frac{\rho}{\lambda(t)\delta}\Big)\Big)Q(\rho)\Big)\Big\|_{L^2}\leq
C.
\end{equation*}
Together with the uniform $\dot H^s$ bound for $\lambda^{\frac1p}(t)\Theta(|x|/\delta)Q(\lambda(t)|x|)$ and \eqref{limite radiation}, this proves
\eqref{low singularity}. Thus, \eqref{limite radiation} together with Proposition \ref{main prop} gives Theorem \ref{main thm}.

  \begin{appendix}

%%%%%%%%%%%%%%%%%%%%%%%%%%%%%%%%%%%%%%%%%%%%%%%%%%%%%%%%%%%%%%%%%%%%%%%%%%%%%%%%%%%%%%%%%%%%%%%%%%%%%%%%%%%

\section{Weighted coercivity of the localized Morawetz form}\label{weighted coercivity}
\begin{lemma}\label{lem KA coercivity}
{\sl Let $d\geq11$ and $p\in\mathbb{N}^{\ast}$ satisfy $2p+1>p_{JL}$, and let $K_A$ be defined by \eqref{def KA}.
For every sufficiently small $\delta>0$, there exist constants $C_\delta,C>0$ such that, for all sufficiently large $A$ and every
smooth, compactly supported radial function $\psi$,
\begin{equation}\label{clam KA bdd}
K_{A}(\psi)\geq C_{\delta}\int_{\mathbb{R}^d}\frac{1}{1+|y|^{\delta}}\big(|\nabla\psi|^2+\frac{|\psi|^2}{|y|^2}\big)\ud y-\frac{C}{A^{\delta}}\langle H\psi,\sigma_3\psi\rangle.
\end{equation} }
\end{lemma}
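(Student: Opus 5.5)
The plan is to compute $K_A$ explicitly via the standard Morawetz/virial identity for radial functions, and then to split the resulting quadratic form into a region $r\le A$, where $\phi_A'(r)=r^{1-\delta}$ exactly, and a transition region $A\le r\le 2A$, where the cutoff derivatives appear. For a real radial $u$ and a real potential $V$, integration by parts gives
\begin{equation*}
\Big\langle(-\Delta-V)u,\big(\tfrac{\Delta\phi}{2}+\nabla\phi\cdot\nabla\big)u\Big\rangle
=\int\phi''|u'|^2-\frac14\int\Delta^2\phi\,|u|^2+\frac12\int\phi' V'|u|^2 .
\end{equation*}
Apply this with $V=(2p+1)Q^{2p}$ for $\psi_+$ and $V=Q^{2p}$ for $\psi_-$. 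On $r\le A$ we have $\phi''=(1-\delta)r^{-\delta}$ and $-\Delta^2\phi=c_{d,\delta}\,r^{-2-\delta}$, where
\begin{equation*}
c_{d,\delta}=(1-\delta)(d-2+\delta)(d-4+\delta)\cdot\tfrac{1}{?}
\end{equation*}
up to a positive normalization; this constant is positive since $d\ge11$. The potential term is $\frac12 r^{1-\delta}V'$. Since $Q$ is decreasing (because $\Lambda Q>0$ by Lemma~\ref{lem position Lambda Q} and $Q'<0$ from the ODE), this term is negative and of size $O(r^{-2-\delta})$.

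The key step is to show that the one-dimensional weighted form
\begin{equation*}
\int_0^\infty\Big((1-\delta)r^{-\delta}|u'|^2+\big(\tfrac{c_{d,\delta}}{4}r^{-2-\delta}+\tfrac12 r^{1-\delta}V'\big)|u|^2\Big)r^{d-1}\,dr
\end{equation*}
is coercive, i.e.\ bounded below by $c\int r^{-\delta}(|u'|^2+r^{-2}|u|^2)$. I would do this by comparison at $\delta=0$. For $\delta=0$ the form reduces to the virial $\int|u'|^2+\frac12 rV'|u|^2$, and one has the identity $\frac12 rV'=-\frac12(2p)\cdot$ (const)$\,Q^{2p-1}(-Q')r$. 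As $r\to\infty$, $rV'\sim -2V\sim -2(2p+1)c_\infty^{2p}r^{-2}$ for $L_+$. The weighted Hardy inequality $\int r^{-\delta}|u'|^2\ge\frac{(d-2-\delta)^2}{4}\int r^{-2-\delta}|u|^2$ then reduces the question to comparing $\frac{(d-2)^2}{4}$ with $(2p+1)c_\infty^{2p}$ plus the boundary constant, which is exactly the Joseph--Lundgren discriminant condition \eqref{def Discr}. The margin is $\operatorname{Discr}>0$, so for $\delta$ small the inequality survives by continuity. Away from infinity, I would use the pointwise bound $r|V'|\le 2\sup_s s^2V(s)\cdot r^{-2}$, which follows from the monotonicity of $r^2Q^{2p}$ (a consequence of $\Lambda Q>0$: $r^{2}Q^{2p}=(r^{1/p}Q)^{2p}$ and $\partial_r(r^{1/p}Q)=r^{1/p-1}\Lambda Q>0$). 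This gives $\frac12 r|V'|\le V\le(2p+1)c_\infty^{2p}r^{-2}$ globally, so the potential term is dominated everywhere by the Hardy constant. This step, getting a global rather than only asymptotic comparison with an explicit margin coming from $\operatorname{Discr}>0$, is the main obstacle, and the monotonicity of $r^{1/p}Q$ is how I would attack it.

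Finally, the region $A\le r\le 2A$ contains the terms involving $\chi_A',\chi_A''$, which are bounded by $CA^{-\delta}\int_{A\le r\le2A}(|\nabla\psi|^2+r^{-2}|\psi|^2)$ (note $\phi''$ and $\Delta^2\phi$ there are $O(A^{-\delta})$ and $O(A^{-2-\delta})$). By Hardy's inequality and the coercivity \eqref{bdd energy 0}, this error is at most $CA^{-\delta}\langle H\psi,\sigma_3\psi\rangle$. The positive part on $r\le A$ is bounded below by the weighted integral over $r\le A$; on $r\ge 2A$ the weight $(1+|y|^\delta)^{-1}$ part is again controlled by $A^{-\delta}\|\nabla\psi\|^2$, so it can be added at the same cost. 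Summing the contributions of $\psi_\pm$ yields \eqref{clam KA bdd}.
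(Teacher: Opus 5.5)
Your proposal is correct and follows essentially the paper's proof: the same virial identity, the bound $\frac12 rV_\pm'\ge -V_\pm$ from $\Lambda Q>0$, and the bound $V_\pm\le(2p+1)c_\infty^{2p}r^{-2}$ (which your monotonicity of $r^{1/p}Q$ justifies more explicitly than the paper does). The remaining steps also match: the weighted Hardy inequality with constant $(\frac{d-2-\delta}{2})^2$, the margin $\operatorname{Discr}>0$ for small $\delta$, and absorption of the $A\le r\le 2A$ errors into $CA^{-\delta}\langle H\psi,\sigma_3\psi\rangle$.

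Two small fixes. On $r<A$ one actually has $-\Delta^2\phi_A=\delta(d-\delta)(d-2-\delta)r^{-2-\delta}$; it is nonnegative but vanishes at $\delta=0$, so it is simply dropped and contributes nothing to the margin. Also, the Hardy step should be done in the $\chi_A$-weighted form, as the paper does: restricted to $\{r\le A\}$ alone it fails without a boundary term, and the cutoff puts that error in the transition region.
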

\begin{proof}
For $L_\pm=-\Delta-V_\pm$, where $V_+=(2p+1)Q^{2p}$ and $V_-=Q^{2p}$, and a real valued smooth, compactly supported radial function $g$, integration by parts, justified by approximation at the origin, gives
\begin{equation}\label{claim equa}
\begin{aligned}
&\Big\langle Lg, \nabla\phi_A\cdot\nabla g+\frac{\Delta\phi_A}{2}g\Big\rangle\\
&\qquad
=\int_{\mathbb R^d}\phi_A''|\partial_rg|^2\,\ud y-\frac{1}{4}\int_{\mathbb R^d}
\Delta^2\phi_A\,g^2\,\ud y+\frac{1}{2}\int_{\mathbb R^d}\nabla V\cdot\nabla\phi_A\,g^2\,\ud y.
\end{aligned}
\end{equation}

\medskip We first estimate the potential contribution. Since $Q$ and $\Lambda Q$ both positive, then
 \begin{equation*}
\frac{1}{2} y\partial_yV_+=p(2p+1)Q^{2p-1}\Lambda Q-V_+\geq -V_+,
\end{equation*}
and similarly
\begin{equation*}
\frac{1}{2} y\partial_yV_-=pQ^{2p-1}\Lambda Q-V_-\geq -V_-.
\end{equation*}
Since $\phi_A'\geq0$, we therefore have
\begin{equation}\label{potential virial lower}
\frac{1}{2}\int_{\mathbb R^d}\nabla V_{\pm}\cdot\nabla\phi_A\,g^2\,\ud y
\geq -\int_{\mathbb R^d}\frac{\phi_A'}{|y|}V_{\pm}g^2\,\ud y.
\end{equation}
By the pointwise bound on $Q$,
\begin{equation*}
V_{-}(y)\leq V_+(y)\leq \frac{(2p+1)c_{\infty}^{2p}}{|y|^2}.
\end{equation*}
Taking this estimate into \eqref{potential virial lower}, then
\begin{equation*}\label{potential virial Hardy}
\frac{1}{2}\int_{\mathbb R^d}\nabla V_{\pm}\cdot\nabla\phi_A\,g^2\,\ud y\geq -(2p+1)c_{\infty}^{2p}\int_{\mathbb{R}^d}\chi_{A}(y)\frac{g^2}{|y|^{2+\delta}}\ud y.
\end{equation*}

\medskip We next estimate the first term in \eqref{claim equa}. Since
\begin{equation*}
\phi_A''(r)=(1-\delta)\chi_A(r)r^{-\delta}+\chi_A'(r)r^{1-\delta},
\end{equation*}
and
\begin{equation*}
\operatorname{supp}\chi_A'
\subset\{A\leq r\leq2A\},
\qquad
|\chi_A'(r)|\leq\frac{C}{A},
\end{equation*}
we have
\begin{equation}\label{phi second lower}
\begin{aligned}
\int_{\mathbb{R}^d}\phi_A''|\nabla g|^2\ud y\geq{}&
(1-\delta)\int_{\mathbb{R}^d}\chi_A\,\frac{|\nabla g|^2}{|y|^\delta}\ud y\\
&-\frac{C}{A}\cdot A^{1-\delta}\int_{A\leq |y|\leq2A}|\nabla g|^2\ud y.
\end{aligned}
\end{equation}

\medskip For any $\beta\in\mathbb{R}$, using the fact
\begin{equation*}
\int_{0}^{\infty}\chi_{A}(r)\, r^{-\delta}\big(\partial_r g+\frac{\beta}{r} g\big)^2 r^{d-1}\ud r\geq 0,
\end{equation*}
and taking $\beta=\frac{d-2-\delta}{2}$, we find the weighted Hardy inequality,
\begin{equation}\label{cutoff weighted Hardy}
\begin{aligned}
\int_{\mathbb{R}^d}\chi_A\,\frac{|\nabla g|^2}{|y|^\delta}\ud y
\geq{}&\big(\frac{d-2-\delta}{2}\big)^2\int_{\mathbb{R}^d}\chi_A\,\frac{g^2}{|y|^{2+\delta}}\ud y\\
&\quad-\frac{C}{A^\delta}\int_{A\leq |y|\leq2A}\frac{g^2}{|y|^2}\ud y.
\end{aligned}
\end{equation}

\medskip Note that at $\delta=0$, $\frac{(d-2)^2}{4}-(2p+1)c_{\infty}^{2p}=\frac{Discr}{4}>0$, then one can choose $\delta>0$ sufficiently small and then $\eta_{\delta}>0$ sufficiently small such that
\begin{equation*}
(1-\delta-\eta_{\delta})\big(\frac{d-2-\delta}{2}\big)^2-(2p+1)c_{\infty}^{2p}>0.
\end{equation*}
Applying \eqref{cutoff weighted Hardy} to \eqref{phi second lower} and write $1-\delta=(1-\delta-\eta_{\delta})+\eta_{\delta}$. We infer from \eqref{potential virial lower} that
\begin{equation}\label{0 energy est 13}
\begin{aligned}
\int_{\mathbb{R}^d}\phi_{A}''\,&|\nabla g|^2\ud y+\frac{1}{2}\int_{\mathbb{R}^d}\nabla V_{\pm}\nabla\phi_{A} \,g^2\ud y\\
&\geq C_{\delta}\int_{\mathbb{R}^d}\chi_{A}\,|y|^{-\delta}\big(|\nabla g|^2+\frac{g^2}{|y|^2}\big)\ud y-\frac{C}{A^{\delta}}\int_{A\leq |y|\leq 2A}\big(|\nabla g|^2+\frac{g^2}{|y|^2}\big)\ud y.
\end{aligned}
\end{equation}

\medskip \medskip We finally estimate the second term in \eqref{claim equa}. Since $\chi_A=1$ for $r\leq A$, a direct computation gives
\begin{equation*}
-\Delta^2\phi_A(r)=\delta(d-\delta)(d-2-\delta)r^{-2-\delta}\geq0,\qquad 0<r<A.
\end{equation*}
On the transition region, using
\begin{equation*}
|\partial_r^j\chi_A(r)|\leq C_jA^{-j}, \qquad 1\leq j\leq3,
\end{equation*}
we obtain
\begin{equation*}
|\Delta^2\phi_A(r)|\leq CA^{-2-\delta}\leq \frac{C}{A^\delta r^2},\qquad A\leq r\leq2A.
\end{equation*}
Moreover, $\phi_A'=0$ for $r\geq2A$, so $\Delta^2\phi_A=0$ there. Therefore,
\begin{equation}\label{0 energy est 2}
\begin{aligned}
-\frac14\int_{\mathbb R^d}\Delta^2\phi_A\,g^2\,\ud y\geq&\frac{\delta(d-\delta)(d-2-\delta)}4
\int_{|y|<A}\frac{g^2}{|y|^{2+\delta}}\,\ud y -\frac{C}{A^\delta}\int_{A\leq|y|\leq2A}\frac{g^2}{|y|^2}\,\ud y\\
\geq&-\frac{C}{A^\delta}\int_{A\leq|y|\leq2A}\frac{g^2}{|y|^2}\,\ud y.
\end{aligned}
\end{equation}

\medskip Substituting \eqref{0 energy est 13} and \eqref{0 energy est 2} into \eqref{claim equa}, we deduce
\begin{equation}\label{bdd g}
\begin{aligned}
&\Big\langle L_{\pm}g,\frac{\Delta \phi_{A}}{2}g+\nabla\phi_{A}\,\nabla g\Big\rangle\\
\geq& C_{\delta}\int_{\mathbb{R}^d}\chi_{A}\, |y|^{-\delta}\big(|\nabla g|^2+\frac{g^2}{|y|^2}\big)\ud y-\frac{C}{A^{\delta}}\int_{A\leq |y|\leq 2A} |\nabla g|^2+\frac{g^2}{|y|^2}\ud y.
\end{aligned}
\end{equation}
Since $A\gg 1$ and $\chi_A=1$ on $\{|y|\leq A\}$, then
\begin{equation*}
\frac{1}{1+r^{\delta}}\leq \left\{
\begin{aligned}
&C\,|y|^{-\delta},\qquad \text{for $0<|y|\leq A$},\\
&C\,A^{-\delta},\qquad\,\, \text{for $|y|\geq A$}.
\end{aligned}
\right.
\end{equation*}
Using this bound with \eqref{bdd energy 0} and Hardy's inequality, we deduce that 
\begin{equation*}
\begin{aligned}
\int_{\mathbb{R}^d}\frac{1}{1+|y|^{\delta}}&\big(|\nabla \psi|^2+\frac{|\psi|^2}{|y|^2}\big)\ud y\\
\leq& C\int_{\mathbb{R}^d}\chi_A\, |y|^{-\delta}\big(|\nabla \psi|^2+\frac{|\psi|^2}{|y|^2}\big)\ud y+\frac{C}{A^{\delta}}\langle H\psi,\sigma_3\psi\rangle.
\end{aligned}
\end{equation*}
Substituting this estimate and \eqref{bdd g} into \eqref{def KA} proves \eqref{clam KA bdd}.

\end{proof}
 \end{appendix}

 \end{document}